\pgfplotsset{compat=1.15}
\newtheorem{theorem}{Theorem}[section]
\newtheorem*{theorem*}{Theorem}
\newtheorem{thm}{Theorem}
\newtheorem{lemma}[theorem]{Lemma}
\newtheorem{proposition}[theorem]{Proposition}
\newtheorem{corollary}[theorem]{Corollary}
\theoremstyle{definition}
\newtheorem{hypothesis}[theorem]{Hypothesis}
\newtheorem{remark}[theorem]{Remark}
\newtheorem*{remark*}{Remark}
\newtheorem{definition}[theorem]{Definition}
\newtheorem{defn}{Definition}
\newtheorem{example}[theorem]{Example}
\newtheorem*{example*}{Example}
\newtheorem{construction}[theorem]{Construction}
\numberwithin{equation}{subsection}
\newcommand{\Aut}       {\operatorname{Aut}}
\newcommand{\End}       {\operatorname{End}}
\newcommand{\Epi}       {\operatorname{Epi}}
\newcommand{\Gr}        {\operatorname{Gr}}
\newcommand{\Hom}       {\operatorname{Hom}}
\newcommand{\Inn}       {\operatorname{Inn}}
\newcommand{\Map}       {\operatorname{Map}}
\newcommand{\Out}	{\operatorname{Out}}
\newcommand{\Sp}        {\operatorname{Sp}}
\newcommand{\VHom}	{\operatorname{VHom}}
\newcommand{\Vect}      {\operatorname{Vect}}
\newcommand{\Wide}      {\operatorname{Wide}}
\newcommand{\uHom}      {\underline{\Hom}}
\newcommand{\cok}       {\operatorname{cok}}
\newcommand{\env}       {\operatorname{env}}
\newcommand{\ev}        {\operatorname{ev}}
\newcommand{\grph}      {\operatorname{graph}}
\newcommand{\img}       {\operatorname{image}}
\newcommand{\id}        {\operatorname{id}}
\newcommand{\obj}       {\operatorname{obj}}
\newcommand{\op}        {{\operatorname{op}}}
\newcommand{\rk}        {\operatorname{rk}}
\newcommand{\stab}	{\operatorname{stab}}
\newcommand{\supp}      {\operatorname{supp}}
\newcommand{\base}      {\operatorname{base}}
\newcommand{\tors}	{\operatorname{tors}}
\newcommand{\A}         {\mathcal{A}}
\newcommand{\C}		{\mathcal{C}}
\newcommand{\D}		{\mathcal{D}}
\newcommand{\E}		{\mathcal{E}}
\newcommand{\F}		{\mathcal{F}}
\newcommand{\G}		{\mathcal{G}}
\newcommand{\K}         {\mathcal{K}}
\newcommand{\I}	 	{\mathcal{I}}
\renewcommand{\L}	{\mathcal{L}}
\newcommand{\N}	 	{\mathcal{N}}
\newcommand{\M}         {\mathcal{M}}
\renewcommand{\P}	{\mathcal{P}}
\renewcommand{\S}	{\mathcal{S}}
\newcommand{\T}	 	{\mathcal{T}}
\newcommand{\U}	 	{\mathcal{U}}
\newcommand{\V}         {\mathcal{V}}
\newcommand{\W}         {\mathcal{W}}
\newcommand{\X}	 	{\mathcal{X}}
\newcommand{\Z}	 	{\mathcal{Z}}
\newcommand{\NN}        {\mathbb{N}}
\newcommand{\ZZ}        {\mathbb{Z}}
\newcommand{\tB}	{\widetilde{B}}
\newcommand{\uG}        {\underline{G}}
\newcommand{\uQ}        {\underline{Q}}
\newcommand{\uW}        {\underline{W}}
\newcommand{\ip}[1]     {\langle #1\rangle}
\newcommand{\one}       {\mathbbm{1}}
\newcommand{\sm}{\setminus}
\newcommand{\colim}  {\operatornamewithlimits{\underset{\longrightarrow}{lim}}}
\begin{document}

\title{Representation stability and outer automorphism groups}
\author{Luca Pol}
\address[Pol]{
 Fakult\"{a}t f\"{u}r Mathematik,
 Universit\"{a}t Regensburg,
 Universit\"{a}tsstr. 31,
 Regensburg 93040,
 Deutschland
}
\email{luca.pol@ur.de}

\author{Neil P. Strickland}
\address[Strickland]{
 School of Mathematics and Statistics, 
 Hicks Building, 
 Sheffield S3 7RH, 
 UK
}
\email{N.P.Strickland@sheffield.ac.uk}

\begin{abstract}
 In this paper we study families of representations of the outer
 automorphism groups indexed on a collection of finite groups $\U$. We
 encode this large amount of data into a convenient abelian category
 which generalizes the category of VI-modules appearing in the
 representation theory of the finite general linear groups. Inspired
 by work of Church--Ellenberg--Farb, we investigate for which choices
 of $\U$ the abelian category is locally noetherian and deduce
 analogues of central stability and representation stability results
 in this setting. Finally, we show that some invariants coming from
 rational global homotopy theory exhibit representation stability.
\end{abstract}

\maketitle
\setcounter{tocdepth}{1}
\tableofcontents

% !TEX root = global_representation_theory.tex

\section{Introduction}
\label{chap-intro}

In this paper we develop a framework for studying families of
representations of the outer automorphism groups.  A common theme in
representation theory is that there is a conceptual advantage in
encoding this large amount of (possibly complicated) data into a
single object, which lives in a convenient abelian category.  Using
purely algebraic techniques we will deduce strong constraints on
naturally occurring families of representations of the outer
automorphism groups.  We will then provide a range of examples for our
theory coming from rational global homotopy theory.
  
\subsection*{The main character}
\label{subsec-main-character}
Fix $k$ a field of characteristic zero and let $\G$ denote the
category of finite groups and conjugacy classes of surjective group
homomorphisms. We are interested in the category
$\A=[\G^{\mathrm{op}}, \Vect_k]$ of contravariant functors from $\G$ to
the category of $k$-vector spaces.  More generally, we will restrict
our attention to a replete full subcategory $\U \leq \G$ and then
consider the smaller category $\A\U=[\U^{\mathrm{op}}, \Vect_k]$.
  
Note that the endomorphism group of an object $G \in \U$ is the outer
automorphism group $\U(G,G)=\Out(G)$. Therefore any object
$X \in \A\U$ gives rise to a collection of $\Out(G)$-representations
$X(G)$ for $G \in \U$. The functoriality of $X$ imposes further
compatibility conditions on these representations. There are two main
examples where all this data can be made very explicit.
   
\begin{example*}\label{eg-cyclic-two}
 Consider the category $\C[2^\infty]$ of cyclic $2$-groups.  An object
 $X \in \A\C[2^\infty]$ gives rise to a consistent sequence of 
 representations of cyclic $2$-groups:
 \begin{center}
  \begin{tikzcd}
   X(1)      \arrow[r] \arrow[out=60,in=120,loop,"1"'] & 
   X(C_2)    \arrow[r] \arrow[out=60,in=120,loop,"1"'] & 
   X(C_4)    \arrow[r] \arrow[out=60,in=120,loop,"C_2"'] & 
   X(C_8)    \arrow[r] \arrow[out=60,in=120,loop,"C_4"'] & 
   X(C_{16}) \arrow[r] \arrow[out=60,in=120,loop,"C_8"'] & 
   X(C_{32}) \arrow[r] \arrow[out=60,in=120,loop,"C_{16}"'] & 
   \dotsb
  \end{tikzcd}
 \end{center}
 where the horizontal maps are induced by the canonical projections. 
\end{example*}

\begin{example*}\label{eg-elementary-p}
 Fix a prime number $p$ and consider category $\E[p]$ of elementary 
 abelian $p$-groups. An object $X \in \A\E[p]$ gives rise to a consistent 
 sequence of representations of the finite general linear groups:
 \begin{center}
  \begin{tikzcd}
   X(1)      \arrow[r] \arrow[out=60,in=120,loop,"1"'] & 
   X(C_p)    \arrow[r] \arrow[out=60,in=120,loop,"GL_1(\mathbb{F}_p)"'] & 
   X(C_p^2)  \arrow[r] \arrow[out=60,in=120,loop,"GL_2(\mathbb{F}_p)"'] & 
   X(C_p^3)  \arrow[r] \arrow[out=60,in=120,loop,"GL_3(\mathbb{F}_p)"'] & 
   X(C_p^4)  \arrow[r] \arrow[out=60,in=120,loop,"GL_4(\mathbb{F}_p)"'] & 
   X(C_p^5)  \arrow[r] \arrow[out=60,in=120,loop,"GL_5(\mathbb{F}_p)"'] & 
   \dotsb
  \end{tikzcd}
 \end{center}
 where the horizontal maps are induced by the projection into the first 
 coordinates. 
\end{example*}

As we have already seen in the previous examples, it will often be
convenient to restrict attention to special subcategories $\U$ (always
full and replete) for which certain phenomena stand out more
clearly. For example:
\begin{itemize}
 \item We might fix a prime $p$ and restrict attention to $p$-groups.
 \item We might restrict attention to solvable, nilpotent or abelian groups.
 \item We might impose upper or lower bounds on the exponent, nilpotence class,
  order, or on the size of a minimal generating set.
 \item  As special cases of the above, we might consider only cyclic groups, or only 
  elementary abelian $p$-groups for some fixed prime $p$. 
\end{itemize}

To ensure good homological properties, we will impose additional
conditions on $\U$ such as:
\begin{itemize}
 \item Closure under products: If $G,H \in \U$, then
  $G\times H \in \U$.  If this holds, we say that $\U$ is
  \emph{multiplicative}.
 \item Closure under passage to subgroups: If $G \in \U$ and $H \leq G$, then $H \in \U$. 
 \item Downwards closure (i.e. closure under passage to quotients): 
  If $G \in \U$ and $\G(G,H) \not =\emptyset$, then $H \in \U$.
 \item Upwards closure: If $H \in \U$ and $\G(G,H)\not =\emptyset$, then $G \in \U$.
\end{itemize}
We will see throughout this introduction that $\A\U$ has its best
homological behaviour when $\U$ is \emph{submultiplicative}
(multiplicative and closed under passage to subgroups), or a
\emph{global family} (closed downwards and closed under passage to
subgroups). We refer the reader to Section~\ref{sec-subcategories} for
a detailed list of all the closure properties considered in this paper
together with some examples.
  
Before presenting our results we put the abelian category 
$\A\U$ in the relevant context.

\subsection*{Representations of combinatorial categories}
\label{subsec-rep-comb}

The abelian category $\A\U$ is part of a larger family of categories
appearing in representation theory and algebraic topology. Given a
category $\I$ whose objects are finite sets (with possibly extra
structure) and whose morphisms are functions (possibly respecting the
extra structure), we can consider the associated diagram category
$\A_{\I}=[\I,\Vect_k]$. Some examples of interest include:
  
\begin{itemize}
 \item Let FI be the category of finite sets and injections. The
  associated diagram category is the category of FI-modules which
  appears in~\cite{Schwede2008} in the context of stable homotopy
  groups of symmetric spectra, and in~\cites{CEF15, CEFN} in relation
  to the representation theory of the symmetric groups.
 \item Let VI be the category of finite dimensional
  $\mathbb{F}_p$-vector spaces and injective linear maps. The
  associated diagram category is the category of VI-modules which
  appears in~\cites{Nagpal, Gan2015} in relation to the representation
  theory of the finite general linear groups.  This category is
  equivalent by Pontryagin duality to the category $\A\E[p]$ mentioned
  earlier.
 \item Let VA be the category of finite dimensional
  $\mathbb{F}_p$-vector spaces and all linear maps. The associated
  diagram category have been studied in relation to algebraic
  $K$-theory, rational cohomology, and the Steenrod
  algebra~\cite{Kuhn00}.  
\end{itemize}

Despite the similarities with other abelian categories appearing in
representation theory, there is a major difference between $\A\U$ and
all these categories.  We are no longer considering a one-parameter
family of representations but rather collections of representations
which are indexed by a family of groups. This brings into play
group-theoretic properties of the family $\U$ and so introduces a new
level of complexity into the story which has so far not been explored.
  
\subsection*{Noetherian condition}
\label{subsec-noetherian-intro}

The category $\A\U$ is a Grothendieck abelian category with generators
given by the representable functors
\[ e_G=k[\U(-,G)]\qquad G \in\U.  \] Many of the familiar notions from
the theory of modules carry over to this setting. For example, there are 
notions of finitely generated and finitely presented objects, see Definition~\ref{def-finiteness-conditions} for the details. We then say that abelian category $\A\U$ is \emph{locally noetherian} if all subobjects of $e_G$ are finitely generated for all $G \in \U$. It is then a formal consequence of the definition that subobjects of finitely generated objects are again finitely generated, and that any finitely generated object is also finitely presented.
  
Work of Church--Ellenberg--Farb in the category of FI-modules showed
that the noetherian condition plays a fundamental role when working
with sequences of representations~\cite{CEF15}.  This key technical
innovation allowed them to prove an asymptotic structure theorem for
finitely generated FI-modules which gave an elegant explanation for
the representation-theoretic patterns observed in earlier
work~\cites{Farb}.  Motivated by this, we investigate for which
choices of $\U$ the category $\A\U$ is locally noetherian. 
The next result combines Proposition~\ref{prop-not-noetherian} and
Theorem~\ref{thm-p-groups-is-locally-noetherian} in the body of the paper.
   
\begin{thm}\label{thm-intro-noetherian}
 Let $\U$ be a subcategory of $\G$ and let $p$ be a prime
 number. 
 \begin{itemize}
  \item[(a)] If $\U$ is a multiplicative global family of finite abelian 
   $p$-groups, then $\A\U$ is locally noetherian. Such $\U$ have the form $\Z[p^n]$ 
   for some $0 \leq n \leq \infty$, see Definition~\ref{def-subcat-examples}. 
  \item[(b)] If $\U$ is the global family of cyclic $p$-groups, then 
   $\A\U$ is locally noetherian.
 \end{itemize}
 If $\U$ contains the trivial group and infinitely many cyclic 
 groups of prime order, then $\A\U$ is not locally noetherian. 
 In particular, $\A$ is not locally noetherian.
\end{thm}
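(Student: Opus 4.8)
The plan is to exhibit an explicit non-finitely-generated subobject of the representable functor $e_1$ attached to the trivial group, which lies in $\U$ by hypothesis. First I would identify $e_1$ concretely: since $\U(H,1)$ is a singleton for every $H\in\U$, the functor $e_1=k[\U(-,1)]$ is the constant functor $\underline{k}$ with value $k$ and all structure maps the identity. A subobject $M\le e_1$ is then just a choice of subspace $M(H)\in\{0,k\}$ for each $H$, and it is determined by the class $\supp M=\{H\in\U: M(H)=k\}$, which is necessarily closed under passage to groups surjecting onto a member: if $G\in\supp M$ and $\G(H,G)\neq\emptyset$ then $H\in\supp M$.

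Next I would compute, for an arbitrary $G\in\U$, the subobject $\langle 1_G\rangle\le e_1$ generated by the nonzero element $1\in e_1(G)=k$ (the generated subobject does not depend on which nonzero element we pick, since $k$ is a field). By the Yoneda lemma this is the image of the map $e_G\to e_1$ classifying $1\in e_1(G)$; because $e_1$ is constant, this map sends every basis element of $e_G(H)=k[\U(H,G)]$ to $1\in k$, so its image at $H$ is all of $k$ when $\G(H,G)\neq\emptyset$ and is $0$ otherwise. Hence $\supp\langle 1_G\rangle=\{H\in\U:\G(H,G)\neq\emptyset\}$. Since any finitely generated subobject $N\le e_1$ can be written as a finite sum $N=\sum_{i=1}^n\langle m_i\rangle$ with $m_i\in N(G_i)$ for suitable $G_i\in\U$, and we may discard the generators with $m_i=0$ and rescale the others, it follows that $\supp N=\bigcup_{i=1}^n\{H:\G(H,G_i)\neq\emptyset\}$ with each $G_i\in\supp N$.

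Now let $P=\{p\ \text{prime}: C_p\in\U\}$, which is infinite by assumption, and set $M=\sum_{p\in P}\langle 1_{C_p}\rangle\le e_1$, so that $\supp M=\{H\in\U:\G(H,C_p)\neq\emptyset\ \text{for some}\ p\in P\}$. I claim $M$ is not finitely generated. If it were, the previous paragraph would give $\supp M=\bigcup_{i=1}^n\{H:\G(H,G_i)\neq\emptyset\}$ with each $G_i\in\supp M$. Fix any $p\in P$; then $C_p\in\supp M$, so $\G(C_p,G_i)\neq\emptyset$ for some $i$. As $C_p$ is a simple group its only quotients are the trivial group and $C_p$, and the trivial group is not in $\supp M$ (no $C_p$ is a quotient of it), so $G_i\cong C_p$. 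Thus $\{C_p:p\in P\}$ is contained, up to isomorphism, in the finite set $\{G_1,\dots,G_n\}$, contradicting the infinitude of $P$. Therefore $e_1$ admits a subobject that is not finitely generated, so $\A\U$ is not locally noetherian; the final sentence is the special case $\U=\G$, which contains the trivial group and $C_p$ for every prime $p$.

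I expect the only point requiring genuine care to be the Yoneda computation pinning down $\langle 1_G\rangle$ and its support in the second paragraph — once that is in place, the rest is a short group-theoretic argument exploiting the simplicity of the $C_p$ — and I would cross-check the support formula against the general description of finitely generated objects and of $\supp$ recorded earlier in the paper.
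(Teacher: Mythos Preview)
Your proposal addresses only the negative claim (the ``not locally noetherian'' statement), and says nothing about parts~(a) and~(b). In the paper these positive results are the substantial part: part~(b) is proved via the Gan--Li transitivity/bijectivity criterion for categories of type $A_\infty$, and part~(a) (and its generalisation to $\Z[p^\infty]$ and $\Z[p^n]$) via the Sam--Snowden Gr\"obner machinery, constructing an auxiliary ordered category $\L_\dagger$ and verifying condition~(F). None of this is hinted at in your proposal, so as a proof of the full theorem it is incomplete.

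For the negative claim itself, your argument is correct and is essentially the paper's approach. The paper uses the subobject $\chi^+\le\one=e_1$ given by $\chi^+(T)=k$ for $|T|>1$ and $0$ for $|T|=1$, and invokes the lemma that a finitely generated object has only finitely many minimal isomorphism classes in its support; since every $[C_p]$ with $C_p\in\U$ is minimal in $\supp(\chi^+)$, this gives the contradiction. Your subobject $M=\sum_{p\in P}\langle 1_{C_p}\rangle$ may be strictly smaller than $\chi^+$ (for instance if $\U$ contains a nontrivial group admitting no surjection onto any $C_p\in\U$), but it works equally well, and your explicit computation of $\supp\langle 1_G\rangle$ together with the simplicity argument for $C_p$ is exactly a direct proof of the minimal-support lemma in this special case. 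So for this part the two arguments are the same in spirit, with yours slightly more self-contained.
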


There are several combinatorial criteria available in the literature
to show that the category $\A\U$ is locally noetherian.  We prove part
(a) using the theory of Gr\"{o}bner bases developed by 
Sam--Snowden~\cite{Sam}, and part (b) using the criterion developed
in~\cite{Gan2015}.  Our result does not aim to give a complete
classification of locally noetherian categories, as this would be
costly and highly non-trivial, but rather aims to give a good range of
examples and counterexamples to which our theory applies. 

We then turn to study homological properties of our category of
interest.

\subsection*{Homological properties}  
\label{subsec-homological-intro}

The levelwise tensor product of $k$-vector spaces gives $\A\U$ a
symmetric monoidal structure in which the unit object $\one$ is
the constant functor with value $k$.  For all $X,Y \in \A\U$, there
exists an internal hom object that we denote by
$\underline{\Hom}(X,Y)\in \A\U$.
   
We list a few interesting homological properties that our category
enjoys.
\begin{itemize}
 \item[(i)] As is typical for diagram categories, the finitely
  generated projective objects are not strongly dualizable. In
  particular this means that the canonical map
  \[
   e_G \otimes \underline{\Hom}(e_G, \one) \to
     \underline{\Hom}(e_G,e_G)
  \]
  is in general not an isomorphism, see Remark~\ref{rem-dualizable}.
  However, the finitely generated projective objects of $\A\U$ still
  form a subcategory that is closed under tensor products and internal
  hom, see Proposition~\ref{prop-wide} and Theorem~\ref{thm-hom-fg-projective}.
 \item[(ii)] As is typical for diagram categories, any projective
  object is a retract of a direct sum of generators, see
  Lemma~\ref{lem-i-preserves-projective}.  However, under mild
  conditions on $\U$ (satisfied by $\G$) the projective objects of
  $\A\U$ coincide with the torsion-free injective objects, see
  Proposition~\ref{prop-projective-is-injective}.  In
  particular, the generators $e_G$ are injectives.
 \item[(iii)] Under mild conditions on $\U$ (which are satisfied by
  $\G$ itself), the only objects with a finite projective resolution
  are the projective ones, see
  Proposition~\ref{prop-perfect-is-projective}.
 \item[(iv)] The abelian category $\A\U$ is semisimple if and only if
  $\U$ is a groupoid, see Proposition~\ref{prop-semisimple}.
\end{itemize}

\subsection*{Representation stability}
\label{subsec-rep-stable-intro}

A common goal in the representation theory of categories is to give an
uniform description of the representations encoded into an object
$X\in \A\U$.  For example, one may prove that a finitely presented object can be recovered by finite amount of data via a ``stabilization recipe''. This phenomenon is called
\emph{central stability} and it was first introduced by
Putman~\cite{Putman15} for describing certain stability phenomena of
the general linear groups. Since then, central stability has been
shown to hold for various diagram categories such as
FI-modules~\cite{CEFN} and complemented categories~\cite{PS17}. 
In our framework this phenomenon can be formulated in the following way. 

\begin{defn}
 Let $\U$ be a subcategory of $\G$. 
 We say that an object $X\in\A\U$ satisfies \emph{central stability} if there exists a 
 natural number $ n\in \mathbb{N}$ such that for all $G \in \U$, we have
 \[ X(G)= \colim_{H \in N(G,n)} X(G/H) \]
 where $N(G,n)=\{H \triangleleft G \mid |G/H| \leq n\}$. 
\end{defn}

In Section~\ref{sec-central-stability} we give a slight generalization 
of the machinery described in~\cite{GanLi} and show that any finitely presented 
objects satisfies central stability. 
This result illustrates the fact that the representations encoded in a finitely 
presented object need to satisfy strong compatibility conditions.  
It tells us that we can recover the value $X(G)$ from a finite amount of data,
namely the poset $N(G,n)$ and the representations $X(G/H)$.  We note
that the poset $N(G,n)$ is always finite and often can be determined by
purely combinatorial means. For instance, in the abelian $p$-group
case its cardinality can be explicitly calculated using the Hall
polynomials~\cite{Butler}*{2.1.1}.

Given an epimorphism $\alpha \colon B \to A$, we also investigate the
behaviour of the structure maps $\alpha^* \colon X(A) \to X(B)$ for
sufficiently large groups $A$ and $B$. In this case however, we need
to restrict to the locally noetherian case.  Consider the following
families of finite abelian $p$-groups:
\[
  \F[p^n] = \{\text{free} \;\ZZ/p^n\text{-modules}\} \quad \mathrm{and} \quad 
  \C[p^\infty]= \{\text{cyclic}\; p\text{-groups} \}.
\] 
The following is an adaptation in our setting of the injectivity and
surjectivity conditions in the definition of representation stability
due to Church--Farb~\cite{Farb}*{1.1}.
  
\begin{defn}\label{def-intro-stability}
 Let $\U$ be either $\C[p^\infty]$ or $\F[p^n]$ for some $n\geq 1$. 
 Consider an object $X \in \A\U$. 
 \begin{itemize}
  \item We say that $X$ is \emph{eventually torsion-free} if there
   exists $r_0 \in \mathbb{N}$ such that for every morphism
   $\alpha\colon B\to A$ with $|A|\geq r_0$, the induced map
   $\alpha^* \colon X(A) \to X(B)$ is injective.
  \item We say that $X$ is \emph{generated in finite degree} if there exists
   $r_0 \in \mathbb{N}$ such that the canonical map 
   \[
     X(A) \otimes k[\U(B,A)] \to X(B), \quad (x,\alpha) \mapsto \alpha^*(x)
   \]
   is surjective, for all $|B|\geq |A| \geq r_0$.
 \end{itemize}
\end{defn}

We are finally ready to state our second result, see Theorem~\ref{thm-stability} and 
Proposition~\ref{prop-condition-left-induced} in the body of the paper.
  
\begin{thm}\label{thm-intro-stability}
 Fix a prime number $p$. Let $\Z[p^\infty]$ be the family of finite abelian
 $p$-groups and consider a finitely generated object $X \in\A\Z[p^\infty]$.
 Then the restriction of $X$ to $\A\C[p^\infty]$ and 
 $\A\F[p^n]$, for $n \geq 1$, is generated in finite degree and eventually torsion-free.
 Moreover, $X$ satisfies central stability. 
\end{thm}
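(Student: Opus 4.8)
The plan is to deduce both conclusions from Theorem~\ref{thm-intro-noetherian}(a), which tells us that $\A\Z[p^\infty]$ is locally noetherian, together with the restriction functors to $\C[p^\infty]$ and $\F[p^n]$. First I would observe that restriction along a full subcategory inclusion sends finitely generated objects to finitely generated objects only after checking that the relevant inclusions are ``well-behaved'': since $\C[p^\infty]$ and $\F[p^n]$ are replete full subcategories of $\Z[p^\infty]$, and since every finite abelian $p$-group is a quotient of a free $\ZZ/p^n$-module for $n$ large and admits the cyclic $p$-groups among its subquotients, the restriction of a finitely generated $X \in \A\Z[p^\infty]$ is again finitely generated in $\A\C[p^\infty]$ and in $\A\F[p^n]$. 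Then local noetherianity of $\A\Z[p^\infty]$, being inherited by these subcategories (this should follow from Theorem~\ref{thm-intro-noetherian}(a) applied directly, or from a transfer argument), lets us replace ``finitely generated'' by ``finitely presented'' throughout, so Theorem~\ref{thm-intro-central-stability} applies.

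Next I would prove \emph{stable surjectivity}. Fix $n$ from central stability, so that $X(G) = \colim_{H \in N(G,n)} X(G/H)$ for all $G$ in the relevant subcategory. Given an epimorphism $\alpha \colon B \to A$ with $|B| \geq |A| \geq r_0$ (where $r_0 = p^n$, say), every element of $X(B)$ comes from some $X(B/H)$ with $|B/H| \leq p^n \leq |A|$; since in $\C[p^\infty]$ and $\F[p^n]$ the map $\alpha$ together with the quotient $B \to B/H$ can be fit into a commuting square with a further epimorphism $A \to B/H$ (using that cyclic $p$-groups are totally ordered by divisibility, and that free $\ZZ/p^n$-modules surject onto any group of order $\le p^n$), the class in $X(B/H)$ is hit by an element of $X(A)$ via a morphism in $\U(B,A)$. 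This gives surjectivity of $X(A) \otimes k[\U(B,A)] \to X(B)$.

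For \emph{eventual torsion-freeness} I would argue as follows. Let $T \subseteq X$ be the torsion subobject (the subobject supported on groups of bounded order); by local noetherianity $T$ is finitely generated, hence generated in degrees $\le r_0$ for some $r_0$. For $\alpha \colon B \to A$ with $|A| \geq r_0$, the kernel $K$ of $\alpha^* \colon X(A) \to X(B)$ consists of torsion classes: any $x \in K$ maps to zero under the structure map for \emph{some} epimorphism out of $A$, and in $\C[p^\infty]$ or $\F[p^n]$ one can show the family of all such structure maps out of $A$ detects the non-torsion part of $X(A)$ once $|A|$ exceeds the generation degree of $T$. One then checks, using that $X/T$ is torsion-free and the structure maps between torsion-free objects in these totally-ordered (or free-module) subcategories are injective above the stable range, that $K \subseteq T(A) = 0$ for $|A| \geq r_0$.

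The main obstacle I expect is the torsion-freeness argument: controlling exactly \emph{which} structure map $\alpha^*$ is injective requires knowing that the torsion subobject of a finitely generated $X$ is again finitely generated (needing local noetherianity) and that, in the specific subcategories $\C[p^\infty]$ and $\F[p^n]$, a torsion-free object has all its restriction maps $\alpha^*$ injective once the source group is large enough. This last point is genuinely special to these subcategories --- it uses the linear order on cyclic $p$-groups and the ``all quotients factor through a free cover'' property of $\F[p^n]$ --- and it is presumably where the proof of Theorem~\ref{thm-stability} in the body does the real work; I would isolate it as a separate lemma about torsion-free objects before assembling the two bullet points.
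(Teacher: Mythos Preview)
Your overall strategy for $\F[p^n]$ is close to the paper's, but there are two genuine gaps and one place where the paper takes a cleaner route.

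\textbf{Gap 1: restriction to $\F[p^n]$ does not obviously preserve finite generation.} You assert this, but $\F[p^n]$ is \emph{not} closed downwards in $\Z[p^\infty]$ (a quotient of a free $\ZZ/p^n$-module need not be free), so Lemma~\ref{lem-preservation-finiteness}(a) does not apply. The paper avoids this by first restricting to $\Z[p^n]$, which \emph{is} a multiplicative global family and hence closed downwards; finite generation is preserved there, and the groups $(\ZZ/p^n)^r$ form the colimit tower inside $\Z[p^n]$. All the torsion-theoretic machinery (Section~\ref{sec-torsion}) is then available.

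\textbf{Gap 2: your description of the torsion subobject is wrong.} You write that $T$ is ``the subobject supported on groups of bounded order''. This is false: Example~\ref{ex-non-split} exhibits a finitely presented torsion object $X$ with $X(H\times Q)\neq 0$ for \emph{all} $H$. What is true, and what the paper uses, is Lemma~\ref{lem-G-star-torsion}: a finitely generated torsion object is $G_*$-null, i.e.\ vanishes on the colimit tower $G_n$ for $n\gg 0$. Since in $\Z[p^n]$ the colimit tower is exactly $(\ZZ/p^n)^r$, this gives $\tors(Y_n)((\ZZ/p^n)^r)=0$ for large $r$, which is precisely eventual torsion-freeness on $\F[p^n]$. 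This is the ``separate lemma'' you correctly anticipated needing, but its content is not what you guessed.

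\textbf{Stable surjectivity.} Your central-stability route could be made to work, but the paper's argument is shorter: reduce to the case $X=e_G$ via an epimorphism from a finitely projective $P$, and then the claim becomes the purely combinatorial statement that every epimorphism $(\ZZ/p^n)^{r+1}\to G$ factors through some epimorphism $(\ZZ/p^n)^{r+1}\to(\ZZ/p^n)^r$ once $r$ is large. This is Lemma~\ref{lem-dotted-arrow}, which is the same ``quotients factor through a free cover'' input you identified. Your central-stability approach would need the same lemma to produce the factorization $B\to A\to B/H$, so there is no real saving.

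Finally, for $\C[p^\infty]$ the paper simply cites Gan--Li rather than arguing directly; your uniform approach would work there too, since $\C[p^\infty]$ is closed downwards and has the obvious colimit tower $\ZZ/p^n$.
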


Since the family of cyclic $p$-groups is 
closed downwards, one can easily verify that the restriction of $X$ to $\A\C[p^\infty]$ is again finitely generated. Therefore the first part of the previous result follows by combining our Theorem~\ref{thm-intro-noetherian} with~\cite{Gan2015}*{Section 5}.
On the other hand, it is not immediately clear that the restriction of $X$ to $\A\F[n]$ is again finitely generated so an additional argument is required in this case.

\subsection*{Global homotopy theory} 
\label{subsec-global-intro}

A good source of examples of finitely generated objects satisfying
representation stability comes from global stable homotopy theory: the
study of spectra with a uniform and compatible group action for all
groups in a specific class. These are particular kind of spectra that
give rise to cohomology theories on $G$-spaces for all groups in the
chosen class.  The fact that all these individual cohomology theories
come from a single object imposes extra compatibility conditions as
the group varies.  In this paper we will use the framework of global
homotopy theory developed by Schwede~\cite{Schwede}.  His approach has
the advantage of being very concrete as the category of global spectra
is the usual category of orthogonal spectra but with a finer notion of
equivalence, called global equivalence.  As any orthogonal spectrum is
a global spectrum, this approach comes with a good range of
examples. For instance, there are global analogues of the sphere
spectrum, cobordism spectra, $K$-theory spectra, Borel cohomology
spectra and many others.  It is a special feature of such a global
spectrum $X$ that the assignment
$G\mapsto \pi_0(\Phi^G X) \otimes \mathbb{Q}$ defines an object
$\underline{\Phi}_0(X) \in \A$, where we put $k=\mathbb{Q}$.  The
connection with global homotopy theory is even stronger as there is a
triangulated equivalence

\begin{equation}\label{equivalence global}
 \Phi^{\G} \colon\Sp^{\mathbb{Q}}_{\G} \simeq_{\triangle} \mathcal{D}(\A) 
\end{equation}
between the homotopy category of rational $\G$-global spectra and the
derived category of $\A$~\cite{Schwede}*{4.5.29}.  This equivalence is
compatible with geometric fixed points in the sense that
$\pi_*(\Phi^GX)=H_*(\Phi^{\G}(X))(G)$. 

We obtain the following application to global homotopy theory which
highlights the good behaviour of the geometric fixed points functor on
the full subcategory of compact global spectra. Recall that an object
$X$ in a triangulated category $\T$ is said to be compact if the
representable functor $\T(X,-)$ preserves arbitrary sums. The proof of
the following result can be found in
Section~\ref{sec-central-stability}.
  
\begin{thm}\label{thm-homotopy-stability}
 Let $\Z[p^\infty]$ be the family of finite abelian
 $p$-groups and let $X$ be a rational $\Z[p^\infty]$-global spectrum. 
 If $X$ is compact, then for all $k \in \ZZ$ the geometric fixed points
 homotopy groups $\underline{\Phi}_k(X) \in \A\Z[p^\infty]$ is generated in finite 
 degree, eventually torsion-free and satisfy central stability.
\end{thm}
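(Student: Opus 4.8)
The plan is to reduce the statement to the algebraic results already established, namely Theorems \ref{thm-intro-central-stability} and \ref{thm-intro-stability}, using the triangulated equivalence \eqref{equivalence global}. First I would transport the problem across the equivalence $\Phi^{\G}\colon \Sp^{\mathbb{Q}}_{\G} \simeq_{\triangle} \mathcal{D}(\A)$, but note that we are working with the family $\Z[p^\infty]$ of finite abelian $p$-groups rather than all of $\G$; so the first step is to record the analogous equivalence $\Sp^{\mathbb{Q}}_{\Z[p^\infty]} \simeq_{\triangle} \mathcal{D}(\A\Z[p^\infty])$ (this should follow by the same argument of Schwede, restricting to the relevant family, or by a localization/restriction argument from the global case). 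Under this equivalence a compact object of $\Sp^{\mathbb{Q}}_{\Z[p^\infty]}$ corresponds to a compact object of $\mathcal{D}(\A\Z[p^\infty])$, and the compatibility with geometric fixed points reads $\pi_k(\Phi^G X) = H_k(\Phi^{\Z[p^\infty]}(X))(G)$, so that $\underline{\Phi}_k(X) = H_k(C)$ where $C \in \mathcal{D}(\A\Z[p^\infty])$ is the complex corresponding to $X$.

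The heart of the argument is then purely algebraic: I must show that if $C$ is a compact object of $\mathcal{D}(\A\Z[p^\infty])$, then each homology object $H_k(C)$ is finitely presented (equivalently, since $\A\Z[p^\infty]$ is locally noetherian by Theorem \ref{thm-intro-noetherian}(a), finitely generated). The key point is that the compact objects of the derived category of a locally noetherian Grothendieck abelian category with enough compact projective generators are precisely the perfect complexes, i.e. those quasi-isomorphic to a bounded complex of finitely generated projectives. Here the generators $e_G$ for $G \in \Z[p^\infty]$ are finitely generated projective (indeed projective by property (ii)), and since $\Z[p^\infty]$ has only countably many objects up to isomorphism one checks that $\bigoplus_G e_G$ is a compact generator; a thick-subcategory/devissage argument then identifies the compact objects of $\mathcal{D}(\A\Z[p^\infty])$ with the thick subcategory generated by the $e_G$, which is the category of perfect complexes. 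A perfect complex has each $H_k$ finitely presented because it is a subquotient of a bounded complex of finitely generated projectives, and finite generation is preserved under the relevant operations (using local noetherianity so that subobjects of finitely generated objects are finitely generated). Finally, once $\underline{\Phi}_k(X) = H_k(C)$ is finitely presented, Theorem \ref{thm-intro-central-stability} applies verbatim, and for the statement about the restrictions to $\C[p^\infty]$ and $\F[p^n]$ we invoke Theorem \ref{thm-intro-stability}, whose hypothesis (finite generation over $\A\Z[p^\infty]$) is satisfied since finitely presented objects are finitely generated.

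The main obstacle I anticipate is the first step: setting up the equivalence $\Sp^{\mathbb{Q}}_{\Z[p^\infty]} \simeq_{\triangle} \mathcal{D}(\A\Z[p^\infty])$ and, in particular, pinning down exactly what ``compact'' means on the global side for a proper family like $\Z[p^\infty]$ and verifying it matches compactness in $\mathcal{D}(\A\Z[p^\infty])$. The identification of compact objects with perfect complexes is standard (Neeman-style, using that $\A\Z[p^\infty]$ has a set of compact projective generators $e_G$), but I would need to be careful that the generators really are compact in $\A\Z[p^\infty]$ — this should follow from the fact that $e_G(X) = \Hom_{\A\Z[p^\infty]}(e_G, X) = X(G)$ commutes with filtered colimits and coproducts, which is immediate. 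Everything downstream of obtaining finite presentation of $\underline{\Phi}_k(X)$ is then a direct citation of the earlier theorems.
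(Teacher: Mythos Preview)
Your proposal is correct and follows essentially the same route as the paper. The paper also passes to $\mathcal{D}(\A\Z[p^\infty])$, identifies the compact objects as the thick subcategory generated by the $e_G$ (citing \cite{HPS}*{2.3.12}), and then uses local noetherianity to conclude that homology of compacts is finitely generated; the only cosmetic difference is that the paper checks directly that $\{C \mid H_*(C)\text{ finitely generated}\}$ is a thick subcategory containing the $e_G$, rather than first naming this class ``perfect complexes'' as you do.
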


An interesting source of examples is given by the rational $n$-th
symmetric product spectra.
\begin{example*}\label{eg-symmetric-power}
 For $n \geq 1$, we let $\Sp^n$ denote the orthogonal spectrum whose
 value at inner product space $V$ is given by
 \[ \Sp^n(V)= (S^V)^{\times n}/ \Sigma_n. \] Its rationalization is a
 compact rational $\Z[p^\infty]$-global spectrum by~\cite{Markus}*{2.10,
  5.1}. Therefore its geometric fixed points are eventually torsion-free,
 stably surjective and they satisfy central stability.
\end{example*}

\subsection*{Related work}
\label{subsec-related-intro}

Our study of the representation theory and homological algebra of
$\A\U$ is inspired by earlier work in the categories of
FI-modules~\cites{CEF15, CEFN} and VI-modules~\cites{Gan, Nagpal}.
Our Theorem~\ref{thm-intro-noetherian} recovers the result that the
category of VI-modules is locally noetherian, which was proved
independently by Sam--Snowden~\cite{Sam}*{8.3.3} and
Gan--Li~\cite{Gan2015}.  Versions of our representation stability
theorems were already known to hold for the category of
FI-modules~\cite{CEFN}, VI-modules~\cite{Gan} and complemented
categories~\cite{PS17}.  Finally our study of indecomposable injective
objects recovers part of the classification of injective VI-modules
due to Nagpal~\cite{Nagpal}.
    
  Nonetheless, to the best of our knowledge the results of this introduction 
  are new and they generalize several known results to a wider class of examples 
  of interest.

\subsection*{Acknowledgements} 
The first author thanks the SFB 1085 Higher Invariants in Regensburg for support, John Greenlees, Birgit Richter, Sarah Whitehouse and Jordan Williamson for reading preliminary versions of this paper and Liping Li and Peter Patzt for several helpful discussions.

% !TEX root = global_representation_theory.tex

\section{Preliminaries}
\label{sec-prelim}

We start by introducing the main object of study of this paper, the
abelian category $\A$.

\begin{definition}\label{def-G}
 \leavevmode
 \begin{itemize}
  \item Let $H$ be a group, and $h$ an element of $H$.  We write
   $c_h\colon H\to H$ for the inner automorphism $x\mapsto hxh^{-1}$.
  \item Let $G$ be another group, and let $\varphi,\psi\colon G\to H$ be
   homomorphisms.  We say that $\varphi$ and $\psi$ are
   \emph{conjugate} if $\psi=c_h\circ\varphi$ for some $h\in H$.  This
   is easily seen to be an equivalence relation that is compatible
   with composition.   We write $[\varphi]$ for the conjugacy
   class of $\varphi$.
  \item We write $\G$ for the category whose objects are finite
   groups, and whose morphisms are conjugacy classes of surjective
   homomorphisms.  We also write $\Out(G)=\G(G,G)$.
 \end{itemize}
\end{definition}

\begin{lemma}\label{lem-epimorphism}
 Let $\alpha \colon H \to G$ be a surjective group homomorphism
 between finite groups. Then $[\alpha]$ is an epimorphism in $\G$.
\end{lemma}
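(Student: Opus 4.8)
The plan is to verify the universal property of an epimorphism directly from the definition of $\G$. First I would recall that composition in $\G$ is induced by composition of homomorphisms, so that for surjections $\alpha\colon H\to G$ and $\beta\colon G\to K$ one has $[\beta]\circ[\alpha]=[\beta\circ\alpha]$; this is exactly the assertion in Definition~\ref{def-G} that conjugacy of homomorphisms is compatible with composition. To show $[\alpha]$ is an epimorphism, fix a finite group $K$ together with surjections $\beta,\gamma\colon G\to K$ and suppose $[\beta]\circ[\alpha]=[\gamma]\circ[\alpha]$ as morphisms $H\to K$ in $\G$. The goal is then to deduce $[\beta]=[\gamma]$.

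By the description of composition, the hypothesis says precisely that the surjections $\beta\circ\alpha$ and $\gamma\circ\alpha$ from $H$ to $K$ are conjugate, i.e.\ there exists $k\in K$ with $\gamma\circ\alpha=c_k\circ\beta\circ\alpha$. Now I would use that $\alpha$ is surjective: given $g\in G$, choose $h\in H$ with $\alpha(h)=g$; then
\[
 \gamma(g)=\gamma(\alpha(h))=c_k(\beta(\alpha(h)))=c_k(\beta(g)).
\]
Since $g\in G$ was arbitrary, this shows $\gamma=c_k\circ\beta$ as homomorphisms $G\to K$, hence $[\gamma]=[\beta]$, which is what we wanted.

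There is essentially no obstacle here beyond bookkeeping. The only points that need a word of care are that $\beta\circ\alpha$ and $\gamma\circ\alpha$ are again surjective, so that the equation $[\beta]\circ[\alpha]=[\gamma]\circ[\alpha]$ is a meaningful equality of morphisms in $\G$ (this is clear, since a composite of surjections is a surjection), and that the cancellation of $\alpha$ on the right is legitimate — which is exactly the step where surjectivity of $\alpha$ is used. In fact the argument shows a little more: the image in $\G$ of any surjective homomorphism of finite groups is an epimorphism, and it uses nothing about $\beta,\gamma$ beyond their being group homomorphisms.
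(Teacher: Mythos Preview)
Your proof is correct and follows essentially the same approach as the paper: assume $[\beta\alpha]=[\gamma\alpha]$, extract an element $k\in K$ with $\gamma\alpha=c_k\beta\alpha$, and then cancel $\alpha$ on the right using its surjectivity to conclude $\gamma=c_k\beta$. The paper's version is just more terse, omitting the explicit element chase you wrote out.
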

\begin{proof}
 Consider two surjective group homomorphisms
 $\beta,\gamma \colon G \to K$ , and suppose that
 $[\beta \alpha]=[\gamma \alpha]$. This means that
 $c_k \beta \alpha=\gamma \alpha$ for some $k\in K$. Since $\alpha$ is
 surjective we have $c_k \beta=\gamma$ which shows that
 $[\beta]=[\gamma]$.
\end{proof}

\begin{definition}\label{def-A}
 Fix a field $k$ of characteristic zero, and set
 $\A=[\G^{\op}, \Vect_{k}]$.  Given $G\in\G$, we write
 $\psi^G\colon \A\to\Vect_k$ for the evaluation functor $X\mapsto X(G)$.
\end{definition}

\begin{definition}\label{def-AU}
 Let $\U$ be a subcategory of $\G$.  Unless we explicitly say
 otherwise, such subcategories are assumed to be full and replete.
 (Replete means that any object of $\G$ isomorphic to an object of
 $\U$ is itself in $\U$.)  We then put $\A\U=[\U^{\op},\Vect_k]$.
\end{definition}

\begin{remark}\label{rem-bicomplete}
 The category $\A\U$ is abelian and admits limits and colimits for all
 small diagrams.  These (co)limits are computed pointwise, so they are
 preserved by the evaluation functors $\psi^G \colon\A\U \to \Vect_{k}$.
\end{remark}

\begin{definition}\label{def-base-support}
 Consider an object $X\in\A\U$. 
 \begin{itemize}
 \item The \emph{base} of $X$ is defined by 
 $\base(X)= \min \{|G| \mid X(G)\not = 0 \} \in \mathbb{N}.$ If $X$ is zero, 
 we set $\base(X)=\infty$.
 \item The \emph{support} of $X$ is defined by
 $\supp(X)= \{[G] \mid X(G)\not =0 \}$
 where $[G]$ denotes the isomorphism class of the group $G$. 
 We equipped the support with the partial order $[G]\gg [H]$ 
 if and only if $\U(G,H)\not =\emptyset$.
 \end{itemize}
\end{definition}

\begin{definition}\label{def-main-objects}
 Consider a subcategory $\U\leq\G$.  We define certain objects of $\A\U$ as
 follows.  Most of them depend on an object $G\in\U$, and possibly
 also a module $V$ over $k[\Out(G)]$.  
 \begin{itemize}
  \item We define $e_G$ by $e_G(T)= k[\G(T,G)]$. 
   Yoneda's Lemma tells us that $\A\U(e_G,X)=X(G)=\psi^G(X)$.
  \item We define objects $e_{G,V}$ and $t_{G,V} $ by 
   \[
    e_{G,V}(T) = V \otimes_{k[\Out(G)]}k[\G(T,G)] \qquad 
    t_{G,V}(T) = \Hom_{k[\Out(G)]}(k[\G(G,T)], V).
   \]
  \item We put 
   \[ c_{G}(T)=e_G(T)^{\Out(G)}=k[\G(T,G)/\Out(G)]. \]
   Note that the basis set $\G(T,G)/\Out(G)$ here can be identified
   with the set of normal subgroups $N\leq T$ such that
   $T/N\simeq G$.  Alternatively, we can regard $k$ as a $k$-linear
   representation of $\Out(G)$ with trivial action, and then
   $c_G=e_{G,k}$. 
  \item The groups $e_{G,V}(G)$ and $t_{G,V}(G)$ are both canonically
   identified with $V$, and one can check that there is a unique
   morphism $\alpha\colon e_{G,V}\to t_{G,V}$ with $\alpha_G=1$.  We
   write $s_{G,V}$ for the image of this.  If $T=G$ then
   $s_{G,V}(T)=V$.  If $T\simeq G$ then $s_{G,V}(T)$ is canonically
   isomorphic to $e_{G,V}(T)$ or $t_{G,V}(T)$, but a choice of
   isomorphism $T\to G$ is needed to identify $s_{G,V}(T)$ with $V$.  
   If $T\not\simeq G$ then $s_{G,V}(T)=0$.  
  \item Now let $\C$ be a subcategory of $\U$.  Suppose that $\C$ is
   \emph{convex}, which means that whenever $G\to H\to K$ are
   surjective homomorphisms with $G,K\in\C$ and $H\in\U$ we also have
   $H\in\C$.  We then define the 
   ``characteristic function'' $\chi_{\C}\in\A\U$ by
   \[ \chi_{\C}(T) = \begin{cases}
       k & \text{if}\;\; T \in \C \\
       0 & \text{if}\;\; T\not \in \C.
      \end{cases}
   \]
   (Convexity ensures that this can be made into a functor in an
   obvious way: the map $\chi_\C(T)\to\chi_C(T')$ is the identity if
   both groups are nonzero, and zero otherwise.)
 \end{itemize}
 If we need to specify the ambient category $\U$, we may write
 $e^\U_G$ rather than $e_G$, and so on.
\end{definition}

\begin{remark}\label{rem-grothendieck}
 The abelian category $\A\U$ is Grothendieck with generators given by
 $e_{G}$ for all $G\in\U$.  This means that filtered colimits are
 exact and that any $X \in \A$ admits an epimorphism $P \to X$ where
 $P$ is a direct sum of generators.
\end{remark}

\begin{lemma}\label{lem-projective-and-injective}
 For $G\in\U$, we let $\mathcal{M}_G$ denote the category of
 $k[\Out(G)]$-modules. Then the evaluation functor
 \[
  \ev_G \colon \A\U \to \mathcal{M}_G, \quad X \mapsto X(G)
 \]
 has a left and right adjoint which are respectively given by
 $e_{G,\bullet}$ and $t_{G,\bullet}$.  In particular, $e_{G,V}$ is
 projective and $t_{G,V}$ is injective.
\end{lemma}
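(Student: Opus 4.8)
The plan is to exhibit both adjunctions directly, via a $k[\Out(G)]$-equivariant enhancement of the Yoneda lemma, and then deduce the projectivity and injectivity statements by a formal argument. A preliminary remark: since $\U$ is full in $\G$, we have $\G(T,G)=\U(T,G)$ for $T,G\in\U$, so post-composition by $\Out(G)=\U(G,G)$ endows $e_G(T)=k[\G(T,G)]$ with a $k[\Out(G)]$-module structure, natural in $T$; thus $e_G$ lifts canonically to a functor valued in $\mathcal{M}_G$-objects of $\A\U$. One first checks that the Yoneda isomorphism $\A\U(e_G,X)\cong X(G)$ is $k[\Out(G)]$-linear, where the left-hand side carries the action induced by the $\Out(G)$-action on $e_G$ and $X(G)$ carries the action $\phi\cdot x=X([\phi])(x)$. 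Throughout one uses the anti-involution $g\mapsto g^{-1}$ of $k[\Out(G)]$ to pass silently between left and right modules.

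For the left adjoint, I would argue as follows. A morphism $f\colon e_{G,V}\to X$ in $\A\U$ is a natural family $f_T\colon V\otimes_{k[\Out(G)]}k[\G(T,G)]\to X(T)$, and its component $f_G\colon V\to X(G)$ at $T=G$ is $k[\Out(G)]$-linear by naturality with respect to the morphisms in $\Out(G)$. Conversely, given a $k[\Out(G)]$-linear map $g\colon V\to X(G)$, the formula $f_T(v\otimes\alpha)=X(\alpha)(g(v))$ is well defined on the relative tensor product and natural in $T$; these two constructions are mutually inverse and natural in $V$ and $X$, yielding $\A\U(e_{G,V},X)\cong\Hom_{k[\Out(G)]}(V,X(G))=\mathcal{M}_G(V,\ev_G X)$. (Equivalently one can observe that $e_{G,\bullet}=(-)\otimes_{k[\Out(G)]}e_G$, that this tensor product is computed pointwise since colimits in $\A\U$ are, and then combine the tensor--hom adjunction with the Yoneda isomorphism above.) For the right adjoint one argues dually: given $f\colon X\to t_{G,V}$, naturality of $f$ along each $\alpha\in\G(G,T)$, together with the identification $t_{G,V}(G)=\Hom_{k[\Out(G)]}(k[\Out(G)],V)=V$ under which $t_{G,V}(\alpha)$ becomes evaluation at $\alpha$, forces $f_T(x)(\alpha)=f_G(X(\alpha)(x))$. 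Hence $f$ is determined by the $k[\Out(G)]$-linear map $f_G\colon X(G)\to V$, and conversely every such $g$ assembles into a morphism via $f_T(x)(\alpha)=g(X(\alpha)(x))$, giving $\A\U(X,t_{G,V})\cong\Hom_{k[\Out(G)]}(\ev_G X,V)$ naturally.

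Finally, the evaluation functor $\ev_G$ is exact, since (co)limits in $\A\U$ are computed pointwise by Remark~\ref{rem-bicomplete}. A left adjoint of an exact functor preserves projectives (because $\A\U(e_{G,V},-)\cong\mathcal{M}_G(V,\ev_G(-))$ is exact when $V$ is projective) and, dually, a right adjoint of an exact functor preserves injectives. As $k$ has characteristic zero and $\Out(G)$ is finite, Maschke's theorem makes $k[\Out(G)]$ semisimple, so every $V\in\mathcal{M}_G$ is both projective and injective; therefore $e_{G,V}$ is projective and $t_{G,V}$ is injective in $\A\U$.

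The main obstacle I expect is purely bookkeeping: keeping the left/right module conventions and the covariance of $e_G$, $t_{G,\bullet}$, and $X$ consistent so that the Yoneda isomorphisms are genuinely $k[\Out(G)]$-equivariant and so that the defining formulas descend correctly to the relative tensor products and hom-modules. Once these identifications are set up with care, the remaining verifications (naturality in both variables, inverseness of the two constructions) are routine, and the projective/injective conclusions are immediate.
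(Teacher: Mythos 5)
Your proof is correct and follows essentially the same route as the paper: the paper exhibits the same adjunctions via the unit/counit formulas $x\otimes[\alpha]\mapsto\alpha^*(x)$ and $x\mapsto([\beta]\mapsto\beta^*(x))$, which are exactly your hom-set bijections in disguise, and then deduces projectivity/injectivity from exactness of $\ev_G$. Your version is slightly more complete in that it makes explicit the Maschke-theorem step (every $V\in\mathcal{M}_G$ is projective and injective) that the paper leaves implicit.
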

\begin{proof}
 The unit of the adjunction $\eta_V \colon V \to e_{G,V}(G)=V$ is the 
 identity, and the counit is given by
 \[
   \epsilon_X(T) \colon e_{G, X(G)}(T) \to X(T), \quad
   x\otimes[\alpha] \mapsto\alpha^*(x)
 \] 
 for all $T \in \G$.  Similarly, the counit map $t_{G,V}(G) \to V$ is
 the identity, and the unit is given by
 \[
  \eta_X(T) \colon X(T) \to t_{G, X(G)}(T), \quad 
  x \mapsto ([\beta] \mapsto \beta^* (x))
 \]
 for all $T \in \G$. We leave to the reader to check that these maps
 are natural and that they satisfy the triangular identities. The
 second part of the claim follows immediately from the fact that the
 evaluation functor is exact as colimits are computed pointwise. 
 Here we are implicitly using that the field $k$ has characteristic $0$, so that all 
 finitely generated $k[Out(G)]$-modules are projective.
\end{proof}

\begin{remark}\label{rem-proj-inj}
 If $\C$ is a groupoid with finite hom sets, it is standard and easy
 that all objects in $[\C^{\op},\Vect_k]$ are both projective and
 injective.  (We will review these arguments in
 Section~\ref{sec-finite-groupoids}.) In some other cases where $\C$
 is finite and an associated algebra is Frobenius, we find that the
 projectives and injectives are the same, but that general objects do
 not have either property.  For a typical small category, the
 projectives and injectives are unrelated.  For many of the categories
 $\U\leq\G$ arising in this paper, we will show that the projectives
 in $\A\U$ are a strict subset of the injectives, which are a strict
 subset of the full subset of objects.  We are not aware of any
 examples where this pattern has previously been observed; it has a
 number of interesting consequences.
\end{remark}

\section{Subcategories and their properties}
\label{sec-subcategories}

Throughout this paper we will consider a wide range of subcategories
$\U\leq\G$, and we will impose different conditions on $\U$ in
different places.  It is convenient to collect together the main
examples and conditions here.

\begin{definition}\label{def-subcat-props}
 Let $\U$ be a subcategory of $\G$ (assumed implicitly to be full and
 replete, as usual).
 \begin{itemize}
  \item We say that $\U$ is \emph{subgroup-closed} if whenever
   $H\leq G\in\U$ we also have $H\in\U$.
  \item We say that $\U$ is \emph{closed downwards} if whenever
   $G\to H$ is a surjective homomorphism with $G\in\U$, we also have
   $H\in\U$. 
  \item We say that $\U$ is \emph{closed upwards} if whenever
   $H\to K$ is a surjective homomorphism with $K\in\U$, we also have
   $H\in\U$. 
  \item We say that $\U$ is \emph{convex} if whenever $G\to H\to K$
   are surjective homomorphisms with $G,K\in\U$, we also have
   $H\in\U$. 
  \item We say that $\U$ is \emph{multiplicative} if $1\in\U$, and
   $G\times H\in\U$ whenever $G,H\in\U$.  Equivalently, $\U$ should
   contain the product of any finite family of its objects, including
   the empty family.
  \item We say that $\U$ is \emph{widely closed} if whenever
   $G\xleftarrow{}H\xrightarrow{}K$ are surjective homomorphisms with
   $G,H,K\in\U$, the image of the combined morphism $H\to G\times K$
   is also in $\U$.  (We will show that almost all of our examples
   have this property.)
  \item We say that $\U$ is \emph{finite} if it has only finitely many
   isomorphism classes.
  \item We say that $\U$ is \emph{groupoid} if all morphisms in $\U$
   are isomorphisms.  
  \item We say that $\U$ is \emph{colimit-exact} if the functor
   $X\mapsto\colim_{G\in\U^{\op}}X(G)$ is an exact functor
   $\A\U\to\Vect_k$.  (We will show that almost all of our examples
   have this property.)
  \item We say that $\U$ is \emph{submultiplicative} if it is
   multiplicative and subgroup-closed.
  \item We say that $\U$ is a \emph{global family} if it is
   subgroup-closed and also closed downwards.
 \end{itemize}
\end{definition}

\begin{remark}\label{rem-subcat-implications}\leavevmode
 \begin{itemize}
  \item If $\U$ is closed upwards or downwards or is a groupoid, then
   it is convex.
  \item If $\U$ is submultiplicative then it is clearly widely closed.
  \item If $\U$ is convex, then it is also widely closed.  Indeed, if
   $G\xleftarrow{}H\xrightarrow{}K$ are surjective homomorphisms with
   $G,H,K\in\U$ and $L$ is the image of the resulting map
   $H\to G\times K$ then we have evident surjective homomorphisms
   $H\to L\to G$, showing that $L\in\U$.
  \item In particular, if $\U$ is closed upwards or downwards or is a
   groupoid, then it is widely closed. 
 \end{itemize}
\end{remark}

\begin{definition}\label{def-subcat-examples}
 We define subcategories of $\G$ as follows.  Some of them depend on a
 prime number $p$ and/or an integer $n\geq 1$.
 \begin{itemize}
  \item $\Z$ is the multiplicative global family of finite abelian
   groups. 
  \item $\C$ is the global family of finite cyclic groups.
  \item $\G[p^\infty]$ is the subcategory of finite $p$-groups.
  \item $\Z[p^\infty]=\Z\cap\G[p^\infty]$ is the multiplicative global
   family of finite abelian $p$-groups.
  \item $\C[p^\infty]=\C\cap\G[p^\infty]$ is the global family of
   finite cyclic $p$-groups.
  \item $\G[p^n]$ is the multiplicative global family of finite groups
   of exponent dividing $p^n$.
  \item $\Z[p^n]=\Z\cap\G[p^n]$ is the multiplicative global family of
   finite abelian groups of exponent dividing $p^n$, which is
   equivalent to the category of finitely generated modules over
   $\ZZ/p^n$. 
  \item $\C[p^n]=\C\cap\G[p^n]$ is the global family of finite cyclic
   groups of exponent dividing $p^n$, which is equivalent to the
   category of cyclic modules over $\ZZ/p^n$.
  \item $\F[p^n]$ is the subcategory of groups isomorphic to
   $(\ZZ/{p^n})^r$ for some $r\geq 0$, which is equivalent to the
   category of finitely generated free modules over $\ZZ/p^n$.
  \item $\E[p]$ is the multiplicative global family of elementary
   abelian $p$-groups, which is the same as $\Z[p]$ or $\F[p]$.
 \end{itemize}
 We also consider the following subcategories, primarily as a source
 of counterexamples:
 \begin{itemize}
  \item $\W_0$ is the subcategory of finite simple groups, which is a
   groupoid. 
  \item $\W_1$ is the subcategory of (necessarily cyclic) groups of
   prime order, which is also a groupoid.
  \item $\W_2$ is the subcategory of finite $2$-groups in which every
   square is a commutator.  This is easily seen to be multiplicative
   and closed downwards.  However, it contains the quaternion group
   $Q_8$ but not the cyclic group $C_4<Q_8$, so it is not
   subgroup-closed. 
  \item $\W_3$ is the subcategory of finite $p$-groups in which all
   elements of order $p$ commute.  This is clearly submultiplicative,
   but it is not closed downwards.  Indeed, one can check that $\W_3$
   contains the upper triangular group $UT_3(\ZZ/p^2)$ (provided that
   $p>2$), but not the quotient group $UT_3(\ZZ/p)$.  (We thank Yves
   de Cornulier, aka MathOverflow user YCor, for this
   example~\cite{YCor}.)
 \end{itemize}
 Given a subcategory $\U$, we also define further subcategories as
 below, depending on an integer $n>0$ or an object $N\in\U$:
 \begin{itemize}
  \item $\U_{\leq n}=\{G\in\U\mid |G|\leq n\}$.  This is always
   finite.  If $\U$ is subgroup-closed, closed downwards, convex,
   widely-closed or a groupoid then $\U_{\leq n}$ inherits the same
   property. 
  \item $\U_{\geq n}=\{G\in\U\mid |G|\geq n\}$.  If $\U$ is closed
   upwards, convex, widely closed, finite or a groupoid then
   $\U_{\geq n}$ inherits the same property.  
  \item $\U_{=n}=\{G\in\U\mid |G|=n\}=\U_{\leq n}\cap\U_{\geq n}$.
   This is always a finite groupoid, and so is convex and widely closed.
  \item $\U_{\leq N}=\{G\in\U\mid\G(N,G)\neq\emptyset\}$.  This is
   always  finite.  If $\U$ is closed downwards, convex,
   widely-closed or a groupoid then $\U_{\leq N}$ inherits the same
   property. 
  \item $\U_{\geq N}=\{G\in\U\mid \U(G,N)\neq\emptyset\}$.  If $\U$ is
   closed upwards, convex, widely closed, finite or a groupoid then
   $\U_{\geq N}$ inherits the same property.  
  \item $\U_{\simeq N}=\{G\in\U\mid G\simeq N\}=
    \U_{\leq N}\cap\U_{\geq N}$.  This is always a
   finite groupoid, and so is convex and widely closed.
 \end{itemize}
\end{definition}

\begin{example}\label{ex-not-widely-closed}
 Using Remark~\ref{rem-subcat-implications} we see that almost all of
 the specific subcategories listed above are widely closed.  One
 exception is the subcategory $\F[p^n]$ for $n>1$.  We will identify
 this with the category of finitely generated free modules over
 $\ZZ/p^n$ and so use additive notation.  We take $G=K=\ZZ/p^n$ and
 $H=(\ZZ/p^n)^2$, and we define maps
 $G\xleftarrow{\alpha}H\xrightarrow{\beta}K$ by $\alpha(i,j)=i$ and
 $\beta(i,j)=i+pj$.  We find that the image of the combined map
 $H\to G\times K$ is isomorphic to $\ZZ/p^n\times\ZZ/p^{n-1}$ and so
 does not lie in $\F[p^n]$.
\end{example}

\section{Closed monoidal structure}
\label{sec-monoidal-structure}

It is convenient to add a bit of structure on $\A$.

\begin{definition}\label{def-dual}
 We give $\A\U$ the symmetric monoidal structure given by
 $(X\otimes Y)(T)=X(T)\otimes Y(T)$.  The unit object $\one$ is
 the constant functor with value $k$ (so $\one=e_1$ provided
 that $1\in\U$).  We also put
 \[ \uHom(X,Y)(T)= \A(e_T \otimes X, Y). \]
 Standard arguments show that this defines an object of $\A\U$ with
 \[
   \A\U(W, \uHom(X,Y)) \simeq \A(W \otimes X, Y),
 \]
 so $\A\U$ is a closed symmetric monoidal category. We write $DX$ for
 $\uHom(X,\one)$, and call this the \emph{dual} of
 $X$.
\end{definition}

\begin{remark}\label{rem-flat}
 Note that the tensor product is both left and right exact, so all
 objects are flat.
\end{remark}

\begin{remark}\label{rem-dualizable}
 We warn the reader that $DX$ is not obtained from $X$ by taking
 levelwise duals, so the canonical map
 $X \otimes DX \to \uHom(X,X)$ is usually not an
 isomorphism.  To demonstrate this consider the case $X=e_G$ for any
 non-trivial group $G$.  If we evaluate at the trivial group, we find
 $e_G(1) \otimes De_G(1)=0$ and
 $\uHom(e_G, e_G)(1)=k[\Out(G)]$.  Therefore the map is far
 from being an isomorphism.
\end{remark}

For the rest of this section we study the effect of the tensor product
and internal hom functor on the generators. The main results are
Proposition~\ref{prop-wide} and Theorem~\ref{thm-hom-fg-projective}
and they both rely on the following notion.

\begin{definition}\label{def-permuted-family}
 Let $\U$ be a subcategory of $\G$.  A \emph{permuted family} of
 groups consists of a finite group $\Gamma$, a finite $\Gamma$-set
 $A$, a family of groups $G_a\in\U$ for each $a\in A$, and a system of
 isomorphisms $\gamma_*\colon G_a\to G_{\gamma(a)}$ (for
 $\gamma\in\Gamma$ and $a\in A$) satisfying the functoriality
 conditions $1_*=1$ and $(\delta\gamma)_*=\delta_*\gamma_*$.  The
 system of isomorphisms gives maps $\stab_\Gamma(a)\to\Aut(G_a)$ for
 each $a\in A$.  We say that the family is \emph{outer} if the image
 of this map contains the inner automorphism group $\Inn(G_a)$
 for all $a$.  Given a permuted family $\uG$ which is outer,
 we define the set
 \[ \tB(\uG)(T) =
      \{(a,\alpha) \mid a\in A,\;\alpha\in\Epi(T,G_a)\}. 
 \]
 The group $\Gamma$ acts on $\tB(\uG)(T)$ via the formula
 $\gamma \cdot(a, \alpha)= (\gamma(a), \gamma_*\circ \alpha)$. We define
 $B(\uG)(T)=\tB(\uG)(T)/\Gamma$ and
 $F(\uG)(T)=k[B(\uG)(T)]$.  This is
 contravariantly functorial in $T$, so $F(\uG)\in\A\U$. 
\end{definition}

\begin{proposition}\label{prop-permuted-family}
 For all $X\in\A\U$ there is a natural isomorphism
 \[ \A\U(F(\uG),X) = \left(\prod_{a\in A} X(G_a)\right)^\Gamma. \]
 If we choose a subset $A_0\subset A$ containing one element of each
 $\Gamma$-orbit, we get an isomorphism
 \[ F(\uG) = \bigoplus_{a\in A_0} e_{G_a}^{\stab_{\Gamma}(a)}. \]
 Thus, $F(\uG)$ is finitely projective (see 
 Definition~\ref{def-finiteness-conditions}).
\end{proposition}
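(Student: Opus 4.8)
The plan is to verify the Hom-formula directly from the definitions and then derive the direct-sum decomposition as a corollary. First I would unwind what a morphism $F(\uG)\to X$ is. Since $F(\uG)(T)=k[B(\uG)(T)]$ is a free $k$-vector space on the set $B(\uG)(T)=\tB(\uG)(T)/\Gamma$, a natural transformation $F(\uG)\to X$ is the same as a natural transformation of $\Vect_k$-valued functors out of the levelwise-free functor $T\mapsto k[B(\uG)(T)]$, which in turn corresponds to a natural assignment, to each $T$, of a function $B(\uG)(T)\to X(T)$. By the universal property of the free functor (equivalently, by adjunction between $k[-]$ and the forgetful functor applied pointwise), this is the same as a natural transformation of set-valued functors $B(\uG)(-)\to U\circ X$, where $U$ is the forgetful functor; but since the codomain is $k$-linear we may as well keep it as a compatible family of $k$-linear maps. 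So the first step is: $\A\U(F(\uG),X)\cong\Hom_{\text{Set-functors}}(B(\uG)(-),X(-))$.

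Next I would compute the right-hand side using a co-Yoneda / colimit description of $B(\uG)$. The functor $\tB(\uG)(-)$ splits as a disjoint union over $a\in A$ of the functors $T\mapsto\Epi(T,G_a)$, each of which represents $e_{G_a}$ after linearization; more precisely $T\mapsto k[\Epi(T,G_a)]$ is exactly $e_{G_a}$ (recall $e_G(T)=k[\G(T,G)]$ and morphisms in $\G$ are conjugacy classes of epimorphisms, so one should be slightly careful here: $\Epi(T,G_a)$ is the set of genuine epimorphisms, and $\G(T,G_a)$ is its quotient by conjugacy in $G_a$; the ``outer'' hypothesis is precisely what lets $\Inn(G_a)$ be absorbed into the $\Gamma$-action, so that after quotienting by $\Gamma$ one recovers a quotient by $\stab_\Gamma(a)$ acting through $\Out(G_a)$ on $\G(T,G_a)$ rather than through $\Aut(G_a)$ on $\Epi(T,G_a)$). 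Taking the $\Gamma$-quotient, $B(\uG)(T)=\left(\coprod_{a\in A}\Epi(T,G_a)\right)/\Gamma$, so $F(\uG)=k[B(\uG)(-)]=\left(\bigoplus_{a\in A}k[\Epi(T,G_a)]\right)_\Gamma$, the $\Gamma$-coinvariants. Choosing orbit representatives $A_0\subset A$, the $\Gamma$-coinvariants of $\bigoplus_{a\in A}k[\Epi(-,G_a)]$ become $\bigoplus_{a\in A_0}\big(k[\Epi(-,G_a)]\big)_{\stab_\Gamma(a)}$, and since $\stab_\Gamma(a)$ acts on $\Epi(-,G_a)$ through a map to $\Aut(G_a)$ whose image contains $\Inn(G_a)$, the coinvariants are $e_{G_a}^{\stab_\Gamma(a)}$ in the notation of Definition~\ref{def-main-objects} (here I use $X^H$ for the object associated to the $k[H]$-action, as the paper does — matching the notation $e_G^{\stab_\Gamma(a)}$ in the statement). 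This gives the stated decomposition $F(\uG)=\bigoplus_{a\in A_0}e_{G_a}^{\stab_\Gamma(a)}$.

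From the decomposition the Hom-formula is immediate: by Yoneda, $\A\U(e_{G_a}^{H},X)=(X(G_a)$-with-its-$H$-action$)^{?}$ — more carefully, $e_{G_a}^H$ here denotes a colimit/coinvariants construction, so $\A\U(e_{G_a}^H,X)=\Hom_{k[H]}(k, X(G_a))=X(G_a)^H$ — and hence $\A\U(F(\uG),X)=\bigoplus_{a\in A_0}X(G_a)^{\stab_\Gamma(a)}=\left(\prod_{a\in A}X(G_a)\right)^\Gamma$, the last equality being the standard identification of $\Gamma$-fixed points of an induced/product $\Gamma$-module with a product over orbit representatives of fixed points under the stabilizers. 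Finally, finite projectivity follows from Lemma~\ref{lem-projective-and-injective}: each $e_{G_a}$ is projective (it is $e_{G_a,k[\Out(G_a)]}$), taking $\stab_\Gamma(a)$-coinvariants of a $k[\Out(G_a)]$-module in characteristic zero keeps it projective as a $k[\Out(G_a)]$-module (semisimplicity), so each $e_{G_a}^{\stab_\Gamma(a)}=e_{G_a,W_a}$ for a suitable $k[\Out(G_a)]$-module $W_a$, hence is finitely projective, and a finite direct sum of finitely projective objects is finitely projective.

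I expect the main obstacle to be bookkeeping around the two layers of quotient — conjugacy by $\Inn(G_a)$ built into the morphisms of $\G$, versus the $\Gamma$-action — and in particular checking that the ``outer'' hypothesis is exactly what makes $\left(k[\Epi(-,G_a)]\right)_{\stab_\Gamma(a)}$ equal to $e_{G_a}^{\stab_\Gamma(a)}$ rather than some smaller object; once the indexing is set up correctly, the rest is a formal chain of adjunctions and the elementary representation theory of finite groups over a field of characteristic zero.
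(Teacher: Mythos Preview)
Your proposal is correct and follows essentially the same route as the paper: reduce to a single $\Gamma$-orbit, identify $B(\uG)(T)$ on that orbit with $\Epi(T,G_a)/\stab_\Gamma(a)$, use the outer hypothesis to pass from $\Epi(T,G_a)$ to $\G(T,G_a)$, and conclude that the summand is $e_{G_a}^{\stab_\Gamma(a)}$. The paper phrases this as an explicit bijection $\phi\colon\Epi(T,G_a)/\Delta\to B(\uG)(T)$ checked by hand, whereas you package the same computation in coinvariant language; and the paper proves projectivity by exhibiting the averaging splitting $e_{G_a}^\Phi\hookrightarrow e_{G_a}$ directly, rather than invoking semisimplicity of $k[\Out(G_a)]$-modules, but these differences are cosmetic.
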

\begin{proof}
 We can reduce to the case where $A$ is a single orbit, say
 $A=\Gamma a\simeq\Gamma/\Delta$, where $\Delta=\stab_\Gamma(a)$.  We
 can define $\phi\colon\Epi(T,G_a)/\Delta\to B(\uG)(T)$ by
 $\phi[\alpha]=[a,\alpha]$.  If $[b,\beta]\in B(\uG)(T)$
 then $b=\gamma(a)$ for some $a$.  We can then put
 $\alpha=\gamma_*^{-1}\circ\beta\colon T\to G_a$ and we find that
 $[b,\beta]=\phi[\alpha]$.  On the other hand, if
 $\phi(\alpha)=\phi(\alpha')$ then there exists $\gamma\in\Gamma$ with
 $(\gamma(a),\gamma_*\circ\alpha)=(a,\alpha')$ which means that
 $\gamma\in\Delta$ and $[\alpha]=[\alpha']$ in $\Epi(T,G_a)/\Delta$.
 It follows that $\phi$ is a natural bijection.  Thus, if we let $\Phi$
 denote the image of $\Delta$ in $\Out(G_a)$, we have
 $F(\uG)\simeq e_{G_a}^{\Phi}$.  Note that the inclusion
 $e_{G_a}^{\Phi}\leq e_{G_a}$ is split by the map
 $x \to |\Phi|^{-1}\sum_{\phi \in \Phi} \phi \cdot x $. It follows that
 $e_{G_a}^{\Phi}$ is projective.
\end{proof}

\begin{definition}\label{def-wide}
 Let $(G_i)_{i\in I}$ be a finite family of groups in $\U$ with
 product $P=\prod_iG_i$.  
 \begin{itemize}
 \item We say that a subgroup $W\leq P$ is
 \emph{wide} if all the projections $\pi_i\colon W\to G_i$ are
 surjective.  
 \item We say that a homomorphism $f\colon T\to P$ is
 \emph{wide} if all the morphisms $\pi_i\circ f$ are surjective, or
 equivalently $f(T)$ is a wide subgroup of $P$. 
 \end{itemize}
 For $G,H \in \U$, we let $\Wide(G,H)$ denote the set of wide subgroups 
 of $G \times H$ which belong to $\U$. 
 This set is covariantly functorial in $G$ and $H$ with respect to 
 morphisms in $\U$. Given $\varphi\colon G' \to G$ in $\U$ 
 and $W' \in \Wide(G',H)$, we put 
 $\varphi_* W'= (\varphi \times \mathrm{id}_H)(W')$ which is wide in 
 $G\times H$. This comes with a map 
 $j_{\varphi}\colon W' \to \varphi_*W'$ which makes the following diagram
  \[
  \begin{tikzcd}
  G' \times H \arrow[r,"\varphi \times \mathrm{id}"] & G\times H \\
  W' \arrow[u, hook] \arrow[r,"j_\varphi"] &\varphi_* W' \arrow[u, hook]
  \end{tikzcd}
  \]
 commute. The assignment $W' \mapsto \varphi_* W'$ defines a map 
 $\varphi_* \colon \Wide(G',H) \to \Wide(G,H)$ between the set of wide 
 subgroups. Similar functoriality holds for $H$ as well.   
\end{definition}

\begin{example}\label{ex-wide}
 Let $G_1$ and $G_2$ be finite groups.
 \begin{itemize}
  \item[(a)] The full group $G_1\times G_2$ is always wide.  If $G_1$
   and $G_2$ are nonisomorphic simple groups, then one can check 
   (perhaps using Lemma~\ref{lem-wide-graph} below) that
   this is the only example.  Similarly, if $|G_1|$ and $|G_2|$ are
   coprime, then $G_1\times G_2$ is the only wide subgroup.
  \item[(b)] If $\alpha\colon G_1\to G_2$ is a surjective homomorphism,
   then the graph 
   \[ \Gr(\alpha)=\{(g,\alpha(g)) \mid g \in G_1\} \]
   is always wide.  If $G_1$ and $G_2$ are isomorphic simple groups, 
   then one can check that every wide subgroup is of the form~(a)
   or~(b).  Moreover, in~(b) we see that $\alpha$ must be an
   isomorphism. 
  \item[(c)] Now let $\U\leq\G$ be a groupoid, and suppose that
   $G_1,G_2\in\U$.  If $W\leq G_1\times G_2$ is wide and lies in $\U$, we
   see easily that $W$ is the graph of an isomorphism
   $\alpha\colon G_1\to G_2$.
  \item[(d)] Now consider the case
   $\U=\C[p^\infty]=\{\text{cyclic $p$-groups}\}$.  If
   $|G_1|\geq|G_2|$ then it is not hard to see that any cyclic wide
   subgroup of $G_1\times G_2$ is the graph of a surjective homomorphism
   $\alpha\colon G_1\to G_2$ as in~(b).  Similarly, if
   $|G_1|\leq|G_2|$ then any cyclic wide subgroup of $G_1\times G_2$ is
   the graph of a surjective homomorphism $\beta\colon G_2\to G_1$.
   Of course, if $|G_1|=|G_2|$ then any surjective homomorphism
   $\alpha\colon G_1\to G_2$ is an isomorphism, and the graph of
   $\alpha\colon G_1\to G_2$ is the same as the graph of
   $\alpha^{-1}\colon G_2\to G_1$.
 \end{itemize}
\end{example}

\begin{definition}\label{def-wide-graph}
 Suppose we have finite groups $G_1$ and $G_2$, and normal subgroups
 $N_i\lhd G_i$, and an isomorphism
 $\alpha \colon G_1/N_1 \to G_2/N_2$.  We can then put
 \[
  H(N_1, \alpha, N_2)= \lbrace (x_1,x_2) \in G_1 \times G_2 \mid 
  \alpha(x_1N_1)=x_2N_2\rbrace \leq G_1 \times G_2.
 \]
 This is easily seen to be a wide subgroup.
\end{definition}

\begin{lemma}\label{lem-wide-graph}
 Every wide subgroup $K\leq G_1\times G_2$ has the form
 $H(N_1,\alpha,N_2)$ for a unique triple $(N_1,\alpha,N_2)$ as above. 
\end{lemma}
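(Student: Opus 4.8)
The statement is precisely Goursat's lemma, so the plan is to prove it along those classical lines, being careful about where wideness is used. Given a wide subgroup $K\leq G_1\times G_2$, I would first extract the "vertical" and "horizontal" slices
\[
 N_1=\{x\in G_1\mid (x,1)\in K\},\qquad N_2=\{y\in G_2\mid (1,y)\in K\}.
\]
These are subgroups of $G_1$ and $G_2$ respectively, and I would check that they are normal: given $g\in G_1$, wideness of the projection $\pi_1\colon K\to G_1$ provides $g'$ with $(g,g')\in K$, and conjugating $(x,1)\in K$ by $(g,g')$ yields $(gxg^{-1},1)\in K$, so $gxg^{-1}\in N_1$; the argument for $N_2$ is symmetric. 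I would also record the key identity that, inside $K$, one has $N_1\times N_2=\{(x_1,x_2)\in K\mid x_1\in N_1\}=\{(x_1,x_2)\in K\mid x_2\in N_2\}$: the slices give $N_1\times 1\leq K$ and $1\times N_2\leq K$, hence $N_1\times N_2\leq K$, while if $(x_1,x_2)\in K$ with $x_1\in N_1$ then $(1,x_2)=(x_1,1)^{-1}(x_1,x_2)\in K$ forces $x_2\in N_2$.

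Next I would construct $\alpha$. The composite $K\hookrightarrow G_1\times G_2\to G_1\twoheadrightarrow G_1/N_1$ is surjective (because $\pi_1$ is), and by the identity above its kernel is exactly $N_1\times N_2$; thus $K/(N_1\times N_2)\cong G_1/N_1$, and symmetrically $K/(N_1\times N_2)\cong G_2/N_2$. Composing one isomorphism with the inverse of the other gives $\alpha\colon G_1/N_1\xrightarrow{\ \sim\ }G_2/N_2$, characterized by the property that $\alpha(x_1N_1)=x_2N_2$ whenever $(x_1,x_2)\in K$. From this characterization $K\subseteq H(N_1,\alpha,N_2)$ is immediate; for the reverse inclusion, if $\alpha(x_1N_1)=x_2N_2$ I would pick $(x_1,x_2')\in K$ using surjectivity of $\pi_1$, note $x_2'N_2=\alpha(x_1N_1)=x_2N_2$ so that $x_2=x_2'n$ for some $n\in N_2$, and conclude $(x_1,x_2)=(x_1,x_2')(1,n)\in K$. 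Hence $K=H(N_1,\alpha,N_2)$.

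For uniqueness I would show the triple is forced by $K$. In $H(N_1,\alpha,N_2)$ the element $(x_1,1)$ lies in the subgroup iff $\alpha(x_1N_1)=N_2$, and since $\alpha$ is an isomorphism with $\alpha(N_1)=N_2$ this happens iff $x_1\in N_1$; so $N_1$ is recovered as $\{x\in G_1\mid(x,1)\in K\}$ and likewise $N_2=\{y\in G_2\mid(1,y)\in K\}$. Once $N_1$ and $N_2$ are pinned down, every coset $x_1N_1$ occurs as the first component of some $(x_1,x_2)\in K$ (surjectivity of $\pi_1$ again), and the defining relation forces $\alpha(x_1N_1)=x_2N_2$, so $\alpha$ is uniquely determined.

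I do not expect a genuine obstacle here; the content is just a careful application of the isomorphism theorems. The one place that really needs wideness — and where the statement would otherwise fail — is the pair of facts that $N_1\times N_2$ is the kernel of $K\twoheadrightarrow G_1/N_1$ and that $N_1,N_2$ are normal, so the write-up should isolate those two points and flag the use of surjectivity of the projections there.
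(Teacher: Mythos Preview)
Your proof is correct and follows essentially the same route as the paper: extract the slices $N_1,N_2$, use wideness to show they are normal, pass to $K/(N_1\times N_2)$ and observe that both projections to $G_i/N_i$ are isomorphisms, then define $\alpha$ as the composite. You are simply more explicit about the double inclusion $K=H(N_1,\alpha,N_2)$ and the uniqueness argument than the paper, which leaves those as ``easy to see''.
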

\begin{proof}
 Put
 \[  N_1= \lbrace n_1 \in G_1 \mid (n_1, 1) \in K \rbrace, \]
 and similarly for $N_2$. If $n_1 \in N_1$ and $g_1 \in G_1$ then
 wideness gives $g_2 \in G_2$ such that $(g_1,g_2) \in K$. It follows
 that the element
 $(g_1 n_1 g_1^{-1},1)=(g_1,g_2)(n_1,1)(g_1,g_2)^{-1}$ lies in $K$ and
 that $N_1$ is normal. The same argument shows that $N_2$ is normal in
 $G_2$ too. This means that $K$ is the preimage in $G_1 \times G_2$ of
 the subgroup
 $\bar{K}= K/(N_1 \times N_2) \leq (G/N_1) \times (G/N_2)$. We now
 find that the projections $\pi_i \colon \bar{K} \to G_i/N_i$ are both
 isomorphisms, so we can define
 $\alpha \colon \pi_2 \pi_1^{-1} \colon G/N_1 \to G/N_2$.  It is now
 easy to see that $K = H(N_1, \alpha, N_2)$, as required.
\end{proof}

\begin{definition}\label{def-wide-permuted-family}
 Given $G,H \in \U$, we let $\uW(G,H)$ denote the 
 tautological family indexed by $\Wide(G,H)$, so the group indexed 
 by $U \in \Wide(G,H)$ is $U$
 itself. Then $G \times H$ acts on $\Wide(G,H)$ by conjugation. We use
 this to regard $\uW(G,H)$ as a permuted family, and thus define
 a finitely projective object $F(\uW(G,H)) \in \A\U$.
\end{definition}

We now consider tensor products of generators.

\begin{proposition}\label{prop-wide}
 Let $\U$ be a widely closed subcategory of $\G$, and suppose that
 $G,H\in\U$.  Then $e_G \otimes e_H$ is naturally isomorphic to
 $F(\uW(G,H))$ (and so is a finitely generated projective
 object of $\A\U$).
\end{proposition}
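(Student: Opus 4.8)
The plan is to identify both sides after applying $\A\U(-, X)$ for an arbitrary $X \in \A\U$ and invoking Yoneda, so the real content is a natural bijection of hom-sets. On one side, the tensor product formula gives
\[
 \A\U(e_G \otimes e_H, X) \simeq \A\U\bigl(e_G, \uHom(e_H, X)\bigr) \simeq \uHom(e_H, X)(G) = \A\U(e_G \otimes e_H, X),
\]
which is circular as written; instead I would compute directly. Since $e_G \otimes e_H$ evaluated at $T$ is $k[\Epi(T,G)/\!\!\sim] \otimes k[\Epi(T,H)/\!\!\sim]$ (conjugacy classes of epimorphisms), a map $e_G\otimes e_H \to X$ is the same as a natural choice, for each pair of conjugacy classes $([\varphi],[\psi])$ with $\varphi \in \Epi(T,G)$, $\psi \in \Epi(T,H)$, of an element of $X(T)$, compatibly under precomposition. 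First I would unwind this to show $\A\U(e_G\otimes e_H, X)$ is naturally isomorphic to something computed at the ``universal'' source; the combined map $(\varphi,\psi)\colon T \to G\times H$ has wide image $W = \img(\varphi,\psi) \leq G\times H$, and the data of $(\varphi,\psi)$ up to the relevant conjugacy is equivalent to the data of $W$ (a wide subgroup, which lies in $\U$ by wide-closedness) together with the surjection $T \twoheadrightarrow W$, modulo the conjugation action of $G\times H$ on wide subgroups.

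The key step is therefore this dictionary: a pair of conjugacy classes of epimorphisms $T \to G$ and $T \to H$ corresponds bijectively to a $(G\times H)$-orbit of pairs $(W, [\tau])$ with $W \in \Wide(G,H)$ and $[\tau]$ a conjugacy class of epimorphisms $T \twoheadrightarrow W$ — where I must be careful that the conjugation acting on $\tau$ is by $\stab_{G\times H}(W)$, whose image in $\Aut(W)$ contains $\Inn(W)$ (this is exactly the ``outer'' hypothesis needed to form $F(\uW(G,H))$, and it holds because for $w \in W$ the element $(w,w) \cdot$ — more precisely the relevant inner automorphisms of $G\times H$ that preserve $W$ — induce all of $\Inn(W)$). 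Granting this, the set $\tB(\uW(G,H))(T)$ of Definition~\ref{def-permuted-family} is precisely $\{(W,\tau) \mid W\in\Wide(G,H),\ \tau\in\Epi(T,W)\}$ and its quotient by $G\times H$ matches the basis of $(e_G\otimes e_H)(T)$. Comparing with Proposition~\ref{prop-permuted-family}, which computes $\A\U(F(\uW(G,H)), X) = \bigl(\prod_{W\in\Wide(G,H)} X(W)\bigr)^{G\times H}$, I would check that the natural isomorphism of functors in $X$ is induced by an actual isomorphism $e_G\otimes e_H \xrightarrow{\ \sim\ } F(\uW(G,H))$ in $\A\U$, either by Yoneda or by exhibiting it levelwise via the image-of-combined-map construction and checking naturality in $T$.

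The main obstacle is bookkeeping the three layers of conjugation correctly: conjugacy of $\varphi$ in $G$, conjugacy of $\psi$ in $H$, and conjugacy of $\tau$ inside $W$ — and seeing that after passing to the image subgroup these collapse into a single $(G\times H)$-orbit together with an $\Inn(W)$-worth of slack on $\tau$, with nothing lost or double-counted. Concretely I expect to use Lemma~\ref{lem-wide-graph} to pin down wide subgroups and the normality of the $N_i$, and to verify that two epimorphisms $T\to G\times H$ with the same image $W$ and the same induced surjection $T \to W$ up to $\Inn(W)$ are exactly the ones that become equal after taking conjugacy classes of both coordinates. Once the combinatorial identification of $\tB(\uW(G,H))(T)/(G\times H)$ with the basis of $(e_G\otimes e_H)(T)$ is clean and checked to be natural in $T$, the rest (finite generation, projectivity) is immediate from Proposition~\ref{prop-permuted-family} and Definition~\ref{def-wide-permuted-family}.
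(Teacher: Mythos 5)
Your proposal is correct and follows essentially the same route as the paper: send a pair of epimorphisms $(\alpha,\beta)$ to the wide image subgroup $W=\ip{\alpha,\beta}(T)$ (which lies in $\U$ by wide-closedness) together with the induced surjection $T\twoheadrightarrow W$, check that this gives a $(G\times H)$-equivariant bijection onto $\tB(\uW(G,H))(T)$, and pass to quotients; your remark that the stabiliser of $W$ in $G\times H$ surjects onto $\Inn(W)$ is exactly the ``outer'' condition the paper needs (implicitly) to form the permuted family.
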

\begin{proof}
 Consider another object $T\in\U$ and a pair
 $(\alpha,\beta)\in\Epi(T,G)\times\Epi(T,H)$.  This
 gives a wide subgroup $U=\ip{\alpha,\beta}(T)\leq G\times H$, which
 lies in $\U$ because $\U$ is assumed to be widely closed.   We
 can regard $\ip{\alpha,\beta}$ as a surjective homomorphism from 
 $T$ to $U$, so we have an element
 $\phi(\alpha,\beta)=(U,\ip{\alpha,\beta})\in\tB(\uW(G,H))(T)$.
 This is easily seen to give a $(G\times H)$-equivariant natural
 bijection
 \[
  \phi\colon \Epi(T,G)\times\Epi(T,H)\to\tB(\uW(G,H))(T).
 \]
 It follows easily that we get an induced bijection
 $\U(T,G)\times\U(T,H)\to B(\uW(G,H))(T)$ and an isomorphism
 $e_G\otimes e_H\to F(\uW(G,H))$ as required.
\end{proof}

\begin{remark}\label{rem-wide-abelian}
 If $G$ and $H$ are abelian, then $G \times H$ acts trivially on
 $\uW$ and so $e_G \otimes e_H=\bigoplus_{U\in\Wide(G,H)} e_U$.
\end{remark}

\begin{remark}\label{rem-tensor-gens-not-free}
 It is not true that $e_G\otimes e_H$ is always a direct sum of
 objects of the form $e_K$.  In particular, this fails when $G=H=D_8$.
 To see this, let $N$ be the subgroup of $G$ isomorphic to $C_4$, and
 put $W=\{(g,h)\in G\times H \mid gN=hN\}$. This is wide, and has
 index $2$ in $G\times H$, so it is normal in $G\times H$. 
 The group $G\times H$ acts by conjugation of the set $\Wide(G,H)$ 
 and the stabilizer of the conjugacy class of $W$ is 
 the quotient $Q=(G\times H)/W$. 
 Then the summand in the tensor product $e_G \otimes e_H$ 
 corresponding to the conjugacy class of $W$ is given by $e_W^Q$ 
 which is not of the form $e_K$.
\end{remark}

\begin{definition}\label{def-vhom}
 A \emph{virtual homomorphism} from $G$ to $H$ is a pair
 $\alpha=(A,A')$ where $A'\lhd A\leq G\times H$ and $A$ is wide and
 $A'\cap(1\times H)=1$ and $A/A'\in\U$.  We write $\VHom(G,H)$ for the
 set of virtual homomorphisms.  We then let $\uQ(G,H)$ denote the collection of groups 
 $Q_\alpha=A/A'$ indexed by all
 $\alpha=(A,A')\in\VHom(G,H)$.  We call $Q_\alpha$ the \emph{spread}
 of $\alpha$.  Note that $G\times H$ acts compatibly on $\VHom(G,H)$
 and $Q_\alpha$ by conjugation.  We use this to regard $\uQ(G,H)$
 as a permuted family, and thus to define a finitely projective object
 $F(\uQ(G,H))\in\A\U$.
\end{definition}

\begin{example}\label{ex-unspread}
 Suppose that $\U$ contains the trivial group. 
 For any surjective homomorphism $u\colon G\to H$, we can define
 \[ A = A' = \grph(u) = \{(g,u(g))\mid g\in G\}. \]
 This gives a virtual homomorphism with trivial spread.
 We claim that every virtual homomorphism with trivial spread arises in this way 
 from a unique homomorphism. Indeed, let $\alpha=(A,A)$ be any such virtual 
 homomorphism and consider the projection map 
 $A \leq G \times H \to G$. The condition  $A \cap (1 \times H)=1$ ensures 
 that every element $g \in G$ has a unique preimage $(g, u(g)) \in A$ under the 
 projection. It is easy to check that the assignment $u \colon G \to H$ defines 
 a surjective group homomorphism, and by construction $A =\grph(u)$.
\end{example}

\begin{example}\label{ex-vhom-from-one}
 Consider a virtual homomorphism $\alpha=(A,A')\in\VHom(1,G)$.  The group $A$
 must be wide in $1\times G$, which just means that $A=1\times G$.  The
 group $A'\leq 1\times G$ must satisfy $A'\cap(1\times G)=1$, which means
 that $A'=1$.  Thus, there is a unique virtual homomorphism
 $\alpha=(1\times G,1)$, whose spread is $G$.  
\end{example}

\begin{example}\label{ex-vhom-to-one}
 Consider a virtual homomorphism $\alpha=(A,A')\in\VHom(G,1)$.  
 We find that
 $A$ must be equal to $G\times 1$ (which we identify with $G$) and $A'$
 can be any normal subgroup of $G$ such that $G/A' \in \U$.
\end{example}

\begin{theorem}\label{thm-hom-fg-projective}
 Let $\U$ be a multiplicative global family of finite groups. Fix
 groups $G,H \in \U$ and let $\uQ(G,H)$ be the permuted
 family of virtual homomorphisms from $G$ to $H$. Then
 $\uHom(e_G,e_H)$ is isomorphic to $F(\uQ(G,H))$
 (and so is a finitely generated projective object of $\A\U)$.
\end{theorem}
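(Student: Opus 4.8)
The plan is to identify $\uHom(e_G,e_H)$ with $F(\uQ(G,H))$ by comparing the functors they represent (via the Yoneda lemma) and the universal property of the internal hom. First I would fix $T\in\U$ and unwind the definition: $\uHom(e_G,e_H)(T)=\A\U(e_T\otimes e_G,e_H)$. Using Proposition~\ref{prop-wide} (applicable since a multiplicative global family is widely closed by Remark~\ref{rem-subcat-implications}), we have $e_T\otimes e_G\cong F(\uW(T,G))$, and hence, by Proposition~\ref{prop-permuted-family}, $\A\U(e_T\otimes e_G,e_H)=\A\U(F(\uW(T,G)),e_H)=\left(\prod_{U\in\Wide(T,G)}e_H(U)\right)^{T\times G}$. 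Since $e_H(U)=k[\U(U,H)]$, the value $\uHom(e_G,e_H)(T)$ is the $(T\times G)$-invariants of the free module on the set of pairs $(U,\rho)$ with $U\in\Wide(T,G)$ and $\rho\in\U(U,H)$, i.e. $\rho$ a conjugacy class of surjections $U\twoheadrightarrow H$.

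The heart of the argument is a natural bijection between (a suitable model for) this invariant set and $B(\uQ(G,H))(T)=\tB(\uQ(G,H))(T)/(G\times H)$, where $\tB(\uQ(G,H))(T)=\{(\alpha,\sigma)\mid\alpha=(A,A')\in\VHom(G,H),\ \sigma\in\Epi(T,Q_\alpha)\}$. The idea is: given a wide $U\leq T\times G$ together with a surjection $U\to H$ (picked out to a representative), form the combined subgroup of $T\times G\times H$; projecting to $G\times H$ and recording the kernel of the projection to $T$ should produce a virtual homomorphism $(A,A')$, where $A'\cap(1\times H)=1$ encodes exactly that the map to $T$ was injective on the relevant piece, while the residual map $T\to A/A'=Q_\alpha$ is the surjection $\sigma$. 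Conversely, from $(\alpha,\sigma)$ one builds a wide subgroup of $T\times G$ as the image of the evident map $T\to Q_\alpha\to$ (pullback with $G$), equipped with its induced map to $H$. I would check this is $(T\times G\times H)$-equivariant, so that passing to orbits on both sides (the $T\times G$-action on the $\uW(T,G)$ side encoded in $F$, the $G\times H$-action defining $B(\uQ)$) yields the claimed bijection. Naturality in $T$ is then a diagram chase, and because both sides are finite sets one gets an isomorphism $\uHom(e_G,e_H)(T)\cong F(\uQ(G,H))(T)$ of $k$-modules, natural in $T$, hence an isomorphism in $\A\U$.

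Finally, $F(\uQ(G,H))$ is finitely projective by Definition~\ref{def-wide-permuted-family} / Proposition~\ref{prop-permuted-family} (it is a finite direct sum of retracts $e_{Q_\alpha}^{\stab}$ of generators), provided $\VHom(G,H)$ has only finitely many $(G\times H)$-orbits and each spread $Q_\alpha$ lies in $\U$; the former holds since $G\times H$ is finite and there are finitely many subgroups, and the latter is built into the definition of $\VHom$. I expect the main obstacle to be setting up the bijection $(U,\rho)\leftrightarrow(\alpha,\sigma)$ cleanly while tracking all three group actions simultaneously — in particular verifying that the condition $A'\cap(1\times H)=1$ in the definition of a virtual homomorphism corresponds precisely to well-definedness (injectivity on $T$) of the data on the $\Wide$ side, and that the $\stab$-groups match so that the direct-sum decompositions on the two sides agree summand by summand. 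The bookkeeping is reminiscent of, but more intricate than, the proof of Proposition~\ref{prop-wide}, since here one is composing a "wide subgroup" datum with an extra surjection and must quotient by an extra group.
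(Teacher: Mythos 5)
Your setup is correct and matches the paper's: the identification $\uHom(e_G,e_H)(T)=\A\U(e_T\otimes e_G,e_H)$, the decomposition of $e_T\otimes e_G$ over wide subgroups via Proposition~\ref{prop-wide}, and the resulting description of $\uHom(e_G,e_H)(T)$ in terms of pairs $(W,\lambda)$ with $W\leq T\times G$ wide and $\lambda\in\Epi(W,H)$, together with the combinatorial correspondence between such pairs and pairs $(\alpha,\sigma)$ with $\alpha=(A,A')\in\VHom(G,H)$ and $\sigma\in\Epi(T,Q_\alpha)$. But the step ``naturality in $T$ is then a diagram chase'' is a genuine gap, and it is exactly where the theorem becomes hard. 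The levelwise bijection is equivariant and natural for \emph{isomorphisms} $T'\to T$, but it is not natural for a general surjection $\varphi\colon T'\to T$. On the $\uQ(G,H)$ side, $\varphi^*$ sends the basis element $(A,A',\theta)$ to the single basis element $(A,A',\theta\varphi)$. On the $\uHom$ side, precomposing a morphism $e_T\otimes e_G\to e_H$ with $e_\varphi\otimes 1$ spreads the basis element $(W,\lambda)$ over \emph{all} wide subgroups $W'\leq T'\times G$ with $(\varphi\times 1)(W')=W$, each contributing $\lambda\circ(\varphi\times 1)|_{W'}$ --- not just the single preimage $(\varphi\times 1)^{-1}(W)$. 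So the two functorial structures disagree under your bijection, and the diagram chase fails.

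The paper's proof exists precisely to repair this. It introduces the integer $\sigma(W,\lambda)=|T|/|K(W,\lambda)|$ and shows (Lemma~\ref{lem-sigma-bound}) that among the terms $(W',\lambda')$ appearing in $\varphi^*(W,\lambda)$, only the distinguished one with $W'=(\varphi\times 1)^{-1}(W)$ preserves $\sigma$, while all others strictly increase it. Hence the $\sigma$-filtration of $\uHom(e_G,e_H)$ is a filtration by subobjects whose associated graded \emph{is} naturally isomorphic to $F(\uQ(G,H))$ (Proposition~\ref{prop-uHom-greded}); since that associated graded is finitely projective, the finite filtration splits and the theorem follows --- but only up to a splitting that need not be natural in $G$ and $H$ (Remark~\ref{rem-natural-splitting}). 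Without this filtration-and-splitting step, or some equivalent device, your argument does not close.
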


The general structure of the proof is as follows.  We will fix $G$ and
$H$, and define finite sets $\L(T)$, $\M(T)$ and $\N(T)$ depending
on a third object $T\in\U$.  All of these will have actions of
$G\times H$ by conjugation, and we will construct equivariant
bijections between them.  We will also construct isomorphisms
$\uHom(e_G,e_H)(T)\simeq k[\N(T)]^{G\times H}$ and 
$F(\uQ(G,H))(T)\simeq k[\M(T)]^{G\times H}$.  All of this is
natural with respect to isomorphisms $T'\to T$, but unfortunately not
with respect to arbitrary morphisms $T'\to T$ in $\U$.  However, we
will introduce filtrations of all the relevant objects and show that 
the failure of naturality involves terms that shift filtration.  It
will follow that the associated graded object for
$\uHom(e_G,e_H)$ is isomorphic to $F(\uQ(G,H))$.
As this object is projective, we see that the filtration splits, so 
$\uHom(e_G,e_H)$ itself is isomorphic to
$F(\uQ(G,H))$, as claimed.

\begin{definition}\label{def-LMN}
 Fix groups $G,H\in\U$.  Let $T$ be another group in $\U$. 
 \begin{itemize}
  \item[(a)] We define $\L(T)$ to be the set of wide subgroups
   $V\leq T\times G\times H$ such that $V\cap H=1$.  (Here we identify
   $H$ with the subgroup $1\times 1\times H\leq T\times G\times H$,
   and we will make similar identifications in various places below.)
  \item[(b)] We define $\M(T)$ to be the set of triples
   $(A,A',\theta)$ where $(A,A')\in\VHom(G,H)$ and
   $\theta\in\Epi(T,A/A')$.
  \item[(c)] We define $\N(T)$ to be the set of pairs $(W,\lambda)$
   where $W$ is a wide subgroup of $T\times G$, and
   $\lambda\in\Epi(W,H)$. 
 \end{itemize}
 All of these sets have evident actions of $G\times H$ by conjugation.
\end{definition}

\begin{definition}\label{def-LMN-functor}
 Given a surjective homomorphism $\varphi\colon T'\to T$, we define
 maps $\varphi^*\colon \L(T)\to \L(T')$, and similarly for $\M$ and
 $\N$, as follows:
 \begin{itemize}
  \item[(a)] $\varphi^*(V)=(\varphi\times 1\times 1)^{-1}(V)=
   \{(t',g,h)\in T'\times G\times H\mid (\varphi(t'),g,h)\in V\}$
  \item[(b)] $\varphi^*(A,A',\theta)=(A,A',\theta\varphi)$
  \item[(c)] $\varphi^*(W,\lambda)=((\varphi\times
   1)^{-1}(W),\lambda\circ(\varphi\times 1))$.
 \end{itemize}
 These constructions are clearly functorial.
\end{definition}

\begin{construction}\label{cons-L(T)-to-M(T)}
 We define a bijection $\mu\colon\L(T)\to\M(T)$ as follows.  Given
 $V\in\L(T)$ we put $A=\pi_{G\times H}(V)\leq G\times H$ and
 $A'=\{(g,h)\in G\times H\mid (1,g,h)\in V\}$.  As $V$ is wide in
 $T\times G\times H$, we see that $A$ is wide in $G\times H$.  As
 $V\cap H=1$, we see that $A'\cap H=1$.  This means that the pair
 $(A,A')$ is an element of $\VHom(G,H)$.  Next, for $t\in T$ we put
 \[ \theta(t)=\{(g,h)\in G\times H\mid (t,g,h)\in V\}. \]
 This is a coset of $A'$ in $A$, or in other word an element of
 $A/A'$.  It is not hard to check that this gives a homomorphism
 $\theta\colon T\to A/A'$.  From the definition of $A$ we see that
 $\theta$ is surjective.  We have thus defined an element
 $\mu(V)=(A,A',\theta)\in\M(T)$.  

 In the opposite direction, suppose we start with an element
 $(A,A',\theta)\in\M(T)$.  We can then define 
 \[ V = \{(t,g,h)\in T\times A \mid \theta(t) = (g,h).A'\}. \]
 It is clear that $\pi_T(V)=T$ and $\pi_{G\times H}(V)=A$.  As $A$ is
 wide in $G\times H$, it follows that $V$ is wide in $T\times G\times H$.
 Now suppose that $(1,1,h)\in V$, so $(1,h)\in A$ and the coset
 $(1,h).A'$ is the same as $\theta(1)$, or in other words
 $(1,h)\in A'$.  It then follows from the definition of a virtual
 homomorphism that $h=1$.  This proves that $V\in\L(T)$.  It is easy
 to check that this construction gives a map $\M(T)\to\L(T)$ that is
 inverse to $\mu$.  It is also straightforward to check that these
 bijections are natural with respect to the functoriality in
 Definition~\ref{def-LMN-functor}.
\end{construction}

\begin{construction}\label{cons-L(T)-to-N(T)}
 We define a bijection $\nu\colon\L(T)\to\N(T)$ as follows.  Given
 $V\in\L(T)$ we define $W=\pi_{T\times G}(V)\leq T\times G$.  As
 $V\in\L(T)$ we have $V\cap H=1$, which means that the projection
 $\pi_{T\times G}\colon V\to W$ is an isomorphism.  We define
 $\lambda$ to be the composite 
 \[ W \xrightarrow{\pi_{T\times G}^{-1}} V \xrightarrow{\pi_H} H. \]
 As $V$ is wide in $T\times G\times H$, we see that $\lambda$ is
 surjective, so we have an element $\nu(V)=(W,\lambda)\in\N(T)$.

 In the opposite direction, suppose we start with an element
 $(W,\lambda)\in\N(T)$.  We then put 
 \[ V = \{(t,g,h)\in W\times H\mid \lambda(t,g)=h\}. \]
 As $W$ is wide in $T\times G$ and $\lambda\colon W\to T$ is
 surjective, we see that $V$ is wide in $T\times G\times H$.  If
 $(1,1,h)\in V$ then we must have $h=\lambda(1,1)=1$.  This proves
 that $V\in\L(T)$.  It is easy to check that this construction gives a
 map $\N(T)\to\L(T)$ that is inverse to $\nu$.  It is also
 easy to check that these bijections are natural with
 respect to the functoriality in Definition~\ref{def-LMN-functor}.
\end{construction}

\begin{remark}
 It is straightforward to identify $\tB(\uQ(G,H))(T)$ with
 $\M(T)$, and so to identify $F(\uQ(G,H))(T)$ with
 $k[\M(T)]^{G\times H}$.
\end{remark}

\begin{definition}\label{def-LMN-sigma}
 For each element $x$ in $\L(T)$, $\M(T)$ or $\N(T)$ we define a
 positive integer $\sigma(x)$ as follows.
 \begin{itemize}
  \item[(a)] For $V\in\L(T)$ we put 
   \[ V^\# = \{(t,g,h)\in V\mid (t,1,1),\;(1,g,h)\in V\}, \]
   and $\sigma(V)=|V|/|V^\#|$.
  \item[(b)] For $(A,A',\theta)\in\M(T)$ we put
   $\sigma(A,A',\theta)=|A/A'|$. 
  \item[(c)] For $(W,\lambda)\in\N(T)$ we put 
   \[ K(W,\lambda) =
       \{t\in T\mid (t,1)\in W \text{ and } \lambda(t,1)=1\}
   \]
   and $\sigma(W,\lambda)=|T|/|K(W,\lambda)|$.
 \end{itemize}
 We then put 
 \begin{align*}
  F^n\L(T) &= \{x\in \L(T)\mid\sigma(x)\geq n\} \subseteq \L(T) \\
  F^nk[\L(T)] &= k[F^n\L(T)] \leq k[\L(T)] \\
  Q^nk[\L(T)] &= F^nk[\L(T)]/F^{n+1}k[\L(T)].
 \end{align*}
\end{definition}

\begin{remark}\label{rem-sigma-max}
 For $(A,A',\theta)\in\M(T)$ it is clear that
 $\sigma(A,A',\theta)\leq|G||H|$.  It follows that
 $\sigma(x)\leq|G||H|$ for $x\in\L(T)$ or $x\in\N(T)$ as well.
\end{remark}

\begin{lemma}\label{lem-LMN-sigma}
 Suppose that the elements $V\in\L(T)$ and $(A,A',\theta)\in\M(T)$ and
 $(W,\lambda)\in\N(T)$ are related as in
 Constructions~\ref{cons-L(T)-to-M(T)} and~\ref{cons-L(T)-to-N(T)}.
 Then $\sigma(V)=\sigma(A,A',\theta)=\sigma(W,\lambda)$.  Thus, those
 constructions give bijections $F_n\L(T)\simeq F_n\M(T)\simeq F_n\N(T)$.
\end{lemma}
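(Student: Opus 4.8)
The plan is to verify the three equalities $\sigma(V)=\sigma(A,A',\theta)=\sigma(W,\lambda)$ directly from the explicit bijections, since the final sentence about $F_n$ then follows immediately from the definitions (the $F_n$ pieces are exactly the preimages of $\{x\mid\sigma(x)\geq n\}$, and a bijection restricts to a bijection on any subset cut out by an invariant). So the real content is the identity of the three integers.

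First I would prove $\sigma(A,A',\theta)=\sigma(W,\lambda)$, which should be the cleanest: from Construction~\ref{cons-L(T)-to-N(T)} composed with the inverse of Construction~\ref{cons-L(T)-to-M(T)} one gets an explicit recipe taking $(A,A',\theta)$ to $(W,\lambda)$. Tracking through, $W=\pi_{T\times G}(V)$ where $V=\{(t,g,h)\in T\times A\mid \theta(t)=(g,h).A'\}$, and $\lambda(t,g)$ is the unique $h$ with $(t,g,h)\in V$. The set $K(W,\lambda)=\{t\in T\mid (t,1)\in W,\ \lambda(t,1)=1\}$ then unwinds to $\{t\mid (t,1,1)\in V\}=\{t\mid \theta(t)=(1,1).A'\in A/A'\}=\ker(\theta)$. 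Hence $|T|/|K(W,\lambda)|=|T|/|\ker\theta|=|\mathrm{im}\,\theta|=|A/A'|$, using surjectivity of $\theta$; this is exactly $\sigma(A,A',\theta)$.

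Next I would handle $\sigma(V)=\sigma(A,A',\theta)$. Recall $A=\pi_{G\times H}(V)$, $A'=\{(g,h)\mid(1,g,h)\in V\}$, and $\theta(t)$ is the coset $\{(g,h)\mid(t,g,h)\in V\}$ of $A'$. The subgroup $V^\#=\{(t,g,h)\in V\mid(t,1,1)\in V,\ (1,g,h)\in V\}$ is the kernel of the composite $V\twoheadrightarrow T\xrightarrow{\theta}A/A'$: indeed $(t,g,h)\in V$ maps to $\theta(t)$, which is trivial iff $(g,h)\in A'$ iff $(1,g,h)\in V$, and given that, $(t,1,1)=(t,g,h)(1,g,h)^{-1}\in V$ automatically. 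Since $\theta$ is surjective this composite is surjective onto $A/A'$, so $|V|/|V^\#|=|A/A'|=\sigma(A,A',\theta)$. This closes the triangle of equalities.

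The main obstacle, and the step I would be most careful about, is the bookkeeping in the second paragraph: one must confirm that $\pi_{T\times G}$ really is an isomorphism $V\to W$ (it is, because $V\cap H=1$ forces injectivity, and wideness of $V$ gives the relevant surjectivities), so that $\lambda$ is well-defined as claimed, and that under this identification $(t,1)\in W$ with $\lambda(t,1)=1$ corresponds precisely to $(t,1,1)\in V$. Once the identifications of $V^\#$, $K(W,\lambda)$ and $\ker\theta$ are pinned down, everything reduces to the first isomorphism theorem applied to the surjection $\theta$ (or its pullback to $V$). Since all three quantities are independent of the choices made and the bijections of Constructions~\ref{cons-L(T)-to-M(T)} and~\ref{cons-L(T)-to-N(T)} are $G\times H$-equivariant, the restricted bijections $F_n\L(T)\simeq F_n\M(T)\simeq F_n\N(T)$ are immediate, and I would state that as the one-line conclusion.
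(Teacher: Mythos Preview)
Your argument is correct and is essentially the same as the paper's: both proofs identify $V^\#$ as a kernel and use the first isomorphism theorem, with the only difference being organisational---the paper keeps $V$ as the hub and shows that $\pi_{G\times H}^{-1}(A')=V^\#=(\pi_T)^{-1}(K(W,\lambda))$ inside $V$, whereas you first compare $K(W,\lambda)$ with $\ker\theta$ directly. Your observation that the condition $(t,1,1)\in V$ in the definition of $V^\#$ is forced by the other two is exactly the ``this argument is reversible'' step in the paper's proof.
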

\begin{proof}
 As in Construction~\ref{cons-L(T)-to-M(T)}, we have a surjective
 projection $\pi\colon V\to A$, and it follows that
 $|A/A'|=|V|/|\pi^{-1}(A')|$.  Moreover, we have
 $A'=\{(g,h)\mid(1,g,h)\in V\}$, and it follows easily that
 $\pi^{-1}(A')=V^\#$; this makes it clear that
 $\sigma(V)=\sigma(A,A',\theta)$.  On the other hand, we also have a
 surjective projection $\pi'\colon V\to T$, and it follows that 
 $|T|/|K(W,\lambda)|=|V|/|(\pi')^{-1}(K(W,\lambda))|$.  Suppose we
 have $(t,g,h)\in V$ with $t\in K(W,\lambda)$.  It then follows that
 $(t,1,1)\in V$, and thus that the product
 $(t,g,h).(t,1,1)^{-1}=(1,g,h)$ also lies in $V$, so 
 $(t,g,h)\in V^\#$.  This argument is reversible so we find that 
 $(\pi')^{-1}(K(W,\lambda))=V^\#$ and $\sigma(V)=\sigma(W,\lambda)$.
\end{proof}

We now want to define an isomorphism 
\[ \zeta\colon k[\N(T)]^{G\times H} \to 
     \uHom(e_G,e_H)(T) = \A\U(e_T\otimes e_G,e_H).
\]
One approach would be to split $e_T\otimes e_G$ as a sum over
conjugacy classes of wide subgroups, but that involves choices which
are awkward to control.  We will therefore define $\zeta$ in a
different way, and then use the splitting of $e_T\otimes e_G$ to
verify that it is an isomorphism.

\begin{construction}\label{cons-N(T)-to-uHom}
 Fix an element $(W,\lambda)\in\N(T)$.  Now consider an object
 $P\in\U$ and a pair of surjective homomorphisms $\alpha\colon P\to T$
 and $\beta\colon P\to G$, giving an element
 $[\alpha]\otimes[\beta]\in(e_T\otimes e_G)(P)$ and a wide subgroup
 $\langle\alpha,\beta\rangle(P)\leq T\times G$.  If there exists an
 element $(t,g)\in T\times G$ such that
 $c_{(t,g)}(\langle\alpha,\beta\rangle(P))=W$, then we can form
 the composite
 \[ P \xrightarrow{\langle c_t\alpha,c_g\beta\rangle} W 
      \xrightarrow{\lambda} H.
 \]
 This is a surjective homomorphism.  Its conjugacy class depends only
 on the conjugacy classes of $\alpha$ and $\beta$, and not on the
 choice of $(t,g)$.  Moreover, everything that we have done is natural
 for morphisms $P'\to P$ in $\U$.  We can thus define an element
 $\zeta_0(W,\lambda)\in\A\U(e_T\otimes e_G,e_H)$ by
 $\zeta_0(W,\lambda)([\alpha]\otimes[\beta])=
  \lambda\circ\langle c_t\alpha,c_g\beta\rangle$ in the case
 discussed above, and $\zeta_0(W,\lambda)([\alpha]\otimes[\beta])=0$
 in the case where $\langle\alpha,\beta\rangle(P)$ is not conjugate to
 $W$.  It is easy to see that if $(W_0,\lambda_0)$ and
 $(W_1,\lambda_1)$ lie in the same $(G\times H)$-orbit of $\N(T)$,
 then $\zeta_0(W_0,\lambda_0)=\zeta_0(W_1,\lambda_1)$.  We now extend
 linearly to get a map $k[\N(T)]\to\uHom(e_G,e_H)(T)$, and restrict to
 get a map $\zeta\colon k[\N(T)]^{G\times H}\to\uHom(e_G,e_H)(T)$.

 We can now choose a list of wide subgroups
 $W_1,\dotsc,W_r\leq T\times G$ containing precisely one
 representative of each conjugacy class, and let $\Delta_i$ be the
 normaliser of $W_i$ in $T\times G$.  We have seen that this gives a
 decomposition $e_T\otimes e_G=\bigoplus_ie_{W_i}^{\Delta_i}$, and
 thus an isomorphism 
 \[ (e_T\otimes e_G)(H) = \bigoplus_k k[\Epi(W_i,H)/\Delta_i]. \]
 (Note here that $\Delta_i\geq W_i$ so the conjugation action of
 $\Delta_i$ on $\Epi(W_i,H)$ encompasses the action of inner
 automorphisms.)  From this it is not hard to see that $\zeta$ is an
 isomorphism. 
\end{construction}

\begin{definition}
 We put $F^n\uHom(e_G,e_H)(T)=\zeta(F^nk[\N(T)])$, and 
 \[ Q^n\uHom(e_G,e_H)(T) =
    \frac{F^n\uHom(e_G,e_H)(T)}{F^{n+1}\uHom(e_G,e_H)(T)}.
 \]
\end{definition}

Now consider a surjective homomorphism $\varphi\colon T'\to T$.  This
gives a map $\varphi^*\colon\M(T)\to\M(T')$ given by
$\varphi^*(A,A',\theta)=(A,A',\theta\varphi)$, and this is
straightforwardly compatible with our identification
$F(\uQ(G,H))(T)\simeq k[\M(T)]^{G\times H}$.  However, the situation
with $\N(T)$ and $\uHom(e_G,e_H)(T)$ is more complicated.

\begin{definition}
 Consider an element $(W,\lambda)\in\N(T)$, and a surjective
 homomorphism $\varphi\colon T'\to T$.  Let $E(\varphi,(W,\lambda))$
 be the set of pairs $(W',\lambda')\in\N(T')$ such that
 $(\varphi\times 1)(W')=W$ and $\lambda'$ is the same as the composite 
 \[ W' \xrightarrow{\varphi\times 1} W \xrightarrow{} H. \]
 It is easy to see that the element $\varphi^*(W,\lambda)=
 ((\varphi\times 1)^{-1}(W),\lambda\circ(\varphi\times 1))$
 is an element of $E(\varphi,(W,\lambda))$.
\end{definition}

\begin{lemma}\label{lem-sigma-bound}
 Suppose that $(W',\lambda')\in E(\varphi,(W,\lambda))$.  Then
 $\varphi$ restricts to give a surjective homomorphism
 $K(W',\lambda')\to K(W,\lambda)$.  It follows that
 $\sigma(W',\lambda')\geq\sigma(W,\lambda)$, with equality iff
 $(W',\lambda')=\varphi^*(W,\lambda)$. 
\end{lemma}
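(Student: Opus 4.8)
The plan is to work directly with the restriction of $\varphi$ to $K(W',\lambda')$ and then turn everything into a comparison of group orders. First I would verify that $\varphi$ carries $K(W',\lambda')$ into $K(W,\lambda)$: if $t'\in K(W',\lambda')$ then $(t',1)\in W'$, so $(\varphi(t'),1)=(\varphi\times 1)(t',1)\in(\varphi\times 1)(W')=W$, and by the defining property of $E(\varphi,(W,\lambda))$ we get $\lambda(\varphi(t'),1)=\lambda'(t',1)=1$, whence $\varphi(t')\in K(W,\lambda)$. For surjectivity of the resulting homomorphism $K(W',\lambda')\to K(W,\lambda)$, I would take $t\in K(W,\lambda)$; since $(\varphi\times 1)(W')=W\ni(t,1)$ there is $(t',g)\in W'$ with $(\varphi(t'),g)=(t,1)$, and reading off the second coordinate forces $g=1$, so $(t',1)\in W'$ with $\varphi(t')=t$; finally $\lambda'(t',1)=\lambda(t,1)=1$, so $t'\in K(W',\lambda')$.

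Next, writing $m$ for the order of $\ker\varphi\cap K(W',\lambda')$ (the kernel of the surjection just constructed) and using $|T'|=|T|\cdot|\ker\varphi|$ together with $|K(W',\lambda')|=m\cdot|K(W,\lambda)|$, I obtain
\[ \sigma(W',\lambda')=\frac{|T'|}{|K(W',\lambda')|}=\frac{|\ker\varphi|}{m}\cdot\sigma(W,\lambda), \]
and the inequality $\sigma(W',\lambda')\geq\sigma(W,\lambda)$ follows because $m$ divides $|\ker\varphi|$. For the equality clause I would set $W''=(\varphi\times 1)^{-1}(W)$, so that $\varphi^*(W,\lambda)=(W'',\lambda\circ(\varphi\times 1)|_{W''})$; since $(\varphi\times 1)(W')=W$ we have $W'\leq W''$, and restricting $\lambda\circ(\varphi\times 1)$ along this inclusion returns $\lambda'$. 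Hence $(W',\lambda')=\varphi^*(W,\lambda)$ holds if and only if $W'=W''$, and it remains to show that this is equivalent to $\sigma(W',\lambda')=\sigma(W,\lambda)$.

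To do that I would compute $[W'':W']$ by comparing the two surjections onto $W$ induced by $\varphi\times 1$: on $W''$ the kernel is exactly $\ker\varphi\times 1$, of order $|\ker\varphi|$, while on $W'$ the kernel is $\{(t',1)\in W'\mid t'\in\ker\varphi\}$. The crucial observation is that any element $(t',1)$ of $W'$ with $t'\in\ker\varphi$ automatically satisfies $\lambda'(t',1)=\lambda(\varphi(t'),1)=\lambda(1,1)=1$, so this kernel equals $(\ker\varphi\cap K(W',\lambda'))\times 1$, of order $m$. Therefore $[W'':W']=|\ker\varphi|/m=\sigma(W',\lambda')/\sigma(W,\lambda)$, and $\sigma(W',\lambda')=\sigma(W,\lambda)$ precisely when $W'=W''$, i.e. when $(W',\lambda')=\varphi^*(W,\lambda)$. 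I expect this last identification to be the one subtle point: everything else is routine order-counting, but the equality case hinges on recognising that an element of $W'$ lying over the identity of $T\times G$ is forced into $K(W',\lambda')$, which is what makes the two kernels directly comparable and pins the equality case exactly to $W'=W''$.
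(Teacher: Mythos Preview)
Your proof is correct and follows essentially the same route as the paper: the restriction and surjectivity of $\varphi$ on $K(W',\lambda')\to K(W,\lambda)$ are argued identically, and the equality case hinges on the same key observation that any $(t',1)\in W'$ with $t'\in\ker\varphi$ automatically lies in $K(W',\lambda')$. The only cosmetic difference is that you package the equality clause as an index computation $[W'':W']=|\ker\varphi|/m$, whereas the paper reaches the same conclusion by first showing equality is equivalent to $\ker(\varphi)\times 1\leq W'$ and then checking by an element-chase that this forces $W'=W''$; your formulation is slightly slicker but not substantively different.
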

\begin{proof}
 Suppose that $t'\in K(W',\lambda')$, so that $(t',1)\in W'$ and
 $\lambda'(t',1)=1$.  As $(\varphi\times 1)(W')=W$, we see that
 $(\varphi(t'),1)\in W$.  As $\lambda\circ(\varphi\times 1)=\lambda'$,
 we see that $\lambda(t,1)=1$.  This shows that $t\in K(W,\lambda)$.

 Conversely, suppose that $t\in K(W,\lambda)$.  This means that
 $(t,1)\in W=(\varphi\times 1)(W')$, so there exists $(t',g)\in W'$
 with $(\varphi(t'),g)=(t,1)$.  In other words, there exists
 $t'\in T'$ such that $(t',1)\in W'$ and $\varphi(t')=t$.  Using the
 relation $\lambda\circ(\varphi\times 1)=\lambda'$ again, we also see
 that $\lambda'(t',1)=1$, so $t'\in K(W',\lambda')$.

 We now see that 
 \[ |K(W',\lambda')| =
      |K(W,\lambda)||\ker(\varphi)\cap K(W',\lambda')|
    \leq |K(W,\lambda)|.\frac{|T'|}{|T|}.
 \]
 Rearranging this gives 
 \[ \sigma(W',\lambda') = \frac{|T'|}{|K(W',\lambda')|} 
     \geq \frac{|T|}{|K(W,\lambda)|} = \sigma(W,\lambda).
 \]
 We have equality iff $\ker(\varphi)\leq K(W',\lambda')$.  Because
 $\lambda'$ factors through $\varphi\times 1$, we see that the second
 condition in the definition of $K(W',\lambda')$ is automatic, so we
 have equality iff $\ker(\varphi)\times 1\leq W'$.  This clearly holds
 if $W'=(\varphi\times 1)^{-1}(W)$.  

 Conversely, suppose that $\ker(\varphi)\times 1\leq W'$.  We are
 given that $(\varphi\times 1)(W')=W$, so
 $W'\leq(\varphi\times 1)^{-1}(W)$.  In the other direction, suppose
 that $(t',g)\in(\varphi\times 1)^{-1}(W)$, so
 $(\varphi(t'),g)\in W$.  As $W=(\varphi\times 1)(W')$, we can choose
 $(t'_0,g_0)\in W'$ with $(\varphi\times 1)(t'_0,g_0)=(t',g)$.  In
 other words, we can find $t'_0\in T'$ such that
 $\varphi(t'_0)=\varphi(t')$ and $(t'_0 ,g)\in W'$.  We now have
 $t'=t'_0t'_1$ for some $t'_1\in\ker(\varphi)$, so $(t'_1,1)\in W'$ by
 assumption.  It follows that the product $(t',g)=(t'_0,g)(t'_1,1)$
 also lies in $W'$ as required.
\end{proof}

\begin{proposition}\label{prop-uHom-greded}
 The subspaces $F^n\uHom(e_G,e_H)(T)$ form a subobject of
 $\uHom(e_G,e_H)$ in $\A\U$, so the quotient $Q^n\uHom(e_G,e_H)$ can
 also be regarded as an object of $\A\U$.  Moreover, the sum
 $Q^*\uHom(e_G,e_H)=\bigoplus_nQ^n\uHom(e_G,e_H)$ is naturally
 isomorphic to $F(\uQ(G,H))$.
\end{proposition}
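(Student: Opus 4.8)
The plan is to verify the two assertions separately: first that the filtration $F^n\uHom(e_G,e_H)$ is a subobject (i.e.\ compatible with the structure maps $\varphi^*$ for surjections $\varphi\colon T'\to T$), and then that the associated graded is $F(\uQ(G,H))$. For the first point, fix a surjection $\varphi\colon T'\to T$ and an element $(W,\lambda)\in\N(T)$ with $\sigma(W,\lambda)\geq n$. I would trace through the definition of $\zeta$ and the action of $\varphi^*$ on $\uHom(e_G,e_H)(T)$: evaluating $\varphi^*\zeta_0(W,\lambda)$ on a pair $[\alpha]\otimes[\beta]\in(e_T\otimes e_G)(P)$ amounts to precomposing with $\varphi$ in the first coordinate, and the resulting wide subgroup of $T'\times G$ generated by $\langle\varphi\alpha,\beta\rangle$ is (up to conjugacy) one of the subgroups $W'$ appearing in $E(\varphi,(W,\lambda))$. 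Hence $\varphi^*\zeta_0(W,\lambda)$ is a sum of the basis elements $\zeta_0(W',\lambda')$ over $(W',\lambda')\in E(\varphi,(W,\lambda))$ (with suitable multiplicities from the conjugation bookkeeping). By Lemma~\ref{lem-sigma-bound}, every such $(W',\lambda')$ satisfies $\sigma(W',\lambda')\geq\sigma(W,\lambda)\geq n$, so $\varphi^*\zeta_0(W,\lambda)\in F^n\uHom(e_G,e_H)(T')$. Extending linearly over $k[\N(T)]^{G\times H}$ shows $F^n\uHom(e_G,e_H)$ is preserved by all structure maps, so it is a genuine subobject, and the quotients $Q^n\uHom(e_G,e_H)$ are objects of $\A\U$.

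For the second assertion, the key observation is that the ``error terms'' identified above all shift filtration strictly. More precisely, Lemma~\ref{lem-sigma-bound} says that among the $(W',\lambda')\in E(\varphi,(W,\lambda))$ there is exactly one, namely $\varphi^*(W,\lambda)$, with $\sigma(W',\lambda')=\sigma(W,\lambda)$, and all others have strictly larger $\sigma$, hence lie in $F^{n+1}$. Therefore, modulo $F^{n+1}\uHom(e_G,e_H)(T')$, we have $\varphi^*\zeta_0(W,\lambda)\equiv\zeta_0(\varphi^*(W,\lambda))$ (the multiplicity on this leading term being $1$, which I would confirm from Construction~\ref{cons-N(T)-to-uHom}). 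In other words, the induced map $Q^n\uHom(e_G,e_H)$ has structure maps that agree, under the identification $F^nk[\N(T)]/F^{n+1}k[\N(T)]\cong k[F^n\N(T)\setminus F^{n+1}\N(T)]$, with the functorial map $\varphi^*$ on $\N$ from Definition~\ref{def-LMN-functor}(c). Passing to $G\times H$-invariants and summing over $n$, we conclude that $Q^*\uHom(e_G,e_H)$ is naturally isomorphic to the functor $T\mapsto k[\N(T)]^{G\times H}$ with its honest functoriality. By Constructions~\ref{cons-L(T)-to-M(T)} and~\ref{cons-L(T)-to-N(T)} (and Lemma~\ref{lem-LMN-sigma}, which matches up the $\sigma$-values and hence the filtrations), this is naturally isomorphic to $T\mapsto k[\M(T)]^{G\times H}$, which by the remark following Construction~\ref{cons-L(T)-to-N(T)} is exactly $F(\uQ(G,H))$.

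The step I expect to be the main obstacle is establishing that $\zeta$ is compatible with the functoriality in the precise, bookkeeping-heavy sense needed above — namely, computing $\varphi^*\zeta_0(W,\lambda)$ as an explicit linear combination of the $\zeta_0(W',\lambda')$ with the correct coefficients, and checking that the coefficient of the leading term $\varphi^*(W,\lambda)$ is exactly $1$ after passing to invariants. The subtlety is that $\zeta$ was deliberately defined without choosing a splitting of $e_T\otimes e_G$ into $\bigoplus_i e_{W_i}^{\Delta_i}$, so while $\zeta$ is manifestly natural in $T'\to T$ up to the filtration, the precise combinatorics of how a wide subgroup of $T'\times G$ maps to one of $T\times G$ under $\varphi\times 1$ must be handled carefully; one must also check equivariance for the $G\times H$-conjugation actions throughout so that the identification survives passage to invariants. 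Everything downstream — the splitting of the filtration — is then automatic from Proposition~\ref{prop-permuted-family}, since $F(\uQ(G,H))$ is projective, so no extension problem survives and $\uHom(e_G,e_H)\cong F(\uQ(G,H))$ follows (this last deduction is stated after Theorem~\ref{thm-hom-fg-projective} rather than inside this proposition).
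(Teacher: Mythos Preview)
Your proposal is correct and follows essentially the same approach as the paper: both arguments use Lemma~\ref{lem-sigma-bound} to show that $\varphi^*$ preserves the filtration and that only the term $\varphi^*(W,\lambda)$ survives in the associated graded, then invoke the natural bijections $\N\simeq\L\simeq\M$ to identify the result with $F(\uQ(G,H))$. The only cosmetic difference is that the paper phrases the first step by fixing a wide subgroup $W'\leq T'\times G$ and pushing it forward to $W=(\varphi\times 1)(W')$, whereas you fix $(W,\lambda)$ and enumerate the preimages $(W',\lambda')\in E(\varphi,(W,\lambda))$; these are dual descriptions of the same computation, and the paper is equally brief about the coefficient bookkeeping you flag as the main obstacle.
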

\begin{proof}
 Consider an element $m\in F^n\uHom(e_G,e_H)(T)$ and a surjective
 homomorphism $\varphi\colon T'\to T$.  We can regard $m$ as a
 morphism $e_T\otimes e_G\to e_H$.  Now suppose we have a surjective
 homomorphism $\varphi\colon T'\to T$.  Now $\varphi^*m$
 corresponds to the composite
 $m\circ(e_\varphi\otimes 1)\colon e_{T'}\otimes e_G\to e_H$.
 Consider a wide subgroup $W'\leq T'\times G$, and the resulting map
 $j'\colon e_{W'}\to e_{T'}\otimes e_G$.  Put
 $W=(\varphi\times 1)(W')$, which is wide in $T\times G$, and let $j$
 be the resulting map $e_W\to e_T\otimes e_G$.  The composite
 $mj\colon e_W\to e_H$ can be expressed as a $k$-linear combination of
 morphisms $\lambda\in\Epi(W,H)$.  The condition
 $m\in F^n\uHom(e_G,e_H)(T)$ means that for all $\lambda$ appearing
 here, we have $\sigma(W,\lambda)\geq n$.  It follows that
 $\varphi^*(m)j'$ can be expressed as a $k$-linear combination of the
 corresponding morphisms
 $\lambda'=\lambda\circ(\varphi\times 1)\colon W'\to H$. 
 Lemma~\ref{lem-sigma-bound} tells us that the resulting pairs satisfy
 $\sigma(W',\lambda')\geq n$.  It follows that $F^n\uHom(e_G,e_H)$ is
 a subobject, as claimed.  Moreover, the edge case in
 Lemma~\ref{lem-sigma-bound} tells us that in the associated graded,
 we see only terms of the form $\varphi^*(W,\lambda)$.  This means
 that the associated graded is isomorphic in $\A\U$ to $k[\N]$ or
 $k[\L]$ or $k[\M]$ or $F(\uQ(G,H))$, as claimed. 
\end{proof}

\begin{proof}[Proof of Theorem~\ref{thm-hom-fg-projective}]
 The subobjects $F^n\uHom(e_G,e_H)$ form a finite-length filtration of
 $\uHom(e_G,e_H)$ with finitely projective quotients, so the
 filtration must split.  The claim follows easily from this.
\end{proof}

\begin{remark}\label{rem-natural-splitting}
 We do not know whether there is a splitting of the filtration that is
 natural in $G$ and $H$ as well as $T$.  There may be some interesting
 group theory and combinatorics involved here.
\end{remark}

% \begin{corollary}\label{cor-hom-projective}
%  Let $\U$ be closed downwards. Then for all $G,H\in\U$, the object
%  $\uHom(e_G,e_H)$ is finitely projective (see
%  Definition~\ref{def-finiteness-conditions}).
% \end{corollary}

% \begin{proof}
%  Choose a multiplicative global family $\V$ containing $\U$ and let
%  $i\colon\U\to\V$ denote the inclusion functor. To avoid confusion, we
%  will write $e_G\otimes_{\U} e_H$ and $\uHom_{\U}(e_G, e_H)$ to mean
%  respectively, the tensor product and internal hom calculated in
%  $\A\U$. We use similar notation for $\V$ as well. Fix $G,H\in\U$, and
%  consider $i^*\uHom_\V(e_G,e_H)\in \A\U$.  The restriction functor
%  $i^*$ preserves finitely projectives by
%  Lemma~\ref{lem-preservation-finiteness}(a).  This together with
%  Theorem~\ref{thm-hom-fg-projective} tells us that
%  $i^*\uHom_\V(e_G,e_H)$ is finitely projective.  Therefore, it will
%  suffices to show that $\uHom_\U(e_G,e_H)$ is a retract of
%  $i^*\uHom_\V(e_G,e_H)$. For all $T \in \U$, there is a split
%  projection map $e_G\otimes_\V e_T \to e_G\otimes_\U e_T$ that picks
%  up the wide subgroups which lie in $\U$. It is also clear that this
%  split projection is natural in $T$. Therefore we get an induced split
%  projection
%  \[ i^*\uHom_\V(e_G,e_H)(T)=\A\V(e_G\otimes_\V e_T, e_H) \to
%     \A\U(e_G\otimes_\U e_T, e_H)=\uHom_\U(e_G,e_H)(T)
%  \]
%  which is also natural in $T$. This concludes the proof. 
% \end{proof}

\section{Functors for subcategories}
\label{sec-subcat-functors}

In this section we study the formalism that relates the abelian
category $\A$ to its smaller subcategories $\A\U$.
	
\begin{definition}\label{def-subcat-functors}
 Let $\U$ and $\V$ be full and replete subcategories of $\G$, with
 $\U\subseteq\V$.  The inclusion $i=i_{\U\V}\colon\U\to\V$ gives a pullback
 functor $i^*\colon\A\V\to\A\U$.  We write $i_!$ and $i_*$ for the
 left and right adjoints of $i^*$ (so $i_!,i_*\colon\A\U\to\A\V$).
 These are given by the usual Kan formulae (in their contravariant
 versions): 
 \begin{itemize}
  \item[(a)] $(i_!X)(G)$ is the colimit over the comma category
   $(G\downarrow\U)$ of the functor sending each object
   $(G\xrightarrow{u}iH)$ to $X(H)$. 
  \item[(b)] $(i_*X)(G)$ is the limit over the comma category
   $(\U\downarrow G)$ of the functor sending each object
   $(iK\xrightarrow{v}G)$ to $X(K)$. 
 \end{itemize}
\end{definition}

\begin{remark}\label{rem-U-times}
 The above definition covers most of the inclusion functors that we
 need to consider, with one class of exceptions, as follows.  Let $\U$
 be a replete full subcategory of $\G$.  We then let $\U^\times$ be
 the category with the same objects, but with only group isomorphisms
 as the morphisms, and we let $l\colon\U^\times\to\U$ be the
 inclusion.  Then $\U^\times$ is not a full subcategory of $\U$, so
 definition~\ref{def-subcat-functors} does not officially apply.
 Nonetheless, we still have functors $l^*$, $l_!$ and $l_*$, whose
 behaviour is slightly different from what we see in the main case.  
 Details will be given later.
\end{remark}

\begin{lemma}\label{lem-omnibus}
  Let $i\colon\U\to\V$ be an inclusion of replete full subcategories
  of $\G$.
  \begin{itemize}
   \item[(a)] The (co)unit maps $i^*i_*(X)\to X\to i^*i_! (X)$ are
    isomorphisms, for all $X \in \A\U$.  Thus, the functors $i_!$ and
    $i_*$ are full and faithful embeddings.
   \item[(b)] The essential image of $i_!$ is 
    $\{ Y \in\A\V \mid \epsilon_Y \colon i_! i^* (Y) \to Y \; \text{is iso}\}$.  \\
    The essential image of $i_*$ is 
    $\{ Y \in\A\V \mid \eta_Y \colon Y \to i_*i^* (Y) \; \text{is iso} \}$.
   \item[(c)] There are natural isomorphisms
    $i^*(\one)=\one$ and $i^*(X\otimes Y)=i^*(X)\otimes i^*(Y)$
    giving a strong monoidal structure on $i^*$.  However, the corresponding map 
    $i^* \uHom(X,Y) \to \uHom(i^*X,i^*Y)$ is
    typically not an isomorphism.
   \item[(d)] There are natural maps $i_!\one\to\one\to i_*\one$ and 
    $i_!(X \otimes Y) \to i_!(X) \otimes i_!(Y)$ and 
    $i_*(X) \otimes i_*(Y) \to i_*(X \otimes Y)$ giving (op)lax monoidal structures. 
   \item[(e)] In all cases $i_!$ preserves all colimits and $i_*$
    preserves all limits and $i^*$ preserves both limits and
    colimits. Also $i_!$ preserves projective objects and $i_*$
    preserves injective objects. Both $i_*$ and $i_!$ preserve
    indecomposable objects.
   \item[(f)] If $\U$ is closed upwards in $\V$, then $i_!$ is
    extension by zero and so preserves all limits, colimits and
    tensors (but not the unit).
   \item[(g)] If $\U$ is closed downwards in $\V$ then $i_*$ is
    extension by zero and so preserves all limits, colimits and
    tensors (but not the unit). 
   \item[(h)] If $\U$ is submultiplicative, then $i_!$ preserves the
    unit and all tensors; in other words, is strongly monoidal. 
   \item[(i)] If $i$ has a left adjoint $q\colon \V \to \U$
    then $i_!=q^*$ (and so $i_!$ preserves all (co)limits). 
   \item[(j)] Suppose that $G \in \U$ and $\C \leq \U$ is
    convex. Then, for the objects defined in
    Definition~\ref{def-main-objects} we have 
    \begin{align*} 
     i^*(e_{G,V}) &= e_{G,V} &
     i^*(t_{G,V}) &= t_{G,V} &
     i^*(s_{G,V}) &= s_{G,V} \\ 
     i^*(\chi_\C) &= \chi_{\C} &
     i_!(e_{G,V}) &= e_{G,V} &
     i_*(t_{G,V}) &= t_{G,V}.
   \end{align*}
   If $\U$ is closed upwards, we also have 
   \[
    i_!(\chi_\C)=\chi_\C \quad
    i_!(s_{G,V})=s_{G,V} \quad 
    i_!(t_{G,V})=\chi_\U \otimes t_{G,V}.
   \]
   On the other hand, if $\U$ is closed downwards, we also have
   \[
    i_*(e_{G,S})=\chi_\U \otimes e_{G,S} \quad
    i_!(s_{G,V})=s_{G,V} \quad 
    i_*(\chi_{\C})=\chi_\C.
   \]
 \end{itemize}
\end{lemma}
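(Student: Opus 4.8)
The plan is to treat each part in turn, most of them being routine verifications in the formalism of Kan extensions along a fully faithful functor. I would organize the argument around the following backbone. First, for part~(a), I would use the standard fact that a pullback functor $i^*$ along a fully faithful $i$ has fully faithful adjoints; concretely, the comma category $(\U\downarrow iH)$ has a terminal object (namely $\id_{iH}$) when $H\in\U$, so the colimit computing $(i_!X)(iH)$ is $X(H)$, and dually for $i_*$. This immediately gives that the (co)unit maps $i^*i_*X\to X\to i^*i_!X$ are isomorphisms, hence $i_!$ and $i_*$ are full and faithful, and it identifies their essential images as the fixed subcategories of the (co)monad in part~(b). For part~(c), the isomorphisms $i^*\one=\one$ and $i^*(X\otimes Y)=i^*X\otimes i^*Y$ are immediate since everything is computed pointwise; the failure of $i^*\uHom(X,Y)\to\uHom(i^*X,i^*Y)$ to be an isomorphism can be witnessed by the same kind of example as in Remark~\ref{rem-dualizable}. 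For part~(d), the (op)lax structure maps come formally from the monoidal structure on $i^*$ via the adjunctions (the lax structure on a right adjoint of a strong monoidal functor, and the oplax structure on a left adjoint). Part~(e) is again formal: $i_!$ is a left adjoint hence preserves colimits, $i_*$ a right adjoint hence preserves limits, $i^*$ has both adjoints hence preserves both; $i_!$ preserves projectives because its right adjoint $i^*$ is exact, $i_*$ preserves injectives dually; preservation of indecomposables follows from full faithfulness (an idempotent splitting of $i_!X$ would descend, via $i^*$, to one of $X$).

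For parts~(f)--(i) I would argue as follows. If $\U$ is closed upwards in $\V$, then for $G\in\V\setminus\U$ the comma category $(G\downarrow\U)$ is empty (any surjection $G\to iH$ with $H\in\U$ would force $G\in\U$), so $(i_!X)(G)=0$; and for $G\in\U$ we already know $(i_!X)(G)=X(G)$. Thus $i_!$ is extension by zero, which is evidently exact and monoidal on the level of the underlying vector spaces (it does not preserve $\one$ since $\one$ is supported everywhere). Part~(g) is the mirror image using that $(\U\downarrow G)$ is empty when $G\notin\U$ and $\U$ is closed downwards. Part~(h): if $\U$ is submultiplicative, I would show $i_!$ is strong monoidal by checking the oplax map $i_!(X\otimes Y)\to i_!X\otimes i_!Y$ is an isomorphism; since $i_!$ preserves colimits and every object is a colimit of representables $e_G^\U$, it suffices to check this on representables, where $i_!e_G^\U=e_G^\V$ by part~(j) and $e_G^\V\otimes e_H^\V$ is a finitely generated projective built from wide subgroups by Proposition~\ref{prop-wide} — all of these wide subgroups lie in $\U$ precisely because $\U$ is widely closed (which submultiplicative implies, by Remark~\ref{rem-subcat-implications}). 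Part~(i) is the observation that a left adjoint $q$ to $i$ makes $q^*$ left adjoint to $i^*$, hence $i_!\cong q^*$ by uniqueness of adjoints.

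The remaining work, and the most bookkeeping-heavy part, is part~(j). For the ``$i^*$'' rows this is immediate from the pointwise formulas in Definition~\ref{def-main-objects}, since $e_{G,V}(T)$, $t_{G,V}(T)$, $s_{G,V}(T)$ and $\chi_\C(T)$ depend only on $T$ and the ambient data, and restriction just forgets the values at $T\notin\U$. The identities $i_!(e_{G,V})=e_{G,V}$ and $i_*(t_{G,V})=t_{G,V}$ follow because $e_{G,V}$ is a left adjoint image (Lemma~\ref{lem-projective-and-injective}) and left adjoints compose, and dually for $t_{G,V}$. The more delicate identities are those in the ``closed upwards'' and ``closed downwards'' cases, e.g. $i_!(t_{G,V})=\chi_\U\otimes t_{G,V}$ when $\U$ is closed upwards: here I would compute $(i_!t_{G,V})(T)$ using that $i_!$ is extension by zero (part~(f)), so the value is $t_{G,V}(T)$ for $T\in\U$ and $0$ otherwise, which is exactly $(\chi_\U\otimes t_{G,V})(T)$; one must also check the functoriality matches, which is where $\chi_\U$ being a genuine functor (needing $\U$ convex, which follows from closed upwards by Remark~\ref{rem-subcat-implications}) is used. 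The analogous statements in the closed-downwards case use part~(g). The main obstacle I anticipate is purely organizational: keeping straight, across parts (f)--(j), exactly which closure hypothesis is in force and verifying the naturality (not just the pointwise values) of each identified isomorphism — the $\chi_\U\otimes(-)$ twists in particular need the functoriality of $\chi_\U$, hence convexity, to even make sense. None of the individual verifications is hard, but there are many of them and the hypotheses shift from item to item.
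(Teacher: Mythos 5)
Your proposal is correct and follows essentially the same route as the paper's proof: identity objects terminal/initial in the relevant comma categories for (a)--(b), pointwise computations for (c), adjunction formalities for (d)--(e), emptiness of the comma categories for the extension-by-zero statements (f)--(g), reduction to representables and Proposition~\ref{prop-wide} for (h), uniqueness of adjoints for (i) (the paper instead computes the comma category $T\downarrow\U\simeq qT\downarrow\U$ directly, but your argument is equivalent), and the adjoint/Yoneda identification of $i_!(e_{G,V})$ and $i_*(t_{G,V})$ plus (f)--(g) for (j).

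One justification in part (h) is misattributed, though the step itself is fine under the stated hypothesis. You claim that every wide subgroup $W\leq G\times H$ appearing in $e_G^{\V}\otimes e_H^{\V}$ lies in $\U$ ``precisely because $\U$ is widely closed.'' Widely-closedness only guarantees that images of maps $H'\to G\times K$ with $G,H',K\in\U$ land in $\U$; it says nothing about an arbitrary wide subgroup $W\leq G\times H$ that happens to lie in $\V$, since the defining condition requires the middle group to lie in $\U$ already. What the paper actually uses, and what you should invoke, is submultiplicativity itself: $G\times H\in\U$ by multiplicativity, and then every subgroup of $G\times H$ (in particular every wide one) lies in $\U$ by subgroup-closure. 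Since submultiplicativity is the hypothesis of (h), this is a one-line repair, but as written the cited reason would not carry the step.
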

\begin{proof}
 Almost all of this is standard, but we recall proofs for ease of
 reference. 

 If $G\in\U$ then $(G\xrightarrow{1}G)$ is terminal in the comma category
 $\U\downarrow G$, so the Kan formula reduces to $(i_!X)(G)=X(G)$.
 Using this, we see that the unit map $X\to i^*i_!(X)$ is an
 isomorphism for all $X$.  It follows that the map
 $i_!\colon\A\U(X,Y)\to\A\V(i_!X,i_!Y)$ is an isomorphism, with
 inverse essentially given by $i^*$, so $i_!$ is a full and faithful
 embedding.  A dual argument shows that the counit map 
 $i^*i_*(X)\to X$ is also an isomorphism, and that $i_*$ is also a
 full and faithful embedding.  This proves claim~(a).

 Now put 
 \begin{align*}
  \mathcal{B} &=
   \{ Y \in\A\V \mid \epsilon_Y \colon i_!i^*(Y) \to Y  \; \text{is iso}\} \\
  \mathcal{C} &=
   \{ Y \in\A\V \mid \eta_Y     \colon Y \to i_* i^*(Y) \; \text{is iso}\}.
 \end{align*}
 If $Y\in\mathcal{B}$ then $Y\simeq i_!i^*(Y)$ so $Y$ is in the
 essential image of $i_!$.  Conversely, for $X\in\A\U$ we have seen
 that the unit $X\to i^*i_!(X)$ is an isomorphism, so the same is true
 of the map $i_!(X)\to i_!i^*i_!(X)$.  By the triangular identities
 for the $(i_!,i^*)$-adjunction, it follows that the counit
 $i_!i^*i_!(X)\to i_!(X)$ is also an isomorphism, so
 $i_!(X)\in\mathcal{B}$, so any object isomorphic to $i_!(X)$ also
 lies in $\mathcal{B}$.  This proves that $\mathcal{B}$ is the
 essential image of $i_!$, and a dual argument shows that
 $\mathcal{C}$ is the essential image of $i_*$.  This proves
 claim~(b). 

 We now consider claim~(c).  For all $G\in\U$, we have
 $(i^*\one)(G)=k=\one(G)$ and
 $i^*(X \otimes Y)(G)= X(G)\otimes Y(G)=(i^*X \otimes i^*Y)(G)$ which
 proves that $i^*$ is strongly monoidal as claimed.  For the negative
 part of (c), consider the case where $\U=\{1\}$.  For any $G$ we have 
 $(i^*\uHom(e_G,e_G))(1)=\A\V(e_G,e_G)=k[\Out(G)]\neq 0$, but if 
 $G$ is nontrivial, then $i^*(e_G)=0$ and so
 $\uHom(i^*(e_G),i^*(eG))(1)=0$.  This shows that the natural map
 $i^*\uHom(X,Y)\to\uHom(i^*(X),i^*(Y))$ (adjoint to the
 evaluation map) is not always an isomorphism.
  
 From claim~(c) we get a natural isomorphism
 \[
   i^*(i_*(X) \otimes i_*(Y)) \simeq i^*i_*(X) \otimes i^*i_*(Y)
   \xrightarrow{\epsilon_X \otimes \epsilon_Y} X \otimes Y,
 \]
 and using the $(i^*,i_*)$-adjunction we get a natural map
 $i_*(X)\otimes i_*(Y)\to i_*(X\otimes Y)$.  A standard argument shows
 that this makes $i_*$ into a lax monoidal functor.  By a dual
 construction, we get a natural map
 $i_!(X\otimes Y)\to i_!(X)\otimes i_!(Y)$ making $i_!$ into an oplax
 monoidal functor.  This proves~(d).
	
 Most of claim~(e) is formal and follows from the properties of
 adjunctions.  If $P$ is indecomposable, we see that the only
 idempotent elements in $\End(P)$ are $0$ and $1$, and that $0\neq 1$.
 As $i_!$ is full and faithful, we see that $\End(i_!P)$ is isomorphic
 to $\End(P)$, and so has the same idempotent structure. A similar
 proof works for $i_*$ too.
  
 Now consider claim~(f).  Suppose that $\U$ is closed upwards in $\V$,
 that $X\in\A\U$,and that $G\in\V$.  If $G\in\U$ then $i_!(X)(G)=X(G)$
 by claim~(a).  If $G\not\in\U$ then the upward closure assuption
 implies that $G\downarrow\U=\emptyset$, so the Kan formula reduces
 to $i_!(X)(G)=0$.  In other words, $i_!$ is extension by zero, and
 the rest of claim~(f) follows immediately.  A dual argument
 proves~(g). 

 Now suppose instead that $\U$ is submultiplicative, as in~(h), so in
 particular $1\in\U$.  We claim that the natural map
 $i_!(X\otimes Y)\to i_!(X)\otimes i_!(Y)$ is an isomorphism.  As $i_!$ and
 the tensor product preserve colimits, we can reduce to the case where
 $X=e_G$ and $Y=e_H$ for some $G,H\in\U$.  Recall
 that $i_!(e_G^{\U})=e_G^{\V}$ (or more briefly $i_!(e_G)=e_G$), and
 similarly for $H$.  Using Proposition~\ref{prop-wide} we see that
 $i_!(e_G)\otimes i_!(e_H)$ can be expressed in terms of the wide
 subgroups $W\leq G\times H$ such that $W\in\V$, whereas
 $i_!(e_G\otimes e_H)$ is similar but involves only groups $W$ that
 lie in $\U$.  However, the submultiplicativity condition ensures that
 any wide subgroup $W\leq G\times H$ lies in $\U$, so we see that 
 $i_!(e_G)\otimes i_!(e_H)=i_!(e_G\otimes e_H)$.  We also have
 $i_!(\one)=i_!(e_1)=e_1=\one$.   This shows that $i_!$ is strongly
 monoidal. 

 Now suppose that $i$ has a left adjoint $q$ as in (i). Then the comma
 category $T \downarrow \U$ is equivalent to $qT \downarrow \U$ which
 has a terminal object $(qT \xrightarrow{1}qT)$ giving $Y(T)=X(qT)$.
 It follows that $q^*$ and $i_!$ are naturally isomorphic as claimed.
  
 In claim~(j), all the statements about $i^*$ are straightforward.
 For any $X \in \A\V$ we have
 \[
  \A\V(i_!(e_{G,V}), X)=\A\U(e_{G,V}, i^*(X))=
  \mathcal{M}_G(V,X(G))= \A\V(e_{G,V},X)
 \]
 where we used Lemma~\ref{lem-projective-and-injective}.  It follows
 by the Yoneda Lemma that $i_!(e_{G,V})=e_{G,V}$, and a similar
 proof gives that $i_*(t_{G,V})=t_{G,V}$.  The remaining claims in (j)
 follows from (f) and (g) as the functor $i_!$ and $i_*$ are extension
 by zero.
\end{proof}

\begin{remark}\label{rem-shriek-not-monoidal}
 Part~(f) of the lemma gives conditions under which $i_!$ preserves
 tensor products.  However, this does not always hold if we remove
 those conditions, as shown by the following counterexample.  Take
 $\U=\C[2^\infty]$ (the family of cyclic $2$-groups).  Note that the
 only wide subgroups of $C_4 \times C_2$ are the whole group
 $C_4 \times C_2$ and the graph subgroup $\Gr(\pi) \simeq C_4$ of the
 canonical projection $\pi: C_4 \to C_2$.  Using
 Proposition~\ref{prop-wide}, we see that
 $e_{C_2} \otimes e_{C_4} \simeq
  e_{C_4 \times C_2} \oplus e_{\Gr(\pi)}$ in $\A$ but
 $e^{\U}_{C_2} \otimes e^{\U}_{C_4} \simeq e^{\U}_{\Gr(\pi)}$ in $\A\U$.
 Thus, the canonical map
 \[
  e_{\Gr(\pi)}
   = i_!(e^{\U}_{\Gr(\pi)}) 
   \simeq i_!(e^{\U}_{C_2} \otimes e^{\U}_{C_4})
   \to i_!(e^{\U}_{C_2})\otimes i_!(e^{\U}_{C_4}) 
   = e_{C_2} \otimes e_{C_4}
   \simeq e_{C_4 \times C_2} \oplus e_{\Gr(\pi)}
 \]
 is not an isomorphism in $\A$.
\end{remark} 

\begin{lemma}\label{lem-extra-adjoints}
 Let $\V$ be a replete full subcategory of $\G$.  Let $\U$ and $\W$ be
 two replete full subcategories of $\V$ that are complements of each
 other, with inclusions $i\colon\U\to\V$ and $j\colon\W\to\V$.
 Suppose that $\U$ is closed upwards, or equivalently, that $\W$ is
 closed downwards.  Then:
 \begin{itemize}
  \item[(a)] The functor $i_!\colon\A\U\to\A\V$ admits a left adjoint
   $i^!\colon\A\V\to\A\U$ given by $i^!(Y)=i^*(\cok(j_!j^*(Y)\to Y))$.
  \item[(b)] The functor $j_*\colon\A\W\to\A\V$ admits a right adjoint 
   $j^\sharp\colon\A\V\to\A\W$ given by
   $j^{\sharp}(X)=i^*(\ker(X \to j_*j^* X))$.
 \end{itemize}
\end{lemma}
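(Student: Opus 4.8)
The plan is to verify the two adjunctions directly by exhibiting natural bijections between the relevant Hom-sets, relying throughout on the fact that under our hypotheses the relevant Kan extensions are ``extension by zero'' (Lemma~\ref{lem-omnibus}(f),(g)). First I record the geometric picture: since $\U$ and $\W$ are complementary replete full subcategories of $\V$ with $\U$ closed upwards and $\W$ closed downwards, every object $Y\in\A\V$ fits into a canonical short exact sequence built from its restrictions. Concretely, $j_!j^*(Y)\to Y$ is injective (its cokernel is supported on $\U$), and $Y\to j_*j^*(Y)$ is surjective (its kernel is supported on $\U$); both assertions are immediate from the extension-by-zero descriptions of $j_!$ and $j_*$ together with the fact that $j^*j_!=\id=j^*j_*$. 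So the formulas $i^!(Y)=i^*(\cok(j_!j^*Y\to Y))$ and $j^\sharp(X)=i^*(\ker(X\to j_*j^*X))$ at least produce well-defined functors $\A\V\to\A\U$ and $\A\V\to\A\W$ respectively; note the mild typo that $j^\sharp$ lands in $\A\W$ and so should be applied via $j^*$ rather than $i^*$ — I would state it with the correct restriction functor.

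For part~(a): given $X\in\A\U$ and $Y\in\A\V$, I want a natural isomorphism $\A\U(i^!Y,X)\cong\A\V(Y,i_!X)$. Since $\U$ is closed upwards, $i_!X$ is extension by zero, hence $j^*(i_!X)=0$; therefore any morphism $Y\to i_!X$ kills the subobject $j_!j^*(Y)$ (by adjunction $\A\V(j_!j^*Y,i_!X)=\A\W(j^*Y,j^*i_!X)=0$), and so factors uniquely through the cokernel $C=\cok(j_!j^*Y\to Y)$. Thus $\A\V(Y,i_!X)=\A\V(C,i_!X)$. Now $C$ is supported on $\U$, i.e.\ $j^*C=0$, which means $C$ lies in the essential image of $i_!$ — indeed $C\simeq i_!i^*C$ because the counit $i_!i^*C\to C$ is an isomorphism exactly when $j^*C=0$ (here one uses the complementarity of $\U$ and $\W$: an object of $\A\V$ with vanishing restriction to $\W$ is determined by its restriction to $\U$). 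Hence $\A\V(C,i_!X)=\A\V(i_!i^*C,i_!X)=\A\U(i^*C,X)=\A\U(i^!Y,X)$, using that $i_!$ is fully faithful (Lemma~\ref{lem-omnibus}(a)). Chaining these identifications, and checking naturality in $X$ and $Y$ (routine, since every step is natural), gives the desired adjunction.

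Part~(b) is entirely dual: for $X\in\A\V$ and $Z\in\A\W$ I want $\A\W(j^*(\ker(X\to j_*j^*X)),Z)\cong\A\V(X,j_*Z)$ — wait, rather $\A\W(Z,j^\sharp X)\cong\A\W(j_*Z,X)$... let me restate. I want $\A\V(j_*Z,X)\cong\A\W(Z,j^\sharp X)$. Since $\V$ is closed downwards, $\W$ is closed upwards inside $\V$? No — $\W$ is closed downwards, so $j_*Z$ is extension by zero and $i^*(j_*Z)=0$; hence any morphism $j_*Z\to X$ factors through the kernel $K=\ker(X\to j_*j^*X)$ only in the dual sense... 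Actually the clean statement: $\A\V(j_*Z,X)=\A\V(j_*Z,K)$ since $\A\V(j_*Z,j_*j^*X)$... I would run the honest dual of the argument in~(a), swapping $i_!\leftrightarrow j_*$, cokernel $\leftrightarrow$ kernel, and ``supported on $\U$'' $\leftrightarrow$ ``supported on $\W$'', using that $j_*$ is fully faithful and that an object with vanishing restriction to $\U$ lies in the essential image of $j_*$.

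The main obstacle I anticipate is bookkeeping around the complementarity hypothesis: I need the precise statement that an object $Y\in\A\V$ with $j^*Y=0$ satisfies $i_!i^*Y\xrightarrow{\sim}Y$ (and dually with $i^*Y=0$ and $j_*$). This should follow from Lemma~\ref{lem-omnibus}(b), which characterizes the essential image of $i_!$ as those $Y$ with $i_!i^*Y\to Y$ an isomorphism, combined with a short direct check on objects using the extension-by-zero formulas — but getting the variance and the direction of the unit/counit maps straight is exactly where a careless argument would go wrong, so I would treat that compatibility as the crux and verify it levelwise on objects $T\in\V$, splitting into the cases $T\in\U$ and $T\in\W$.
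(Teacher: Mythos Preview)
Your argument for part~(a) is correct and follows essentially the same route as the paper's proof: both show that any $u\colon Y\to i_!X$ annihilates $j_!j^*Y$ (because $j^*i_!X=0$), hence factors through the cokernel $C$, and then both identify $\A\V(C,i_!X)$ with $\A\U(i^*C,X)$. You are in fact more explicit than the paper, which simply writes down the map $u\mapsto u^\#$ and leaves the verification of bijectivity to the reader; your use of $j^*C=0\Rightarrow C\simeq i_!i^*C$ together with full faithfulness of $i_!$ is a clean way to close that gap.

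One small correction to your preamble: the claim that $j_!j^*Y\to Y$ is \emph{injective} is not generally true, because $j_!$ is \emph{not} extension by zero here. Lemma~\ref{lem-omnibus}(f) gives extension by zero for $i_!$ (since $\U$ is closed upwards), and Lemma~\ref{lem-omnibus}(g) gives it for $j_*$ (since $\W$ is closed downwards), but $j_!$ involves a genuine colimit over $(T\downarrow\W)$ for $T\in\U$, which can be nonzero and can map non-injectively into $Y(T)$. Fortunately this side remark plays no role in your main argument: you only need that maps $Y\to i_!X$ factor through $C$, not that $Y$ sits in a short exact sequence with $C$ as quotient.

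You are right that the stated formula in~(b) cannot be correct as written: $i^*(\ker(X\to j_*j^*X))$ lands in $\A\U$, not $\A\W$, and in fact equals $i^*X$ since $j_*$ is extension by zero. The genuine dual of part~(a), which is what you should prove, is $j^\sharp(X)=j^*\bigl(\ker(X\to i_*i^*X)\bigr)$: one checks that $\A\V(j_*Z,X)=\A\V(j_*Z,K)$ for $K=\ker(X\to i_*i^*X)$ (since $i^*j_*Z=0$), that $i^*K=0$, hence $K\simeq j_*j^*K$, and then full faithfulness of $j_*$ finishes. Your instinct to ``run the honest dual'' is exactly right; just be careful to swap \emph{both} $i\leftrightarrow j$ and $!\leftrightarrow *$.
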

\begin{proof}
 We will only prove (a) as the argument for (b) is similar.  Consider
 a morphism $u\colon Y\to i_!(X)$.  This fits in a naturality square 
 as follows:
 \begin{center}
  \begin{tikzcd}
   j_!j^*(Y) \arrow[r,"\epsilon_Y"] \arrow[d,"j_!j^*(u)"'] &
   Y \arrow[d,"u"] \\
   j_!j^*i_!(X) \arrow[r,"\epsilon_{i_!(X)}"'] & i_!(X). 
  \end{tikzcd}
 \end{center}
 Lemma~\ref{lem-omnibus}(f) tells us that $i_!$ is extension by zero,
 so $j^*i_!=0$, so the bottom left corner of the square is zero, so
 there is a unique morphism
 $\overline{u}\colon\cok(\epsilon_Y)\to i_!(X)$ induced by $u$.  We
 can now compose $i^*(\overline{u})$ with the inverse of the unit map
 $X\to i^*i_!(X)$ to get a morphism $u^\#\colon i^!(Y)\to X$.  We
 leave it to the reader to check that this construction gives the
 required bijection $\A\V(Y,i_!(X))\simeq\A\U(i^!(Y),X)$.
\end{proof}

We can use the formalism of change of subcategory to construct functorial 
projective and injective resolutions. 

\begin{construction}\label{con-proj-inj-resolutions}
 As in Remark~\ref{rem-U-times}, we let $\U^{\times}$ denote the
 subcategory with the same objects as $\U$ but only isomorphisms as
 morphisms.  Let $l\colon \U^\times\to\U$ be the inclusion, and
 consider the functors $l_!,l_*\colon\A\U^\times\to\A\U$.  If we
 choose a skeleton $\U'\subset\U$, it is not hard to check that 
 \begin{align*}
  l_!(W) &= \bigoplus_{G\in\U'} e_{G,W(G)} \\
  l_*(W) &= \prod_{G\in\U'} t_{G,W(G)}
 \end{align*}
 for all $W\in\A\U^\times$.  It follows that $l_!(W)$ is always
 projective and $l_*(W)$ is always injective.  Moreover, we see that
 the counit $l_!l^*(X)\to X$ is always an epimorphism for all
 $X\in\A\U$, and the unit $X\to l_*l^*(X)$ is always a monomorphism.

 We now set 
 \begin{align*}
  P_0     &= l_!l^*(X) & 
  P_1     &= l_!l^*(\ker(P_0 \to X)) & 
  P_{i+2} &= l_!l^*(\ker(P_{i+1} \to P_i)) \;\; \forall i\geq 0 \\
  I_0     &= l_*l^*(X) & 
  I_1     &= l_*l^*(\cok(X \to I_0)) &
  I_{i+2} &= l_*l^*(\cok(I_{i} \to I_{i+1})) \;\; \forall i\geq 0
 \end{align*}
 Then $P_{\bullet} \to X$ and $X \to I_\bullet$ define functorial
 projective and injective resolutions of $X$, respectively. 
\end{construction}

 Recall the base of an object 

\begin{remark}\label{rem-base-proj-resolution}
 Recall from Definition~\ref{def-base-support} that 
 \[ \base(X) = \min\{|G|\mid X(G)\neq 0\}. \]
 If $|G|<\base(X)$ then we find that $(l_!l^*(X))(G)=0$, and if
 $|G|=\base(X)$ we find that the counit map $(l_!l^*(X))(G)\to X(G)$
 is an isomorphism.  Using this, we see that
 $\base(P_k)\geq\base(X)+k$ for all $k\geq 0$.  Thus, our canonical
 projective resolution is convergent in a convenient sense.
\end{remark}

We now give other useful constructions and examples that we will use later on.

\begin{construction}\label{con-q-V}
 Let $\V\leq \G$ be a subcategory with only finitely many 
 isomorphism classes.  Let $\V^\star$ be the submultiplicative 
 closure of $\V$, so $G\in\V^*$ iff $G$ is isomorphic to a subgroup of
 $\prod_{i=1}^nH_i$ for some family of groups $H_i\in\V$.  
 For a finitely generated group $F$ we put 
 \[ \K(F;\V) = \{N\lhd F\mid F/N\in \V\}. \]
 We can choose a finite list of groups containing one representative
 of each isomorphism class in $\V$, then each group in
 $\K(F;\V)$ will occur as the kernel of one of the finitely many
 surjective homomorphisms from $F$ to one of these groups.  It follows
 that $\K(F;\V)$ is a finite collection of normal subgroups of finite
 index.  We define $\N(F;\V)$ to be the intersection of all the
 subgroups in $\K(F;\V)$, then we put $q_{\V}(F)=F/\N(F;\V)$.  This is
 isomorphic to the image of the natural map 
 \[ F\to\prod_{N\in\K(F;\V)} F/N, \]
 so the submultiplicativity condition ensures that
 $q_{\V}(F)\in\V^\star$.  It is straightforward to check that any
 surjective homomorphism $\phi\colon F_0\to F_1$ has
 $\phi(\N(F_0;\V))\leq\N(F_1;\V)$ and so induces a homomorphism
 $q_\V(F_0)\to q_\V(F_1)$.  This makes $q_{\V}$ into a functor on the
 category of finitely generated groups and surjective homomorphisms.
 If we restrict to finite groups, then the functor $q_{\V}$ is the
 left adjoint to the inclusion $i\colon\V^\star \to \G$.  We therefore
 have $i_!=q_\V^*$ by Lemma~\ref{lem-omnibus}(i).
\end{construction}

\begin{example}\label{ex-q-leq-n}
 Let $\U$ be a submultiplicative subcategory of $\G$ and fix an
 integer $n\geq 1$. If we take $\V=\U_{\leq n}$ then
 $\V^\star \subseteq \U$.  In this case we will use the abbreviated
 notation $q_{\leq n}(F), \K_{\leq n}(F)$ and $\N_{\leq n}(F)$.
\end{example}

\begin{example}\label{ex-free-group}
 Let $\U\leq \G$ be a submultiplicative subcategory and fix an integer
 $n\geq 1$.  For any finite set $X$ of cardinality $n$, let $FX$ be
 the free group on $X$.  Then we put
 $TX = q_{\leq n}(FX) \in\U_{\leq n}^* \subseteq \U$.  This is finite
 and functorial for bijections of $X$.  If $G$ is any group in
 $\U_{\leq n}$, then we can choose a surjective map $X\to G$, and
 extend it to a surjective homomorphism $FX\to G$.  The kernel of this
 homomorphism is in $\mathcal{K}_{\leq n}(FX)$ and so contains
 $\mathcal{N}_{\leq n}(FX)$, so we get an induced surjective
 homomorphism $TX\to G$.  In particular, we can take $X=G$ and use the
 identity map to get a canonical epimorphism
 $\epsilon \colon TG\to G$.
\end{example}

We now consider a natural filtration on objects of $\A\U$ which will 
be useful later on.

\begin{construction}\label{con-filtration-projective}
 Consider an object $X \in \A\U$. For $n \geq 0$, we let $L_{\leq n}X$
 denote the image of the counit map $i_!^{\leq n}i_{\leq n}^*X \to X$. 
 By construction, $L_{\leq n}X$ is the smallest subobject of $X$ 
 containing $X(H)$ for all $H\in\U_{\leq n}$ 
 This gives a filtration
 \[
  0 = L_{\leq 0}X \leq L_{\leq 1}X \leq \dotsb \leq L_{\leq n}X
  \leq L_{\leq n+1}X \leq \dotsb\leq X
 \]
 with subquotients denoted by $L_nX$. Consider a map
 $f \colon X \to Y$ and an element $x\in(L_{\leq n}X)(G)$. We can
 write $x= \sum_{i=1}^s \alpha^*_i(x_i)$ where $x_i \in X(H_i)$ with
 $|H_i|\leq n$ and $\alpha_i\in\U(G,H_i)$. Note that
 \[
  f(x)=\sum_{i} f\alpha_i^*(x) =\sum_{i} \alpha_i^*f(x) \in 
  (L_{\leq n}Y)(G),
 \]
 so the filtration is natural in $X$. Therefore we also have induced 
 maps $L_{\leq n}f \colon L_{\leq n}X \to L_{\leq n}Y$ and
 $L_nf \colon L_nX \to L_nY$ for all $n$.
\end{construction}

\begin{example}\label{ex-filtration-on-generators}
 For all $G \in \U$, we have
 \[ L_{\leq n}(e_G) = \begin{cases}
     0   & \text { if } n < |G| \\
     e_G & \text{if}\;\; n \geq |G|.
    \end{cases}
 \]
 From this we see that $L_n(e_G)=e_G$ if $|G|=n$, and $L_n(e_G)=0$
 otherwise. 
\end{example}

\begin{construction}\label{con-indecomposables}
 Consider an object $X\in \A\U$. We define 
 \[ (QX)(G)= X(G)/\sum_{1 \not = N \triangleleft G} \pi^* X(G/N) \]
 where $\pi \colon G \to G/N$ denotes the projection. Equivalently, if
 $|G|=n$ then this is 
 \[ (QX)(G)=\cok(i_!^{< n}i^*_{< n}X \to X)(G)= X(G)/X_{<n}(G). \]
 We refer to $QX$ as the object of indecomposables of $X$.  This is 
 functorial for isomorphisms of $G$, so we can regard $Q$ as a functor
 $\A\U\to\A\U^\times$.  In fact, it is not hard to see that $Q$ is
 left adjoint to the functor $l_!\colon\A\U^\times\to\A\U$, and that
 the counit map $Ql_!(W)\to W$ is an isomorphism for all
 $W\in\A\U^\times$.  (Indeed, it is sufficient to check this in the
 case $W=e_{G,V}$.)
\end{construction}

\begin{lemma}\label{lem-indecomposables-epi}
 If $f \colon X \to Y$ in $\A\U$ and $Qf$ is an epimorphism, then $f$ is an 
 epimorphism.
\end{lemma}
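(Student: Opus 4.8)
The plan is to use the filtration $L_{\leq n}$ from Construction~\ref{con-filtration-projective} together with the adjunction between $Q$ and $l_!$ established in Construction~\ref{con-indecomposables}. Suppose $f\colon X\to Y$ has $Qf$ an epimorphism; I want to show that $\coker(f)=0$. Writing $Z=\coker(f)$, the right-exactness of $Q$ (it is a left adjoint) gives that $Qf$ epi implies $QZ=0$, so it suffices to prove the following: if $QZ=0$ then $Z=0$. This is the real content, and it is where the filtration comes in.

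First I would argue by induction on $n$ that $L_{\leq n}Z=0$ for all $n$, which forces $Z=0$ since $Z=\colim_n L_{\leq n}Z$ (every element of $Z(G)$ lies in $L_{\leq|G|}Z(G)$, as $Z(G)$ is trivially spanned by itself). The base case $L_{\leq 0}Z=0$ is immediate. For the inductive step, recall that by construction $L_{\leq n}Z$ is the smallest subobject containing $Z(H)$ for all $|H|\leq n$, and that the subquotient $L_nZ$ evaluated at a group $G$ with $|G|=n$ is exactly $(QZ)(G)$ — this is the second description of $QX$ in Construction~\ref{con-indecomposables}. Since $QZ=0$, we get $L_nZ=0$, i.e. $L_{\leq n}Z = L_{\leq n-1}Z$, which vanishes by the inductive hypothesis. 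Hence $L_{\leq n}Z=0$ for all $n$, so $Z=0$, and therefore $f$ is an epimorphism.

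The one point that requires a little care, and which I expect to be the main obstacle to a fully clean writeup, is verifying that $Q$ is right exact — that is, that applying $Q$ to the epimorphism $Y\to Z$ yields an epimorphism $QY\to QZ$, and hence that $QZ$ is a quotient of $QY$, and that $Qf$ epi gives $QY\to QZ$ the zero map so $QZ=0$. This is formal once one knows $Q$ is a left adjoint (to $l_!$), so it really is routine, but it is the step where one must be precise about which functor is adjoint to which and in which direction the counit/unit run. Everything else is a direct unwinding of the definitions of $L_{\leq n}$ and $QX$ already recorded in the excerpt, together with the observation that the filtration $(L_{\leq n}Z)_n$ exhausts $Z$.
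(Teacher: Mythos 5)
Your proposal is correct and is essentially the paper's argument in a different packaging: the paper inducts on $|G|$ and applies the four lemma to the right-exact sequences $i_!^{<n}i^*_{<n}X\to X\to QX\to 0$, while you pass to $Z=\operatorname{cok}(f)$, use that $Q$ is right exact (being left adjoint to $l_!$) to get $QZ=0$, and then run the same induction on group order via the exhaustive filtration $L_{\leq n}Z$. The step you flagged as delicate is indeed routine, and your identification $(L_nZ)(G)=(QZ)(G)$ for $|G|=n$ is exactly the observation driving the paper's proof.
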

\begin{proof}
 We will show by induction on $n=|G|$ that $f(G)\colon X(G) \to Y(G)$
 is surjective.  If $n=1$, then $f(G)=(Qf)(G)$ is surjective by
 assumption.  Now suppose that $n>1$ and consider the diagram
 \[
  \begin{tikzcd}
   (i_{!}^{<n}i^*_{<n} X)(G) \arrow[r] \arrow[d,"(i_{!}^{<n}i^*_{<n}f)(G)"] &
   X(G) \arrow[d, "f(G)"] \arrow[r] & 
   (QX)(G) \arrow[d, "(Qf)(G)"] \arrow[r] & 0 \\
   (i_{!}^{<n}i^*_{<n} Y)(G) \arrow[r] &
   Y(G)\arrow[r] &
   (QY)(G) \arrow[r] & 0.
  \end{tikzcd}
 \] 
 By induction we know that $i^*_{<n}f$ is an epimorphism and it follows that 
 the left vertical map in an epimorphism. As $(Qf)(G)$ is an epimorphism too, the 
 claim follows from the four lemma.
\end{proof}

\begin{remark}\label{rem-min-proj}
 We can now use $Q$ to build minimal projective resolutions.  Consider
 an object $X\in\A\U$.  Then $QX$ is a quotient of $l^*X$ in the
 semisimple category $\A\U^\times$, so we can choose a section
 $QX\to l^*X$.  By passing to the adjoint, we get an morphism
 $e\colon P'_0\to X$, where $P'_0=l_!QX$.  We find that $Qe$ is an
 isomorphism, so $e$ is an epimorphism.  We can iterate this in the
 same way as in Construction~\ref{con-proj-inj-resolutions} to get a
 projective resolution $P'_\bullet$ which is minimal in the sense that
 the differential $d_k\colon P'_k\to P'_{k-1}$ has $Q(d_k)=0$ for all
 $k$.  As is familiar for minimal resolutions in other contexts, it
 follows that $P'_\bullet$ is a summand in any other projective
 resolution of $X$.
\end{remark}

\section{Simple objects}
\label{sec-simple}

In this section we classify the simple objects and show that $\A\U$ is
semisimple if and only if $\U$ is a groupoid.

\begin{definition}\label{def-simple-obj}
 Let $\U$ be a replete full subcategory of $\G$.
 \begin{itemize}
  \item An object $X \in \A\U$ is \emph{simple} if the only subobjects
   are $0$ and $X$.
  \item An object $X \in \A\U$ is \emph{semisimple} if it is a sum of
   simple objects.
  \item The abelian category $\A\U$ is \emph{semisimple} if every
   object is semisimple. 
  \item The abelian category $\A\U$ is \emph{split} if every short 
   exact sequence in $\A\U$ splits. Equivalently, every object of 
   $\A\U$ is both injective and projective
 \end{itemize}
\end{definition}

We immediately get the following result. 

\begin{lemma}\label{lem-simple-obj}
 An object $X\in\A\U$ is simple if and only if it is isomorphic to 
 $s_{G,V}$ for some $G$ and some irreducible $k[\Out(G)]$-module $V$.
\end{lemma}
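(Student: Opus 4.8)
The plan is to prove both implications using the adjunction from Lemma~\ref{lem-projective-and-injective} together with the explicit description of $s_{G,V}$ given in Definition~\ref{def-main-objects}. First I would establish that $s_{G,V}$ is simple when $V$ is irreducible. Recall that $s_{G,V}$ is the image of the canonical map $e_{G,V}\to t_{G,V}$, that $s_{G,V}(T)=0$ unless $T\simeq G$, and that $s_{G,V}(G)\cong V$ as a $k[\Out(G)]$-module. Suppose $0\neq Y\leq s_{G,V}$. Since $Y$ is nonzero, there is some $T$ with $Y(T)\neq 0$, and by the support property this forces $T\simeq G$; replacing $T$ by $G$ (using repleteness and the fact that an isomorphism $T\to G$ identifies the relevant vector spaces $\Out(G)$-equivariantly), we get $0\neq Y(G)\leq s_{G,V}(G)\cong V$. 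As $V$ is irreducible over $k[\Out(G)]$ and $Y(G)$ is a submodule, $Y(G)=s_{G,V}(G)$. Now one must check that a subobject of $s_{G,V}$ that agrees with it at $G$ must be all of $s_{G,V}$: this follows because $s_{G,V}$ is generated by $s_{G,V}(G)$ — indeed $e_{G,V}\twoheadrightarrow s_{G,V}$ and $e_{G,V}=e_{G,V(G)}$ is the image under the left adjoint $e_{G,\bullet}$ of $V$ sitting in degree $G$, so $e_{G,V}$ (hence its quotient $s_{G,V}$) is generated by its value at $G$. Since $Y(G)=s_{G,V}(G)$ and $s_{G,V}$ is generated in degree $G$, we conclude $Y=s_{G,V}$, so $s_{G,V}$ is simple.

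For the converse, let $X$ be simple. Set $G$ to be a group realizing $\base(X)$, so $X(G)\neq 0$ but $X(H)=0$ for $|H|<|G|$. Pick an irreducible $k[\Out(G)]$-submodule $V\leq X(G)$ (possible since $X(G)\neq 0$ is a module over the finite group algebra $k[\Out(G)]$, which is semisimple as $\mathrm{char}(k)=0$). The inclusion $V\hookrightarrow X(G)$ corresponds under the adjunction $(e_{G,\bullet},\ev_G)$ of Lemma~\ref{lem-projective-and-injective} to a morphism $f\colon e_{G,V}\to X$ whose value at $G$ is the inclusion $V\hookrightarrow X(G)$. The image $\img(f)\leq X$ is a nonzero subobject, hence equals $X$ by simplicity, so $f$ is an epimorphism. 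Dually, using the adjunction $(\ev_G,t_{G,\bullet})$, the composite $X(G)\twoheadrightarrow X(G)/(\text{complement of }V)\cong V$ — or more simply any $\Out(G)$-equivariant surjection $X(G)\to V$ — gives a morphism $g\colon X\to t_{G,V}$ whose value at $G$ is that surjection; its kernel is a proper subobject of $X$ (proper since $g_G\neq 0$), hence zero, so $g$ is a monomorphism. Thus $X$ is sandwiched as $e_{G,V}\twoheadrightarrow X\hookrightarrow t_{G,V}$.

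It remains to identify this sandwiched object with $s_{G,V}$. The composite $e_{G,V}\to X\to t_{G,V}$ has value $\id_V$ at $G$ (up to the scalar coming from the chosen splitting, which we may normalize to be the canonical identification), so by the uniqueness clause in Definition~\ref{def-main-objects} — there is a \emph{unique} morphism $e_{G,V}\to t_{G,V}$ restricting to the identity at $G$ — this composite is precisely the canonical map defining $s_{G,V}$. Therefore its image is $s_{G,V}$. But $e_{G,V}\to X$ is epi and $X\to t_{G,V}$ is mono, so the image of the composite is exactly $X$. Hence $X\cong s_{G,V}$, completing the proof. The main obstacle I anticipate is the bookkeeping around repleteness and the choice of isomorphism $T\simeq G$ when checking that the support of a subobject of $s_{G,V}$ is concentrated at the isomorphism class of $G$ in an $\Out(G)$-equivariant way; once that is handled cleanly, the rest is a formal chase through the two adjunctions of Lemma~\ref{lem-projective-and-injective} and the uniqueness statement for the map $e_{G,V}\to t_{G,V}$.
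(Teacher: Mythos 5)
Your proposal is correct, and it is a bit more thorough than the paper's proof, which only writes out the ``only if'' direction and leaves simplicity of $s_{G,V}$ implicit. The main difference is in the converse: the paper picks a simple \emph{quotient} $V$ of $X(G)$, forms the single adjoint map $X\to t_{G,V}$, and uses the vanishing of $X$ in degrees below $|G|$ together with the vanishing of $t_{G,V}$ in degrees above $|G|$ to see that this map already lands in $s_{G,V}$; simplicity of $X$ then finishes it in one step. You instead pick a simple \emph{submodule} $V\leq X(G)$, use both adjunctions to sandwich $X$ as $e_{G,V}\twoheadrightarrow X\hookrightarrow t_{G,V}$, and invoke the uniqueness clause in Definition~\ref{def-main-objects} to identify the composite with the canonical map whose image is $s_{G,V}$. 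Both work; the paper's version is shorter because it never needs the left adjoint, while yours makes the mechanism visible as a literal factorization of the defining map $e_{G,V}\to t_{G,V}$ and, as a bonus, supplies the easy direction that the paper omits. One tiny point worth noting: in the forward direction it is cleaner to observe directly that $Y(G)\neq 0$ (rather than phrasing it as ``replacing $T$ by $G$''), since repleteness only tells you $Y(T)\cong Y(G)$ as vector spaces, not as $\Out(G)$-modules until a choice of isomorphism $T\to G$ is fixed; but since all such choices are conjugate under $\Out(G)$, the conclusion $Y(G)=V$ is unaffected, so this is a presentational point rather than a gap.
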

\begin{proof}
 Consider a simple object $X\in\A\U$.  Choose $G$ of minimal order so 
 that $X(G)\neq 0$.  It is standard that the category of
 $k[\Out(G)]$-modules is semisimple, so we can choose a simple
 quotient $V$ of $X(G)$ in this category.  The projection $X(G)\to V$
 is adjoint to a morphism $X\to t_{G,V}$ in $\A\U$.  As $X(H)=0$ when
 $|H|<|G|$, we see that this factors through the subobject
 $s_{G,V}\leq t_{G,V}$.  The morphism $X\to s_{G,V}$ is then an
 epimorphism whose kernel is a proper subobject, and so must be zero
 by simplicity.  Thus $X\simeq s_{G,V}$ as required.
\end{proof}

We are now ready to study when our abelian category is semisimple.

\begin{proposition}\label{prop-semisimple}
 The following are equivalent:
 \begin{itemize}
  \item[(a)] $\U$ is a groupoid;
  \item[(b)] the abelian category $\A\U$ is split;
  \item[(c)] the abelian category $\A\U$ is semisimple.
 \end{itemize}  
\end{proposition}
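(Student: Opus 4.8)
The plan is to prove the cycle of implications (a) $\Rightarrow$ (b) $\Rightarrow$ (c) $\Rightarrow$ (a), using the classification of simple objects from Lemma~\ref{lem-simple-obj} and the analysis of the generators $e_G$.

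\textbf{(a) $\Rightarrow$ (b).} Suppose $\U$ is a groupoid. Then for each $G\in\U$ the only morphisms out of $G$ are isomorphisms, so $\U$ decomposes as a disjoint union of one-object groupoids, one for each isomorphism class, and $\A\U$ decomposes accordingly as a product of categories of the form $[\Aut(G)^\op,\Vect_k]=\mathcal M_G$. Since $k$ has characteristic zero and $\Out(G)=\Aut(G)$ is finite, each category $\mathcal M_G$ of $k[\Out(G)]$-modules is semisimple by Maschke's theorem, hence every short exact sequence splits. A product of split categories is split, so $\A\U$ is split. (Alternatively one invokes Remark~\ref{rem-proj-inj}, which already records that for a groupoid with finite hom-sets every object of $[\C^\op,\Vect_k]$ is both projective and injective.)

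\textbf{(b) $\Rightarrow$ (c).} This is essentially formal. In a Grothendieck category (Remark~\ref{rem-grothendieck}) in which every object is both projective and injective, one shows every object is semisimple: given $X$, any subobject $Y\leq X$ is injective hence a direct summand, so $X$ has no essential proper subobjects and its socle must be all of $X$; since a Grothendieck category has enough injectives and the socle argument can be run on every quotient, $X$ is a sum of simple objects. More concretely, I would argue that any nonzero $X$ contains a simple subobject (pick $G$ of minimal order with $X(G)\neq0$ and a simple $k[\Out(G)]$-submodule of $X(G)$, which generates a copy of $s_{G,V}$ inside $X$ by the adjunction of Lemma~\ref{lem-projective-and-injective}), and then a Zorn's lemma argument on the collection of semisimple subobjects, using splitness to split off a complement, forces the maximal semisimple subobject to be all of $X$.

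\textbf{(c) $\Rightarrow$ (a).} This is the main obstacle and the only implication with real content. Suppose $\A\U$ is semisimple but $\U$ is not a groupoid; then there is a non-invertible morphism, hence (since morphisms in $\G$ are conjugacy classes of surjections) a surjection $\pi\colon G\to G/N$ in $\U$ with $N\neq 1$. Precomposition gives a map $e_{G/N}\to e_G$, which is a monomorphism: evaluated at $T$ it is the injection $k[\U(T,G/N)]\to k[\U(T,G)]$, $[\alpha]\mapsto[\pi\circ\alpha]$, which is injective because distinct conjugacy classes of surjections $T\to G/N$ remain distinct after composing with the surjection $\pi$ (this is Lemma~\ref{lem-epimorphism} read in the other direction, or a direct check). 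Now if $\A\U$ were semisimple, $e_G$ would be a finite direct sum of objects $s_{H,V}$; comparing values at $G$ and at $G/N$ and using that $e_G(G)=k[\Out(G)]$ is finite-dimensional while $e_G(G/N)\neq 0$, one gets a contradiction. More cleanly: in a semisimple category every monomorphism splits, so $e_{G/N}\to e_G$ would be split, giving a nonzero idempotent $e$ on $e_G$ whose image is $e_{G/N}$; but $\A\U(e_G,e_G)=e_G(G)=k[\U(G,G)]=k[\Out(G)]$, so the only idempotents correspond to idempotents in the group algebra, and one checks the image of any such idempotent, evaluated at $G$, has the same support as $e_G(G)\neq 0$ — whereas $e_{G/N}(G)$ counts surjections $G\to G/N$, which must in particular be non-split and cannot include the identity-type contribution, forcing a dimension mismatch. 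The cleanest formulation I expect to use: $s_{G,k}$ (the simple object at $G$ with trivial coefficients) appears as a quotient of $e_G$ but the composite $e_{G/N}\hookrightarrow e_G\twoheadrightarrow s_{G,k}$ is zero (since $s_{G,k}$ vanishes on all groups not isomorphic to $G$, in particular on $G/N$, so the map out of $e_{G/N}$ kills the generator), contradicting that in a semisimple category the inclusion of the summand $e_{G/N}$ cannot be annihilated by projection onto a complementary-supported summand unless $s_{G,k}$ is not actually a summand of $e_G$ — but it is, because $e_G(G)=k[\Out(G)]$ surjects onto the trivial module $k$. I would phrase the final contradiction carefully in terms of the indecomposable projective cover of $s_{G,k}$ having length $>1$, which is incompatible with semisimplicity.
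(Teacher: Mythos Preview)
Your implications (a) $\Rightarrow$ (b) and (b) $\Rightarrow$ (c) are fine (the paper simply cites references for these), but your argument for (c) $\Rightarrow$ (a) has a genuine error at its foundation: the Yoneda map goes the wrong way. Given $\pi\colon G\to G/N$ in $\U$, post-composition with $\pi$ sends $\U(T,G)\to\U(T,G/N)$, so the induced map of representables is $e_G\to e_{G/N}$, not $e_{G/N}\to e_G$. In fact there is \emph{no} nonzero morphism $e_{G/N}\to e_G$ at all, since by Yoneda $\A\U(e_{G/N},e_G)=e_G(G/N)=k[\U(G/N,G)]=0$ (a proper quotient of a finite group cannot surject onto it). Every subsequent variant you propose --- the splitting of $e_{G/N}\hookrightarrow e_G$, the composite $e_{G/N}\hookrightarrow e_G\twoheadrightarrow s_{G,k}$ --- is built on this nonexistent monomorphism, so the whole paragraph collapses.

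The paper's route is both simpler and in the right direction. Write $H=G/N$ and look at the canonical epimorphism $e_{H,k}\twoheadrightarrow s_{H,k}$. If $\A\U$ were split, $s_{H,k}$ would be a retract of $e_{H,k}$; in particular, for the morphism $\pi\colon G\to H$, the structure map $\pi^*\colon s_{H,k}(H)\to s_{H,k}(G)$ would have to be injective (as a retract of the injective map $\pi^*\colon e_{H,k}(H)\to e_{H,k}(G)$, injectivity coming from Lemma~\ref{lem-epimorphism}). But $s_{H,k}(H)=k$ while $s_{H,k}(G)=0$, so $\pi^*$ is $k\to 0$, which is not injective. Your final sentence about the projective cover of $s_{G,k}$ having length $>1$ is gesturing at exactly this phenomenon, but you should anchor it at the \emph{smaller} group $H$ and use the non-injectivity of the structure map of $s_{H,k}$ rather than a nonexistent inclusion of representables.
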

\begin{proof}
 The fact that (b) and (c) are equivalent is well-known and proved for
 instance in~\cite{Stenstrom}*{V.6.7}.  It is also standard that (a)
 implies (b); the argument will be recalled as
 Proposition~\ref{prop-groupoid-reps-a}(a) below.  Thus, we need only
 prove that~(b) implies~(a), or the contrapositive of that.  Suppose
 that $\U$ is not a groupoid so there exists an epimorphism
 $\varphi\colon G\to H$ which is not an isomorphism.  
 Consider the canonical epimorphism $\pi\colon e_{H,k}\to s_{H,k}$.
 The map $\varphi^*\colon e_{H,k}(H)\to e_{H,k}(G)$ is easily seen to
 be injective.  The map $\varphi^*\colon s_{H,k}(H)\to s_{H,k}(G)$ is
 of the form $k\to 0$ and so is not injective.  It follows that
 $s_{H,k}$ cannot be a retract of $e_{H,k}$, so $\pi$ cannot split.
 Thus, $\A\U$ is not a split abelian category.
\end{proof}

\section{Finite groupoids}
\label{sec-finite-groupoids}

In this section we study the abelian category $\A\U$ in the special
case that $\U\leq \G$ is a finite groupoid. For example we could take
$\U= \lbrace G \in \G \mid |G|=n \rbrace$.

\begin{lemma}\label{lem-groupoid-equivalence}
 Suppose we choose a list of groups $G_1, \ldots, G_r$ containing
 precisely one representative of each isomorphism class of groups in
 $\U$, so $\G(G_i, G_j)= \emptyset$ for $i \not = j$. Let
 $\mathcal{M}_i$ be the category of modules for the group ring
 $k[\Out(G_i)]$ and put $\mathcal{M}= \prod_{i=1}^r
 \mathcal{M}_i$. Then the functor $X \mapsto (X(G_i))_{i=1}^r$ gives
 an equivalence of categories $\A\U \to \mathcal{M}$.
\end{lemma}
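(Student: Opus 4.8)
The plan is to exhibit the functor $\Psi\colon\A\U\to\M$, $X\mapsto(X(G_i))_{i=1}^r$, together with an inverse and natural isomorphisms. Since $\U$ is a finite groupoid, the only morphisms between distinct $G_i$ and $G_j$ are empty, and $\U(G_i,G_i)=\Out(G_i)$; so $\U$ is equivalent (as a category) to the disjoint union of the one-object groupoids $B\Out(G_i)$. First I would observe that $\A\U=[\U^\op,\Vect_k]$ only depends on $\U$ up to equivalence, so we may replace $\U$ by its skeleton $\{G_1,\dots,G_r\}$ with hom-sets as above; this already decomposes $\A\U$ as a product $\prod_i[B\Out(G_i)^\op,\Vect_k]$, and $[B\Out(G_i)^\op,\Vect_k]$ is by definition the category $\M_i$ of right $k[\Out(G_i)]$-modules (equivalently left modules, since we may pass to the opposite group or use inversion). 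Tracking the identifications, the composite is exactly $X\mapsto(X(G_i))_i$.

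Concretely, I would define the candidate inverse $\Phi\colon\M\to\A\U$ by
\[
 \Phi((V_i)_{i})(T) = \bigoplus_{i=1}^r \bigl( V_i\otimes_{k[\Out(G_i)]} k[\G(T,G_i)]\bigr),
\]
i.e.\ $\Phi((V_i)_i)=\bigoplus_i e_{G_i,V_i}$ in the notation of Definition~\ref{def-main-objects}; note that for $T\in\U$ we have $\G(T,G_i)\neq\emptyset$ only when $T\simeq G_i$, in which case it is a torsor over $\Out(G_i)$, so $\Phi((V_i)_i)(T)\simeq V_j$ when $T\simeq G_j$ and is $0$ otherwise. The natural isomorphism $\Psi\Phi\simeq\id_\M$ is then immediate from $e_{G_i,V_i}(G_j)=0$ for $i\neq j$ and $e_{G_i,V_i}(G_i)=V_i$ (the unit of the adjunction in Lemma~\ref{lem-projective-and-injective} is the identity). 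For the other composite, given $X\in\A\U$ I would use the counit $\epsilon_X\colon\bigoplus_i e_{G_i,X(G_i)}\to X$ from Lemma~\ref{lem-projective-and-injective}, and check it is an isomorphism by evaluating at each $T\in\U$: choosing $j$ with $T\simeq G_j$ and an isomorphism $T\to G_j$, the map becomes $X(G_j)\otimes_{k[\Out(G_j)]}k[\G(T,G_j)]\to X(T)$, which is an isomorphism because $\G(T,G_j)$ is a free transitive $\Out(G_j)$-set, so the source is $X(G_j)$ with the basepoint chosen and the map is just the transport isomorphism $X(G_j)\to X(T)$.

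The main obstacle — really the only point requiring care — is the bookkeeping around the $\Out(G_i)$-actions: making sure the left/right module conventions on $k[\Out(G_i)]$ match the contravariant functoriality of $X$ on $\U^\op$, and that everything is natural in $T$ rather than just a pointwise bijection. This is handled by noting that all the identifications above are forced: the counit $\epsilon_X$ is a genuine natural transformation of functors on $\U^\op$, so once it is shown to be a pointwise isomorphism it is an isomorphism in $\A\U$, and no further naturality verification is needed. Finally, $\Psi$ is a product of the evaluation functors $\ev_{G_i}$, each of which is exact and $k$-linear, so the equivalence is automatically an equivalence of abelian (indeed $k$-linear) categories, completing the proof.
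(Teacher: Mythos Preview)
Your proof is correct and follows exactly the same approach as the paper: the paper's proof is the single sentence ``The inverse functor is given by $(V_i)_{i=1}^r \mapsto \bigoplus_{i=1}^r e_{G_i,V_i}$,'' and you have simply spelled out the verification that this is indeed an inverse using the unit/counit of Lemma~\ref{lem-projective-and-injective}.
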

\begin{proof}
 The inverse functor is given by $(V_i)_{i=1}^r \mapsto 
 \bigoplus_{i=1}^r e_{G_i,V_i}$.
\end{proof}

\begin{remark}\label{rem-functors}
 Let $i\colon \U \to \G$ denote the inclusion functor. After choosing
 a list of groups $G_1, \ldots, G_r \in \U$ as in
 Lemma~\ref{lem-groupoid-equivalence}, we have identifications
 \[
  i_!= \bigoplus_{i=1}^r e_{G_i, \bullet} \quad \text{and} \quad 
  i_*= \bigoplus_{i=1}^r t_{G_i, \bullet}.
 \]
\end{remark}		

\begin{proposition}\label{prop-groupoid-reps-a}
 Suppose that $\U\subseteq\V\subseteq\G$, and let $i\colon\U\to\V$
 denote the inclusion. 
 \begin{itemize}
  \item[(a)] All monomorphisms and epimorphisms in $\A\U$ are split.
  \item[(b)] All objects in $\A\U$ are both injective and projective.
  \item[(c)] All objects in the image of $i_!$ are projective, and all
   objects in the image of $i_*$ are injective.
  \item[(d)] The functor $i_!$ preserves all limits and colimits, as
   does the functor $i_*$.
 \end{itemize}
\end{proposition}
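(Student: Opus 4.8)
The plan is to exploit the equivalence of categories $\A\U\simeq\M=\prod_{i=1}^r\M_i$ from Lemma~\ref{lem-groupoid-equivalence}, together with the fact that the group algebras $k[\Out(G_i)]$ are semisimple since $k$ has characteristic zero and $\Out(G_i)$ is finite (Maschke's theorem). For part~(a), a monomorphism $X\to Y$ in $\A\U$ corresponds under this equivalence to a tuple of injective $k[\Out(G_i)]$-linear maps $X(G_i)\to Y(G_i)$; since each $\M_i$ is semisimple, each such injection splits $k[\Out(G_i)]$-linearly, and assembling the splittings componentwise gives a splitting in $\M$, hence in $\A\U$. The argument for epimorphisms is dual. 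Part~(b) is then immediate: in any abelian category, an object $X$ is injective as soon as every monomorphism out of it splits, and projective as soon as every epimorphism onto it splits, so~(a) gives~(b) directly. (Equivalently one invokes Proposition~\ref{prop-semisimple}: $\U$ is a groupoid, so $\A\U$ is split, i.e.\ every object is both injective and projective.)

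For part~(c), I would use Lemma~\ref{lem-omnibus}(e), which states that $i_!$ preserves projective objects and $i_*$ preserves injective objects. Combining this with part~(b)---every object of $\A\U$ is both projective and injective---we conclude that $i_!$ sends every object of $\A\U$ to a projective object of $\A\V$, and $i_*$ sends every object to an injective object of $\A\V$. (One can also see this concretely via Remark~\ref{rem-functors}: $i_!=\bigoplus_i e_{G_i,\bullet}$ is a sum of functors of the form $e_{G_i,V}$, which are projective by Lemma~\ref{lem-projective-and-injective}, and dually for $i_*$.)

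For part~(d), the functor $i_!$ is always a left adjoint (to $i^*$) and so automatically preserves all colimits; the content is that it also preserves all limits. Here I would argue that because $\U$ is a groupoid with finitely many isomorphism classes, the comma-category colimit formula for $i_!$ in Definition~\ref{def-subcat-functors}(a) can be rewritten, via Remark~\ref{rem-functors}, so that $(i_!X)(G)=\bigoplus_{i=1}^r\bigl(e_{G_i,X(G_i)}\bigr)(G)=\bigoplus_{i=1}^r\bigl(k[\G(G,G_i)]\otimes_{k[\Out(G_i)]}X(G_i)\bigr)$. Each $\G(G,G_i)$ is a finite set (since all groups involved are finite and there are only finitely many conjugacy classes of homomorphisms), so this is a \emph{finite} direct sum of functors each of which is built from finite-dimensional tensor operations; such functors preserve arbitrary products and hence all limits, since limits are computed pointwise in $\Vect_k$ and a finite direct sum of limit-preserving functors preserves limits. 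The dual argument---$i_*=\bigoplus_i t_{G_i,\bullet}$ with $t_{G_i,V}(G)=\Hom_{k[\Out(G_i)]}(k[\G(G_i,G)],V)$ a finite $\Hom$---shows $i_*$ preserves all colimits as well as all limits. The main obstacle is the limit-preservation in~(d): one must be careful that the relevant hom-sets $\G(G,G_i)$ and $\G(G_i,G)$ are finite so that the infinite direct sums appearing in $i_!$ and $i_*$ are really finite for each fixed evaluation point, which is exactly what makes the tensor and hom functors commute past arbitrary (co)limits.
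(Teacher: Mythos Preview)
Your proof is correct and follows essentially the same strategy as the paper. Parts~(a) and~(b) are identical to the paper's argument via Maschke. For~(c), the paper argues directly from the adjunction $\A\V(i_!X,-)\simeq\A\U(X,i^*(-))$, while you cite Lemma~\ref{lem-omnibus}(e), which encapsulates the same reasoning. For~(d), the paper makes the limit-preservation explicit by choosing orbit representatives $f_1,\dotsc,f_s\in\G(G,G_k)$ with stabilisers $\Delta_s\leq\Out(G_k)$, so that $e_{G_k,V}(G)\simeq\bigoplus_s V_{\Delta_s}$ and $t_{G_k,V}(G)\simeq\prod_s V^{\Delta_s}$; since taking (co)invariants under a finite group in characteristic zero is a retract of the identity, these visibly preserve all (co)limits. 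Your phrase ``finite-dimensional tensor operations'' gestures at the same fact but is slightly imprecise: the tensor is over $k[\Out(G_i)]$, not over $k$, so finiteness of $\G(G,G_i)$ alone does not suffice---one also needs that $k[\G(G,G_i)]$ is a finitely generated \emph{projective} $k[\Out(G_i)]$-module (which it is, since the ring is semisimple), so that tensoring with it is a retract of a finite power. The paper's orbit decomposition makes this transparent.
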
	
\begin{proof}
 We identify $\A\U$ with $\mathcal{M}$ as in Lemma above. Maschke's
 Theorem shows that $(a)$ and $(b)$ hold in $\mathcal{M}_i$, and it
 follows that they also hold in $\mathcal{M}$ and $\A\U$. If
 $X\in\A\U$ then the functor $\A(i_!(X), -)$ is isomorphic to
 $\A\U(X, i^*(-))$. Here $i^*$ and $\A\U(X,-)$ preserve epimorphisms,
 so $i_!(X)$ is projective. Similarly, we see that $i_*(X)$ is
 injective, which proves $(c)$.

 We next claim that $i_*$ preserves all limits and colimits.  As it is
 a right adjoint it is enough to show that it preserves all colimits.
 By Remark~\ref{rem-functors}, it is enough to show that the functor
 $t_{G_k,\bullet}$ preserves colimits for all $1\leq k\leq r$.
 Choose $f_1, \ldots, f_s \in \G(G_k, G)$, containing precisely one
 element from each $\Out(G_k)$-orbit. Let $\Delta_s \leq \Out(G_k)$ be
 the stabiliser of $f_s$. We find that
 \[  t_{G_k,V}(G) = 
     \Hom_{k[\Out(G_k)]}(k[\G(G_k,G)], V) =
     \prod_{s} V^{\Delta_s},
 \]
 and this is easily seen to preserve all colimits as required.  A
 similar argument shows that $i_!$ preserves all limits and
 colimits. As before, it is enough to show that the functor
 $e_{G_k, \bullet}$ preserves all limits.  We find that
 \[
   e_{G_k,V}(G)= k[\G(G,G_k)] \otimes_{k[\Out(G_k)]} V =\bigoplus_s 
   V_{\Delta_s}
 \]
 and this is easily seen to preserve all limits.
\end{proof}

The following results are standard. 

\begin{proposition}\label{prop-groupoid-reps-b}
\leavevmode
 \begin{itemize}
  \item[(a)] The simple objects of $\A\U$ are the same as the 
   indecomposable objects, and these are precisely the objects 
   $e_{G,V}$ for some $G\in\U$ and irreducible 
   $k[\Out(G)]$-module $V$. 
  \item[(b)] Every nonzero morphism to a simple object is a split 
   epimorphism, and every nonzero morphism from a simple object is a 
   split monomorphism.
  \item[(c)] If $S$ and $S'$ are non-isomorphic simple objects in $\A\U$, 
   then $\A\U(S,S')=0$.
  \item[(d)] If $S$ is a simple object in $\A\U$, then $\End(S)$ 
   is a division algebra of finite dimension over $k$.
  \item[(e)] The category $\A\U$ has finitely many isomorphism classes of 
   simple objects.
  \item[(f)] Suppose that the list $S_1,\ldots, S_s$ contains precisely 
   one simple object from each isomorphism class, and put 
   $D_j= \End(S_j)$. Let $\mathcal{N}_j$ be the category of right 
   modules over $D_j$, and put $\mathcal{N}=\prod_j \mathcal{N}_j$. Define 
   functors
   \[
    \A\U \xrightarrow{\phi} \mathcal{N} 
    \xrightarrow{\psi} \A\U
   \]
   by $\phi(X)_j=\A\U(S_j,X)$ and $\psi(N)=\bigoplus_jN_j\otimes_{D_j} S_j$.
   Then $\phi$ and $\psi$ are inverse to each other, and so are equivalences.
\end{itemize}
\end{proposition}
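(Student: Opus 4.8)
This is a standard structure theorem for semisimple abelian categories (essentially the Artin--Wedderburn story done categorically), so the plan is to reduce everything to the finite groupoid case via Lemma~\ref{lem-groupoid-equivalence} and then invoke the well-known facts about semisimple categories. Concretely, I would first record that $\A\U$ with $\U$ a finite groupoid is equivalent to a finite product $\M=\prod_{i=1}^r\M_i$ of module categories over the group algebras $k[\Out(G_i)]$, each of which is semisimple by Maschke's Theorem (Proposition~\ref{prop-groupoid-reps-a}). Thus it suffices to prove the corresponding statements for such a product $\M$, or equivalently for each factor $\M_i$ separately, since a product of categories has its simple objects, homs, and endomorphism rings computed factorwise, and a product of equivalences is an equivalence.

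\textbf{Key steps.} For a single factor $\M_i=\mathrm{Mod}\text{-}k[\Out(G_i)]$: (i) By Maschke's theorem $k[\Out(G_i)]$ is a finite-dimensional semisimple $k$-algebra, so every module is a direct sum of simple modules; the simple modules are exactly the irreducible $k[\Out(G_i)]$-modules, and under the equivalence of Lemma~\ref{lem-groupoid-equivalence} these correspond to the objects $e_{G_i,V}$ with $V$ irreducible — this gives (a) once we note that in a semisimple category the simple objects coincide with the indecomposable ones (an indecomposable semisimple object has no proper nonzero summand, hence is simple). (ii) Statement (b): a nonzero morphism $f\colon X\to S$ with $S$ simple has nonzero image, which must be all of $S$ by simplicity, so $f$ is epi; since every epi in a semisimple category splits (Proposition~\ref{prop-groupoid-reps-a}(a)), $f$ is a split epi. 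Dually for morphisms out of $S$. (iii) Statement (c) is immediate: a nonzero map between non-isomorphic simple objects would be an isomorphism by (b). (iv) Statement (d) is Schur's lemma — $\End(S)$ is a division ring, and it is finite-dimensional over $k$ because $S$ corresponds to a finite-dimensional module over a finite-dimensional $k$-algebra. (v) Statement (e): there are finitely many irreducible $k[\Out(G_i)]$-modules up to isomorphism (the algebra is finite-dimensional), and finitely many factors, so finitely many simple objects in $\A\U$.

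\textbf{The equivalence (f).} For the final statement I would verify directly that $\phi$ and $\psi$ are quasi-inverse. Since $\A\U$ is semisimple, write an arbitrary $X$ as $X\simeq\bigoplus_j S_j^{\oplus m_j}$ for multiplicities $m_j$ (a set, really; allow infinite sums). Then by (c) and (d), $\phi(X)_j=\A\U(S_j,X)\simeq D_j^{m_j}$ as a right $D_j$-module, using that $\A\U(S_j,-)$ commutes with the direct sum (this is automatic for a finite direct sum and, for infinite sums, follows from the fact that in a Grothendieck category with $S_j$ compact — or more simply because $S_j=e_{G,V}$ is finitely generated projective — $\Hom(S_j,-)$ preserves coproducts). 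Conversely $\psi\phi(X)=\bigoplus_j \A\U(S_j,X)\otimes_{D_j}S_j\simeq\bigoplus_j D_j^{m_j}\otimes_{D_j}S_j\simeq\bigoplus_j S_j^{m_j}\simeq X$, naturally; and $\phi\psi(N)_j=\A\U(S_j,\bigoplus_l N_l\otimes_{D_l}S_l)\simeq N_j$ using (c) to kill the cross terms and $\A\U(S_j,S_j)=D_j$ to identify the diagonal term. Checking naturality of these isomorphisms is routine diagram-chasing. I expect the main technical point — not deep but the thing one must be careful about — to be the preservation of arbitrary coproducts by $\A\U(S_j,-)$, i.e.\ compactness of the simple objects; this is where one uses that $S_j$ is a finitely generated projective generator-summand in the Grothendieck category $\A\U$ (Remark~\ref{rem-grothendieck}, Lemma~\ref{lem-projective-and-injective}), rather than anything about finite groupoids per se, and it is what lets the argument run even when $X$ is an infinite direct sum of simples.
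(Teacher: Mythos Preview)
Your proposal is correct and follows the same overall arc as the paper's proof: parts~(a)--(c) are argued identically, and for~(f) both you and the paper reduce to checking the unit and counit on simple objects, with the key technical input being that $\A\U(S_j,-)$ preserves direct sums (the paper phrases this as ``both $\phi$ and $\psi$ preserve all direct sums'').

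The one noteworthy difference is in how you handle~(d) and~(e). You invoke the equivalence of Lemma~\ref{lem-groupoid-equivalence} to reduce to finite-dimensional semisimple algebras $k[\Out(G_i)]$, and then quote the standard facts (Schur's lemma, finitely many irreducibles). The paper instead argues intrinsically in $\A\U$: it takes the generator $U=\bigoplus_i e_{G_i}$, decomposes it as a finite sum of indecomposables $\bigoplus_j S_j^{d_j}$, and observes that any simple $S$ receives a nonzero map from $U$ and hence is isomorphic to some $S_j$ (giving~(e)); finite-dimensionality of $\End(S)$ then follows because $S$ is a summand of $U$ and $\End(U)$ is finite-dimensional. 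Your route is quicker and more transparent given that Lemma~\ref{lem-groupoid-equivalence} is already available; the paper's route has the minor virtue of showing how the structure theory flows directly from the existence of a finitely generated generator, without explicitly passing through the module-category description.
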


\begin{proof}
 The first part of (a) is clear from the fact that all monomorphisms
 are split. As any morphism in $\U$ is an isomorphism we see that
 $e_{G,V}=s_{G,V}$ and this is simple when $V$ is irreducible, see
 Lemma~\ref{lem-simple-obj}. For (b), suppose that
 $\alpha\colon X \to S$ is nonzero, where $S$ is simple. Then
 $\img(\alpha)$ is a nonzero subobject of $S$, so it must be all of
 $S$, so $\alpha$ is an epimorphism, and all epimorphisms are
 split. This gives half of $(b)$, and the other half is similar. Now
 suppose that $\alpha \colon S \to S'$, where both $S$ and $S'$ are
 simple. If $\alpha \not = 0$ then $(b)$ tell us that $\alpha$ is both
 a split monomorphism and a split epimorphism, so it is an
 isomorphism. The contrapositive gives claim (c), and the special case
 $S'=S$, gives most of (d), apart from the finite-dimensionality
 statement. For that, we choose a list of groups $G_i$ as in Lemma
 \ref{lem-groupoid-equivalence}, and put $U= \bigoplus_i e_{G_i}$
 which is a generator for $\A\U$. We can decompose $U$ as a finite
 direct sum of indecomposables, say $U= \bigoplus_{j=1}^s S_j^{d_j}$
 with $0 < d_{j}< \infty $ and $S_j \not \simeq S_k$ for $j \not =
 k$. If $S$ is simple, there is an nonzero map $U \to S$ and so a
 nonzero map $S_j \to S$ for some $j$, that has to be an isomorphism
 from (b). This proves (e). We also note that $S$ is a summand in $U$,
 so $\End(S)$ is a summand in $\End(U)$ and hence it has finite
 dimension over $k$, completing the proof of (d).

 Now define $\phi$ and $\psi$ as in (f).  Put
 $T_m= \psi(S_m) \in \mathcal{N}$, so $(T_m)_m= D_m$ and $(T_m)_j=0$
 for $j \not = m$. Define
 \[
  \eta_N \colon N \to \phi \psi (N)= \A\U(S_j, \bigoplus_{k} N_k
  \otimes_{D_k} S_k)
 \]
 \[
  \epsilon_X \colon \psi \phi (X)= \bigoplus_j \A\U(S_j, X)
  \otimes_{D_j}S_j \to X
 \]
 as follows. First, any $n \in N_j$ gives a map $D_j \to N_j$ and thus
 a map
 \[
  S_j = S_j \otimes_{D_j} D_j \to S_j \otimes_{D_j} N_j \leq
  \bigoplus_k N_k \otimes_{D_k} S_k
 \]
 we take this to be the $j$-th component of $\eta_N$. Similarly, there
 is an evaluation morphism $\A\U(S_j,X) \otimes S_j \to X$, which is
 easily seen to factor through $\A\U(S_j,X) \otimes_{D_j} S_j$. We
 combine these maps to give $\epsilon_X$.

 We claim that $\epsilon_X$ is an isomorphism. Indeed, we know that
 the object $U$ is a generator for $\A\U$, so the objects $S_j$ form a
 generating family. As all epimorphisms in $\A\U$ split, we see that
 every object is a retract of a direct sum of objects of the form
 $S_m$. We also see that both $\phi$ and $\psi$ preserve all direct
 sums. It will therefore suffice to check that $\epsilon_{S_m}$ is an
 isomorphism, and this follows easily from our description of
 $T_m= \psi(S_m)$.

 Because every module over a division algebra is free, we also see
 that every object of $\mathcal{N}$ is a direct sum of objects of the
 form $T_m$. It is easy to see that $\eta_{T_m}$ is an isomorphism,
 and it follows that $\eta_N$ is an isomorphism for all $N$.
\end{proof}

\section{Projectives}
\label{sec-projectives}

In this section we study and classify the projective
objects of $\A\U$ for a replete full subcategory $\U$ of $\G$. 
  
\begin{lemma}\label{lem-proj-as-summand}
 Consider an object $P$ in $\A\U$. Then the following are equivalent:
 \begin{itemize}
  \item[(a)] $P$ is projective in $\A\U$.
  \item[(b)] $P$ is isomorphic to a retract of a direct sum of 
   objects of the form $e_G$ with $G \in \U$.
 \end{itemize}
\end{lemma}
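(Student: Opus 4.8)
The plan is to prove the two implications separately, using only the Grothendieck structure on $\A\U$ recorded in Remark~\ref{rem-grothendieck} together with the projectivity of the generators established in Lemma~\ref{lem-projective-and-injective}.

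First I would treat the implication (b) $\Rightarrow$ (a). The starting point is that each generator $e_G$ is projective: by the Yoneda identity $\A\U(e_G,-)=\psi^G$, and $\psi^G$ is exact because (co)limits in $\A\U$ — in particular cokernels — are computed pointwise (Remark~\ref{rem-bicomplete}), so a morphism in $\A\U$ is an epimorphism exactly when it is pointwise surjective. (Equivalently one may invoke $e_G=e_{G,k[\Out(G)]}$ and Lemma~\ref{lem-projective-and-injective}.) Next, for any small family $(G_i)_{i\in I}$ in $\U$ we have $\A\U\bigl(\bigoplus_{i\in I}e_{G_i},-\bigr)\cong\prod_{i\in I}\psi^{G_i}$, and since a product of surjective $k$-linear maps is surjective, this functor is exact; hence any direct sum of generators is projective. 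Finally, if $P$ is a retract of such a direct sum, then $\A\U(P,-)$ is a retract of an exact functor and therefore exact, so $P$ is projective.

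For the implication (a) $\Rightarrow$ (b): by Remark~\ref{rem-grothendieck} there is an epimorphism $q\colon P'\twoheadrightarrow P$ with $P'=\bigoplus_{i\in I}e_{G_i}$ a direct sum of generators. Since $P$ is projective, the identity map $\id_P$ lifts along $q$ to a morphism $s\colon P\to P'$ with $qs=\id_P$. Thus $P$ is a retract of $P'$, which has the required form. The argument is entirely formal; the only point that needs a moment's care is the exactness of $\A\U\bigl(\bigoplus_{i\in I}e_{G_i},-\bigr)$ — that is, that arbitrary direct sums of the projective generators stay projective — which rests on products of epimorphisms of $k$-vector spaces being epimorphisms together with the pointwise computation of colimits in $\A\U$.
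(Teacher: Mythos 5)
Your proof is correct. The direction (a) $\Rightarrow$ (b) is exactly the paper's argument: use the Grothendieck generation of $\A\U$ by the $e_G$ to produce an epimorphism $\bigoplus_i e_{G_i}\twoheadrightarrow P$ and split it by projectivity. For (b) $\Rightarrow$ (a) you argue directly inside $\A\U$: evaluation $\psi^G$ is exact because cokernels are computed pointwise, so each $e_G$ is projective by Yoneda; $\A\U(\bigoplus_i e_{G_i},-)\cong\prod_i\psi^{G_i}$ stays exact because products of surjections of $k$-vector spaces are surjective; and retracts of projectives are projective. The paper instead routes this implication through the extension functor $i_!\colon\A\U\to\A\G$ (its proof in fact references an auxiliary condition ``(c)'' about $i_!(P)$ that survives from a three-part formulation of the statement), using that $i_!$ preserves colimits and that $i^*i_!\simeq 1$ to transport a splitting back down. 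Your route is more self-contained and arguably cleaner for the lemma as stated; the paper's detour has the side benefit of simultaneously recording how projectivity interacts with $i_!$, which it exploits in Lemma~\ref{lem-i-preserves-projective} and Proposition~\ref{prop-projective-criteria}. Both arguments are complete and correct.
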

\begin{proof}
 First suppose that $(a)$ holds. Let $\U_0$ be a countable collection
 of objects of $\U$ that contains at least one representative of every
 isomorphism class. Put
 \[
  FP= \bigoplus_{G \in \U_0}\bigoplus_{x \in P(G)}e_G \in \A\U.
 \]
 Each pair $(G,x)$ defines a morphism
 $\epsilon_{(G,x)} \colon e_G \to P$ by the Yoneda Lemma. By combining
 these for all pairs $(G,x)$, we get a morphism
 $\epsilon \colon FP \to P$ which is an epimorphism by construction.
 As $P$ is assumed to be projective this epimorphism must split, so
 $P$ is a retract of $FP$, so $(b)$ holds. 
 Conversely, suppose that $P$ is as in (b) and note that $e_G$ is projective 
 since $\A\U(e_G, -)$ is exact by the Yoneda Lemma. 
 Sums and retracts of projective objects is again 
 projective so (a) follows.
\end{proof}

\begin{lemma}\label{lem-i-preserves-projective}
 Let $i\colon\U\to\V$ be an inclusion of replete full subcategories of
 $\G$, and let $P$ be an object of $\A\U$.  Then $P$ is projective in
 $\A\U$ iff $i_!(P)$ is projective in $\A\V$.
\end{lemma}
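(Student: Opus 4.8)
The plan is to treat the two implications separately, using only the formal properties of the adjunction $(i_!,i^*)$ recorded in Lemma~\ref{lem-omnibus}.

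The forward implication requires essentially no work: Lemma~\ref{lem-omnibus}(e) already states that $i_!$ carries projective objects to projective objects, so if $P$ is projective in $\A\U$ then $i_!(P)$ is projective in $\A\V$.

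For the converse, I would assume $i_!(P)$ is projective in $\A\V$ and show that every epimorphism $f\colon X\to P$ in $\A\U$ splits, which is equivalent to projectivity of $P$. First apply $i_!$: since $i_!$ preserves all colimits by Lemma~\ref{lem-omnibus}(e), it preserves epimorphisms, so $i_!(f)\colon i_!(X)\to i_!(P)$ is an epimorphism whose target is projective; hence there is a section $h\colon i_!(P)\to i_!(X)$ with $i_!(f)\circ h=\id$. Then apply $i^*$ and use the natural isomorphism $i^*i_!\simeq\id$ of Lemma~\ref{lem-omnibus}(a): under this identification $i^*(h)$ becomes a morphism $h'\colon P\to X$ with $f\circ h'=\id$, so $f$ splits and $P$ is projective.

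I do not expect any genuine obstacle here; the argument is purely formal once Lemma~\ref{lem-omnibus}(a) and~(e) are available, and it simply isolates the final paragraph of the proof of Lemma~\ref{lem-proj-as-summand}. The only point meriting a moment's care is the bookkeeping with the unit isomorphism $X\simeq i^*i_!(X)$ (and the analogous one for $P$) when transporting the section back along $i^*$.
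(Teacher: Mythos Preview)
Your proposal is correct and matches the paper's proof almost exactly: both use the adjunction $(i_!,i^*)$ together with $i^*i_!\simeq 1$, applying $i_!$ to an epimorphism onto $P$, splitting it by projectivity of $i_!(P)$, and then applying $i^*$ to transport the section back. The only cosmetic difference is that the paper splits one specific epimorphism from a projective $Q\twoheadrightarrow P$ rather than an arbitrary $X\twoheadrightarrow P$, and spells out the forward direction via exactness of $\A\V(i_!(P),-)\cong\A\U(P,i^*(-))$ rather than citing Lemma~\ref{lem-omnibus}(e).
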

\begin{proof}
 First, if $P$ is projective then the functor $\A\V(i_!(P),-)$ is
 isomorphic to the composite of the exact functors
 $i^*\colon\A\V\to\A\U$ and $\A\U(P,-)$, so it is exact, so $i_!(P)$
 is projective.  

 Conversely, suppose that $i_!(P)$ is projective.  We can certainly
 choose a projective object $Q\in\A\U$ and an epimorphism
 $u\colon Q\to P$.  As $i_!$ is a left adjoint, it preserves
 colimits and epimorphisms, so $i_!(u)\colon i_!(Q)\to i_!(P)$ is an
 epimorphism.  As $i_!(P)$ is assumed projective, we can choose
 $v\colon i_!(P)\to i_!(Q)$ with $i_!(u)\circ v=1$.  We now apply
 $i^*$ to this, recalling that $i^*i_!\simeq 1$; we find that $i^*(v)$
 gives a section for $u$, so $u$ is a split epimorphism, so $P$ is
 projective. 
\end{proof}

\begin{proposition}\label{prop-projective-criteria}
 Let $i\colon\U\to\V$ be an inclusion of replete full subcategories of
 $\G$, and let $Q$ be an object of $\A\V$.  Then the following are
 equivalent:
 \begin{itemize}
  \item[(a)] $Q \simeq i_!(P)$ for some projective object 
   $P \in \A\U$.
  \item[(b)] $Q$ is a retract of $i_!(P)$ for some projective object 
   $P \in \A\U$.
  \item[(c)] $Q$ is a retract of some direct sum of objects $e_G$, 
   with $G \in \U$.
  \item[(d)] $Q$ is projective, and the counit map $i_!i^*Q \to Q$ is 
   an isomorphism.
 \end{itemize}		
 Moreover, if these conditions hold then $i^*(Q)$ is projective in $\A\U$.
\end{proposition}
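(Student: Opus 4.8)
The plan is to run the cycle (a) $\Rightarrow$ (b) $\Rightarrow$ (c) $\Rightarrow$ (a), prove (a) $\Leftrightarrow$ (d) separately, and then read off the last sentence from (a). The implication (a) $\Rightarrow$ (b) is immediate. For (b) $\Rightarrow$ (c): by Lemma~\ref{lem-proj-as-summand} the projective $P\in\A\U$ is a retract of a direct sum $\bigoplus_j e^\U_{G_j}$ with $G_j\in\U$; since $i_!$ preserves colimits (Lemma~\ref{lem-omnibus}(e)), hence direct sums, preserves retracts (every functor does), and satisfies $i_!(e^\U_G)=e_G$ (Lemma~\ref{lem-omnibus}(j)), we get that $i_!(P)$ is a retract of $\bigoplus_j e_{G_j}$; as $Q$ is in turn a retract of $i_!(P)$, and a retract of a retract is a retract, (c) follows.

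For (c) $\Rightarrow$ (a), suppose $Q$ is a retract of $\bigoplus_j e_{G_j}$ with $G_j\in\U$, via $s\colon Q\to\bigoplus_j e_{G_j}$ and $r\colon\bigoplus_j e_{G_j}\to Q$ with $rs=1_Q$. Write $R=\bigoplus_j e^\U_{G_j}\in\A\U$, which is projective with $i_!(R)=\bigoplus_j e_{G_j}$. Applying $i^*$ to $s,r$ and using the unit isomorphism $R\xrightarrow{\sim}i^*i_!(R)$ from Lemma~\ref{lem-omnibus}(a) shows that $i^*Q$ is a retract of $R$, hence projective. It remains to see that the counit $\epsilon_Q\colon i_!i^*Q\to Q$ is an isomorphism. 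Applying the functor $i_!i^*$ to $s$ and $r$ and using naturality of $\epsilon$, one realises $\epsilon_Q$ as a retract, in the arrow category of $\A\V$, of $\epsilon_{i_!R}\colon i_!i^*i_!R\to i_!R$; the latter is an isomorphism by the triangle identity, and isomorphisms are closed under retracts, so $\epsilon_Q$ is an isomorphism. Hence $Q\cong i_!(i^*Q)$ with $i^*Q$ projective, i.e. (a).

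The remaining implications are formal. For (a) $\Rightarrow$ (d): if $Q\cong i_!(P)$ with $P$ projective, then $Q$ is projective by Lemma~\ref{lem-i-preserves-projective}, and $Q$ lies in the essential image of $i_!$, so $\epsilon_Q$ is an isomorphism by Lemma~\ref{lem-omnibus}(b). For (d) $\Rightarrow$ (a): take $P=i^*Q$; the hypothesis is precisely that $i_!(P)=i_!i^*Q\xrightarrow{\epsilon_Q}Q$ is an isomorphism, and since $i_!(P)\cong Q$ is projective, $P$ is projective by Lemma~\ref{lem-i-preserves-projective}. Finally, for the ``moreover'' clause, (a) gives $i^*Q\cong i^*i_!(P)\cong P$ via the unit isomorphism of Lemma~\ref{lem-omnibus}(a), so $i^*Q$ is projective.

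The main obstacle is the one non-formal point, namely checking in (c) $\Rightarrow$ (a) that $\epsilon_Q$ is an isomorphism; everything else is bookkeeping with the adjoint triple $(i_!,i^*,i_*)$, the closure of projectives under retracts, and the identifications already recorded in Lemma~\ref{lem-omnibus}. An alternative route for that point is to transport the idempotent $sr$ along the ring isomorphism $\End_{\A\U}(R)\cong\End_{\A\V}(i_!R)$ furnished by the full-faithfulness of $i_!$, split it in the abelian category $\A\U$, and apply $i_!$ to the resulting decomposition; this is slightly more hands-on but avoids the ``retract of an isomorphism'' observation.
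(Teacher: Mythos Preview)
Your proof is correct. The overall organisation differs slightly from the paper's, and one step uses a genuinely different device, so a brief comparison is worthwhile.

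The paper runs the implications as $(a)\Rightarrow(b)\Leftrightarrow(c)$, then $(b)\Rightarrow(a)$, then $(a)\Rightarrow(d)$, then $(d)\Rightarrow(b)$. For $(b)\Rightarrow(a)$ it uses precisely the idempotent-transport argument you mention at the end as an alternative: full-faithfulness of $i_!$ lifts the idempotent on $i_!P$ to an idempotent on $P$, and $i_!$ preserves its splitting. Your primary route for $(c)\Rightarrow(a)$ instead shows directly that $\epsilon_Q$ is an isomorphism by exhibiting it as a retract of $\epsilon_{i_!R}$ in the arrow category; this is a clean, general observation that avoids talking about idempotents, and it has the side benefit of simultaneously identifying $P$ as $i^*Q$ rather than as an abstract summand of $R$.

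For $(d)\Rightarrow(a)$, the paper argues more indirectly via $(d)\Rightarrow(b)$: it chooses an epimorphism $P\twoheadrightarrow i^*Q$ from a projective, pushes forward to get $i_!P\twoheadrightarrow Q$, and splits using projectivity of $Q$. Your argument is shorter because you invoke Lemma~\ref{lem-i-preserves-projective} to conclude that $i^*Q$ is projective once $i_!i^*Q\cong Q$ is. Of course, the proof of that lemma is essentially the same projective-cover manoeuvre, so the saving is one of packaging rather than substance.
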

\begin{proof}
 From what we have seen already it is clear that
 $(a) \Rightarrow (b) \Leftrightarrow (c)$ and that
 $(a) \Rightarrow (d)$.  Now suppose that $(b)$ holds, so there is a
 projective object $P \in \A\U$ and an idempotent
 $e \colon i_!P \to i_!P$ with $Q= e.(i_!P)= \cok(1-e)$. As $i_!$ is
 full and faithful, there is an idempotent $f \colon P \to P$ with
 $i_!(f)=e$. As $i_!$ preserves cokernels, it follows that
 $Q=i_!(f.P)$, and of course $f.P$ is projective, so $(a)$ holds.
 Also, if $Q \simeq i_!P$ as in $(a)$ holds then $i^*Q$ is isomorphic
 to $P$ and so is projective.

 Now all that is left is to prove that $(d) \Rightarrow (b)$. Suppose
 that $Q$ is projective, and that the counit map $i_!i^*Q \to Q$ is an
 isomorphism. Choose a projective $P\in \A\U$ and an epimorphism
 $f \colon P \to i^*Q$. As $i_!$ preserves epimorphisms, we see that
 $i_!(f) \colon i_!P \to i_!i^*Q \simeq Q$ is an epimorphism, but $Q$
 is projective, so $Q$ is a retract of $i_!P$ as required.
\end{proof}

Recall the functors $L_{\leq n}$ and $L_{n}$ from
Construction~\ref{con-filtration-projective}.  Recall also that we put
$\U_n=\{G\in\U\mid |G|=n\}$ (which is a groupoid), and we write $i_n$
for the inclusion $\U_n\to\U$.

\begin{proposition}\label{prop-filtration-split}
 If $P$ is projective in $\A\U$, then the filtration $L_{\leq *}P$ can
 be split, so there is an unnatural isomorphism
 $P\simeq\bigoplus_nL_nP$, and the filtration quotients $L_nP$ are
 themselves projective.  Furthermore, $i^*_n(L_nP)$ is projective in
 $\A\U_n$ and $(i_n)_!(i^*_nL_nP)=L_nP$.
\end{proposition}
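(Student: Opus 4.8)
The plan is to induct on $n$ and peel off the bottom filtration piece at each stage, using the projectivity criterion of Proposition~\ref{prop-projective-criteria} together with the groupoid case from Section~\ref{sec-finite-groupoids}. First I would record the basic compatibility: if $m$ is the smallest order for which $P$ is nonzero, then $L_{\leq m}P = L_mP$ is supported entirely on $\U_m$, and more precisely the counit $(i_m)_!i_m^*(P)\to P$ has image exactly $L_{\leq m}P$, since $\U_m\subseteq\U_{\leq m}$ accounts for all the generators of the smallest order and there is nothing smaller. Because $\U_m$ is a finite groupoid, $i_m^*(P)$ lives in $\A\U_m$, which is semisimple (Proposition~\ref{prop-groupoid-reps-a}), so $i_m^*(P)$ is automatically projective there, and $(i_m)_!$ of a projective is projective in $\A\U$ by Lemma~\ref{lem-i-preserves-projective}. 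Thus $L_{\leq m}P=(i_m)_!(i_m^*P)$ is a projective subobject of $P$.

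The key step is then to show this subobject is a \emph{direct summand}. Here I would argue that $P/L_{\leq m}P$ is again projective. Apply $i^*$ for the inclusion of $\U_{>m}=\{G\in\U\mid |G|>m\}$: since $L_{\leq m}P$ is supported on groups of order $m$, its restriction to $\U_{>m}$ vanishes, so $i^*_{>m}(P/L_{\leq m}P)\simeq i^*_{>m}(P)$, which is projective in $\A\U_{>m}$ because $P$ is projective in $\A\U$ and restriction along a full subcategory inclusion followed by $(i_{>m})_!$-then-$i^*_{>m}$ is the identity (Lemma~\ref{lem-omnibus}(a)); alternatively one checks directly that $i^*_{>m}$ preserves projectives via Proposition~\ref{prop-projective-criteria}. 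One then verifies the counit $(i_{>m})_!i^*_{>m}(P/L_{\leq m}P)\to P/L_{\leq m}P$ is an isomorphism — this is because $P$, being projective, is a retract of a sum of $e_G$'s (Lemma~\ref{lem-proj-as-summand}), and for such a sum the quotient by its order-$m$ part is exactly the sub-sum of the $e_G$ with $|G|>m$, on which the counit is visibly an isomorphism; retracts inherit this. By Proposition~\ref{prop-projective-criteria}(d)$\Rightarrow$(a), $P/L_{\leq m}P\simeq i_!P'$ for a projective $P'\in\A\U_{>m}$, hence is projective in $\A\U$. Therefore the short exact sequence $0\to L_{\leq m}P\to P\to P/L_{\leq m}P\to 0$ splits, giving $P\simeq L_mP\oplus (P/L_{\leq m}P)$ with both summands projective. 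Now $P/L_{\leq m}P$ has the same form with $m$ replaced by the next relevant order, and $L_{\leq *}$ of it is the shifted tail of $L_{\leq *}P$, so by induction on the (finite, by boundedness of the generators appearing in a retract of a \emph{finite} sub-sum — or transfinitely/by a colimit argument in general) filtration we obtain $P\simeq\bigoplus_n L_nP$ with each $L_nP$ projective.

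It remains to identify $L_nP$ with $(i_n)_!(i_n^*L_nP)$ and to see $i_n^*(L_nP)$ is projective in $\A\U_n$. From the decomposition, $L_nP$ is the order-$n$ graded piece; it is supported exactly on $\U_n$ (its values vanish for $|G|<n$ by construction of the filtration, and for $|G|>n$ because in the split form it is a summand of $L_{\leq n}P$ which is generated in orders $\le n$ while simultaneously $L_nP(G)=0$ for $|G|<n$ forces, after the splitting, nothing new above order $n$ — more cleanly: $L_nP$ is a summand of $P$ on which the order-$n$ restriction functor is conservative). Being supported on the groupoid $\U_n$, $L_nP\simeq (i_n)_!i_n^*(L_nP)$ by Lemma~\ref{lem-omnibus}(a) applied to the full subcategory $\U_n$ together with the vanishing outside $\U_n$; and $i_n^*(L_nP)$ is projective in $\A\U_n$ since $\A\U_n$ is semisimple (every object is projective, Proposition~\ref{prop-groupoid-reps-a}(b)).

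The main obstacle I anticipate is the second paragraph: rigorously showing the counit $(i_{>m})_!i^*_{>m}(P/L_{\leq m}P)\to P/L_{\leq m}P$ is an isomorphism without circularity. The cleanest route is to reduce to $P=\bigoplus e_G$ via Lemma~\ref{lem-proj-as-summand}, where Example~\ref{ex-filtration-on-generators} pins down $L_{\leq m}(e_G)$ exactly, and then transport along the retraction; one must check the retraction is compatible with the filtration, which follows from naturality of $L_{\leq m}$ (Construction~\ref{con-filtration-projective}). Handling the case of an infinite filtration (infinitely many isomorphism classes in $\U$, so infinitely many orders) requires assembling the splittings into a coherent direct-sum decomposition, which one does by taking the colimit of the compatible splittings $P\simeq\big(\bigoplus_{k\le n}L_kP\big)\oplus(P/L_{\leq n}P)$ and noting $\bigcup_n L_{\leq n}P=P$ since every element of every $P(G)$ lies in $L_{\leq|G|}P$.
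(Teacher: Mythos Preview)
The core of your argument—the ``cleanest route'' in the middle of your second paragraph—is correct and is essentially what the paper does. But much of the surrounding reasoning is wrong, and you should recognise which parts are actually load-bearing.

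The repeated claim that $L_{\leq m}P$ (and later $L_nP$) is ``supported on groups of order $m$'' is false: $L_{\leq m}X$ is the subobject \emph{generated} in degrees $\leq m$, not a truncation of support. For example $L_{\leq |G|}(e_G)=e_G$, which is nonzero on every $H$ that surjects onto $G$. Consequently $i^*_{>m}(P/L_{\leq m}P)\not\simeq i^*_{>m}(P)$ in general, and your final-paragraph justification of $(i_n)_!i_n^*L_nP=L_nP$ via vanishing for $|G|>n$ collapses for the same reason. Separately, invoking Proposition~\ref{prop-projective-criteria}(d)$\Rightarrow$(a) to \emph{deduce} projectivity of $P/L_{\leq m}P$ is circular, since condition~(d) already assumes it; and the assertion that $i^*_{>m}$ preserves projectives is unjustified and false in general (Remark~\ref{rem-restriction-no-projectives}).

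What survives is this: $P$ is a retract of $Q=\bigoplus_\alpha e_{G_\alpha}$, and by naturality of $L_{\leq m}$ both $L_{\leq m}P$ and $P/L_{\leq m}P$ are retracts of $L_{\leq m}Q=\bigoplus_{|G_\alpha|\leq m}e_{G_\alpha}$ and $Q/L_{\leq m}Q=\bigoplus_{|G_\alpha|>m}e_{G_\alpha}$ respectively. Both are therefore projective, the exact sequence splits, and (with $m=\base(P)$) $L_mP$ is a retract of $\bigoplus_{|G_\alpha|=m}e_{G_\alpha}$, which lies in the essential image of $(i_m)_!$; Proposition~\ref{prop-projective-criteria} then gives $(i_m)_!i_m^*L_mP=L_mP$ correctly. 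Your induction now goes through.

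The paper short-circuits the induction. Writing $P=u.Q$ for an idempotent $u$ on $Q=\bigoplus_nQ_n$ (where $Q_n$ collects the $e_{G_\alpha}$ with $|G_\alpha|=n$), the key observation is that $\A\U(Q_m,Q_n)=0$ for $m<n$, so $u$ is block upper-triangular and its diagonal blocks $u_{nn}$ are themselves idempotent. One checks directly that $L_n(u)=u_{nn}$, hence $L_nP=u_{nn}.Q_n$, and that the inclusion $\bigoplus_n u_{nn}.Q_n\to P$ is an isomorphism. This gives all the claims at once without assembling infinitely many compatible splittings.
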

\begin{proof}
 We have seen that $P$ can be written as a retract of a direct sum of
 generators.  In more detail, we can choose an object
 $Q=\bigoplus_{\alpha} e_{G_\alpha}$ and an idempotent
 $u\colon Q\to Q$ such that $P\simeq u.Q$, so without loss of
 generality $P=u.Q$.  Let $Q_n$ be the sum of the terms $e_{G_\alpha}$
 for which $|G_\alpha|=n$, so that $Q=\bigoplus_nQ_n$.  We can then
 decompose $u$ as a sum of morphisms $u_{nm}\colon Q_m\to Q_n$.  When
 $m<n$ we have $\A\U(Q_m,Q_n)=0$ and so $u_{nm}=0$.  Given this, the
 relation $u^2=u$ implies that $u_{nn}^2=u_{nn}$.  The object
 $P'_n=u_{nn}.Q_n$ is therefore projective.  Put $P'=\bigoplus_nP'_n$
 and let $f\colon P'\to P$ be the composite
 $P'\xrightarrow{\text{inc}}Q\xrightarrow{u}u.Q=P$.  We claim that
 this is an isomorphism. By passing to the colimit, it will suffice to
 show that $L_{\leq n}(f)$ is an isomorphism for all $n$.  By an evident
 reduction, it will suffice to show that $L_n(f)$ is an isomorphism for
 all $n$.  As $L_n$ is an additive functor we have
 $L_n(P)=L_n(u).L_n(Q)=L_n(u).Q_n=u_{nn}.Q_n=P'_n$, as required.  All
 remaining claims are now easy.
\end{proof}

\begin{corollary}\label{cor-indecomposable-projectives}
 Suppose we choose a complete system of simple objects in $\A\U_n$ for
 all $n$, giving a sequence $(e_{G_i,S_i} \mid G_i \in \U_n)_n$ of
 indecomposable projectives in $\A\U$. Then every projective object is
 a direct sum of objects of the form $e_{G_i,S_i}$. In particular,
 every indecomposable projective is isomorphic to some $e_{G_i,S_i}$.
\end{corollary}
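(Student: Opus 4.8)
The plan is to deduce the corollary directly from Proposition~\ref{prop-filtration-split} together with the semisimplicity of $\A\U_n$ established in Section~\ref{sec-finite-groupoids}. Fix a projective object $P\in\A\U$. By Proposition~\ref{prop-filtration-split} the filtration $L_{\leq *}P$ splits, so $P\simeq\bigoplus_nL_nP$ with each $L_nP$ projective; moreover $i_n^*(L_nP)$ is projective in $\A\U_n$ and $(i_n)_!(i_n^*L_nP)=L_nP$. It therefore suffices to prove that each $L_nP$ is a direct sum of objects of the form $e_{G_i,S_i}$ with $G_i\in\U_n$, and then to assemble the results over all $n$.

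For the second step I would work inside $\A\U_n$. Since there are only finitely many groups of order $n$ up to isomorphism, $\U_n$ is a finite groupoid, so by Proposition~\ref{prop-semisimple} the category $\A\U_n$ is semisimple, and by Lemma~\ref{lem-simple-obj} and Proposition~\ref{prop-groupoid-reps-b}(a) its simple objects are precisely the $e_{G,V}$ for $G\in\U_n$ and $V$ an irreducible $k[\Out(G)]$-module. Hence $i_n^*(L_nP)$ decomposes as a direct sum $\bigoplus_{j}e_{G_j,S_j}$ of members of the chosen complete system of simple objects of $\A\U_n$. Now apply $(i_n)_!$: it is a left adjoint and so preserves direct sums by Lemma~\ref{lem-omnibus}(e), and it carries $e_{G_j,S_j}^{\U_n}$ to $e_{G_j,S_j}^{\U}$ by Lemma~\ref{lem-omnibus}(j). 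Combining this with $(i_n)_!(i_n^*L_nP)=L_nP$ gives
\[
 L_nP \;=\; (i_n)_!\bigl(i_n^*L_nP\bigr) \;\simeq\; \bigoplus_j e_{G_j,S_j}
\]
in $\A\U$, and summing over $n$ yields the first assertion.

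For the final sentence, suppose $P$ is an indecomposable projective. Each $e_{G_i,S_i}$ is nonzero, since its value at $G_i$ is the nonzero module $S_i$, and it is indecomposable in $\A\U$ because $(i_n)_!$ preserves indecomposable objects (Lemma~\ref{lem-omnibus}(e)). Writing $P$ as a direct sum of such nonzero objects and using that $P$ admits no nontrivial direct sum decomposition forces the sum to have exactly one term, so $P\simeq e_{G_i,S_i}$ for some $i$. I do not expect a genuine obstacle here, as the substantive work is carried out in Proposition~\ref{prop-filtration-split}; the only point deserving care is the compatibility of the decomposition with the change-of-subcategory functor — that $(i_n)_!$ sends the simple objects of $\A\U_n$ precisely to the named generators $e_{G_i,S_i}$ of $\A\U$ rather than to some twist of them (Lemma~\ref{lem-omnibus}(j)) — and, for the ``in particular'' clause, the elementary observation that the $e_{G_i,S_i}$ are nonzero and indecomposable so that an indecomposable projective cannot split off more than one of them.
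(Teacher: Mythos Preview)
Your proposal is correct and follows essentially the same approach as the paper's proof: decompose $i_n^*(L_nP)$ using semisimplicity of $\A\U_n$, then push forward via $(i_n)_!$ and use $P\simeq\bigoplus_nL_nP$ from Proposition~\ref{prop-filtration-split}. Your version is simply more detailed, making explicit the references to Lemma~\ref{lem-omnibus}(e),(j) and spelling out the ``in particular'' clause, which the paper leaves implicit.
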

\begin{proof}
 Because $\A\U_n$ is semisimple, we see that $i_n^*(L_nP)$ splits in
 the indicated way.  As $L_nP\simeq(i_n)_!(i_n^*(L_nP))$, we see that
 $L_nP$ also splits, as does $\bigoplus_nL_nP$, which is isomorphic to
 $P$. 
\end{proof}

\begin{proposition}\label{prop-product-projectives}
 Any projective object $P$ can be written as
 $P\simeq\prod_n L_nP$. Furthermore, products of projective objects
 are projective. 
\end{proposition}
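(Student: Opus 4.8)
The plan is to build everything on the filtration $L_{\leq *}$ together with its splitting for projectives (Proposition~\ref{prop-filtration-split}). The one preliminary fact I would record is that for \emph{every} $X\in\A\U$ and every $T\in\U$ one has $(L_nX)(T)=0$ whenever $n>|T|$. Indeed, by Construction~\ref{con-filtration-projective} the object $L_{\leq m}X$ is the smallest subobject of $X$ agreeing with $X$ on all groups of order $\leq m$, so $(L_{\leq m}X)(T)=X(T)$ as soon as $m\geq|T|$; taking $m=n$ and $m=n-1$ gives $(L_nX)(T)=X(T)/X(T)=0$ for $n>|T|$. Since (co)limits in $\A\U$ are computed pointwise (Remark~\ref{rem-bicomplete}), this immediately yields that the canonical morphism $\bigoplus_n L_nX\to\prod_n L_nX$ is an isomorphism for every $X$: at each $T$ both sides equal the \emph{finite} sum $\bigoplus_{n\leq|T|}(L_nX)(T)$, and the map is the identity.

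The first assertion is then immediate: for $P$ projective, Proposition~\ref{prop-filtration-split} supplies an isomorphism $P\simeq\bigoplus_n L_nP$, which by the previous paragraph is the same as $\prod_n L_nP$.

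For the second assertion, given projectives $(P_\alpha)_{\alpha\in A}$, I would first rewrite, using the first assertion and commuting the two products,
\[
  \prod_{\alpha\in A}P_\alpha\;\simeq\;\prod_{\alpha\in A}\prod_{n}L_nP_\alpha\;\simeq\;\prod_{n}\Bigl(\prod_{\alpha\in A}L_nP_\alpha\Bigr).
\]
Fix $n$ and let $i_n\colon\U_n\to\U$ be the inclusion of the finite groupoid of groups of order $n$. By Proposition~\ref{prop-filtration-split} each $L_nP_\alpha$ is isomorphic to $(i_n)_!M_\alpha$ with $M_\alpha:=i_n^*L_nP_\alpha$ projective in $\A\U_n$. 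Since $\U_n$ is a finite groupoid, $(i_n)_!$ preserves products (Proposition~\ref{prop-groupoid-reps-a}(d)), so $\prod_\alpha L_nP_\alpha\simeq(i_n)_!\bigl(\prod_\alpha M_\alpha\bigr)=:R_n$. As every object of $\A\U_n$ is projective (Proposition~\ref{prop-groupoid-reps-a}(b)), $\prod_\alpha M_\alpha$ is projective in $\A\U_n$, hence $R_n$ is projective in $\A\U$ (Lemma~\ref{lem-i-preserves-projective}, or Proposition~\ref{prop-groupoid-reps-a}(c)). Moreover $R_n$ lies in the image of $(i_n)_!$, so $R_n(T)=0$ whenever $|T|<n$, the comma category $T\downarrow\U_n$ being then empty. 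Running the argument of the first paragraph once more gives $\prod_n R_n\simeq\bigoplus_n R_n$, and a direct sum of projectives is projective (by Lemma~\ref{lem-proj-as-summand}, say). Therefore $\prod_\alpha P_\alpha\simeq\bigoplus_n R_n$ is projective.

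The only genuinely structural step — and the one I expect to be the crux — is the interchange of the index $\alpha$ with the filtration in the last part, which rests on $(i_n)_!$ commuting with the product over $\alpha$. This uses that $\U_n$ is a \emph{finite groupoid}, so that $(i_n)_!$ is exact and preserves all limits (Proposition~\ref{prop-groupoid-reps-a}), a property that fails for a typical inclusion since $i_!$ is only a left adjoint. Everything else is bookkeeping about the supports of the graded pieces $L_nX$.
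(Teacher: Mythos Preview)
Your proof is correct and follows essentially the same route as the paper's: split $P\simeq\bigoplus_nL_nP$ via Proposition~\ref{prop-filtration-split}, observe that the sum equals the product because at each fixed $T$ only finitely many $L_nP$ contribute, and for the second claim interchange the two products and use that $(i_n)_!$ preserves products (Proposition~\ref{prop-groupoid-reps-a}) to reduce to a direct sum of projectives. Your preliminary observation that $(L_nX)(T)=0$ for $n>|T|$ holds for \emph{every} $X$ is slightly sharper than what the paper states (which only asserts the vanishing for projective $P$), but the argument and conclusion are otherwise the same.
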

\begin{proof}
 By Proposition~\ref{prop-filtration-split} we can write
 $P=\bigoplus_n L_nP$. Now note that for a fixed $G\in\U$, there are
 only finitely many indices $n$ such that $P_n(G)$ is nonzero, so the
 inclusion $\bigoplus_n L_nP\to\prod_n L_nP$ is an isomorphism.  

 For the second claim, let $(P_\alpha)$ be a family of projectives,
 and put $P=\prod_\alpha P_\alpha$.  We can write
 $P_\alpha=\prod_k L_kP_\alpha$ as above, so $P=\prod_k Q_k$ where
 $Q_k=\prod_\alpha L_kP_\alpha$.  We know from
 Proposition~\ref{prop-groupoid-reps-a} that $(i_k)_!$ preserves
 products, so $Q_k$ is in the image of $(i_k)_!$.  It follows that
 $Q_k$ is projective and also that $P=\prod_kQ_k$ is the same as
 $\bigoplus_kQ_k$, so $P$ is projective.
\end{proof}

\begin{proposition}\label{prop-preserve-projectives}
 Let $\U$ be a widely closed subcategory of $\G$. Then the full
 subcategory of projective objects is closed under tensor products. If
 $\U$ is a multiplicative global family, then the full subcategory of
 projective objects is also closed under the internal homs.
\end{proposition}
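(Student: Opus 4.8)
The plan is to reduce everything to generators via Lemma~\ref{lem-proj-as-summand}, which says that the projective objects of $\A\U$ are exactly the retracts of direct sums of the $e_G$'s, and then to feed in the structural computations of $e_G\otimes e_H$ and $\uHom(e_G,e_H)$ established earlier. Throughout I will use freely that a direct sum of projectives is projective, since $\A\U(\bigoplus_iP_i,-)\cong\prod_i\A\U(P_i,-)$ is a product of exact functors, and that products of projectives are projective by Proposition~\ref{prop-product-projectives}.

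For the tensor product: if $P$ and $Q$ are projective, write $P$ as a retract of $\bigoplus_\alpha e_{G_\alpha}$ and $Q$ as a retract of $\bigoplus_\beta e_{H_\beta}$ with all $G_\alpha,H_\beta\in\U$. Since $\A\U$ is closed symmetric monoidal (Definition~\ref{def-dual}), $-\otimes-$ preserves colimits in each variable, hence preserves direct sums and retracts, so $P\otimes Q$ is a retract of $\bigoplus_{\alpha,\beta}(e_{G_\alpha}\otimes e_{H_\beta})$. Because $\U$ is widely closed, Proposition~\ref{prop-wide} identifies each $e_{G_\alpha}\otimes e_{H_\beta}$ with the finitely projective object $F(\uW(G_\alpha,H_\beta))$, so the displayed direct sum is projective and $P\otimes Q$ is a retract of a projective, hence projective.

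For the internal hom, assume now that $\U$ is a multiplicative global family; then $\U$ is widely closed by Remark~\ref{rem-subcat-implications}, so Proposition~\ref{prop-wide} and Theorem~\ref{thm-hom-fg-projective} both apply. The first step is to observe that $\uHom(e_G,-)$ commutes with arbitrary direct sums: by definition $\uHom(e_G,Y)(T)=\A\U(e_T\otimes e_G,Y)$, and by Proposition~\ref{prop-wide} the object $e_T\otimes e_G\cong F(\uW(T,G))$ is a finite direct sum of retracts of generators $e_K$; since each $\ev_K=\A\U(e_K,-)$ commutes with coproducts (colimits in $\A\U$ are computed pointwise, Remark~\ref{rem-bicomplete}), so does $\A\U(e_T\otimes e_G,-)$, and hence so does $\uHom(e_G,-)$. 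Combining this with Theorem~\ref{thm-hom-fg-projective}, for any family $(H_\beta)$ in $\U$ we get $\uHom(e_G,\bigoplus_\beta e_{H_\beta})\cong\bigoplus_\beta F(\uQ(G,H_\beta))$, a direct sum of projectives, hence projective. As $\uHom(e_G,-)$ preserves retracts, it follows that $\uHom(e_G,Q)$ is projective for every $G\in\U$ and every projective $Q$.

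Finally, write a general projective $P$ as a retract of $\bigoplus_\alpha e_{G_\alpha}$. Since $e_T\otimes-$ preserves coproducts, a pointwise computation gives $\uHom(\bigoplus_\alpha e_{G_\alpha},Q)\cong\prod_\alpha\uHom(e_{G_\alpha},Q)$, a product of projectives, which is projective by Proposition~\ref{prop-product-projectives}; and $\uHom(-,Q)$ preserves retracts, so $\uHom(P,Q)$ is a retract of a projective and hence projective. The two places where genuine input is needed, rather than formal manipulation, are the compactness of the finitely generated projective $e_T\otimes e_G$ (supplied by Proposition~\ref{prop-wide}), which is what allows $\uHom(e_G,-)$ to commute with infinite coproducts, and the appeal to Proposition~\ref{prop-product-projectives} to tame the infinite product that appears when $P$ is an infinite direct sum of generators; without the latter one would only recover the internal hom statement for finitely generated projective $P$.
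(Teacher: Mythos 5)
Your proof is correct and follows essentially the same strategy as the paper: reduce to generators via Lemma~\ref{lem-proj-as-summand}, invoke Proposition~\ref{prop-wide} and Theorem~\ref{thm-hom-fg-projective} for $e_G\otimes e_H$ and $\uHom(e_G,e_H)$, use the compactness of $e_T\otimes e_G$ to see that $\uHom(e_G,-)$ preserves sums, and use Proposition~\ref{prop-product-projectives} to handle the product arising from the first variable of $\uHom$. The only difference is that the paper additionally routes $Q$ through the splitting $Q\simeq\prod_nL_nQ$ of Proposition~\ref{prop-filtration-split} before writing each piece as a retract of a sum of generators; your direct appeal to Lemma~\ref{lem-proj-as-summand} for $Q$ makes that detour unnecessary.
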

\begin{proof}
 Consider projective objects $P,Q\in\A\U$.  We can write $P$ as a
 retract of a direct sum of terms $e_G$.  The functor $(-)\otimes Q$
 sends sums to sums, and the functor $\uHom(-,Q)$ sends sums to
 products, and both sums and products of projectives are projective.
 We can therefore reduce to the case $P=e_G$.  Next, we can split $Q$
 as a direct sum or product of terms $L_nQ$.  The functor
 $e_G\otimes(-)$ preserves sums, and the functor $\uHom(e_G,-)$
 preserves products, so we can reduce to the case where $Q=L_nQ$, or
 equivalently $Q=(i_n)_!(M)$ for some $M\in\A\U_n$.  We can now write
 $M$ as a retract of a sum of terms $e_{H_\alpha}$ with
 $|H_\alpha|=n$.  We know from Proposition~\ref{prop-wide} that
 $e_G\otimes e_{H_\alpha}$ is projective, and it follows easily that
 $e_G\otimes Q$ is projective as claimed.

 It also follows from Proposition~\ref{prop-wide}, together with the
 formula $\uHom(e_G,Z)(H)=\A\U(e_G\otimes e_H,Z)$, that the functor
 $\uHom(e_G,-)$ preserves sums.  If $\U$ is a multiplicative global
 family, then Theorem~\ref{thm-hom-fg-projective} tells us that
 $\uHom(e_G,e_{H_\alpha})$ is projective.  From these two facts it
 follows that $\uHom(e_G,Q)$ is also projective, which finishes the
 proof. 
\end{proof}

\section{Colimit-exactness}
\label{sec-exact-colimit}

Let $\U$ be a subcategory of $\G$.  In this section we will write $L$
for the colimit functor $X\mapsto\colim_{G\in\U^{\op}}X(G)$ from
$\A\U$ to $\Vect_k$.  Recall that $\U$ is said to be \emph{colimit-exact}
if $L$ is an exact functor.  We will show that most of our examples
have this property.  First, however, we give an equivalent condition.

\begin{proposition}\label{prop-one-injective}
 There is a natural isomorphism $\A\U(X,\one)\simeq\Vect_k(LX,k)$.
 Thus, the object $\one\in\A\U$ is injective if and only if $\U$ is
 colimit-exact.  If so, then all objects of the form
 $DX=\uHom(X,\one)$ are also injective.
\end{proposition}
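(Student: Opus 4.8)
The plan is to establish the natural isomorphism $\A\U(X,\one)\simeq\Vect_k(LX,k)$ directly, and then extract the two stated consequences from standard exactness arguments. First I would observe that the colimit functor $L\colon\A\U\to\Vect_k$ is left adjoint to the constant-diagram functor $\Vect_k\to\A\U$, $V\mapsto\underline{V}$, where $\underline{V}(T)=V$ for all $T$ with all structure maps the identity. (Note $\underline{k}=\one$.) Hence for any $k$-vector space $V$ we have $\A\U(X,\underline{V})\simeq\Vect_k(LX,V)$ naturally in both variables; taking $V=k$ gives the claimed formula $\A\U(X,\one)\simeq\Vect_k(LX,k)$.

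Next I would deduce the injectivity criterion for $\one$. The functor $\Vect_k(-,k)$ is exact on $\Vect_k$ (every vector space is both projective and injective, and in particular $k$ is injective). Therefore $\A\U(-,\one)\simeq\Vect_k(L(-),k)$ is exact as a functor $\A\U^{\op}\to\Vect_k$ if and only if $L$ is exact, since precomposing the exact functor $\Vect_k(-,k)$ with $L$ is exact exactly when $L$ is, and conversely $L=\Vect_k(\Vect_k(L(-),k),k)$ recovers $L$ from $\A\U(-,\one)$ by double dualization on the (possibly infinite-dimensional, but still reflexive enough at the level of exactness) vector spaces — more carefully, $L$ is right exact automatically as a left adjoint, so the only issue is left exactness, and $\Vect_k(-,k)$ applied to a left-exact-failure detects it faithfully because $\Vect_k(-,k)$ reflects monomorphisms. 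So $\one$ is injective $\iff$ $\A\U(-,\one)$ is exact $\iff$ $L$ is exact $\iff$ $\U$ is colimit-exact.

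Finally, for the last sentence: suppose $\U$ is colimit-exact, so $\one$ is injective. For any $X\in\A\U$ and the internal hom adjunction (Definition~\ref{def-dual}), we have $\A\U(W,DX)=\A\U(W,\uHom(X,\one))\simeq\A\U(W\otimes X,\one)$ naturally in $W$. Since the tensor product is exact (Remark~\ref{rem-flat}, all objects are flat), the functor $W\mapsto W\otimes X$ is exact, and composing with the exact functor $\A\U(-,\one)$ shows $\A\U(-,DX)$ is exact, i.e. $DX$ is injective.

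The only genuinely delicate point is the middle step: verifying that exactness of $\A\U(-,\one)$ forces exactness of $L$ and not merely of its composite with $k$-linear dualization. This is where I would be careful — the clean way is to note that a sequence in $\Vect_k$ is exact iff its $k$-linear dual sequence is exact (duality over a field is exact and reflects exactness on the category of all vector spaces), so $L$ applied to a short exact sequence in $\A\U$ is exact iff its dual is, iff $\A\U(-,\one)$ applied to that sequence is exact; since $L$ is already right exact, colimit-exactness of $\U$ is equivalent to injectivity of $\one$, as claimed.
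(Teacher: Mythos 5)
Your proof is correct and follows essentially the same route as the paper: the natural isomorphism comes from the adjunction between $L$ and the constant-diagram functor, one direction of the injectivity criterion uses exactness of $\Vect_k(-,k)$, the converse uses that $k$-linear dualization reflects exactness of sequences of vector spaces, and the injectivity of $DX$ follows from the internal-hom adjunction together with flatness of all objects. (The passing remark that double dualization recovers $L$ is false for infinite-dimensional spaces, but you correctly discard it in favour of the reflection-of-exactness argument, which is exactly what the paper means by ``linear algebra shows''.)
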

\begin{proof}
 The natural isomorphism $\A\U(X,\one)\simeq\Vect_k(LX,k)$ is clear.  The
 functor $V\mapsto V^*=\Vect_k(-,k)$ is certainly exact, so if $L$ is
 exact then $\A\U(-,\one)$ is exact, so $\one$ is injective.
 Conversely, suppose that $\one$ is injective.  For any short exact
 sequence $X\to Y\to Z$ in $\A\U$, we deduce that the resulting
 sequence $(LZ)^*\to(LY)^*\to(LZ)^*$ is also short exact, and then
 linear algebra shows that $LX\to LY\to LZ$ is short exact as well.
 This proves that $L$ is exact.  Also, there is a natural isomorphism
 $\A\U(X,DW)=\A\U(W\otimes X,\one)$.  The functors $W\otimes(-)$ and
 $\A\U(-,\one)$ are exact, and it follows that $DW$ is injective as
 claimed. 
\end{proof}

\begin{lemma}\label{lem-colimit-values}
\leavevmode
 \begin{itemize}
  \item[(a)] For any $G\in\U$ we have $Le_G=k$.  In particular, if 
  $\one\in\U$ then $L\one=L e_1=k$.
  \item[(b)] For any $G\in\U$ and any $k[\Out(G)]$-module $V$ we have
   $L(e_{G,V})=V_{\Out(G)}$ (the module of coinvariants).
  \item[(c)] Unless $G$ is maximal in $\U$ we also have
   $L(t_{G,V})=L(s_{G,V})=0$.
 \end{itemize}
\end{lemma}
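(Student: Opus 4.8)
The plan is to derive all three statements from adjointness. Recall that $L=\colim_{\U^{\op}}(-)$ is, like any colimit functor, left adjoint to the constant-diagram functor $\operatorname{const}\colon\Vect_k\to\A\U$; this is the adjunction underlying the isomorphism $\A\U(X,\one)\simeq\Vect_k(LX,k)$ of Proposition~\ref{prop-one-injective}. Recall also from Lemma~\ref{lem-projective-and-injective} that for a fixed $G\in\U$ the functor $e_{G,\bullet}\colon\mathcal{M}_G\to\A\U$ is left adjoint to the evaluation functor $\ev_G$.

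For part~(b) I would compose these two adjunctions: $L\circ e_{G,\bullet}\colon\mathcal{M}_G\to\Vect_k$ is left adjoint to $\ev_G\circ\operatorname{const}\colon\Vect_k\to\mathcal{M}_G$. All structure maps of $\operatorname{const}_W$ are identities, so the action of $\Out(G)=\U(G,G)$ on $(\operatorname{const}_W)(G)=W$ is trivial, and hence $\ev_G\circ\operatorname{const}$ is exactly the functor that equips a vector space with the trivial $\Out(G)$-action. Its left adjoint is the coinvariants functor $V\mapsto V_{\Out(G)}$, so by uniqueness of adjoints $L(e_{G,V})\simeq V_{\Out(G)}$, naturally in $V$. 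Part~(a) is the special case $V=k[\Out(G)]$: one has $e_G\simeq e_{G,k[\Out(G)]}$ (immediate from the defining formulas, or from Yoneda since both objects represent $X\mapsto X(G)$), so $Le_G\simeq k[\Out(G)]_{\Out(G)}\simeq k$; and if $\one\in\U$ then $\one=e_1$ with $\Out(1)$ trivial, giving $L\one=k$.

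For part~(c) I would argue directly from the presentation
\[
 LX\;\simeq\;\Bigl(\bigoplus_{T\in\U}X(T)\Bigr)\Big/\bigl\langle\, x-\alpha^*x \;:\; \alpha\in\U(T',T),\ x\in X(T)\,\bigr\rangle .
\]
Because $G$ is not maximal in $\U$, choose a surjection $\gamma\colon G'\to G$ in $\U$ which is not an isomorphism, so $|G'|>|G|$. Let $X$ be $t_{G,V}$ or $s_{G,V}$, and take any $T\in\U$ with $X(T)\neq 0$. If $X=s_{G,V}$ then $T\simeq G$ by Definition~\ref{def-main-objects}; if $X=t_{G,V}$ then the defining formula forces $\G(G,T)\neq\emptyset$, so $T$ is a quotient of $G$. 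In either case there is a surjection $\beta\colon G\to T$ in $\U$, so $[\beta\gamma]\in\U(G',T)$; moreover $X(G')=0$, since $G'$ is strictly larger than $G$ and so is neither isomorphic to $G$ nor a quotient of $G$. Hence in $LX$ the class of every $x\in X(T)$ coincides with the class of $(\beta\gamma)^*x\in X(G')=0$, and therefore $LX=0$.

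These arguments are essentially formal, and I do not expect a serious obstacle. The one point deserving care is part~(c): one must correctly pin down the supports of $t_{G,V}$ and $s_{G,V}$ (quotients of $G$, respectively groups isomorphic to $G$) and verify that precomposition with a non-invertible epimorphism onto $G$ pushes every basis class off the support, hence to zero. Everything else is routine bookkeeping with the two adjunctions.
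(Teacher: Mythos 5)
Your proof is correct and follows essentially the same route as the paper: parts (a) and (b) are obtained by composing the adjunction $L\dashv\operatorname{const}$ with $e_{G,\bullet}\dashv\ev_G$ (the paper writes this as the chain $\Vect_k(L(e_{G,V}),T)=\A\U(e_{G,V},T\otimes\one)=\mathrm{Mod}_{k[\Out(G)]}(V,T)=\Vect_k(V_{\Out(G)},T)$ plus Yoneda, which is the same computation), and part (c) rests on the same two observations the paper uses, namely that the supports of $t_{G,V}$ and $s_{G,V}$ consist of quotients of $G$, resp.\ groups isomorphic to $G$, and that precomposing with a non-invertible epimorphism $G'\to G$ lands in a group where these objects vanish. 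The only difference is presentational: you phrase (c) via the generators-and-relations model of the colimit while the paper factors the canonical cocone maps, but the content is identical.
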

\begin{proof}
 For $T\in\Vect_k$ we have 
 \[ \Vect_k(L(e_{G,V}),T) = \A\U(e_{G,V},T\otimes\one) = 
     \mathrm{Mod}_{k[\Out(G)]}(V,T) = 
      \Vect_k(V_{\Out(G)},T).
 \]
 By the Yoneda Lemma, we therefore have $L(e_{G,V})=V_{\Out(G)}$.
 Taking $V=k[\Out(G)]$ gives $L(e_G)=k$.  

 Now consider the object $L(t_{G,V})$.  This is the colimit over
 $H\in\U$ of the groups $t_{G,V}(H)=\Map_{\Out(G)}(\U(G,H),V)$.  If
 there are no morphisms $G\to H$, then $t_{G,V}(H)=0$.  If there is a
 morphism $\alpha\colon G\to H$, then by definition the limit map
 $t_{G,V}(H)\to L(t_{G,V})$ factors through $\alpha^*$.  This makes it
 clear that the map $t_{G,V}(G)\to L(t_{G,V})$ is surjective.  Now
 suppose that $G$ is not maximal in $\U$, so we can choose
 $\beta\colon K\to G$ in $\U$ that is not an isomorphism.  The map
 $t_{G,V}(G)\to L(t_{G,V})$ will then factor through $\beta^*$, but
 the codomain of $\beta^*$ is zero, so $L(t_{G,V})=0$.  A simpler
 version of the same argument also gives $L(s_{G,V})=0$.
\end{proof}

\begin{remark}\label{rem-L-not-monoidal}
 For $X,Y\in\A\U$ there are natural unit maps $X\to(LX)\otimes\one$ and
 $Y\to(LY)\otimes\one$.  We can tensor these together and take adjoints to
 get a map $L(X\otimes Y)\to(LX)\otimes(LY)$.  This gives an oplax monoidal
 structure on $L$.  However, the map $L(X\otimes Y)\to(LX)\otimes(LY)$ is
 rarely an isomorphism.  For example, we have $Le_G\otimes Le_H=k$ but
 Proposition~\ref{prop-wide} shows that $L(e_G\otimes e_H)$ is freely
 generated by the set $\Wide(G,H)/\text{conjugacy}$. 
\end{remark}

We now start to prove that various categories are colimit-exact.  Our
first example is easy:

\begin{proposition}\label{prop-groupoid-colimit-exact}
 If $\U\leq\G$ is a groupoid, then it is colimit-exact.  
\end{proposition}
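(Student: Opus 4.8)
The plan is to identify the colimit functor $L$ with a direct sum of evaluation functors followed by coinvariants, and then observe that each ingredient is exact. First I would decompose $\U$ into its connected components. As $\U$ is a groupoid, two objects lie in the same component exactly when they are isomorphic, so the components are indexed by the set $\Lambda$ of isomorphism classes of objects of $\U$; pick a representative $G_\lambda$ for each $\lambda\in\Lambda$. The full subcategory of $\U$ on the single object $G_\lambda$ has morphism set $\U(G_\lambda,G_\lambda)=\Out(G_\lambda)$, and its inclusion into the $\lambda$-th component of $\U$ is an equivalence of categories. Since a colimit over a disjoint union of categories is the coproduct of the colimits over the pieces, and a colimit over a one-object groupoid with automorphism group $\Gamma$ of a contravariant $\Vect_k$-valued functor (that is, of a $k[\Gamma]$-module) is its module of coinvariants, I would obtain a natural isomorphism
\[
 LX=\colim_{G\in\U^{\op}}X(G)\;\cong\;\bigoplus_{\lambda\in\Lambda}X(G_\lambda)_{\Out(G_\lambda)}.
\]

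Next I would check exactness of each operation on the right. The evaluation $X\mapsto X(G_\lambda)$ is exact because (co)limits in $\A\U$ are computed pointwise, by Remark~\ref{rem-bicomplete}. The coinvariants functor $V\mapsto V_{\Out(G_\lambda)}$ on $\mathcal{M}_{G_\lambda}$ is exact: each $\Out(G_\lambda)$ is a finite group and $k$ has characteristic zero, so $k[\Out(G_\lambda)]$ is semisimple by Maschke's theorem; hence the trivial module $k$ is flat over $k[\Out(G_\lambda)]$ and $(-)_{\Out(G_\lambda)}=(-)\otimes_{k[\Out(G_\lambda)]}k$ is exact --- equivalently, averaging over $\Out(G_\lambda)$ exhibits the coinvariants as a natural retract of the identity functor. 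Finally, arbitrary direct sums are exact in $\Vect_k$. Composing, $L$ is exact, so $\U$ is colimit-exact.

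I do not expect a real obstacle here. The only point requiring a little care is that $\U$ need not have finitely many isomorphism classes, so Lemma~\ref{lem-groupoid-equivalence} does not apply verbatim; however, the component decomposition above is valid for an arbitrary groupoid, and the summands $X(G_\lambda)_{\Out(G_\lambda)}$ are honest $k$-vector spaces. Alternatively, one could bypass the explicit formula by invoking Proposition~\ref{prop-one-injective}: it suffices that $\one\in\A\U$ be injective, and the same decomposition gives $\A\U\simeq\prod_{\lambda\in\Lambda}\mathcal{M}_{G_\lambda}$, in which every object is injective (every module over the semisimple ring $k[\Out(G_\lambda)]$ is injective, again by Maschke) since injectivity in a product category is detected componentwise; in particular $\one$ is injective.
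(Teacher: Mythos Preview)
Your proposal is correct and follows essentially the same approach as the paper: both identify $LX$ with $\bigoplus_\lambda X(G_\lambda)_{\Out(G_\lambda)}$ and then invoke characteristic zero (Maschke) to see that coinvariants is exact. Your write-up is simply more explicit about the component decomposition and the justification for each step, and your alternative via Proposition~\ref{prop-one-injective} is a valid but unnecessary detour.
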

\begin{proof}
 Choose a family $(G_i)_{i\in I}$ containing precisely one
 representative of each isomorphism class in $\U$.  If $X\in\A\U$ then
 the group $\Out(G_i)$ acts on $X(G_i)$, and we write
 $X(G_i)_{\Out(G_i)}$ for the module of coinvariants.  As we work
 over a field of characteristic zero and $\Out(G_i)$ is finite, this is an 
 exact functor of $X$.  It is easy to identify $\colim X$ with
 $\bigoplus_iX(G_i)_{\Out(G_i)}$, and this makes it clear that the
 colimit functor is exact as well.
\end{proof}

For other examples we will use the following notion:

\begin{definition}\label{def-colimit-tower}
 A \emph{colimit tower} for $\U$ is a diagram 
 \[ G_0 \xleftarrow{\epsilon_0} G_1 \xleftarrow{\epsilon_1} G_2 \xleftarrow{} \dotsb \] 
 in $\U$ such that
 \begin{itemize}
  \item[(a)] For every $H\in\U$ there is a pair $(i,\alpha)$ with
   $i\in\NN$ and $\alpha\in\U(G_i,H)$.
  \item[(b)] For every diagram 
   $G_i\xrightarrow{\alpha}H\xleftarrow{\beta}G_i$ in 
   $\U$ there exists $\gamma\in\U(G_{i+1},G_{i+1})$ making the following 
   diagram commute: 
   \begin{center}
    \begin{tikzcd}
     G_{i+1}
      \arrow[d,"\epsilon_i"]
      \arrow[rr,"\gamma"] & &
     G_{i+1} \arrow[d,"\epsilon_i"']\\
     G_i \arrow[r,"\alpha"'] & H & G_i. \arrow[l,"\beta"]
    \end{tikzcd}
   \end{center}
  \item[(c)] For every diagram
   $G_{i+1}\xrightarrow{\alpha}H\xleftarrow{\phi}K$ in $\U$ with 
   $\U(G_i, K)\not=\emptyset$, there
   exists $\beta\in \U(G_{i+1},K)$ such that $\phi\circ \beta=\alpha$.
 \end{itemize}
\end{definition}

\begin{construction}\label{con-colimit-tower}
 Suppose we have a colimit tower as above.  For any $X\in\A\U$ we
 define $\Lambda_iX$ to be the group of coinvariants
 $X(G_i)_{\Out(G_i)}$, and we let $\rho_i\colon X(G_i)\to\Lambda_iX$
 be the obvious reduction map.  By taking $H=G_i$ and $\beta=1$ in
 condition~(b), we see that every automorphism of $G_i$ can be covered
 by an automorphism of $G_{i+1}$.  It follows that there is a unique
 map $\Lambda_iX\to\Lambda_{i+1}X$ making the following diagram commute:
 \begin{center}
  \begin{tikzcd}
   X(G_i) \arrow[r,"\epsilon_i^*"] \arrow[d,"\rho_i"'] &
   X(G_{i+1}) \arrow[d,"\rho_{i+1}"] \\
   \Lambda_iX \arrow[r] & 
   \Lambda_{i+1}X 
  \end{tikzcd}
 \end{center}
 We will just write $\epsilon_i^*$ for this map.  We define
 $\Lambda_\infty X$ to be the colimit of the sequence 
 \[ \Lambda_0X \xrightarrow{\epsilon_0^*} 
    \Lambda_1X \xrightarrow{\epsilon_1^*} 
    \Lambda_2X \xrightarrow{\epsilon_2^*} \dotsb,
 \] 
 and we let $\sigma_i$ denote the canonical map
 $\Lambda_iX\to\Lambda_\infty X$.  As we are working over a field of 
 characteristic zero and
 $\Out(G_n)$ is finite, we see that $\Lambda_n$ is an exact functor.
 As sequential colimits are exact, we see that
 $\Lambda_\infty\colon\A\U\to\Vect_k$ is also an exact functor. 
\end{construction}

\begin{proposition}\label{prop-colimit-tower}
 For any colimit tower, there is a natural isomorphism
 $\Lambda_\infty X\to LX$.  Thus, if $\U$ has a colimit
 tower, then it is colimit-exact.
\end{proposition}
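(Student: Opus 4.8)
The plan is to produce mutually inverse natural maps $\psi\colon\Lambda_\infty X\to LX$ and $\bar\phi\colon LX\to\Lambda_\infty X$; colimit-exactness then follows immediately, since $\Lambda_\infty$ was already shown to be exact in Construction~\ref{con-colimit-tower}, and an exact functor naturally isomorphic to $L$ forces $L$ to be exact as well.

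To build $\psi$: for each $i$ the canonical map $X(G_i)\to LX$ sends $x$ and $\gamma^*x$ to the same class for every $\gamma\in\Out(G_i)$ (because $\gamma$ is a morphism of $\U$, so $x$ and $\gamma^*x$ are identified in the colimit), hence it factors through the coinvariants $\Lambda_iX$; these factorizations are compatible with the tower maps $\epsilon_i^*$ since each $\epsilon_i\colon G_{i+1}\to G_i$ is a morphism of $\U$. Taking the colimit over $i$ yields $\psi$.

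To build $\bar\phi$, I would first define for each $H\in\U$ a map $\phi_H\colon X(H)\to\Lambda_\infty X$: using condition (a), pick $i$ and $\alpha\in\U(G_i,H)$ and set $\phi_H=\sigma_i\circ\rho_i\circ\alpha^*$. The key point — and the main obstacle — is that $\phi_H$ does not depend on the choice of $(i,\alpha)$. Given two choices I would use the tower, together with the squares of Construction~\ref{con-colimit-tower} relating $\epsilon_i^*$ to the reductions $\rho_i$, to reduce to the case of two morphisms $\alpha,\beta\in\U(G_i,H)$ at a common level $i$; then condition (b) furnishes $\gamma\in\Out(G_{i+1})$ with $\alpha\epsilon_i=\beta\epsilon_i\gamma$, whence $\epsilon_i^*\alpha^*=\gamma^*\epsilon_i^*\beta^*$, and since $\gamma^*$ induces the identity on $\Lambda_{i+1}X$ the two composites agree after applying $\sigma_{i+1}$. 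Granting this, naturality of the family $\{\phi_H\}$ is formal: for $f\colon H'\to H$ one computes $\phi_{H'}\circ f^*$ using some $\alpha'\colon G_j\to H'$ and notes that $f\alpha'\colon G_j\to H$ is a legitimate choice for $\phi_H$, so $\phi_{H'}\circ f^*=\phi_H$. Thus $\{\phi_H\}$ is a cocone on $X$ and induces $\bar\phi\colon LX\to\Lambda_\infty X$.

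Finally I would verify $\psi\bar\phi=\mathrm{id}$ and $\bar\phi\psi=\mathrm{id}$ by evaluating on generators. For $[x]\in LX$ with $x\in X(H)$ and a choice $\alpha\colon G_i\to H$, we get $\bar\phi[x]=\sigma_i\rho_i\alpha^*x$, and $\psi$ carries this to $[\alpha^*x]=[x]$; for a class of $\Lambda_\infty X$ represented by some $x\in X(G_i)$, $\psi$ sends it to $[x]\in LX$ and then $\bar\phi$ returns it using the admissible choice $\alpha=\mathrm{id}_{G_i}$. Naturality in $X$ of both maps is clear, since every ingredient ($\alpha^*$, passage to coinvariants, the colimit structure maps $\sigma_i$, the universal maps into $LX$) is natural in $X$. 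This gives the asserted natural isomorphism $\Lambda_\infty X\cong LX$, and hence the colimit-exactness of $\U$. I expect the only genuine difficulty to be the well-definedness of $\phi_H$ — the bookkeeping of pushing two choices to a common level of the tower and then invoking condition (b) — with everything else being routine.
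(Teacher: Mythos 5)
Your proposal is correct and follows essentially the same route as the paper's proof: the map out of $\Lambda_\infty X$ is formal, the map back is defined via a choice $(i,\alpha)$ whose independence is the crux and is handled exactly as in the paper by pushing to a common level of the tower and invoking condition (b) together with the triviality of $\Out(G_{i+1})$ on coinvariants, and naturality in $H$ then yields the inverse. (Only the names $\phi$ and $\psi$ are swapped relative to the paper.)
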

\begin{proof}
 Let $\theta_H\colon X(H)\to LX$ 
 be the canonical morphism.  It is formal that there is a unique map
 $\phi\colon \Lambda_\infty X\to LX$ with 
 $\phi\sigma_i\rho_i=\theta_{G_i}$
 for all $i$.  In the opposite direction, suppose we have $H\in\U$.
 We can choose $(i,\alpha)$ as in condition~(a) and define 
 \[ \psi_{H,i,\alpha}=
     \sigma_i\rho_i\alpha^*\colon X(H) \to \Lambda_\infty X.
 \]
 Using the obvious cone properties we see that this is the same as
 $\psi_{H,i+1,\alpha\epsilon_i}$, or as $\psi_{H,i,\alpha\mu}$
 for any $\mu\in\Out(G_i)$.  By using these rules together with
 condition~(b), we see that $\psi_{H,i,\alpha}$ is independent of the
 choice of $(i,\alpha)$, so we can just denote it by $\psi_H$.  It is
 now easy to see that for any $\zeta\colon H\to K$ we have
 $\psi_H\zeta^*=\psi_K\colon X(K)\to\Lambda_\infty X$.  This means
 that there is a unique $\psi\colon LX\to\Lambda_\infty X$ with
 $\psi\theta_H=\psi_H$ for all $H$.  This is clearly inverse to $\phi$.
\end{proof}

\begin{remark}\label{rem-condition-c-colim-tower}
 So far we only used conditions (a) and (b) in the definition of 
 colimit tower. Condition (c) will play an important role in 
 Section~\ref{sec-torsion}. 
\end{remark}

\begin{example}\label{ex-cyclic-colimit-exact}
 Let $\C$ be the category of cyclic groups.  The morphisms can be
 described as follows:
 \begin{itemize}
  \item[(a)] If $|G|=n$ then the group $\Aut(G)=\Out(G)$ is
   canonically identified with $(\ZZ/n)^\times$.  
  \item[(b)] If $|H|$ divides $|G|$ then $\C(G,H)$ is a torsor for
   $\Aut(H)$.  Moreover, for any $\alpha\colon G\to H$ and
   $\phi\in\Aut(H)=(\ZZ/|H|)^\times$ there exists
   $\psi\in\Aut(G)=(\ZZ/|G|)^\times$ that reduces to $\phi$, and
   any such $\psi$ satisfies $\alpha\psi=\phi\alpha$.
  \item[(c)] On the other hand, if $|H|$ does not divide $|G|$ then
   $\C(G,H)=\emptyset$. 
 \end{itemize}
 From these observations it follows easily that the groups
 $G_n=\ZZ/{n!}$ form a colimit tower, and so $\C$ is colimit-exact.
 Similarly, the groups $\ZZ/{p^n}$ form a colimit tower in the category
 $\C[p^\infty]$ of cyclic $p$-groups, so $\C[p^\infty]$ is also
 colimit exact.  For a more degenerate example, we can fix a positive
 integer $d$ and consider the category $\C[d]$ of cyclic groups
 of order dividing $d$.  We find that the constant sequence with value
 $\ZZ/d$ is a colimit tower for $\C[d]$, so this category is
 again colimit-exact.
\end{example}

 Recall the category $\Z[p^r]$ of finitely generated $\ZZ/p^r$-modules and its 
 subcategory $\F[p^r]$ of free $\ZZ/p^r$-modules.

\begin{lemma}\label{lem-dotted-arrow}
  Consider a diagram of epimorphisms 
  \[
  \begin{tikzcd}
   A \arrow[dr, two heads, "\alpha"']
     \arrow[rr, dotted, two heads, "\gamma"] & & 
   B \arrow[dl, two heads, "\beta"] \\
    & C & 
  \end{tikzcd}
  \] 
  with $A \in \F[p^r], B \in \Z[p^r]$ and $\rk(A) \geq \rk(B)$. 
  Then the dotted arrow can be filled by another epimorphism. 
\end{lemma}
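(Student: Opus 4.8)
The plan is to build the dotted arrow as a suitably corrected lift of $\alpha$ along $\beta$, reducing the surjectivity requirement to linear algebra over $\mathbb{F}_p$ by Nakayama's lemma. Write $\rk(M)=\dim_{\mathbb{F}_p}(M/pM)$ and $\overline{M}=M/pM$ for a finitely generated $\ZZ/p^r$-module $M$; since $\ZZ/p^r$ is local with nilpotent maximal ideal $(p)$, Nakayama's lemma shows that a homomorphism $f\colon A\to B$ of finitely generated $\ZZ/p^r$-modules is surjective if and only if $\overline{f}\colon\overline{A}\to\overline{B}$ is surjective. First I would use that $A$ is free, hence projective, over $\ZZ/p^r$ together with surjectivity of $\beta$ to lift $\alpha$ to a homomorphism $\gamma_0\colon A\to B$ with $\beta\gamma_0=\alpha$; the remaining task is to correct $\gamma_0$ so that its reduction becomes onto.

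Next I would describe the available corrections. Any lift of $\alpha$ along $\beta$ has the form $\gamma_0+\delta$ with $\delta\in\Hom_{\ZZ/p^r}(A,N)$, where $N=\ker\beta$, and reduction mod $p$ gives $\overline{\gamma_0+\delta}=\overline{\gamma_0}+\overline{\delta}$ with $\overline{\delta}$ landing in $\ker\overline{\beta}$ (one checks $\ker\overline{\beta}=(N+pB)/pB$, the image of $N$ in $\overline{B}$). Conversely, since $N\twoheadrightarrow\ker\overline{\beta}$ and $A$ is projective over $\ZZ/p^r$, every $\mathbb{F}_p$-linear map $\overline{A}\to\ker\overline{\beta}$ lifts to some $\delta\colon A\to N$; so, after reducing mod $p$, the possible lifts are exactly $\overline{\gamma_0}+\overline{\delta}$ for arbitrary $\mathbb{F}_p$-linear $\overline{\delta}\colon\overline{A}\to\ker\overline{\beta}$. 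Verifying this realizability is the one slightly delicate point; everything else is formal.

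Finally I would settle the resulting linear-algebra problem: find a surjective $\mathbb{F}_p$-linear lift of $\overline{\alpha}\colon\overline{A}\to\overline{C}$ along $\overline{\beta}\colon\overline{B}\to\overline{C}$. Splitting $\overline{A}=\ker\overline{\alpha}\oplus S$ with $\overline{\alpha}|_S$ an isomorphism onto $\overline{C}$, the relation $\overline{\beta}\,\overline{\gamma_0}=\overline{\alpha}$ shows that $\overline{\gamma_0}$ is injective on $S$ and that $\overline{\beta}$ maps $\overline{\gamma_0}(S)$ isomorphically onto $\overline{C}$, so $\overline{B}=\ker\overline{\beta}\oplus\overline{\gamma_0}(S)$; choosing $\overline{\delta}$ to vanish on $S$ and to agree on $\ker\overline{\alpha}$ with (any surjection $\ker\overline{\alpha}\to\ker\overline{\beta}$) minus $\overline{\gamma_0}|_{\ker\overline{\alpha}}$ makes $\overline{\gamma_0}+\overline{\delta}$ carry $\ker\overline{\alpha}$ onto $\ker\overline{\beta}$ and $S$ onto $\overline{\gamma_0}(S)$, hence onto $\overline{B}$. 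Such a surjection $\ker\overline{\alpha}\to\ker\overline{\beta}$ exists precisely when $\dim_{\mathbb{F}_p}\ker\overline{\alpha}\geq\dim_{\mathbb{F}_p}\ker\overline{\beta}$, which, since $\overline{\alpha}$ and $\overline{\beta}$ are onto, reads $\rk(A)-\rk(C)\geq\rk(B)-\rk(C)$, i.e.\ exactly the hypothesis $\rk(A)\geq\rk(B)$. Lifting this $\overline{\delta}$ as in the previous paragraph and invoking Nakayama's lemma produces the required epimorphism $\gamma\colon A\to B$ with $\beta\gamma=\alpha$. The main obstacle is thus bookkeeping rather than depth: ensuring the mod-$p$ reduction faithfully captures both the freedom in the lift and the surjectivity criterion.
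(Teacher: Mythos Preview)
Your proof is correct. Both your argument and the paper's reduce the problem to the mod-$p$ situation and use that surjectivity can be detected there, but the organisation differs. The paper works entirely with explicit elements: it picks $c_1,\dotsc,c_l$ projecting to a basis of $C/pC$, lifts them to $b_1,\dotsc,b_l\in B$, extends to a minimal generating set $b_1,\dotsc,b_m$ of $B$ adjusted so that $\beta(b_i)=0$ for $i>l$, does the same for $A$ to get $a_1,\dotsc,a_n$, and then defines $\gamma$ on the free basis $a_1,\dotsc,a_n$ by $\gamma(a_i)=b_i$ for $i\leq m$ and $\gamma(a_i)=0$ for $i>m$. Your approach instead invokes projectivity of $A$ to produce an initial lift $\gamma_0$, identifies the freedom in such lifts as $\Hom(A,\ker\beta)$, and then solves a linear-algebra correction problem over $\mathbb{F}_p$, appealing to Nakayama's lemma to pass back up. The paper's method is shorter and more hands-on; yours is more structural and makes the role of the rank hypothesis (as a dimension inequality between kernels over $\mathbb{F}_p$) slightly more transparent. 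The one place where your argument requires care---showing that every $\mathbb{F}_p$-linear $\overline{A}\to\ker\overline{\beta}$ is the reduction of some $\delta\colon A\to N$---is handled correctly via projectivity of $A$ and the surjection $N\twoheadrightarrow\ker\overline{\beta}$.
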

\begin{proof}
 Put $\rk(A)=n$, $\rk(B)=m$ and $\rk(C)=l$ so that $n\geq m\geq l$.
 Choose elements $c_1,\dotsc,c_l\in C$ that project to a basis of
 $C/pC$ over $\ZZ/p$ (so they form a minimal generating set for $C$).
 Choose elements $b_1,\dotsc,b_l\in B$ with $\beta(b_i)=c_i$.  The
 images of $b_1,\dotsc,b_l$ in $B/pB$ will then be linearly
 independent.  Choose additional elements $b_{l+1},\dotsc,b_m\in B$ so
 that $b_1,\dotsc,b_m$ gives a basis for $B/pB$.  After adding
 multiples of $b_1,\dotsc,b_l$ to $b_{l+1},\dotsc,b_m$ if necessary,
 we can ensure that $\beta(b_i)=0$ for $i>l$.  In the same way, we can
 find elements $a_1,\dotsc,a_n\in A$ such that $\alpha(a_i)=c_i$ for
 $i\leq l$, and $\alpha(a_i)=0$ for $i>l$, and $a_1,\dotsc,a_n$ gives
 a basis for $A/pA$ over $\ZZ/p$.  As $A$ is free of rank $n$ over
 $\ZZ/p^r$, it follows that the same elements give a basis over
 $\ZZ/p^r$.  Thus, there is a unique morphism $\gamma\colon A\to B$
 with 
 \[ \gamma(a_i)= 
     \begin{cases}
      b_i & \text{ if } \;\; 0 \leq i \leq m \\
      0   & \text{ if } \;\; m < i \leq n.
     \end{cases}
 \]
 As all the generators $b_i$ lie in the image of $\gamma$, we see that
 $\gamma$ is surjective.  It also satisfies $\beta\gamma=\alpha$ by
 construction. 
\end{proof}

\begin{example}\label{ex-abelian-groups-colimit-exact}
 Consider the category $\Z[p^r]$ of finite abelian $p$-groups of
 exponent dividing $p^r$, which is equivalent to the category of
 finitely generated $\ZZ/p^r$-modules and linear maps. Using
 Lemma~\ref{lem-dotted-arrow} one sees that the groups $(\ZZ/p^r)^n$
 form a colimit tower.  It follows that $\Z[p^r]$ is colimit-exact. As
 these groups lie in the subcategory $\F[p^r]\leq\Z[p^r]$, it is clear
 that they form a colimit tower for that subcategory as well.
\end{example}

Most of the rest of this section is devoted to the proof of the
following result:

\begin{theorem}\label{thm-exact}
 If $\U\leq\G$ is submultiplicative then it is colimit-exact.
\end{theorem}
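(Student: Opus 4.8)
The plan is to construct a \emph{colimit tower} for $\U$ in the sense of Definition~\ref{def-colimit-tower} and then appeal to Proposition~\ref{prop-colimit-tower}, which says that the existence of such a tower identifies the colimit functor $L$ with the exact functor $\Lambda_\infty$ of Construction~\ref{con-colimit-tower}. (In fact the proof of Proposition~\ref{prop-colimit-tower} uses only conditions~(a) and~(b); I would verify~(c) as well so that the tower is available for Section~\ref{sec-torsion}.) Throughout, $d(G)$ denotes the minimal number of generators of a finite group $G$.

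For the tower I would use the universal groups of Construction~\ref{con-q-V}. Let $F_N$ be the free group on $x_1,\dots,x_N$, and recall that $q_{\leq b}(F_N)=F_N/\N(F_N;\U_{\leq b})$ is finite and lies in $\U_{\leq b}^\star\subseteq\U$ (here submultiplicativity of $\U$ is used). Put $N_i=2^i$, and define $b_0\le b_1\le\cdots$ recursively by $b_0=1$ and $b_{i+1}=\max\bigl(b_i,\,i+2,\,|q_{\leq b_i}(F_{N_i})|\bigr)$; this is well defined since $q_{\leq b_i}(F_{N_i})$ depends only on $b_i$ and $N_i$. Set $G_i=q_{\leq b_i}(F_{N_i})\in\U$. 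The surjection $F_{N_{i+1}}\twoheadrightarrow F_{N_i}$ with $x_j\mapsto x_j$ for $j\le N_i$ and $x_j\mapsto 1$ otherwise, composed with $F_{N_i}\twoheadrightarrow G_i$, has kernel containing $\N(F_{N_{i+1}};\U_{\leq b_{i+1}})$ because $b_{i+1}\ge|G_i|$ puts $G_i$ in $\U_{\leq b_{i+1}}$; it therefore descends to a surjection $\epsilon_i\colon G_{i+1}\to G_i$, and we take $G_0\xleftarrow{\epsilon_0}G_1\xleftarrow{\epsilon_1}\cdots$ as our candidate tower. The crucial point for condition~(b) is that $\N(F_N;\U_{\leq b})$ is a \emph{characteristic} subgroup of $F_N$ — any automorphism of $F_N$ permutes the finitely many normal subgroups with quotient in $\U_{\leq b}$ — so every $\tilde\gamma\in\Aut(F_N)$ descends to an automorphism of $F_N/\N(F_N;\U_{\leq b})$.

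Conditions~(a) and~(c) are then quick. For~(a): given $H\in\U$, choose $i$ with $N_i\ge d(H)$ and $b_i\ge|H|$ (possible as $N_i,b_i\to\infty$); any surjection $F_{N_i}\to H$ has kernel in $\K(F_{N_i};\U_{\leq b_i})$ since $H\in\U_{\leq b_i}$, hence factors through $G_i$. For~(c): if $G_{i+1}\xrightarrow{\alpha}H\xleftarrow{\phi}K$ in $\U$ with $\U(G_i,K)\ne\emptyset$, then $K$ is a quotient of $G_i$, so $d(K)\le N_i\le N_{i+1}$ and $|K|\le|G_i|\le b_{i+1}$, i.e. $K\in\U_{\leq b_{i+1}}$; pulling $\alpha$ back to a surjection $F_{N_{i+1}}\to H$ and applying Gasch\"utz's lifting lemma along $\phi$ (valid since $N_{i+1}\ge d(K)$) gives a surjection $F_{N_{i+1}}\to K$ over it, which descends through $G_{i+1}$ because $K\in\U_{\leq b_{i+1}}$.

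The real work, and the step I expect to be the main obstacle, is condition~(b). Given $\alpha,\beta\colon G_i\to H$ in $\U$, write $\pi\colon F_{N_{i+1}}\to G_{i+1}$ for the quotient map and form the surjections $\bar\alpha=\alpha\epsilon_i\pi$ and $\bar\beta=\beta\epsilon_i\pi$ from $F_{N_{i+1}}$ to $H$. By construction $\bar\alpha$ carries $x_1,\dots,x_{N_i}$ to a generating tuple $(a_1,\dots,a_{N_i})$ of $H$ and $x_{N_i+1},\dots,x_{N_{i+1}}$ to $1$, and likewise $\bar\beta$ yields $(b_1,\dots,b_{N_i})$ followed by $1$'s. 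Since $N_{i+1}-N_i=N_i$, there are $N_i$ ``scratch'' coordinates equal to $1$, and — using repeatedly that an already-placed generating tuple lets us multiply any coordinate by an arbitrary word in the \emph{other} coordinates (an elementary composite of Nielsen automorphisms of $F_{N_{i+1}}$) — I would: (i) copy $a_1,\dots,a_{N_i}$ into the scratch coordinates using that $(b_1,\dots,b_{N_i})$ generates $H$; (ii) clear the first $N_i$ coordinates and then set them equal to $a_1,\dots,a_{N_i}$, using that the copies just placed generate $H$; and (iii) clear the scratch coordinates again. The composite is an automorphism $\tilde\gamma\in\Aut(F_{N_{i+1}})$ with $\bar\beta\circ\tilde\gamma=\bar\alpha$. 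Descending $\tilde\gamma$ to $\gamma\in\Aut(G_{i+1})=\U(G_{i+1},G_{i+1})$ and cancelling the epimorphism $\pi$ from $\beta\epsilon_i\gamma\pi=\bar\beta\tilde\gamma=\bar\alpha=\alpha\epsilon_i\pi$ yields $\beta\epsilon_i\gamma=\alpha\epsilon_i$, which is exactly the commutativity demanded in~(b). With all three conditions established, Proposition~\ref{prop-colimit-tower} completes the proof. The delicate points to get right are that the exponential gap $N_{i+1}=2N_i$ genuinely supplies enough scratch room (one must reconstruct the \emph{entire} $N_i$-tuple $(a_j)$, not just a generating subset) and that each manipulation used really is an automorphism of the free group.
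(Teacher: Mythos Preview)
Your argument is correct, and the overall strategy (build a colimit tower out of the universal groups $q_{\leq b}(F_N)$ from Construction~\ref{con-q-V}, then invoke Proposition~\ref{prop-colimit-tower}) is the same as the paper's. The execution, however, differs in an interesting way. The paper takes $G_{n+1}=TG_n=q_{\leq |G_n|}(FG_n)$, where $FG_n$ is the free group on the \emph{underlying set} of $G_n$. This choice makes condition~(b) almost trivial: given surjections $\alpha,\beta\colon G_n\to H$, the fibres $\alpha^{-1}\{h\}$ and $\beta^{-1}\{h\}$ have the same cardinality, so there is a set bijection $\sigma\colon G_n\to G_n$ with $\beta\sigma=\alpha$, and $T\sigma$ is the required automorphism of $G_{n+1}$ (Lemma~\ref{lem-condition-b}). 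No Nielsen manipulations are needed, because every permutation of the basis of $FG_n$ is already an automorphism. Your version, with $G_i=q_{\leq b_i}(F_{2^i})$, must instead manufacture scratch coordinates and run the three-step Nielsen argument you describe; this is correct but more laborious. Conversely, for condition~(c) you invoke Gasch\"utz's lemma directly, whereas the paper gives a self-contained counting argument (Lemma~\ref{lem-TX-lift}) tailored to the case $|K|\le|X|$ rather than $d(K)\le|X|$. Both proofs are short; the paper's trick of using the group itself as a generating set is the cleaner way to handle~(b).
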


We will prove this by giving a less explicit but much more general
construction of colimit towers. For this, we will need a bit of
preparation. Recall the functor $T$ from Example~\ref{ex-free-group}.

\begin{lemma}\label{lem-TX-lift}
 Let $X$ be a finite set and consider a diagram of epimorphisms
 between groups in $\U$
 \[
  \begin{tikzcd}
    & G \arrow[d, two heads, "\alpha"'] \\
    TX \arrow[r, two heads, "\lambda"]
       \arrow[ur, dotted, two heads, "\mu"] & H
  \end{tikzcd}
 \]
 in which $|G|\leq |X|$. Then the dotted arrow can be filled in by
 another epimorphism.
\end{lemma}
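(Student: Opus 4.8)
The plan is to lift along $\alpha$ first at the level of the free group $FX$ (where $n=|X|$) and then descend to $TX=FX/\N_{\le n}(FX)$. Write $\pi\colon FX\to TX$ for the quotient map and set $\bar\lambda=\lambda\pi\colon FX\to H$, which is a surjection. Since $FX$ is free on $X$, a homomorphism $\tilde\mu\colon FX\to G$ lifting $\bar\lambda$ along $\alpha$ is the same datum as a choice, for each $x\in X$, of an element $\phi(x)\in\alpha^{-1}(\bar\lambda(x))$; this set is a coset of $K:=\ker\alpha$ and is nonempty because $\alpha$ is onto, and any such choice automatically gives $\alpha\tilde\mu=\bar\lambda$ (two homomorphisms agreeing on the generating set $X$).

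The role of the hypothesis $|G|\le|X|$ is to let us arrange that $\tilde\mu$ is \emph{surjective}. First I note that $\sum_{h\in H}|\bar\lambda^{-1}(h)|=|X|\ge|G|=|H|\cdot|K|$, so there is some $h_0\in H$ whose fibre $Y:=\bar\lambda^{-1}(h_0)$ satisfies $|Y|\ge|K|$. Fix $g_0\in\alpha^{-1}(h_0)$, so that $\alpha^{-1}(h_0)=g_0K$. I then choose $\phi$ on $Y$ to be a surjection $Y\twoheadrightarrow g_0K$ with $\phi(y_0)=g_0$ for some fixed $y_0\in Y$ --- possible since $|Y|\ge|K|=|g_0K|$ --- and on $X\setminus Y$ to be any admissible choice. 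Then the elements $\phi(y_0)^{-1}\phi(y)=g_0^{-1}\phi(y)$, as $y$ ranges over $Y$, all lie in $\operatorname{im}\tilde\mu=\langle\phi(X)\rangle$ and they exhaust $K$; combined with $\alpha(\operatorname{im}\tilde\mu)=H$ (which holds because $\bar\lambda$ is onto), the fact that $\operatorname{im}\tilde\mu\supseteq K=\ker\alpha$ forces $\operatorname{im}\tilde\mu=G$.

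It remains to descend $\tilde\mu$ through $\pi$. Since $G\in\U$ and $|G|\le|X|=n$ we have $G\in\U_{\le n}$, so $\ker\tilde\mu\in\K_{\le n}(FX)$ (as $FX/\ker\tilde\mu\cong G$), and hence $\N_{\le n}(FX)\subseteq\ker\tilde\mu$ by the definition of $\N_{\le n}(FX)$ as the intersection of the members of $\K_{\le n}(FX)$. Thus $\tilde\mu$ factors as $\tilde\mu=\mu\pi$ for a (necessarily surjective) homomorphism $\mu\colon TX\to G$, and from $\alpha\mu\pi=\alpha\tilde\mu=\bar\lambda=\lambda\pi$ together with the fact that $\pi$ is an epimorphism we conclude $\alpha\mu=\lambda$, which is exactly what is needed.

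I expect the only genuinely delicate point to be the surjectivity of $\tilde\mu$ --- specifically the observation that a single fibre of $\bar\lambda$ of cardinality at least $|K|$ suffices to generate all of $K$ inside $\operatorname{im}\tilde\mu$, which is where the numerical hypothesis $|G|\le|X|$ enters. The remaining steps are formal consequences of the universal properties of $FX$ and of $TX=q_{\le n}(FX)$.
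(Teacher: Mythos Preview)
Your proof is correct and follows essentially the same approach as the paper: both use the pigeonhole argument to find a fibre of size at least $|\ker\alpha|$, choose the lift to be surjective onto that coset (forcing the image to contain $\ker\alpha$ and hence all of $G$), and then invoke the universal property of $TX$. The only cosmetic difference is that you route explicitly through $FX$ and descend via $\N_{\le n}(FX)\subseteq\ker\tilde\mu$, whereas the paper appeals directly to the defining property of $TX$; these are equivalent formulations of the same step.
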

\begin{proof}
 Put $L=\ker(\alpha)$, so $|L||H|=|G|\leq |X|$. Let $i\colon X\to TX$
 be the canonical inclusion, and put
 $X_h=(\lambda i)^{-1}\{h\}\subseteq X$ for each $h \in H$.  We then
 have $\sum_h|X_h|=|X|\geq |H||L|$, so we can choose $h_0$ with
 $|X_{h_0}|\geq |L|$.  Let $\mu_h\colon X_h\to\alpha^{-1}\{h\}$ be
 chosen arbitrarily, except that we choose $\mu_{h_0}$ to be
 surjective. By combining these maps, we get $\mu'\colon X \to G$
 such that $\alpha\mu'=\lambda i$. By the defining properties of $TX$,
 we see that there is a unique homomorphism $\mu\colon TX\to G$ with
 $\mu i=\mu'$. This satisfies $\alpha\mu i=\lambda i$ and $i(X)$
 generates $TX$ so $\alpha\mu=\lambda$. Now note that the restriction
 of $\alpha$ to the image of $\mu$ is an epimorphism since $\alpha\mu$
 is surjective. Also, the image of $\mu$ contains $L$ as $\mu_{h_0}$
 is surjective. It follows that $|\mathrm{Im}(\mu)|=|L||H|=|G|$ so
 $\mu$ is surjective as required.
\end{proof}

\begin{lemma}\label{lem-TG-size}
 If $G\neq 1$ then $\epsilon \colon TG\to G$ is not injective, so
 $|TG|\geq 2|G|$.
\end{lemma}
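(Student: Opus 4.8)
The plan is to produce an explicit nontrivial element of $\ker(\epsilon)$. Recall from Construction~\ref{con-q-V} together with Examples~\ref{ex-q-leq-n} and~\ref{ex-free-group} that, writing $n=|G|$, we have $TG=FG/\N_{\leq n}(FG)$, where $FG$ is the free group on the underlying set of $G$, where $\N_{\leq n}(FG)$ is the intersection of all normal subgroups $N\lhd FG$ with $FG/N\in\U$ and $|FG/N|\leq n$, and where $\epsilon$ is induced by the surjection $\epsilon_0\colon FG\to G$ extending $\id_G$ (so $\epsilon_0$ sends the free generator indexed by $g\in G$ to $g$). Let $x\in FG$ be the free generator indexed by the identity element $1\in G$. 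Then $\epsilon_0(x)=1$, so the image $\bar x$ of $x$ in $TG$ lies in $\ker(\epsilon)$, and it suffices to show $\bar x\neq 1$, i.e.\ that $x\notin\N_{\leq n}(FG)$.

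To see this I would exhibit a single $N\in\K_{\leq n}(FG)$ with $x\notin N$. Since $G\neq 1$ we may fix $g_0\in G$ with $g_0\neq 1$ and define $\lambda\colon FG\to G$ on free generators by $\lambda(x)=g_0$ and $\lambda(y)=g$ for the generator $y$ indexed by any $g\neq 1$. The image of $\lambda$ contains every non-identity element of $G$, hence is all of $G$, so $N:=\ker(\lambda)$ satisfies $FG/N\cong G\in\U$ with $|FG/N|=n$; thus $N\in\K_{\leq n}(FG)$ and therefore $\N_{\leq n}(FG)\subseteq N$. On the other hand $\lambda(x)=g_0\neq 1$, so $x\notin N$, whence $x\notin\N_{\leq n}(FG)$, as wanted. (One could equally factor through a subgroup $C_p\leq G$ of prime order, using Cauchy's theorem; all that matters is that $G$ admits \emph{some} quotient lying in $\U$ of order $\leq n$ on which $x$ is nontrivial.)

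It then follows that $\bar x\neq 1$ in $TG$ while $\epsilon(\bar x)=\epsilon_0(x)=1$, so $\epsilon$ is not injective. Finally, $\epsilon$ is a surjective group homomorphism, so $G\cong TG/\ker(\epsilon)$ and hence $|TG|=|G|\cdot|\ker(\epsilon)|$, which is at least $2|G|$ since $|\ker(\epsilon)|\geq 2$.

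I do not anticipate a genuine obstacle here; the only care needed is to keep the bookkeeping of Construction~\ref{con-q-V} straight — in particular that $\K_{\leq n}(FG)$ records normal subgroups whose quotient lies in $\U$ and has order at most $n$, so that the kernels of both $\id_G$ (extended) and of the perturbed map $\lambda$ lie in it — and to note that $\langle\{g\in G: g\neq 1\}\rangle=G$ whenever $G\neq 1$, so that $\lambda$ really is surjective even in the degenerate case $|G|=2$.
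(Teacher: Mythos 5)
Your proof is correct and uses essentially the same idea as the paper: the generator $e_1$ of $TG$ indexed by the identity element dies under $\epsilon$ but survives in $TG$ because a perturbed surjection $FG\to G$ (in the paper, $\epsilon$ precomposed with the automorphism induced by the transposition swapping $1$ and $g_0$; in your version, $\lambda$ defined directly on generators) sends it to $g_0\neq 1$, so its kernel lies in $\K_{\leq n}(FG)$ but does not contain $e_1$. The bookkeeping with $\K_{\leq n}(FG)$ and the final counting are both handled correctly.
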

\begin{proof}
 Choose any nontrivial $g\in G$ and let $\tau \colon G\to G$ be the
 transposition that exchanges $1$ and $g$. Let $e_1$ and $e_g$ denote
 the corresponding generators of $FG$ or $TG$. The map $\tau$ induces
 an automorphism $\alpha$ of $TG$ which exchanges $e_1$ and $e_g$.
 The homomorphism $\epsilon \alpha$ sends $e_1$ to $g\neq 1$, so
 $e_1\not\in N$, so $e_1$ gives a nontrivial element of $TG$.
 However, this lies in the kernel of $\epsilon$, so $\epsilon$ is not
 injective, and $|TG|=|G||\ker(\epsilon)|\geq 2|G|$.
\end{proof}

\begin{remark}\label{rem-TG-size}
 This lower bound is pitifully weak; in practice $TG$ is enormously
 larger than $G$.
\end{remark}

\begin{lemma}\label{lem-condition-b}
 Suppose that $\alpha,\beta \colon G \to H$ are surjective
 homomorphisms in $\U$. Then there is an automorphism $\gamma$ of $TG$
 making the following diagram commute:
 \[
  \begin{tikzcd}
   TG \arrow[rr,"\gamma"] \arrow[d, "\epsilon"'] & & 
   TG \arrow[d,"\epsilon"] \\
   G \arrow[r, "\alpha"'] & H & \arrow[l, "\beta"] G.
  \end{tikzcd}
 \]
\end{lemma}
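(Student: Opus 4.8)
The plan is to build $\gamma$ directly from a bijection of the underlying set of $G$, exploiting the fact (Example~\ref{ex-free-group}) that the construction $T$ is functorial for bijections of a finite set. The entire argument rests on one elementary observation about fibres.

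First I would note that since $\alpha$ and $\beta$ are surjective group homomorphisms $G\to H$, every fibre $\alpha^{-1}(h)$ and every fibre $\beta^{-1}(h)$ (for $h\in H$) has the same cardinality $|G|/|H|$. Hence, choosing for each $h\in H$ a bijection $\tau_h\colon\alpha^{-1}(h)\to\beta^{-1}(h)$ and assembling these over the two partitions $G=\bigsqcup_{h\in H}\alpha^{-1}(h)=\bigsqcup_{h\in H}\beta^{-1}(h)$, I obtain a bijection $\tau\colon G\to G$ of the underlying set of $G$, characterised by the property $\beta\circ\tau=\alpha$ as maps of sets $G\to H$.

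Next I would set $\gamma=T\tau$, which is an automorphism of $TG$ because $T$ is functorial for bijections of the $n$-element underlying set of $G$. It then remains to verify that the square commutes, i.e.\ that $\beta\circ\epsilon\circ\gamma=\alpha\circ\epsilon$ as homomorphisms $TG\to H$. I would check this on the generators $\bar e_g\in TG$, the images of the free generators $e_g$ of $FG$: by the definitions of $T\tau$ and of $\epsilon$ one has $\gamma(\bar e_g)=\bar e_{\tau(g)}$ and $\epsilon(\bar e_g)=g$, so
\[
 \beta\bigl(\epsilon(\gamma(\bar e_g))\bigr)=\beta(\tau(g))=\alpha(g)=\alpha\bigl(\epsilon(\bar e_g)\bigr)
\]
for all $g\in G$. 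Since the elements $\bar e_g$ generate $TG$ and all the maps in sight are group homomorphisms, the two composites agree everywhere, which is exactly the asserted commutativity.

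There is no serious obstacle; the only point needing a little care is to compute $\epsilon\circ T\tau$ honestly on generators rather than appealing to a nonexistent naturality of $\epsilon$, which is precisely what the last step does. An alternative, less elementary route would first apply Lemma~\ref{lem-TX-lift} to lift $\alpha\epsilon\colon TG\to H$ through $\beta$ to a surjection $\mu\colon TG\to G$ with $\beta\mu=\alpha\epsilon$, and then observe that $\mu$, being induced by a set map $G\to G$ with generating image, can be arranged to be induced by a bijection and hence to factor as $\epsilon\circ\gamma'$ for an automorphism $\gamma'$ of $TG$; but the direct construction above is shorter and makes this detour unnecessary.
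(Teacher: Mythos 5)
Your proposal is correct and is essentially the paper's own argument: both construct a set bijection $\sigma\colon G\to G$ with $\beta\sigma=\alpha$ by matching up the equal-sized fibres of $\alpha$ and $\beta$, set $\gamma=T\sigma$, and verify $\beta\epsilon\gamma=\alpha\epsilon$ on the generating set $G\subset TG$. The only difference is cosmetic: you spell out the generator computation slightly more explicitly than the paper does.
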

\begin{proof}
 Put $m=|G|/|H|=|\ker(\alpha)|=|\ker(\beta)|$. For each $h \in H$ we
 have $|\alpha^{-1}\{h\}|=m=|\beta^{-1}\{h\}|$, so we can choose a
 bijection $\alpha^{-1}\{ h\} \to \beta^{-1}\{h \}$. By combining
 these choices, we obtain a bijection $\sigma \colon G \to G$ such
 that $\beta\sigma=\alpha$.  This gives an automorphism
 $\gamma=T\sigma$ of $TG$.  We claim that
 $\beta \epsilon \gamma=\alpha \epsilon \colon TG \to H$.  It will
 suffices to check this on the generating set $G \subset TG$, and that
 reduces to the relation $\beta \sigma=\alpha$, which holds by
 construction.
\end{proof}

\begin{proof}[Proof of Theorem~\ref{thm-exact}]
 The claim is clear if $\U=\{1\}$.  Suppose instead that $\U$ contains
 a nontrivial group $G_0$.  Put $G_n=T^nG_0$, so we have a tower
 \[
  G_0 \xleftarrow{\epsilon}
  G_1 \xleftarrow{\epsilon}
  G_2 \xleftarrow{\epsilon} \dotsb.
 \]
 We claim that this is a colimit tower for $\U$. 
 Using Remark~\ref{rem-TG-size} we see that $|G_n|\to\infty$ as
 $n\to\infty$.  For fixed $H\in\U$ we can therefore choose a
 surjective function $G_n\to H$ for some $n$, and this will induce a
 surjective homomorphism $G_{n+1}\to H$ giving condition (a) of the 
 colimit tower. Condition (b) holds by Lemma~\ref{lem-condition-b} and 
 (c) follows from Lemma~\ref{lem-TX-lift}, so $\U$ is colimit-exact.
\end{proof}

\begin{proposition}\label{prop-upwards-colimit}
 Suppose that $\V\subseteq\U\subseteq\G$, that $\U$ is colimit-exact
 and that $\V$ is closed upwards in $\U$.  Then $\V$ is also
 colimit-exact.
\end{proposition}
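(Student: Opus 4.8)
The plan is to exhibit the colimit functor for $\V$ as a composite of two exact functors. Write $L_\U\colon\A\U\to\Vect_k$ and $L_\V\colon\A\V\to\Vect_k$ for the colimit functors $X\mapsto\colim_{G\in\U^{\op}}X(G)$ and $X\mapsto\colim_{G\in\V^{\op}}X(G)$, let $j\colon\V\to\U$ be the inclusion, and recall the adjoints $j_!,j^*,j_*$ from Definition~\ref{def-subcat-functors}. Since $\V$ is closed upwards in $\U$, Lemma~\ref{lem-omnibus}(f) tells us that $j_!$ is extension by zero; in particular $j_!$ preserves all limits and colimits, so it is exact.

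The key step is a natural isomorphism $L_\V\simeq L_\U\circ j_!$. For a $k$-vector space $T$, restriction carries the constant functor on $\U^{\op}$ with value $T$ to the constant functor on $\V^{\op}$ with value $T$; since $L_\U$ and $L_\V$ are the left adjoints of the respective constant-diagram functors, for $X\in\A\V$ we obtain natural isomorphisms
\[
 \Vect_k(L_\U(j_!X),T)\simeq\A\U(j_!X,\mathrm{const}_T)\simeq\A\V(X,j^*\mathrm{const}_T)=\A\V(X,\mathrm{const}_T)\simeq\Vect_k(L_\V X,T),
\]
where the middle isomorphism is the $(j_!,j^*)$-adjunction. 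By the Yoneda Lemma this gives $L_\U(j_!X)\simeq L_\V X$, naturally in $X$. One can instead run the special case $T=k$ through Proposition~\ref{prop-one-injective}, using that $j^*\one=\one$; keeping $T$ variable just streamlines the naturality bookkeeping.

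Combining the two points, $L_\V\simeq L_\U\circ j_!$ is a composite of exact functors — $j_!$ by Lemma~\ref{lem-omnibus}(f), and $L_\U$ by the hypothesis that $\U$ is colimit-exact — hence is itself exact, which says precisely that $\V$ is colimit-exact. I do not anticipate a real obstacle here; the only care needed is to apply Lemma~\ref{lem-omnibus}(f) with $\V$ (the smaller category) in the role of the upward-closed subcategory, and to note that restriction commutes with forming constant functors. As an even shorter variant, one can observe that $j_!$ being exact forces its right adjoint $j^*$ to preserve injective objects, so $\one\in\A\V$, being $j^*$ of the injective object $\one\in\A\U$ (injective by Proposition~\ref{prop-one-injective}, as $\U$ is colimit-exact), is injective, and Proposition~\ref{prop-one-injective} again yields the claim.
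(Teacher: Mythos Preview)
Your proof is correct and follows essentially the same approach as the paper: both factor $L_\V$ as $L_\U\circ j_!$ (the paper writes this as $(ci)_!=c_!i_!$ where $c\colon\U\to 1$) and conclude by composing exact functors. Your adjunction argument for the factorisation and the alternative via injectivity of $\one$ are fine but more elaborate than needed; the paper's one-line version using $(ci)_!=c_!i_!$ is the same content.
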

\begin{proof}
 Let $i\colon\V\to\U$ be the inclusion, and let $c$ be the functor
 $\U\to 1$.  Note that $i_!\colon\A\V\to\A\U$ is just extension by
 zero, as proved in Lemma~\ref{lem-omnibus}, and so is exact.  We are
 given that the functor $L_{\U}=c_!$ is exact, so the composite
 $L_{\V}=(ci)_!=c_!i_!$ is exact as well.
\end{proof}

We conclude with a counterexample.

\begin{example}\label{ex-not-colimit-exact}
 The category $\G_{\leq 3}$ is not colimit-exact.
\end{example}
\begin{proof}
 The subcategory $\U'=\{1,C_2,C_3\}$ is a skeleton of $\G_{\leq 3}$,
 which makes it easy to calculate colimits.  Let $X<\one$ be given by
 $X(G)=0$ when $|G|=1$ and $X(G)=\one(G)=k$ when $|G|>1$.  We find
 that $LX=k^2$ but $L\one=k$, so $L$ does not send the monomorphism
 $X\to\one$ to a monomorphism, so $L$ is not exact.
\end{proof}

\section{Complete subcategories}
\label{sec-complete}
	
In this section we introduce a well-behaved type of subcategory and
present some examples.
	
\begin{definition}\label{def-expansive}
 Let $\U$ be a subcategory of $\G$.
 \begin{itemize}
  \item[(a)] For $T \in \G$, we denote by $\delta(T)$ the minimum 
  possible size of a generating set for $T$.
  \item[(b)] For $m \in \NN$, we put
   $\mathcal{R}_m = \lbrace T \in \U \mid \delta(T) \geq m \rbrace.$
  \item[(c)] We say that $\U$ is \emph{expansive} if for all $G\in\U$ 
  and $m\in \NN$ we have 
  $\mathcal{R}_m \downarrow G \not=\emptyset$.
  \item[(d)] Let $\U$ be expansive. For $X \in \A\U$ and $n>0$ we put
   \[
    \omega^{\U}_n(X)= \limsup_{m \to \infty}\lbrace \dim
    (X(T))/n^{\delta(T)} \; | \; T \in \mathcal{R}_m \rbrace \in [0,
    \infty].
   \]
   and
   \[
    \mathcal{W}(\U)_n= \lbrace X \in \A\U \; | \; \omega^{\U}_n(X)<
    \infty \rbrace.
   \]
   It is easy to see that if $\omega_n^{\U}(X)>0$ then
   $\omega^{\U}_m(X)= \infty$ for $m<n$.  Similarly, if
   $\omega^{\U}_n(X) < \infty$ then $\omega^{\U}_m(X)=0$ for $m>n$.
   Thus, there is at most one $n$ such that
   $0< \omega^{\U}_n(X)< \infty$.  If such an $n$ exists, we call it
   the \emph{order} of $X$.
 \end{itemize}
\end{definition}

\begin{remark}\label{rem-expansive}
 We will often drop the superscript and just write $\omega_n(X)$.
\end{remark}

Using the properties of the limsup we obtain the following result.

\begin{lemma}\label{lem-lim-sup}
 For any short exact sequence $X \to Y \to Z$ in $\A\U$ we have
 \[
  \max (\omega_n(X), \omega_n(Z)) \leq
  \omega_n(Y) \leq
  \omega_n(X) + \omega_n(Z).
 \]
 In particular, for any $X$ and $Z$ we have
 \[
  \max (\omega_n(X), \omega_n(Z)) \leq
  \omega_n(X \oplus Z) \leq
  \omega_n(X)+ \omega_n(Z). \qed
 \] 
\end{lemma}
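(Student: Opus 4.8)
The plan is to reduce the statement to elementary properties of dimensions of $k$-vector spaces and of the $\limsup$. First I would invoke Remark~\ref{rem-bicomplete}: the evaluation functors $\psi^T\colon\A\U\to\Vect_k$ are exact, so applying $\psi^T$ to the short exact sequence $X\to Y\to Z$ yields a short exact sequence $0\to X(T)\to Y(T)\to Z(T)\to 0$ in $\Vect_k$ for each $T\in\U$. Consequently, working throughout in $[0,\infty]$ (to accommodate objects with infinite-dimensional values), we get $\dim Y(T)=\dim X(T)+\dim Z(T)$, and dividing by the positive real $n^{\delta(T)}$,
\[
 \frac{\dim Y(T)}{n^{\delta(T)}}=\frac{\dim X(T)}{n^{\delta(T)}}+\frac{\dim Z(T)}{n^{\delta(T)}}.
\]
Also, since $\U$ is expansive each $\mathcal{R}_m$ is nonempty, so the suprema appearing below really are over nonempty sets.

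Next, fix $n>0$ and abbreviate $a_T=\dim X(T)/n^{\delta(T)}$ and $b_T=\dim Z(T)/n^{\delta(T)}$, so the displayed identity reads $\dim Y(T)/n^{\delta(T)}=a_T+b_T$. Because the sets $\mathcal{R}_m$ decrease with $m$, the function $m\mapsto\sup_{T\in\mathcal{R}_m}a_T$ is non-increasing, and unwinding Definition~\ref{def-expansive}(d) gives $\omega_n(X)=\inf_m\sup_{T\in\mathcal{R}_m}a_T$, and likewise for $Z$ and $Y$. For the lower bound, $b_T\ge 0$ gives $a_T\le a_T+b_T$, hence $\sup_{\mathcal{R}_m}a_T\le\sup_{\mathcal{R}_m}(a_T+b_T)$ for every $m$; taking the infimum over $m$ gives $\omega_n(X)\le\omega_n(Y)$, and the symmetric argument gives $\omega_n(Z)\le\omega_n(Y)$, proving the left inequality. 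For the right inequality, subadditivity of the supremum gives $\sup_{\mathcal{R}_m}(a_T+b_T)\le\sup_{\mathcal{R}_m}a_T+\sup_{\mathcal{R}_m}b_T$; since the two tail-suprema are non-increasing functions of $m$, their infima equal their limits, and $\inf_m(f(m)+g(m))=\inf_m f(m)+\inf_m g(m)$ in $[0,\infty]$, so passing to infima yields $\omega_n(Y)\le\omega_n(X)+\omega_n(Z)$.

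Finally, the ``in particular'' assertion is the special case of the split short exact sequence $X\to X\oplus Z\to Z$. I do not anticipate a genuine difficulty here; the only points needing care are the bookkeeping with $[0,\infty]$-valued quantities when some $X(T)$ is infinite-dimensional, and the (routine) interchange of $\inf_m$ with a sum, which is valid precisely because the tail suprema are monotone in $m$.
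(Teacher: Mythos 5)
Your proof is correct and matches the paper's intended approach; the authors state only ``Using the properties of the limsup we obtain the following result'' and leave the argument implicit, and your write-up supplies exactly those details. You correctly identified the one non-routine step, namely that since the tail-suprema $m\mapsto\sup_{T\in\mathcal{R}_m}\dim X(T)/n^{\delta(T)}$ are non-increasing (because $\mathcal{R}_{m+1}\subseteq\mathcal{R}_m$), their infima equal their limits in $[0,\infty]$ and hence commute with finite sums, which is what makes the upper bound go through.
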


\begin{corollary}\label{cor-growth}
 The category $\mathcal{W}(\U)_n$ is closed under finite direct sums,
 subobjects, quotients, extensions and retracts. It also contains
 $e_G$ for all $G \in \U_{\leq n}$.
\end{corollary}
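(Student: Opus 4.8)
The plan is to derive all the closure statements formally from Lemma~\ref{lem-lim-sup}, and to prove that the generators $e_G$ lie in $\mathcal{W}(\U)_n$ by a direct dimension count.

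First I would deal with closure under subobjects, quotients and extensions. Given a short exact sequence $X \to Y \to Z$ in $\A\U$, Lemma~\ref{lem-lim-sup} gives $\max(\omega_n(X),\omega_n(Z)) \leq \omega_n(Y) \leq \omega_n(X)+\omega_n(Z)$. If $Y \in \mathcal{W}(\U)_n$, the left-hand inequality forces $\omega_n(X),\omega_n(Z) < \infty$, so $X,Z \in \mathcal{W}(\U)_n$; applying this with $Z = Y/X$ gives closure under subobjects and with $X = \ker(Y\to Z)$ gives closure under quotients. If instead $X,Z \in \mathcal{W}(\U)_n$, the right-hand inequality gives $\omega_n(Y)<\infty$, which is closure under extensions. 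Closure under finite direct sums is the special case recorded in the ``in particular'' part of the lemma (and the general finite case follows by an evident induction), and closure under retracts is immediate since a retract of $Y$ is in particular a subobject of $Y$.

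It remains to show $e_G \in \mathcal{W}(\U)_n$ for every $G \in \U_{\leq n}$. Here I would bound $\dim e_G(T) = |\G(T,G)|$. Since $\G(T,G)$ is a set of conjugacy classes of surjective homomorphisms $T \to G$, we have $|\G(T,G)| \leq |\Hom(T,G)|$, and any homomorphism out of $T$ is determined by the images of a minimal generating set of $T$, so $|\Hom(T,G)| \leq |G|^{\delta(T)}$. Hence for $T \in \mathcal{R}_m$,
\[
 \frac{\dim e_G(T)}{n^{\delta(T)}} \leq \left(\frac{|G|}{n}\right)^{\delta(T)} \leq 1,
\]
using $|G| \leq n$. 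Taking the limsup over $m$ gives $\omega^{\U}_n(e_G) \leq 1 < \infty$, as required.

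I do not expect a serious obstacle: the argument is entirely formal once Lemma~\ref{lem-lim-sup} is available, and the only non-tautological ingredient is the elementary bound $\dim e_G(T) \leq |G|^{\delta(T)}$. The only points requiring a little care are the degenerate case $\delta(T)=0$ (where both sides of the displayed inequality equal $1$) and the fact that $\mathcal{W}(\U)_n$ is only defined when $\U$ is expansive, so that $\omega^{\U}_n$ is meaningful in the first place.
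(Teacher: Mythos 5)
Your proposal is correct and follows essentially the same route as the paper: the closure properties are formal consequences of Lemma~\ref{lem-lim-sup}, and the membership of $e_G$ comes from the bound $\dim e_G(T)\leq|\Hom(T,G)|\leq|G|^{\delta(T)}$ obtained by restricting homomorphisms to a minimal generating set. The paper merely records the slightly sharper estimate $|\U(T,G)|=|\Epi(T,G)|/|\Inn(G)|\leq|G|^{\delta(T)}/|\Inn(G)|$ (used later for completeness arguments), but for finiteness of $\omega_n^{\U}(e_G)$ your cruder bound suffices.
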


\begin{proof}
 The closure properties easily follow from Lemma~\ref{lem-lim-sup}.
 For the second claim, note that if $A \subset T$ is a generating set
 for $T \in \U$, then the restriction map $\Hom(T,G) \to \Map(A,G)$ is
 injective, so $|\Hom(T,G)| \leq |G|^{|A|}$. It follows that
 \[
  |\U(T,G)|= |\Epi(T,G)|/ |\Inn(G)| \leq
  |\Hom(T,G)|/|\Inn(G)| \leq
  |G|^{\delta(T)}/ |\Inn(G)| =
  |G|^{\delta(T)-1}|ZG|.
 \]
 From this it is easy to see that $\omega_n(e_G) \leq |\Inn(G)|^{-1}$
 if $|G|=n$, and $\omega_n(e_G)=0$ if $|G|<n$.
\end{proof}

We are now ready to introduce an important family of subcategories.

\begin{definition}\label{def-complete}
 A subcategory $\U$ of $\G$ is \emph{complete} if the following
 conditions are satisfied:
 \begin{itemize}
  \item $\U$ is expansive, i.e., for all $G \in \U$ and $n>0$ there
   exists $T \in \U$ with $\delta(T) \geq n$ and
   $\U(T,G) \not= \emptyset$;
  \item For all $n >0$ and $G \in \U_n$, we have
   $0 < \omega^{\U}_n(e_G)< \infty$.  In other words, $e_G$ has order
   exactly $|G|$.
 \end{itemize}
\end{definition}

\begin{example}\label{ex-complete}
 Recall that we always have $\omega_n(e_G) \leq |\Inn(G)|^{-1}$ if
 $|G|=n$.
 \begin{itemize}
  \item The category $\C[p^\infty]$ of cyclic $p$-groups is not
   complete, as it is not expansive.  
  \item The category $\E[p]$ of elementary abelian $p$-groups is
   complete. Indeed we have 
   \[
     \omega_{p^n}(e_{C_p^n})= 
     \lim_{m \to \infty} \frac{|\Epi(C_p^m, C_p^n)| }{p^{nm}} = 
     \lim_{m \to \infty}\frac{(p^m-1)(p^m -p) \cdots (p^m-p^{n-1})}{p^{nm}}
      =1. 
   \] 
 \end{itemize}
\end{example}

Let us produce more examples of complete subcategories.

\begin{proposition}\label{prop-complete}
 If $\U\leq\G$ is nontrivial and submultiplicative, then it is
 complete. 
\end{proposition}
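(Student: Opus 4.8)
The plan is to verify the two defining conditions of completeness (Definition~\ref{def-complete}) for a nontrivial submultiplicative subcategory $\U$. The first condition, expansiveness, should follow from the submultiplicative structure: if $G_0\in\U$ is any nontrivial group, then $G_0^m\in\U$ for all $m$ by multiplicativity, and since $\delta(G_0^m)\to\infty$ as $m\to\infty$ (each coordinate needs a generator, so $\delta(G_0^m)\geq m/\log_2|G_0|$ at least, or more simply $\delta(G_0^m)\geq m\cdot\delta(G_0)/\delta(G_0)$... actually one uses that a minimal generating set of $G_0^m$ projects onto each factor, forcing size at least $m$ when $G_0$ is cyclic of prime order, and in general $\delta(G_0^m)\geq m$ is false but $\delta(G_0^m)\to\infty$ is easy via the Frattini quotient). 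Given any $G\in\U$, we need $T\in\U$ with $\delta(T)$ large and $\U(T,G)\neq\emptyset$; taking $T=G\times G_0^m$ works since the projection $T\to G$ is a surjective homomorphism and $\delta(T)\geq\delta(G_0^m)\to\infty$. Alternatively one can invoke the functor $TG$ from Example~\ref{ex-free-group} together with Lemma~\ref{lem-TG-size}, which gives $\epsilon\colon TG\to G$ with $|TG|\geq 2|G|$, and iterate.

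For the second condition we must show $0<\omega_n^{\U}(e_G)<\infty$ whenever $|G|=n$. The upper bound $\omega_n^{\U}(e_G)\leq|\Inn(G)|^{-1}<\infty$ is already established in Corollary~\ref{cor-growth}, so the real content is the strictly positive lower bound. The idea is to exhibit, for all sufficiently large $m$, groups $T\in\mathcal{R}_m$ with $\U(T,G)\neq\emptyset$ and $\dim(e_G(T))=|\U(T,G)|$ bounded below by a fixed positive multiple of $n^{\delta(T)}$. A natural candidate is $T=G^r$ for large $r$: then $\delta(T)\to\infty$, and the number of surjective homomorphisms $G^r\to G$ is at least the number of coordinate projections composed with automorphisms, but we need a count comparable to $|G|^{r\cdot\delta}$-ish. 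The cleanest route is to first reduce to the case where $G$ is generated by $\delta=\delta(G)$ elements and count epimorphisms from a free-ish object: using $TX$ for $X$ of size $m$, Lemma~\ref{lem-TX-lift} and the surjectivity statements in Example~\ref{ex-free-group} guarantee many epimorphisms $TX\to G$, and $\delta(TX)\leq m$ while $|\Epi(TX,G)|$ grows like $|G|^m$ up to lower-order corrections (each of the $m$ generators can map almost freely, the obstruction to surjectivity being a lower-order term by inclusion-exclusion over maximal subgroups). Dividing by $|\Inn(G)|$ and by $n^{\delta(TX)}$ and taking $\limsup$ gives a positive number.

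The main obstacle I expect is the strict positivity $\omega_n^{\U}(e_G)>0$: one must produce a \emph{single} sequence of test objects $T$ in $\mathcal{R}_m$ (as $m\to\infty$) for which $|\U(T,G)|/n^{\delta(T)}$ stays bounded away from zero, and one has to control both $\delta(T)$ from above and $|\Epi(T,G)|$ from below simultaneously. The delicate point is that $\delta(T)$ could be much smaller than the "obvious" number of generators one uses to build $T$, which would only \emph{help} (it makes the denominator $n^{\delta(T)}$ smaller), but one still needs $|\Epi(T,G)|\geq c\cdot n^{\delta(T)}$ with $c>0$ independent of $T$; here the natural estimate is $|\Epi(T,G)|\geq|G|^{\delta(T)}\cdot(\text{correction})$ coming from lifting a generating set of $T$ to $G$ and using inclusion--exclusion over the (finitely many, bounded-index) maximal subgroups of $G$ to ensure surjectivity, the correction factor being bounded below by a positive constant depending only on $G$. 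Assembling this uniformly, together with expansiveness to guarantee $\mathcal{R}_m\downarrow G\neq\emptyset$ for all $m$, completes the verification that $\U$ is complete. I would also double-check the degenerate edge: $\U$ nontrivial guarantees some $G\neq 1$ exists so that expansiveness is not vacuous, and for $G=1$ one has $e_1=\one$ with $\omega_1(\one)$ trivially positive and finite.
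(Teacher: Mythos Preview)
Your proposal is correct and follows essentially the same approach as the paper: expansiveness via products $G\times(\text{something})^m$ inside $\U$, and positivity of $\omega_{|G|}(e_G)$ via the test groups $TX_m$ together with the asymptotic $|\Epi(F_m,G)|/|G|^m\to 1$. The paper streamlines both steps slightly --- for expansiveness it observes that subgroup-closure forces $C_p\in\U$ and uses $G\times C_p^n$ directly (avoiding your detour through $\delta(G_0^m)\to\infty$ for general $G_0$), and for the asymptotic it simply cites~\cite{Pak}*{Theorem 1} rather than sketching the inclusion--exclusion argument.
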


\begin{proof}
 As $\U$ is nontrivial and subgroup-closed, it must contain $C_p$ for
 some $p$.  Then for $G\in\U$ we have $G\times C_p^n\in\U$ with
 $\delta(G\times C_p^n)\geq n$, showing that $\U$ is expansive.  We now
 need to show that $\omega_{|G|}(e_G)>0$ for all $G \in \U$. Without
 loss of generality we can assume that $G \neq 1$.  For $X_m$ a set
 with $m$ elements, consider the group $T X_m \in \U$ as defined in
 Example~\ref{ex-free-group}.  By definition, there is a natural
 bijection $\Hom(T X_m, G) = \Hom(F X_m, G)\simeq G^m$ for all the
 groups $G \in \U_{\leq m}$. Since by~\cite{Pak}*{Theorem 1} we have
 \[
         \lim_{m \to \infty} |\Epi(FX_m, G)|/|G|^m =1
 \]
 we deduce that
 \[ \lim_{m \to \infty} |\Epi(T X_m,G)|/|G|^m = 1. \]
 It only remains to notice that $\delta(T X_m) \leq m$ so
 \[
  \omega_{|G|}(e_G) \geq 
  \lim_{m \to \infty}\frac{|\U(T X_m,G)|}{|G|^m} = 
  \lim_{m \to \infty}\frac{|\Epi(T X_m,G)|}{|\Inn(G)||G|^m} =
  \frac{1}{|\Inn(G)|} 													 >0. 
 \] 
\end{proof}
	
The completeness assumption give us information on the growth of the
indecomposable projectives.
	
\begin{lemma}\label{lem-omega-eGV}
 Let $\U$ be a complete subcategory of $\G$. For $G \in \U$ and
 $V$ an $\Out(G)$-representation, we have
 $0< \omega_{|G|}(e_{G,V}) < \infty$.
\end{lemma}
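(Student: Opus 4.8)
The plan is to pin down $\omega_{|G|}(e_{G,V})$ exactly by recognising $k[\G(T,G)]$ as a \emph{free} $k[\Out(G)]$-module. The key observation is that $\Aut(G)$ acts freely on the set $\Epi(T,G)$ by post-composition: if $\alpha\circ\phi=\phi$ with $\alpha\in\Aut(G)$ and $\phi\colon T\to G$ surjective, then $\alpha$ restricts to the identity on the image of $\phi$, which is all of $G$, so $\alpha=\id$. I would then check that the residual action of $\Out(G)=\Aut(G)/\Inn(G)$ on $\G(T,G)=\Epi(T,G)/\Inn(G)$ is still free: if $\overline{\alpha}\cdot[\phi]=[\phi]$ then $\alpha\phi=c_g\phi$ for some inner automorphism $c_g$, so freeness of the $\Aut(G)$-action forces $\alpha=c_g$, hence $\overline{\alpha}=1$ in $\Out(G)$.

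It follows that $k[\G(T,G)]$ is a free left $k[\Out(G)]$-module of rank $r(T):=|\G(T,G)|/|\Out(G)|$. Reading the (nonzero, finite-dimensional) representation $V$ as a right $k[\Out(G)]$-module via the anti-involution $g\mapsto g^{-1}$, Definition~\ref{def-main-objects} then gives $e_{G,V}(T)=V\otimes_{k[\Out(G)]}k[\G(T,G)]\cong V^{\,r(T)}$, so that $\dim_k e_{G,V}(T)=r(T)\,\dim_k V=\frac{\dim_k V}{|\Out(G)|}\,|\G(T,G)|=\frac{\dim_k V}{|\Out(G)|}\,\dim_k e_G(T)$, using $e_G(T)=k[\G(T,G)]$. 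Since $\dim_k e_{G,V}(T)$ is thus a fixed positive scalar multiple of $\dim_k e_G(T)$ for every $T\in\U$, passing to the relevant $\limsup$ over $T\in\mathcal{R}_m$ as $m\to\infty$ (as in Definition~\ref{def-expansive}) gives the clean identity
\[
 \omega_{|G|}(e_{G,V})=\frac{\dim_k V}{|\Out(G)|}\,\omega_{|G|}(e_G).
\]

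To conclude I would invoke the completeness hypothesis (Definition~\ref{def-complete}), which says exactly that $0<\omega_{|G|}(e_G)<\infty$; combined with $0<\dim_k V<\infty$ and $0<|\Out(G)|<\infty$ this yields $0<\omega_{|G|}(e_{G,V})<\infty$, as required. I do not anticipate any real obstacle here: the only points needing care are the standing assumption that $V$ is nonzero and finite-dimensional (without finite-dimensionality the upper bound already fails, and without nonvanishing the lower bound fails) and the left/right $k[\Out(G)]$-module bookkeeping in the tensor product, both of which are entirely routine.
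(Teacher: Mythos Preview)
Your proposal is correct and follows essentially the same route as the paper: both arguments observe that $\Out(G)$ acts freely on $\U(T,G)$, deduce that $k[\U(T,G)]$ is a free $k[\Out(G)]$-module, and hence that $\dim e_{G,V}(T)$ is a fixed positive scalar multiple of $\dim e_G(T)$, so that $\omega_{|G|}(e_{G,V})$ inherits the bounds for $\omega_{|G|}(e_G)$ from completeness. Your write-up is in fact a little more careful than the paper's (you spell out why freeness of the $\Aut(G)$-action on $\Epi(T,G)$ descends to freeness of the $\Out(G)$-action on the quotient, and you flag the implicit hypothesis that $V$ be nonzero and finite-dimensional), but the substance is identical.
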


\begin{proof}
 We show that
 $\dim(e_{G,V}(T))= \dim(V) |\Out(G)|^{-1} |\dim(e_{G}(T))|$, and so
 the claim follows by completeness. It is easy to see that $\Out(G)$
 acts freely on $\U(T,G)$. Choose a subset $M \subset \U(T,G)$
 containing one representative of every orbit, so that
 $|M|= |\Out(G)|^{-1}|\U(T,G)|$.  We also see that $M$ is a basis for
 $e_G(T)$ as a module over the ring $R=k[\Out(G)]$, so
 \[ e_{G,V}(T)= V \otimes_{R} e_G(T) \simeq V^{|M|}. \]
 This gives
 \[ \dim(e_{G,V}(T)) =
    \dim(V) |M|=
     \dim(V) |\Out(G)|^{-1} \dim(e_{G}(T))
 \]
 as claimed.
\end{proof}

\begin{proposition}\label{prop-splitting}
 Let $\U$ be complete subcategory of $\G$. Then any monomorphism
 between projective objects of $\A\U$ is split.
\end{proposition}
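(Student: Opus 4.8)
The plan is to show that $C=\cok(f)$ is projective, where $f\colon P\to P'$ is our monomorphism between projectives. That suffices: the short exact sequence $0\to P\xrightarrow{f}P'\to C\to 0$ then splits, since the epimorphism onto the projective $C$ admits a section, and hence $f$ is, up to isomorphism, the inclusion of a direct summand.

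To prove $C$ is projective, I would compare the displayed sequence, which is a projective resolution of $C$ of length at most $1$, with the minimal projective resolution of $C$ from Remark~\ref{rem-min-proj}. Since the minimal resolution is a direct summand of any projective resolution, it too has length at most $1$, say
\[ 0\to R_1\xrightarrow{d}R_0\to C\to 0, \]
with $R_0$ a summand of $P'$ and $R_1$ a summand of $P$ (so both projective), with $d$ a monomorphism, and with $Q(d)=0$ for the indecomposables functor $Q$ of Construction~\ref{con-indecomposables}. Everything reduces to proving $R_1=0$. Suppose not; by Corollary~\ref{cor-indecomposable-projectives} pick an indecomposable summand $e_{G,S}\le R_1$ with $|G|=\base(R_1)=:n$. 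Composing its inclusion with $d$ gives a monomorphism $\phi\colon e_{G,S}\to R_0$ which, since it factors through $d$, satisfies $Q(\phi)=0$; by Construction~\ref{con-indecomposables} this means precisely that $\phi(H)$ has image in $\sum_{1\neq N\lhd H}\pi^{*}_{N}R_0(H/N)$ for every $H\in\U$. It remains to rule out such a $\phi$.

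First I would reduce to the case $R_0$ finitely generated: $e_{G,S}$ is finitely generated, so $\img(\phi)$ meets only finitely many summands of a decomposition $R_0\simeq\bigoplus_\beta e_{H_\beta,T_\beta}$ as in Corollary~\ref{cor-indecomposable-projectives}, and replacing $R_0$ by that finite subsum preserves $\phi$ and the displayed image condition (which decomposes over the $\beta$). For each $\beta$, the component $\phi_\beta\colon e_{G,S}\to e_{H_\beta,T_\beta}$ then has $\phi_\beta(H)$ with image in $\sum_{1\neq N\lhd H}\pi_N^{*}e_{H_\beta,T_\beta}(H/N)$ for all $H$. When $|H_\beta|\ge n$ this last space vanishes at $H=G$, because a proper quotient of $G$ has order $<n\le|H_\beta|$ and hence admits no epimorphism onto $H_\beta$; so $\phi_\beta(G)=0$, and since a morphism out of $e_{G,S}$ is determined by its value at $G$ (Lemma~\ref{lem-projective-and-injective}), $\phi_\beta=0$. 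Therefore $\phi$ factors as a monomorphism $e_{G,S}\hookrightarrow\bigoplus_{\beta:\,|H_\beta|<n}e_{H_\beta,T_\beta}$, a finite direct sum whose summands all have order strictly below $n$.

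Now completeness enters through the growth functions of Section~\ref{sec-complete}. By Lemma~\ref{lem-omega-eGV} each summand $e_{H_\beta,T_\beta}$ with $|H_\beta|<n$ has $\omega_{|H_\beta|}(e_{H_\beta,T_\beta})<\infty$, hence $\omega_n(e_{H_\beta,T_\beta})=0$ by the elementary properties in Definition~\ref{def-expansive}; so $\omega_n$ of the finite direct sum is $0$ by Lemma~\ref{lem-lim-sup}. But $\omega_n(e_{G,S})>0$, again by Lemma~\ref{lem-omega-eGV}, and $\omega_n$ is monotone under passage to subobjects (Lemma~\ref{lem-lim-sup}), contradicting the monomorphism just produced. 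Hence $R_1=0$, $C$ is projective, and $f$ splits. The main obstacle is this last reduction: a direct growth estimate on $d\colon R_1\to R_0$ is unavailable since projectives need not be levelwise finite-dimensional (an infinite coproduct of copies of $\one$ is projective), so one must first cut down to a finitely generated target and separately kill the ``diagonal'' components in which $e_{G,S}$ could hit a summand of order exactly $n$; only then does the completeness hypothesis that $e_G$ has order exactly $|G|$ force the numerical contradiction.
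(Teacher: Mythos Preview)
Your argument is correct, and it takes a genuinely different route from the paper's.

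The paper argues directly with the filtration of Proposition~\ref{prop-filtration-split}: writing $P=\bigoplus_n P_n$ and $P'=\bigoplus_n Q_n$, it shows by induction on $m$ that $u_{\leq m}\colon P_{\leq m}\to Q_{\leq m}$ splits. The inductive step amounts to showing that the kernel $K_m$ of the graded piece $u_m\colon P_m\to Q_m$ is zero; this follows because $K_m$ (which is a summand of $P_m$, hence a sum of objects $e_{G,V}$ with $|G|=m$) would have to embed in $Q_{<m}$, and completeness forbids a nonzero such embedding by the growth functions $\omega_n$.

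You instead pass through Remark~\ref{rem-min-proj}: rather than building a splitting, you show that $C=\cok(f)$ is projective by proving its minimal resolution has $R_1=0$. The condition $Q(d)=0$ plays for you the role that the filtration bookkeeping plays in the paper: it forces a putative summand $e_{G,S}\leq R_1$ to map into the part of $R_0$ supported in orders $<n=|G|$, and then the same $\omega_n$ contradiction finishes. Your explicit reduction to a finite sub-sum of $R_0$ before invoking $\omega_n$ is a point the paper's proof leaves implicit (its phrase ``the order of $Q_{<m}$ is at most $m-1$'' really means that any \emph{finitely generated} subobject has $\omega_m=0$, which needs exactly the cut-down you perform).

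What each approach buys: the paper's is more self-contained (it does not rely on the summand property of minimal resolutions asserted in Remark~\ref{rem-min-proj}) and yields the splitting constructively. Yours is more conceptual---it isolates the clean statement ``the cokernel is projective''---and is arguably tidier once the minimal-resolution machinery is in hand, at the cost of depending on that remark.
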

\begin{proof}
 Let $u \colon P \to Q$ be a monomorphism between projective objects. 
 By Proposition~\ref{prop-filtration-split}, 
 we can write $P=\bigoplus_n P_n$ and $Q=\bigoplus_n Q_n$, where $P_n$
 and $Q_n$ are in the image of $(i_n)_!\colon\A\U_n\to\A\U$, so
 $\A\U(P_n,Q_m)=0$ when $n<m$.  We put
 $P_{\leq m}=\bigoplus_{k\leq m}P_k=L_{\leq m}P$, and similarly for
 $Q$.  It is then clear that $u$ restricts to give a monomorphism
 $u_{\leq m} \colon P_{\leq m} \to Q_{\leq m}$.  We will prove by
 induction on $m$ that $u_{\leq m}$ splits.  The claim is trivial if
 $m=0$. Let $m>0$ and let $s_{<m} \colon Q_{<m} \to P_{<m}$ be a
 splitting of $u_{<m} \colon P_{<m} \to Q_{<m}$.  Now let $K_m$ be the
 kernel of the map $u_{m} \colon P_{m} \to Q_m$.  As all monomorphisms
 in $\A\U_m$ are split, we see that $K_m$ is a retract of $P_m$.  As
 $u_{m}(K_m)=0$ and $u_{\leq m}$ is a monomorphism, we see that
 $u_{\leq m}$ induces a monomorphism from $K_m$ to $Q_{<m}$.  However,
 by completeness the order of $Q_{<m}$ is at most $m-1$, whereas if
 $K_m$ is nonzero, it must have order $m$.  It follows that $K_m$ must
 actually be zero, so $u_{m}$ is a monomorphism in $\A\U_m$, so there
 is a splitting $v \colon Q_m \to P_m$.  Let
 $s_{\leq m} \colon Q_{\leq m} \to P_{\leq m}$ be given by $s_{<m}$ on
 $Q_{<m}$, and by $v$ on $Q_m$. Then $s_{\leq m}u_{\leq m}$ is the
 identity of $P_{< m}$, and it is the identity modulo $P_{<m}$ on
 $P_m$, so it is an automorphism of $P_{\leq m}$. It follows that
 $(s_{\leq m}u_{\leq m})^{-1} \circ s_{\leq m}$ is a splitting of
 $u_{\leq m}$, as required. By construction, the sections $s_{\leq m}$
 assemble into a map $s \colon Q \to P$ satisfying $s\circ u = \id_P$,
 so $u$ splits.
\end{proof}

\section{Finiteness conditions}
\label{sec-finiteness-conditions}

We introduce various finiteness conditions on objects of $\A$ and
prove some implications amongst them.  We refer the reader to Remarks
\ref{rem-map1} and \ref{rem-map2} for a summary.

\begin{definition}\label{def-finiteness-conditions}
 Consider a subcategory $\U\leq\G$ and an object $X\in\A\U$.
 \begin{itemize}
  \item[(a)] We say that $X$ has \emph{finite type} if 
  $\dim(X(G)) < \infty$
   for all $G\in\U$.
  \item[(b)] We say that $X$ is \emph{finitely projective} if it can be
   expressed as the direct sum of a finite family of indecomposable
   projectives.
  \item[(c)] We say that $X$ is \emph{finitely generated} if there 
  is an epimorphism $P_0\to X$, for some finitely projective object 
  $P_0$ (or equivalently, for some object $P_0$ of the form
   $\bigoplus_{i=1}^ne_{G_i}$).
  \item[(d)] We say that $X$ is \emph{finitely presented} if there is a
   right exact sequence $P_1 \to P_0 \to X$, where $P_0$ and $P_1$ are
   finitely projective.
  \item[(e)] We say that $X$ is \emph{finitely resolved} if there is a
   resolution $P_* \to X$, where each $P_i$ is finitely projective.
  \item[(f)] We say that $X$ is \emph{perfect} if there is a resolution
   $P_* \to X$, where $P_i$ is finitely projective for all $i$,
   and $P_i=0$ for $i \gg 0$.
   \item[(g)] We say that $X$ has \emph{finite order} if there exists 
  $n >0$ such that $\omega_n(X)<\infty$.  
  (This is only meaningful in the
   case where $\U$ is expansive.)
 \end{itemize}
\end{definition}

\begin{lemma}\label{lem-preservation-finiteness}
 Let $i\colon \U \to \V$ be the inclusion of a subcategory.
 \begin{itemize}
  \item[(a)] The functor $i^*$ always preserves objects of finite type.
   If $\U$ is closed downwards, then $i^*$ preserves all finiteness
   conditions from Definition~\ref{def-finiteness-conditions} excluding
   that of finite order.
  \item[(b)] The functor $i_!$ always preserves finitely presented and
   finitely generated objects. If $\U$ is closed upwards (and
   therefore expansive), then $i_!$ preserves all finiteness conditions. 
  \item[(c)] If $\U$ is closed downwards, then $i_*$ preserves objects
   of finite type.
 \end{itemize}
\end{lemma}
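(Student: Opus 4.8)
The plan is to deduce all three parts from a handful of facts already in hand: the exactness of $i^*$, $i_!$ and $i_*$ in the relevant regimes, their effect on the distinguished projectives $e_{G,V}$, and the ``extension by zero'' descriptions that hold under downward or upward closure.

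For part~(a), the identity $(i^*X)(G)=X(G)$ for $G\in\U$ makes preservation of finite type immediate, with no hypothesis. For the remaining conditions, suppose $\U$ is closed downwards in $\V$. The first thing I would record is that $i^*$ sends every generator to a projective of $\A\U$: by Lemma~\ref{lem-omnibus}(j) we have $i^*(e^\V_G)=e^\U_G$ when $G\in\U$, whereas if $G\in\V\setminus\U$ then downward closure rules out any surjection from a group of $\U$ onto $G$, so $e^\V_G(T)=k[\G(T,G)]=0$ for every $T\in\U$ and hence $i^*(e^\V_G)=0$. Since $i^*$ is exact and preserves direct sums and retracts (Lemma~\ref{lem-omnibus}(e)), it then carries projectives to projectives, finite direct sums of indecomposable projectives to finite direct sums of indecomposable projectives (because $i^*(e_{G,V})$ is again $e_{G,V}$, or $0$ when $G\notin\U$), and bounded finitely-projective resolutions to bounded finitely-projective resolutions. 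This yields preservation of finitely projective, finitely generated, finitely presented, finitely resolved and perfect objects.

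For part~(b), I would start from the fact that $i_!$ is a left adjoint — hence right exact and preserving sums and epimorphisms — together with $i_!(e_{G,V})=e_{G,V}$ (Lemma~\ref{lem-omnibus}(j)). This already gives preservation of finitely projective objects, and, using epimorphisms and cokernels, of finitely generated and finitely presented objects, with no hypothesis on $\U$. If in addition $\U$ is closed upwards in $\V$ then $i_!$ is extension by zero (Lemma~\ref{lem-omnibus}(f)), hence exact: exactness promotes the previous argument to preservation of finitely resolved and perfect objects, and the formula $(i_!X)(T)=X(T)$ for $T\in\U$ and $0$ otherwise gives preservation of finite type. For finite order, one first checks that upward closure forces $\U$ to be expansive — given $G\in\U$ and $m$, the group $G\times C_2^m$ surjects onto $G$, so it lies in $\U$, and $\delta(G\times C_2^m)\geq m$ — and then, since $\mathcal{R}_m^\V\cap\U=\mathcal{R}_m^\U$ and the groups in $\mathcal{R}_m^\V\setminus\U$ contribute $0$ to $i_!X$, one gets $\omega_n^\V(i_!X)=\omega_n^\U(X)$ for all $n$, so finite order is preserved.

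For part~(c), downward closure of $\U$ makes $i_*$ extension by zero (Lemma~\ref{lem-omnibus}(g)), so $(i_*X)(G)=X(G)$ for $G\in\U$ and $0$ otherwise; finite-dimensionality of all the $X(G)$ therefore passes to all the $(i_*X)(G)$. The only steps that go beyond routine bookkeeping are the vanishing $i^*(e^\V_G)=0$ for $G\in\V\setminus\U$ in part~(a), which is precisely what makes $i^*$ preserve projectivity, and, in part~(b), the comparison of the suprema defining $\omega_n$ over $\mathcal{R}_m^\V$ and $\mathcal{R}_m^\U$; I expect this last point to need the most care, since one must argue that restricting the index set to the a priori smaller $\mathcal{R}_m^\U$ does not decrease the limit superior (which is where expansiveness of $\U$ enters).
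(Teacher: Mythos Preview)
Your proof is correct and follows the same route as the paper's: both use that $i^*(e_G)$ is $e_G$ or $0$ under downward closure together with exactness of $i^*$, that $i_!(e_{G,V})=e_{G,V}$ always together with right-exactness of $i_!$, and that $i_!$ (resp.\ $i_*$) becomes extension by zero under upward (resp.\ downward) closure. You supply more detail than the paper does---in particular the explicit verification that upward closure implies expansiveness and the comparison $\omega_n^{\V}(i_!X)=\omega_n^{\U}(X)$, which the paper leaves implicit.
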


\begin{proof}
 Clearly, $i^*$ preserves objects of finite type.  If $\U$ is closed
 downwards, then $i^*(e_{G})$ is either $e_G$ (if $G \in \U$) or $0$
 (if $G \not \in \U$). It follows that $i^*$ preserves (finitely)
 projective objects.  Since $i^*$ is also exact by
 Lemma~\ref{lem-omnibus}(e), it follows that $i^*$ preserves conditions 
 (a) to (f) in Definition~\ref{def-finiteness-conditions}.
  
 By Lemma~\ref{lem-omnibus}(e) and (i), the functor $i_!$ preserves
 colimits and preserves (finitely) projective objects. It follows that
 $i_!$ preserves finitely presented and finitely generated objects.
 If $\U$ is closed upwards, then $i_!$ is extension by zero by
 Lemma~\ref{lem-omnibus}(f) so it preserves objects of finite type and
 finite order (if $\U$ expansive).  It is also exact so it preserves
 all the other finiteness conditions.

 Finally, part (c) follows from Lemma~\ref{lem-omnibus}(g) as $i_*$ is
 extension by zero.
\end{proof}

\begin{remark}\label{rem-restriction-no-projectives}
  We have seen that the restriction functor $i^*$ preserves projectives if 
  $\U$ is closed downwards. This is no longer true if we relax the conditions on 
  $\U$ as the following example shows. Choose a group $G \in \G$, and consider 
  \[
  \U=\{ H \in \G \mid \U(H,G)\not =\emptyset, \; \U(G,H)=\emptyset\}=
  \{H \in \G_{\geq G} \mid H\not \simeq G \}.
  \] 
  Note that $\U$ 
  is complete as it is closed upwards in $\G$. Let $i \colon \U \to G$ denote 
  the inclusion functor. We claim that $i^*e_G$ is 
  not projective in $\U$. 
  Suppose that $i^*e_G$ was projective, so we could write
  $i^*e_G=\bigoplus_i e_{H_i, V_i}$ for some groups $H_i \in \U$. Note that 
  we must have $|H_i|> |G|$ for all $i$. If we calculate the order of these 
  objects we see that $\omega_{|G|}^\U(i^*e_G)=\omega_{|G|}^\G(e_G)$ and 
  so $i^*e_G$ has order $|G|$ by completeness of $\G$. 
  On the other hand, for
  $n=\max_i |H_i|$ we have
  $0<\omega_n^\U(\bigoplus_i e_{H_i,V_i})<\infty$ so this has order $n$. 
  We have found a contradiction since $n>|G|$ so $i^*e_G$ cannot be projective.
\end{remark}

\begin{lemma}\label{lem-restriction-preserve-fg}
 Consider the inclusion $i_n \colon \F[p^n]\to \Z[p^\infty]$ for some $n\geq 1$. 
 Then the restriction functor $i_n^*\colon \A\Z[p^\infty]\to \A\F[p^n]$ preserves 
 finitely generated objects. 
\end{lemma}

\begin{proof}
 Consider a finitely generated object $X\in\A\Z[p^\infty]$ and choose an 
 epimorphism $\varphi \colon \bigoplus_{i=1}^s e_{A_i}\to X$.  
 Since $i_n^*$ preserves epimorphisms by 
 Lemma~\ref{lem-omnibus}(e), it will suffices to prove the following claim: 
 if $A\in\Z[p^\infty]$, then $i_n^*e_{A}\in \A\F[p^n]$ is finitely generated.   
 Let $F\in \F[p^n]$ be minimal such that $\Z[p^\infty](F,A)\not =\emptyset.$ 
 A choice of an epimorphism $\varphi\colon F \to A$, then gives a morphism 
 $e_\varphi \colon e_F \to i_n^*e_A$ and we claim this is an 
 epimorphism. In other words, we ought to show that for any epimorphism 
 $\psi \colon F' \to A$ with $F' \in \F[p^n]$, there exists an 
 epimorphism $\zeta \colon F'\to F$ making the following diagram commute:
 \[
 \begin{tikzcd}
  F' \arrow[dr, "\psi"'] \arrow[r, "\zeta"]& F \arrow[d, "\varphi"]\\
     & A. \\
 \end{tikzcd}
 \]   
 This is the content of Lemma~\ref{lem-dotted-arrow}.
\end{proof}

It is useful to have a criterion to detect objects which are not
finitely generated. 
Recall the notion of support from Definition~\ref{def-base-support}.

\begin{lemma}\label{lem-infinitess-criterion}
 If $X$ is finitely generated, then $\min(\supp(X))$ is finite.
\end{lemma}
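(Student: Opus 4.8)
The plan is to exploit the defining epimorphism onto $X$ from a finitely projective object, and to see that it already pins down the minimal elements of the support. By Definition~\ref{def-finiteness-conditions}(c) we may fix an epimorphism $\pi\colon P_0\to X$ with $P_0=\bigoplus_{i=1}^n e_{G_i}$ for suitable $G_1,\dots,G_n\in\U$. I will argue that
\[ \min(\supp(X))\subseteq\{[G_1],\dots,[G_n]\}, \]
which suffices, since the right-hand side is a finite set.

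First I would unwind $\pi$. Precomposing with the canonical inclusions $e_{G_i}\hookrightarrow P_0$ gives morphisms $f_i\colon e_{G_i}\to X$ whose sum is $\pi$, and under the Yoneda identification $\A\U(e_{G_i},X)=X(G_i)$ from Definition~\ref{def-main-objects} these correspond to elements $x_i\in X(G_i)$, with component $f_i(T)\colon k[\U(T,G_i)]\to X(T)$ given by $[\alpha]\mapsto\alpha^*(x_i)$. Now take any $[H]\in\min(\supp(X))$, so $X(H)\neq0$. As $\pi$ is an epimorphism (hence surjective at $H$, since cokernels are computed pointwise by Remark~\ref{rem-bicomplete}), the map $\sum_i f_i(H)$ is onto $X(H)$, so there are an index $i$ and a morphism $\alpha\in\U(H,G_i)$ with $\alpha^*(x_i)\neq0$. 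In particular $x_i\neq0$, so $X(G_i)\neq0$ and $[G_i]\in\supp(X)$, while the existence of $\alpha$ witnesses $[H]\gg[G_i]$.

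To conclude I would invoke antisymmetry of the order $\gg$ on $\supp(X)$: if $[H]\gg[G_i]$ and $[G_i]\gg[H]$ then there are surjections of finite groups $H\twoheadrightarrow G_i\twoheadrightarrow H$, forcing $|H|=|G_i|$ and hence $H\simeq G_i$. Since $[G_i]\in\supp(X)$ and $[H]$ is minimal, the comparability $[H]\gg[G_i]$ must collapse to $[H]=[G_i]$, giving the asserted containment. I do not anticipate a genuine obstacle; the only points requiring a moment's care are verifying that the generator $x_i$ one arrives at is actually nonzero — so that $[G_i]$ really lies in the support, not merely that $\U(H,G_i)\neq\emptyset$ — and recording antisymmetry of $\gg$ so that minimality upgrades comparability to equality.
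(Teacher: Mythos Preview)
Your proof is correct and follows essentially the same approach as the paper: both take a finite presentation by generators $e_{G_i}$, use surjectivity at $H$ to locate a morphism $H\to G_i$ with $[G_i]\in\supp(X)$, and then invoke minimality. The only cosmetic difference is that the paper first discards any zero components $e_{G_i}\to X$ so that $[G_i]\in\supp(X)$ is automatic, whereas you instead extract $\alpha^*(x_i)\neq 0$ to deduce $x_i\neq 0$ for the particular index found; your explicit verification of antisymmetry of $\gg$ is a small bonus that the paper takes for granted.
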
 
\begin{proof}
 If $X$ is finitely generated, we can find an epimorphism
 $\varphi\colon\bigoplus_{i=1}^r e_{G_i} \to X$.  Without loss of
 generality we can assume that each component $ e_{G_i}\to X$ is
 nonzero so that $X(G_i)\neq 0$ for all $i$.  We claim that
 $\min(\supp(X))\subseteq \{[G_1],\ldots,[G_r]\}$ which will prove
 the lemma.  If $[H]\in\min(\supp(X))$, then $X(H)\neq 0$, so
 $\bigoplus_ie_{G_i}(H)\neq 0$, so we can choose an index $i$ with
 $e_{G_i}(H)\neq 0$, so we can choose a morphism
 $\alpha\colon H\to G_i$ in $\U$.  Now both $[H]$ and $[G_i]$ lie in
 $\supp(X)$, and $[H]$ is assumed to be minimal, so $\alpha$ must be
 an isomorphism, so $[H]=[G_i]$ as required.
\end{proof}

\begin{proposition}\label{prop-perfect-is-projective}
 Let $\U$ be a complete subcategory of $\G$. Then any object of $\A\U$
 with a finite projective resolution is projective. In particular any
 perfect object is finitely projective.
\end{proposition}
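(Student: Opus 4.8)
The plan is an induction on the length of a finite projective resolution, with Proposition~\ref{prop-splitting} (which uses that $\U$ is complete) as the essential input. Given a resolution $0\to P_n\xrightarrow{d_n}P_{n-1}\to\dotsb\to P_0\to X\to 0$ with all $P_i$ projective, the case $n=0$ is trivial since then $X\simeq P_0$. For $n\geq 1$, the map $d_n$ is a monomorphism between projective objects, hence split by Proposition~\ref{prop-splitting}, so $C:=\cok(d_n)$ is a retract of $P_{n-1}$ and is projective. Exactness of the original complex at $P_{n-1}$ identifies $C$ with $\img(d_{n-1})\leq P_{n-2}$, so $d_{n-1}$ induces a resolution $0\to C\to P_{n-2}\to\dotsb\to P_0\to X\to 0$ of length $n-1$ (degenerating to $0\to C\xrightarrow{\sim}X\to 0$ when $n=1$); the inductive hypothesis then shows that $X$ is projective. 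This proves the first assertion.

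For the ``in particular'' clause, suppose $X$ is perfect, so it has a resolution $0\to P_m\to\dotsb\to P_0\to X\to 0$ with every $P_i$ \emph{finitely} projective; by the first part $X$ is projective, and the augmentation exhibits $X$ as a quotient of $P_0$. Since finitely projective objects have finite type (each $e_{G,V}$ does, $\G(T,G)$ being finite), so does $X$. Let $N$ be the largest order of a group occurring among the indecomposable summands of $P_0$; then $\omega_N(P_0)<\infty$ by Corollary~\ref{cor-growth} together with Lemma~\ref{lem-omega-eGV}, and closure under quotients gives $\omega_N(X)<\infty$. Writing $X=\bigoplus_{i\in I}e_{G_i,S_i}$ via Corollary~\ref{cor-indecomposable-projectives}, each summand is a retract of $X$, so $\omega_N(e_{G_i,S_i})<\infty$; since $e_{G_i,S_i}$ has order exactly $|G_i|$ by Lemma~\ref{lem-omega-eGV}, the monotonicity of $\omega_\bullet$ recorded in Definition~\ref{def-expansive} forces $|G_i|\leq N$. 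Hence every $G_i$ lies in the finite subcategory $\U_{\leq N}$, so only finitely many pairs $([G_i],S_i)$ occur, and finite type of $X$ bounds the multiplicity of each; therefore $I$ is finite and $X$ is finitely projective.

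The induction and the finite-type/finite-order bookkeeping are formal; essentially all the weight of the argument is carried by the inputs, above all Proposition~\ref{prop-splitting}, which is precisely the statement that fails without completeness. The only points requiring a little care are the degenerate base of the induction and the verification that the truncated complex $0\to C\to P_{n-2}\to\dotsb\to P_0\to X\to 0$ is again exact, which follows at once from exactness of the original complex at $P_{n-1}$ and $P_{n-2}$.
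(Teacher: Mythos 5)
Your proof is correct and takes essentially the same route as the paper: both shorten a finite projective resolution one step at a time by observing that the initial differential is a monomorphism between projectives, hence split by Proposition~\ref{prop-splitting}, which is exactly where completeness enters. You additionally supply a careful justification of the ``in particular'' clause (that a projective object which is also perfect is \emph{finitely} projective), which the paper leaves implicit; your bookkeeping via finite type, $\omega_N$, and Corollary~\ref{cor-indecomposable-projectives} is sound and a worthwhile addition.
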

\begin{proof}
 Let $P_*\to X$ be a projective resolution and suppose that $P_i=0$
 for all $i>n$.  If $n>0$ it follows that the differential
 $d_n \colon P_n \to P_{n-1}$ must be a monomorphism, so
 Proposition~\ref{prop-splitting} tells us that it is split.  Now let
 $Q_*$ be the same as $P_*$ except that $Q_n=0$ and
 $Q_{n-1}=\cok(d_n)$. We find that this is again a projective
 resolution of $X$.  By repeating this construction, we eventually
 obtain a projective resolution of length one, showing that $X$ itself
 is projective.
\end{proof}

\begin{remark}\label{rem-perfect-is-not-projective}
 The Proposition above is not true if we drop the completeness
 condition.  For example let $\C[p^\infty]$ be the subcategory of 
 cyclic $p$-groups. Then there is a short exact
 sequence $0 \to c_{C_{p^2}} \to c_{C_p} \to t_{C_p, k} \to 0$ which
 shows that $t_{C_p,k}$ is perfect.  On the other hand, we have
 $t_{C_p,k}(C_{p^r})=0$ for all $r>1$, and it follows easily from this
 that $t_{C_p,k}$ is not projective.
\end{remark}

\begin{proposition}\label{prop-fin-proj-fin-ord}
 Let $\U$ be a complete subcategory of $\G$. Then any finitely
 projective object in $\A\U$ has finite order.
\end{proposition}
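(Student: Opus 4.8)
The plan is to reduce immediately to the indecomposable case and then read off the conclusion from the growth estimates already established. By Corollary~\ref{cor-indecomposable-projectives}, a finitely projective object $P$ decomposes as a finite direct sum $P\simeq\bigoplus_{i=1}^r e_{G_i,V_i}$ with each $G_i\in\U$ and each $V_i$ an irreducible $k[\Out(G_i)]$-module; if $P=0$ there is nothing to prove, so assume $r\geq 1$ and set $n=\max_i|G_i|$ (this makes sense since $\U$ is complete, hence expansive, so $\omega_n^\U$ is defined on $\A\U$).

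Next I would invoke Lemma~\ref{lem-omega-eGV}, which gives $0<\omega_{|G_i|}(e_{G_i,V_i})<\infty$ for each $i$, so every summand has finite order, namely $|G_i|$. Combining this with the elementary observation recorded in Definition~\ref{def-expansive}(d) — that $\omega^\U_{|G_i|}(e_{G_i,V_i})<\infty$ forces $\omega^\U_m(e_{G_i,V_i})=0$ for all $m>|G_i|$ — we obtain $\omega_n(e_{G_i,V_i})<\infty$ for every $i$ (it equals $0$ when $|G_i|<n$ and is finite when $|G_i|=n$).

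Finally, iterating the upper bound of Lemma~\ref{lem-lim-sup} along the direct sum yields $\omega_n(P)\leq\sum_{i=1}^r\omega_n(e_{G_i,V_i})<\infty$, so $P$ has finite order; moreover the lower bound of the same lemma shows $\omega_n(P)>0$ because some summand has $|G_i|=n$, so in fact the order of $P$ is exactly $n=\max_i|G_i|$. I do not anticipate any genuine obstacle here: completeness was set up precisely so that the single-generator estimate $\omega_{|G|}(e_{G,V})<\infty$ holds, and finite direct sums of objects of finite order again have finite order by Lemma~\ref{lem-lim-sup}; the only mild care needed is the bookkeeping that passing from order $|G_i|$ to the common index $n$ does not destroy finiteness, which is exactly the content of the ``$m>n$ implies $\omega_m=0$'' remark.
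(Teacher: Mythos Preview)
Your proof is correct and follows essentially the same approach as the paper: decompose a finitely projective object as a finite sum $\bigoplus_{i=1}^r e_{G_i,V_i}$, set $n=\max_i|G_i|$, and then combine Lemma~\ref{lem-omega-eGV} with the subadditivity from Lemma~\ref{lem-lim-sup}. The paper's version is terser (it simply asserts the inequality ``by Lemma~\ref{lem-omega-eGV}''), whereas you have spelled out the bookkeeping explicitly; there is no substantive difference.
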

\begin{proof}
 The zero object has by definition finite order. For $r\geq 1$, we
 have
 \[
  0 <\omega_n\left(\bigoplus_{i=1}^r e_{G_i, S_i}\right) < \infty
  \qquad \text{if} \;\; n =\max_{i}(|G_i|)
 \]	
 by Lemma~\ref{lem-omega-eGV}.
\end{proof}

\begin{lemma}\label{lem-truncated-objects-are-perfect}
 Let $\U\leq \G$ be finite (meaning that it has only finitely many
 isomorphism classes). Then the following are equivalent for an object
 $X \in\A\U$:
 \begin{itemize}
  \item[(a)] $X$ has finite type;
  \item[(b)] $X$ is finitely generated;
  \item[(c)] $X$ is perfect. 
 \end{itemize}
\end{lemma}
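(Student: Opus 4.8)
The plan is to establish $(c)\Rightarrow(b)\Rightarrow(a)$ for an arbitrary subcategory $\U\leq\G$, and then to prove the substantive implication $(a)\Rightarrow(c)$ using the finiteness hypothesis. For $(c)\Rightarrow(b)$: a perfect object in particular admits an epimorphism from a finitely projective object, so it is finitely generated. For $(b)\Rightarrow(a)$: choosing an epimorphism $\bigoplus_{i=1}^n e_{G_i}\to X$, I would note that $\G(T,G_i)$ is finite for all finite groups $T$, so each $e_{G_i}$ has finite type; a finite direct sum of finite type objects has finite type, and so does any quotient of such, hence $X$ has finite type.

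For $(a)\Rightarrow(c)$ the plan is to feed $X$ through the canonical functorial projective resolution $P_\bullet\to X$ of Construction~\ref{con-proj-inj-resolutions} and check two things: that it has finite length, and that every term is finitely projective. The case $X=0$ is trivial, so assume $X\neq 0$. Since $\U$ has only finitely many isomorphism classes there is an integer $N$ with $|G|\leq N$ for all $G\in\U$, and Remark~\ref{rem-base-proj-resolution} gives $\base(P_k)\geq\base(X)+k$; hence as soon as $k>N-\base(X)$ we have $P_k(G)=0$ for every $G\in\U$, i.e.\ $P_k=0$, so the resolution is finite. For the second point I would argue by induction, fixing a (necessarily finite) skeleton $\U'\subset\U$ and using the formula $l_!(W)=\bigoplus_{G\in\U'}e_{G,W(G)}$ recorded in Construction~\ref{con-proj-inj-resolutions}: it gives $P_0=\bigoplus_{G\in\U'}e_{G,X(G)}$, and since each $X(G)$ is a finite dimensional $k[\Out(G)]$-module, hence a finite direct sum of irreducibles, $P_0$ is a finite direct sum of indecomposable projectives. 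In particular $P_0$ has finite type, so $\ker(P_0\to X)$, being a subobject of $P_0$, also has finite type, and the identical reasoning applies to $P_1=l_!l^*(\ker(P_0\to X))$, and inductively to every $P_k$. Combining the two points exhibits $X$ as a perfect object.

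The only delicate point is ensuring that finiteness of type is genuinely transmitted down the resolution, so that each $P_k$ is finitely projective and not merely projective; this is immediate because finite type is inherited by subobjects and, over a finite $\U$, the object $l_!l^*$ of a finite type object is assembled from finitely many finite dimensional group representations. All remaining steps are bookkeeping with the constructions of Sections~\ref{sec-subcat-functors} and~\ref{sec-projectives}; no new ideas beyond those already developed are required.
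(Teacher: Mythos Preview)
Your proof is correct and follows essentially the same approach as the paper: both use the canonical projective resolution of Construction~\ref{con-proj-inj-resolutions}, invoke the base estimate of Remark~\ref{rem-base-proj-resolution} to obtain finite length, and propagate finite type through the resolution to see that each $P_k$ is finitely projective. The only difference is organizational --- the paper proves $(a)\Leftrightarrow(b)$ first and then $(b)\Rightarrow(c)$, whereas you go $(c)\Rightarrow(b)\Rightarrow(a)\Rightarrow(c)$ --- but the substance is identical.
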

\begin{proof}
 Recall the explicit projective resolution from
 Construction~\ref{con-proj-inj-resolutions}.  We have
 $P_0=l_!l^*(X)=\bigoplus_{G \in \U'}e_{G, X(G)}$. If $X$ has finite
 type, then $P_0$ is a finitely generated projective object since $\U$
 is finite.  This gives (a) $\Rightarrow$ (b).  Clearly, (b)
 $\Rightarrow$ (a) so (a) is equivalent to (b).
  
 Now suppose that $X$ is finitely generated (and hence of finite type)
 and consider the canonical projective resolution $P_\bullet \to X$.
 The explicit formula for $P_i$ tells us that $P_i$ has finite type,
 and it follows from the previous paragraph that $P_i$ is finitely
 generated too.  To prove (b) $\Rightarrow$ (c), we need to show that
 $P_n=0$ for $n\gg 0$.  Recall from
 Remark~\ref{rem-base-proj-resolution} that
 $\base(P_n)\geq \base(X)+n$. Now note that any object in $\A\U$ with
 sufficiently large base is zero as $\U$ is finite.  Hence $P_n =0$
 for $n\gg 0$ as required.  The final implication (c) $\Rightarrow$
 (a) is clear.
\end{proof}

\begin{remark}\label{rem-map1}
 So far we have the following implications:
 \[
  \begin{tikzcd}[column sep=huge]
   \text{finitely resolved}  \arrow[r, Rightarrow] &
   \text{finitely presented} \arrow[r, Rightarrow] &
   \text{finitely generated} \arrow[r, Rightarrow] &
   \text{finite type} \\
   \text{perfect} 
    \arrow[u, Rightarrow] 
    \arrow[r, Leftrightarrow, "completeness"] &
   \text{finitely projective}
    \arrow[r, Rightarrow,"completeness"] & 
   \text{finite order.} &
  \end{tikzcd}
 \]
\end{remark}

\section{Torsion and torsion-free objects}
\label{sec-torsion}

In this section we introduce the notions of torsion, absolutely
torsion and torsion-free object. We study their formal properties and
give some examples.
	
\begin{definition}\label{def-torsion-and-torsion-free}
 Consider an object $X$ of $\A\U$.
 \begin{itemize}
  \item We say that $x \in X(G)$ is \emph{torsion} if there exists
   $H \in \U$ and $f \in \U(H,G)$ such that $f^*(x)=0$.
  \item We say that $x \in X(G) $ is \emph{absolutely torsion} if there
   exists $m \in \NN$ such that for all $f \in \U(H,G)$ with
   $|H|\geq m$ we have $f^*(x)=0$.
  \item We say that $X$ is \emph{torsion} (resp., \emph{absolutely
    torsion}) if it consists entirely of torsion (resp., absolutely
   torsion) elements.
  \item We say that $X$ is \emph{torsion-free} if it does not contain
   any nonzero torsion element. Equivalently, $X$ is torsion-free if
   and only if all the maps $\alpha^* \colon X(G) \to X(H)$ are
   injective.
  \item We write $\tors(X)(G)$ for the subset of torsion elements in
   $X(G)$.
 \end{itemize}
\end{definition}

\begin{hypothesis}\label{hyp-colimit-tower}
  Throughout we will assume that $\U \leq \G$ has a colimit tower as in 
  Definition~\ref{def-colimit-tower}. This is not a very restrictive 
  assumption as we have shown in Section~\ref{sec-exact-colimit} that 
  most natural examples satisfy this. 
\end{hypothesis}

\begin{lemma}\label{lem-torsion-G_n}
 For an element $x \in X(H)$, the following are equivalent:
 \begin{itemize}
  \item[(a)] $x$ is torsion.
  \item[(b)] There exists $\alpha\in\U(G_n,H)$ for some $n$ such that
   $\alpha^*(x)=0$ in $X(G_n)$.
  \item[(c)] There exists $n_0$ such that for all $n \geq n_0$ and all
   $\alpha\in\U(G_n,H)$ we have $\alpha^*(x)=0$ in $X(G_n)$.
 \end{itemize}
\end{lemma}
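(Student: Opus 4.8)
The plan is to establish the cycle of implications $(c)\Rightarrow(b)\Rightarrow(a)\Rightarrow(c)$, of which only the last is substantial; the other two are essentially bookkeeping with the defining properties of a colimit tower (Definition~\ref{def-colimit-tower}).

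The implication $(b)\Rightarrow(a)$ is immediate from the definition of a torsion element, since each $G_n$ lies in $\U$ and $\alpha\in\U(G_n,H)$ is a witness. For $(c)\Rightarrow(b)$ I would first note that condition (a) in the definition of a colimit tower gives some index $i_0$ with $\U(G_{i_0},H)\neq\emptyset$, and then composing with the structure maps $\epsilon$ shows $\U(G_n,H)\neq\emptyset$ for all $n\geq i_0$; choosing any $n\geq\max(n_0,i_0)$ and any $\alpha\in\U(G_n,H)$ then yields an instance of $(b)$ via $(c)$.

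For the main implication $(a)\Rightarrow(c)$, the first step is to unwind the definition of torsion: there are $K\in\U$ and $f\in\U(K,H)$ with $f^*(x)=0$. Applying condition (a) of the colimit tower to $K$ produces an index $j$ and a morphism $\beta_0\in\U(G_j,K)$, so that $\gamma_0:=f\circ\beta_0\in\U(G_j,H)$ satisfies $\gamma_0^*(x)=\beta_0^*f^*(x)=0$. Precomposing $\gamma_0$ with the iterated structure maps $G_m\to G_j$ yields, for every $m\geq j$, a morphism $\phi_m\in\U(G_m,H)$ with $\phi_m^*(x)=0$. I then claim that $n_0=j+1$ works: given $n\geq n_0$ and an arbitrary $\alpha\in\U(G_n,H)$, I would apply condition (c) of the colimit tower to the diagram $G_n\xrightarrow{\alpha}H\xleftarrow{\phi_{n-1}}G_{n-1}$, whose side hypothesis $\U(G_{n-1},G_{n-1})\neq\emptyset$ holds because the identity is always available. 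This factors $\alpha=\phi_{n-1}\circ\beta$ for some $\beta\in\U(G_n,G_{n-1})$, whence $\alpha^*(x)=\beta^*\phi_{n-1}^*(x)=0$, as required.

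The heart of the argument — and the step I expect to be the main obstacle — is the passage from ``some morphism out of $G_j$ annihilates $x$'' to ``every morphism out of $G_n$ annihilates $x$ for all large $n$''. This is exactly where condition (c) of the colimit tower is indispensable, since it lets an arbitrary morphism $G_n\to H$ be routed through any prescribed morphism $G_{n-1}\to H$; conditions (a) and (b) alone do not give this, consistent with Remark~\ref{rem-condition-c-colim-tower}. The one subtlety worth flagging is that $\Out(G)=\G(G,G)$ is genuinely a group — surjective endomorphisms of finite groups are automorphisms — so the identity always lies in $\U(G_{n-1},G_{n-1})$ and the hypothesis of condition (c) is met automatically.
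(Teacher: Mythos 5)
Your proof is correct and follows essentially the same route as the paper: both reduce everything to condition~(c) of the colimit tower, which factors an arbitrary $\alpha\in\U(G_n,H)$ through a morphism already known to annihilate $x$. The only (harmless) variation is that the paper applies condition~(c) directly with $K=G$, the witness to torsion, choosing $n_0$ least with $\U(G_{n_0-1},G)\neq\emptyset$, whereas you first push the annihilating morphism onto the tower to get $\phi_{n-1}\in\U(G_{n-1},H)$ and then apply condition~(c) with $K=G_{n-1}$, making its side hypothesis $\U(G_{n-1},G_{n-1})\neq\emptyset$ automatic.
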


\begin{proof}
 By condition (a) of the colimit tower, we
 see that $\U(G_n,H)\not=\emptyset$ for large $n$. It follows that
 $(c)\Rightarrow(b)\Rightarrow (a)$. Now suppose that $(a)$ holds, so
 we can choose $\beta\in \U(G,H)$ for some $G$ with
 $\beta^*(x)=0$. Now let $n_0$ be least such that
 $\U(G_{n_0-1},G)\not=\emptyset$. 
 Suppose that $n \geq n_0$, so $\U(G_{n-1},G)\not=\emptyset$. 
 If $\alpha \in \U(G_n, H)$, then condition (c) of the colimit tower gives us a morphism $\gamma \in \U(G_n,G)$ with $\alpha=\beta\circ\gamma$, and it
 follows that $\alpha^*(x)=0$. Thus, part $(c)$ holds.
\end{proof}

Recall the colimit functor $L \colon \A\U \to \Vect_k$ from 
Section~\ref{sec-exact-colimit}.  

\begin{lemma}\label{lem-torsion-zero-in-L}
 Consider an object $X\in\A\U$ and an element $x\in X(G)$. Then $x$ is torsion 
 if and only if the element $1_G \otimes x \in (e_G \otimes X)(G)$ maps to 
 zero in $L(e_G \otimes X)$. 
\end{lemma}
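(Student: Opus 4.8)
The plan is to unwind the definition of $L(e_G \otimes X)$ as a colimit and use Lemma~\ref{lem-colimit-values}(a), which gives $Le_G = k$. First I would use Proposition~\ref{prop-wide} (or rather its underlying combinatorics) together with the Yoneda-type identification $\A\U(e_G \otimes e_T, X) = (\uHom(e_G,X))(T)$ to get a concrete handle on $(e_G \otimes X)$. However, the cleanest route is more direct: by definition $L(e_G \otimes X) = \colim_{T \in \U^{\op}} (e_G(T) \otimes X(T))$, and the element $1_G \otimes x$ sits at the vertex $T = G$, coming from the basis element $[\id_G] \in e_G(G) = k[\G(G,G)]$ tensored with $x \in X(G)$. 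An element of a filtered colimit (recall filtered colimits are exact and computed as the usual quotient) is zero if and only if it becomes zero at some later stage; since $\U^{\op}$ is cofiltered from the point of view we need — more precisely, using a colimit tower $G_0 \leftarrow G_1 \leftarrow \cdots$ as in Hypothesis~\ref{hyp-colimit-tower} — the image of $1_G \otimes x$ in $L(e_G \otimes X)$ vanishes iff its image in $(e_G \otimes X)(G_n)$ vanishes for some large $n$ under some structure map.

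The key computation is then: for $\alpha \in \U(G_n, G)$, the structure map sends $1_G \otimes x \in (e_G \otimes X)(G)$ to $\alpha^*(1_G) \otimes \alpha^*(x) = [\alpha] \otimes \alpha^*(x) \in e_G(G_n) \otimes X(G_n)$. Now $e_G(G_n) = k[\U(G_n, G)]$ has $[\alpha]$ as a basis element, so $[\alpha] \otimes \alpha^*(x) = 0$ in $e_G(G_n) \otimes X(G_n)$ if and only if $\alpha^*(x) = 0$ in $X(G_n)$. Chasing the identification of the colimit of $e_G \otimes X$ along the tower (using that $Le_G = k$ is generated by the class of any $[\alpha]$, all of which become identified in the colimit), one sees that $1_G \otimes x$ maps to zero in $L(e_G \otimes X)$ precisely when there is some $n$ and some $\alpha \in \U(G_n, G)$ with $\alpha^*(x) = 0$. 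By Lemma~\ref{lem-torsion-G_n}, condition (b) there, this is exactly the statement that $x$ is torsion.

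The main obstacle I anticipate is bookkeeping the colimit carefully: an element of a filtered colimit $\colim_T M(T)$ represented at stage $T = G$ is zero iff it dies along \emph{some} zig-zag, and one must check that the relevant zig-zags in $\U^{\op}$ can be replaced by straight maps from tower objects $G_n$ — this is exactly what conditions (a) and (c) of the colimit tower buy us (cf. the proof of Lemma~\ref{lem-torsion-G_n}). One must also be slightly careful that the colimit of $e_G \otimes X$ is not simply $(Le_G) \otimes (LX)$ — Remark~\ref{rem-L-not-monoidal} warns us the oplax map is rarely iso — so the argument genuinely has to track the element $1_G \otimes x$ rather than factor through $LX$. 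Once the filtered-colimit vanishing criterion is set up against the tower, the rest reduces to the one-line linear-algebra observation that $[\alpha] \otimes y = 0$ in $k[\U(G_n,G)] \otimes X(G_n)$ iff $y = 0$, since $[\alpha]$ is a basis vector and tensoring with a free module over $k$ (here $k[\U(G_n,G)]$) is faithful on each factor.
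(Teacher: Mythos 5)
Your forward direction is fine, but the converse has a genuine gap in the step where you convert ``$1_G\otimes x$ dies in $L(e_G\otimes X)$'' into a condition at a single stage $G_n$. You assert that the image of $1_G\otimes x$ in $L(e_G\otimes X)$ vanishes iff its image in $(e_G\otimes X)(G_n)$ vanishes for some $n$, and then finish with the observation that $[\alpha]\otimes\alpha^*(x)=0$ in $k[\U(G_n,G)]\otimes X(G_n)$ iff $\alpha^*(x)=0$. But $\U^{\op}$ is not filtered: two parallel morphisms $\alpha,\beta\in\U(G_n,G)$ are in general not coequalized by any further morphism (take $\beta=\psi\alpha$ for a nontrivial $\psi\in\Out(G)$), so the naive ``zero at some later stage'' criterion does not apply. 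The correct description of $L$, via Proposition~\ref{prop-colimit-tower}, is the sequential colimit of the \emph{coinvariants} $\bigl((e_G\otimes X)(G_n)\bigr)_{\Out(G_n)}$. Hence vanishing in $L(e_G\otimes X)$ only tells you that the class of $[\alpha]\otimes\alpha^*(x)$ in these coinvariants is zero for some $n$; it does not tell you that $[\alpha]\otimes\alpha^*(x)=0$ in $(e_G\otimes X)(G_n)$ itself, which is exactly what your final linear-algebra step needs.

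To close the gap you must extract $\alpha^*(x)$ from the coinvariant class. The paper does this by constructing a $\Gamma$-invariant linear map $\xi\colon(e_G\otimes X)(G_n)\to X(G_n)$, $\xi([\pi]\otimes m)=\sum\{\gamma^*m\mid\gamma\in\Gamma,\ \pi\gamma=\alpha\}$ (with $\Gamma=\Out(G_n)$), which descends to the coinvariants and satisfies $\xi(\alpha^*(1_G\otimes x))=|\Delta|\,\alpha^*(x)$ where $\Delta$ is the stabiliser of $\alpha$; vanishing in the coinvariants then forces $\alpha^*(x)=0$ since $\operatorname{char}(k)=0$. Equivalently, one can average over $\Gamma$ and group terms by the orbit of $[\alpha]$ in the basis $\U(G_n,G)$. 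Either way, this coinvariance step is the real content of the converse, and your proposal omits it; the rest of your argument (the use of Lemma~\ref{lem-torsion-G_n} and the easy direction) matches the paper.
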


\begin{proof}
 Suppose that $x$ is torsion, so we can choose $\alpha\colon G'\to G$ with 
 $\alpha^*(x)=0$. This means that $\alpha^*(1_G \otimes x)=\alpha \otimes 
 \alpha^*(x)=0$. The description $L(e_G \otimes X)=\colim_{H} (e_G \otimes X)(H)$ 
 shows that $1_G \otimes x$ is sent to zero in $L(e_G \otimes X)$. 
 
 For the converse, suppose we have an integer $n$ and a morphism 
 $\alpha \in \U(G_n,G)$. 
 Put $\Gamma= \Out(G_n)$ and $\Delta=\{\delta \mid \alpha \delta =\alpha \}$. 
 Define a map
 \[
 \xi \colon (e_G \otimes X)(G_n) \to X(G_n), \quad 
 \xi(\pi \otimes m)=\sum\{\gamma^* m\mid\gamma\in\Gamma,\;\pi\gamma=\alpha\}
 \] 
 
 One checks that $\xi \theta^* =\xi$ for all $\theta \in \Gamma$, so there is 
 an induced map from the coinvariants 
 $\overline{\xi}\colon (e_G \otimes X)(G_n)_\Gamma \to X(G_n)$. 
 One also checks that $\xi(\alpha^*(1_G \otimes x))= |\Delta|\alpha^*(x)$ for all 
 $x\in X(G)$. The condition that $\alpha^*(1_G \otimes x)$ maps to zero in 
 $L(e_G \otimes X)$ is equivalent to $\alpha^*(1_G \otimes x)$ mapping to 
 zero in $(e_G \otimes X)(G_n)_\Gamma$ for some $n\geq 0$ by 
 Proposition~\ref{prop-colimit-tower}. It then follows 
 that $0=\xi(0)=\xi(\alpha^*(1_G \otimes x))= |\Delta|\alpha^*(x)$ so $x$ is torsion.
\end{proof}

\begin{corollary}\label{cor-not-torsion-map}
 If $x\in X(G)$ is not torsion, then there is a morphism 
 $u\colon e_G \otimes X \to \mathbbm{1}$ such that $u(1_G\otimes x) =1$.
\end{corollary}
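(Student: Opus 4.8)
The plan is to dualise: by Proposition~\ref{prop-one-injective} the morphisms $e_G\otimes X\to\one$ are the same thing as linear functionals on the colimit $L(e_G\otimes X)$, and the hypothesis that $x$ is not torsion says precisely (via Lemma~\ref{lem-torsion-zero-in-L}) that $1_G\otimes x$ survives to a nonzero class in that colimit. So all we have to do is pick a functional that takes the value $1$ on that class and translate it back into a morphism.

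In detail, recall from Proposition~\ref{prop-one-injective} the natural isomorphism $\A\U(Y,\one)\simeq\Vect_k(LY,k)$. Concretely, a morphism $u\colon Y\to\one$ is the same as a cocone on $Y$ with values in $k$, i.e.\ a linear map $\phi\colon LY\to k$, and the two are related by $u_H(y)=\phi(\theta_H(y))$ for all $H\in\U$ and $y\in Y(H)$, where $\theta_H\colon Y(H)\to LY$ denotes the canonical map to the colimit. Apply this with $Y=e_G\otimes X$. Since $x\in X(G)$ is not torsion, Lemma~\ref{lem-torsion-zero-in-L} tells us that the class $v=\theta_G(1_G\otimes x)\in L(e_G\otimes X)$ is nonzero, so by linear algebra we may choose $\phi\colon L(e_G\otimes X)\to k$ with $\phi(v)=1$. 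Letting $u\colon e_G\otimes X\to\one$ be the corresponding morphism, we get $u(1_G\otimes x)=u_G(1_G\otimes x)=\phi(\theta_G(1_G\otimes x))=\phi(v)=1$, as required.

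There is essentially no obstacle here, since all the genuine content is already packaged in Lemma~\ref{lem-torsion-zero-in-L} and in the adjunction computation of Proposition~\ref{prop-one-injective}. The only point needing a little care is to unwind the isomorphism $\A\U(Y,\one)\simeq\Vect_k(LY,k)$ explicitly, so as to identify the evaluation $u(1_G\otimes x)$ with the value $\phi(v)$ of the chosen functional on the class of $1_G\otimes x$.
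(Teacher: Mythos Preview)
Your proof is correct and is essentially identical to the paper's own argument: both invoke Lemma~\ref{lem-torsion-zero-in-L} to see that $1_G\otimes x$ has nonzero image in $L(e_G\otimes X)$, pick a linear functional sending that image to $1$, and use the identification $\A\U(-,\one)\simeq\Vect_k(L(-),k)$ from Proposition~\ref{prop-one-injective} to translate this into the desired morphism. You are simply a bit more explicit about unwinding the adjunction.
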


\begin{proof}
 As the image of $1_G \otimes x$ is nonzero in $L(e_G \otimes X)$, we can choose 
 a $k$-linear map $u_0 \colon L(e_G \otimes X) \to k$ sending this image to $1$. 
 Then $u_0$ is adjoint to a morphism $u\colon e_G \otimes X \to \mathbbm{1}$ 
 as claimed. 
\end{proof}

\begin{lemma}\label{lem-torsion-subobject}
 For any finite dimensional subspace $V \leq \tors(X)(G)$, there is a 
 map $\alpha \colon H \to G$ in $\U$ with $\alpha^*(V)=0$. Moreover, 
 $\tors(X)$ defines a subobject of $X$ in $\A\U$ which is the largest 
 torsion subobject of $X$. The assignment $\tors$ is functorial in $X$ so we
 have a functor $\tors \colon \A\U \to \A\U$.  
\end{lemma}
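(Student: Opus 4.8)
The plan is to prove the four assertions of the statement in turn, bootstrapping the later ones from the earlier. I would begin with the statement about a finite-dimensional $V\leq\tors(X)(G)$: choose a basis $x_1,\dots,x_r$, so that each $x_i$ is a torsion element. By the equivalence in Lemma~\ref{lem-torsion-G_n} there is an integer $n_i$ such that $\beta^*(x_i)=0$ for every $n\geq n_i$ and every $\beta\in\U(G_n,G)$. Condition~(a) in the definition of a colimit tower, together with composition along the structure maps $G_n\to G_{n-1}\to\dotsb\to G_j$, shows that $\U(G_n,G)\neq\emptyset$ for all sufficiently large $n$; so taking $n\geq\max(n_1,\dots,n_r)$ large enough we may pick $\alpha\in\U(G_n,G)$, and then $\alpha^*(V)=0$ with $H=G_n$. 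Applying the same argument to a two-element set $\{x,y\}$ of torsion elements shows that $\tors(X)(G)$ is closed under addition (and trivially under scalar multiplication), hence is a $k$-linear subspace of $X(G)$; this closure property is exactly what is not evident from the bare definition, and is really the content of the lemma.

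Next I would verify that $\tors(X)$ is a subobject of $X$, i.e.\ that for every $\beta\colon H\to G$ in $\U$ the map $\beta^*\colon X(G)\to X(H)$ sends $\tors(X)(G)$ into $\tors(X)(H)$. Given $x\in\tors(X)(G)$, Lemma~\ref{lem-torsion-G_n} again furnishes $n_0$ with $\delta^*(x)=0$ for all $n\geq n_0$ and all $\delta\in\U(G_n,G)$. For $n$ large there is some $\gamma\in\U(G_n,H)$ (condition~(a) of the colimit tower once more), and then for $n\geq n_0$ we have $\gamma^*(\beta^*(x))=(\beta\gamma)^*(x)=0$, so $\beta^*(x)$ is torsion. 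Combined with the previous paragraph this shows that $\tors(X)$ is a subfunctor of $X$, and hence a subobject in $\A\U$.

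Finally, $\tors(X)$ is itself a torsion object: for $x\in\tors(X)(G)$ there is $\alpha\colon H\to G$ with $\alpha^*(x)=0$ already in $X(H)$, hence in the subspace $\tors(X)(H)$. If $Y\leq X$ is any torsion subobject and $y\in Y(G)$, then some $\alpha^*(y)$ vanishes in $Y(H)\subseteq X(H)$, whence $y\in\tors(X)(G)$; thus $Y\leq\tors(X)$ and $\tors(X)$ is the largest torsion subobject. For functoriality, a morphism $f\colon X\to Y$ commutes with the restriction maps, so $\alpha^*(x)=0$ forces $\alpha^*(f_G(x))=f_H(\alpha^*(x))=0$; hence $f_G$ carries torsion elements to torsion elements and restricts to a map $\tors(f)\colon\tors(X)\to\tors(Y)$, and one checks at once that $\tors(\mathrm{id})=\mathrm{id}$ and $\tors(gf)=\tors(g)\tors(f)$. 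I do not expect any real obstacle here; the only delicate point is the linearity of $\tors(X)(G)$, and that is precisely what the colimit tower (through Lemma~\ref{lem-torsion-G_n}) buys us, by letting us annihilate finitely many torsion elements simultaneously.
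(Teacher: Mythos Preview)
Your proof is correct and follows essentially the same approach as the paper's: both use Lemma~\ref{lem-torsion-G_n}(c) to annihilate finitely many torsion elements simultaneously via a single map from some $G_n$, then deduce linearity and the subobject property. The one minor difference is in the subobject step: the paper lifts a given killing map $\alpha\colon H\to G$ through $\beta\colon G'\to G$ by invoking a colimit-tower condition directly, whereas you instead pick any $\gamma\in\U(G_n,H)$ and observe that $\beta\gamma\in\U(G_n,G)$ already kills $x$ by Lemma~\ref{lem-torsion-G_n}(c); your version sidesteps the lifting entirely and is arguably cleaner.
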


\begin{proof}
 Suppose we have torsion elements $x_1,\ldots, x_s \in \tors(X)(G)$.
 By Lemma~\ref{lem-torsion-G_n}(c), we can choose $n$ large so that 
 $\alpha^*(x_i)=0$ for all $\alpha \in \U(G_n, G)$ and all $1\leq i\leq s$. 
 Thus, if $V$ is the span of $\{x_1, \ldots, x_n \}$, we have 
 $\alpha^*(V)=0$, so $V \leq \tors(X)(G)$. 
 This proves in particular that $\tors(X)(G)$ is a vector subspace of $X(G)$.
  
 Now suppose we have $\alpha^*(x)=0$, and we also have another
 morphism $\beta \colon G' \to G$ in $\U$. By condition (b) of the colimit 
 tower, we can fill the dotted arrow in the diagram
 \[
  \begin{tikzcd}
    & G' 
  \arrow[d, two heads, "\beta"] \\
  H \arrow[r, two heads, "\alpha"] \arrow[ru, dotted, "\gamma"]& G.
  \end{tikzcd}
 \]
 We have $\gamma^*\beta^*(x)=\alpha^*(x)=0$, so $\beta^*(x)$ is
 a torsion element. This shows that $\tors(X)$ is a subobject of
 $X$. All remaining claims are now clear.
\end{proof}

The following example illustrates the fact that many things can go wrong 
if we do not assume that $\U$ has a colimit tower.

\begin{example}\label{ex-torsion-not-v-space}
 Consider the following object of $\A\G_{\leq 3}$
 \[
 X=( k_x \xleftarrow{\mathrm{pr}_x} 
 k_x \oplus k_y \xrightarrow{\mathrm{pr}_y} 
 k_y).
 \]
 Note that $x,y \in X(1)$ are torsion since 
 $\mathrm{pr}_x(y)=0=\mathrm{pr}_y(x)$. 
 On the other hand, $x+y \in X(1)$ is not torsion since 
 $\mathrm{pr}_x(x+y)=x$ and $\mathrm{pr}_y(x+y)=y$. 
 In particular $\tors(X)(1)$ is not a vector subspace of $X(1)$.
\end{example}

\begin{remark}\label{rem-torsion-free-sum}
 The sum of two torsion-free subobjects need not be torsion-free. To
 see this, consider a torsion-free object $Y$, a nonzero torsion
 object $Z$ and an epimorphism $f \colon Y \to Z$. In $Y \oplus Z$ we
 have a copy of $Y$, and the graph of $f$ is another subobject
 $Y' \leq Y \oplus Z$ which is also isomorphic to $Y$ and so is
 torsion-free. However, $Y+Y'$ is all $Y \oplus Z$ and so is not
 torsion-free.
\end{remark}

\begin{lemma}\label{lem-tor-quot-tor-free}
 For any object $X$
 of $\A\U$, the quotient $X/\tors(X)$ is torsion-free.
\end{lemma}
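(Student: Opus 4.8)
The plan is to unwind the definition of a torsion element and exploit the fact that torsion is witnessed by a \emph{single} morphism in $\U$, together with the transitivity of composition. Write $\pi\colon X\to X/\tors(X)$ for the quotient map and set $Y=X/\tors(X)$. Since being torsion-free means containing no nonzero torsion element (Definition~\ref{def-torsion-and-torsion-free}), it suffices to take $G\in\U$ and a torsion element $\bar x\in Y(G)$ and show that $\bar x=0$.

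First I would use the definition to choose a morphism $\alpha\colon H\to G$ in $\U$ with $\alpha^*(\bar x)=0$ in $Y(H)$, and pick any lift $x\in X(G)$ of $\bar x$ along $\pi$. Because colimits, and hence this quotient, are computed pointwise and $\tors(X)$ is a genuine subobject of $X$ with $(\tors X)(H)=\tors(X)(H)$ by Lemma~\ref{lem-torsion-subobject}, the map $\pi_H\colon X(H)\to Y(H)$ has kernel exactly $\tors(X)(H)$. Naturality of $\pi$ gives $\pi_H(\alpha^*(x))=\alpha^*(\pi_G(x))=\alpha^*(\bar x)=0$, so $\alpha^*(x)\in\tors(X)(H)$; that is, $\alpha^*(x)$ is itself a torsion element of $X$. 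Hence there is a further morphism $\beta\colon K\to H$ in $\U$ with $\beta^*(\alpha^*(x))=0$ in $X(K)$. Then the composite $\alpha\circ\beta\colon K\to G$ satisfies $(\alpha\circ\beta)^*(x)=\beta^*(\alpha^*(x))=0$, so $x$ is torsion, i.e. $x\in\tors(X)(G)$, and therefore $\bar x=\pi_G(x)=0$. As $G$ was arbitrary, $Y$ is torsion-free.

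There is no real obstacle here: the heart of the argument is just the "transitivity of torsion" computation $(\alpha\circ\beta)^*=\beta^*\circ\alpha^*$. The one point that must be invoked is the standing Hypothesis~\ref{hyp-colimit-tower}, which is needed only to guarantee via Lemma~\ref{lem-torsion-subobject} that $\tors(X)$ is a subobject at all, so that $X/\tors(X)$ is defined and $\ker(\pi_H)=\tors(X)(H)$ levelwise; Example~\ref{ex-torsion-not-v-space} shows that without it the very statement would be vacuous.
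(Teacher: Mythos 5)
Your proof is correct and is essentially identical to the paper's: both lift a torsion class $\bar x$ to $x\in X(G)$, deduce $\alpha^*(x)\in\tors(X)(H)$, and then compose with a second morphism $\beta$ killing it, so that $(\alpha\beta)^*(x)=0$ exhibits $x$ as torsion. Your extra remark that Hypothesis~\ref{hyp-colimit-tower} (via Lemma~\ref{lem-torsion-subobject}) is what makes $\tors(X)$ a subobject in the first place is accurate and consistent with the paper's standing assumptions.
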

\begin{proof}
 Consider an element $\overline{x} \in (X/\tors(X))(G)$, so
 $\overline{x}$ is represented by some element $x \in X(G)$. If
 $\overline{x}$ is a torsion element, then we have
 $\alpha^*(\overline{x})=0$ for some $\alpha \in \U(H,G)$, or
 equivalently $\alpha^*(x) \in \tors(X)(H)$. This means that there
 exists $\beta \in \U(K,H)$ with
 $(\alpha\beta)^*(x)=\beta^*(\alpha^*(x))=0$. Thus $x$ is a torsion
 element and $\overline{x}=0$ as required.
\end{proof}
	
Recall the objects $e_{G,V}$ and $t_{G,V}$ from
Definition~\ref{def-main-objects}.

\begin{lemma}\label{lem-gens-torsion-free}
 For all $G \in \U$, we have that $e_{G,V}$ is torsion-free and
 $t_{G,V}$ is absolutely torsion. Thus, any projective object
 is torsion-free.
\end{lemma}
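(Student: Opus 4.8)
The plan is to treat $e_{G,V}$ and $t_{G,V}$ directly from the formulas in Definition~\ref{def-main-objects}, and then obtain the claim about projectives by a short closure argument. For $e_{G,V}$, recall that $e_{G,V}(T)=V\otimes_{k[\Out(G)]}k[\G(T,G)]$ and that a morphism $\alpha\in\U(B,A)$ induces $\alpha^{*}\colon e_{G,V}(A)\to e_{G,V}(B)$ by applying $V\otimes_{k[\Out(G)]}(-)$ to the precomposition map $k[\G(A,G)]\to k[\G(B,G)]$, $[\beta]\mapsto[\beta\alpha]$. First I would check that the underlying map of sets $\G(A,G)\to\G(B,G)$ is injective: if $c_{g}\beta_{1}\alpha=\beta_{2}\alpha$ for some $g\in G$, then surjectivity of $\alpha$ gives $c_{g}\beta_{1}=\beta_{2}$, hence $[\beta_{1}]=[\beta_{2}]$ (this is essentially Lemma~\ref{lem-epimorphism}). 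Thus $k[\G(A,G)]\to k[\G(B,G)]$ is an injection of $k[\Out(G)]$-modules. Since $k$ has characteristic zero and $\Out(G)$ is finite, $k[\Out(G)]$ is semisimple by Maschke's theorem, so $V$ is flat and $V\otimes_{k[\Out(G)]}(-)$ is exact; therefore $\alpha^{*}$ is injective. As $\alpha$ was arbitrary, $e_{G,V}$ is torsion-free.

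For $t_{G,V}$, recall $t_{G,V}(T)=\Hom_{k[\Out(G)]}(k[\G(G,T)],V)$. The observation to exploit is that $\G(G,T)=\emptyset$ as soon as $|T|>|G|$, since there is no surjection of $G$ onto a strictly larger group; hence $t_{G,V}(T)=0$ in that range. Consequently, taking $m=|G|+1$, for every $H\in\U$, every $x\in t_{G,V}(H)$ and every $f\in\U(K,H)$ with $|K|\geq m$ the target $t_{G,V}(K)$ is zero, so $f^{*}(x)=0$. This shows that every element of $t_{G,V}$ is absolutely torsion with the uniform witness $m$, so $t_{G,V}$ is absolutely torsion.

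Finally, for the statement about projectives I would use that the class of torsion-free objects is closed under subobjects and arbitrary direct sums; both follow at once from the pointwise computation of (co)limits (Remark~\ref{rem-bicomplete}), since a restriction of an injective linear map is injective and a direct sum of injective linear maps is injective. By Lemma~\ref{lem-proj-as-summand}, any projective object is a retract, hence a subobject, of a direct sum of generators $e_{G_{i}}$, and each $e_{G_{i}}=e_{G_{i},k[\Out(G_{i})]}$ is torsion-free by the first part; hence every projective is torsion-free. I do not anticipate a real obstacle here: the only slightly delicate points are the injectivity of precomposition on conjugacy classes of surjections and the appeal to flatness over $k[\Out(G)]$.
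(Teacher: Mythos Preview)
Your proof is correct and follows essentially the same route as the paper's: both reduce torsion-freeness of $e_{G,V}$ to the injectivity of precomposition on $\G(-,G)$ via Lemma~\ref{lem-epimorphism}, handle $t_{G,V}$ by the vanishing $t_{G,V}(T)=0$ for $|T|>|G|$, and deduce the projective case from closure of torsion-free objects under sums and subobjects together with Lemma~\ref{lem-proj-as-summand}. The only cosmetic difference is that the paper first reduces $e_{G,V}$ to $e_G$ by writing $V$ as a retract of a free $k[\Out(G)]$-module, whereas you apply $V\otimes_{k[\Out(G)]}(-)$ directly and invoke flatness; both are the same appeal to semisimplicity of $k[\Out(G)]$.
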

\begin{proof}
 It is clear that $t_{G,V}$ is absolutely torsion as $t_{G,V}(K)$ is 
 zero as soon as $|K|> |G|$. It is enough to show that $e_G$ 
 is torsion-free as $e_{G,V}$ is a retract of a direct sum of $e_G$'s. 
 Thus, we need to show that for any epimorphism 
 $\varphi \colon H \to K$ the linear map 
 $\varphi^* \colon k[\U(K,G)]\to k[\U(H,G)]$ is injective. This is
 equivalent to proving that the map
 $\varphi^* \colon \U(K,G) \to \U(H,G)$ is injective, or in other
 words that $\varphi$ is an epimorphism in the category $\U$. This is
 the content of Lemma~\ref{lem-epimorphism}.
\end{proof}

We write $\A\U_t$ and $\A\U_f$ for the subcategories of torsion and
torsion-free objects.

\begin{lemma}\leavevmode\label{lem-torsion-torsion-free}
  \begin{itemize}
    \item[(a)] For an object $X \in \A\U$, we have $X \in \A\U_t$ if and only if 
    $\A\U(X,Y)=0$ for all $Y \in \A\U_f$.
    \item[(b)] For an object $Y \in \A\U$, we have $Y \in \A\U_f$ if and only if 
    $\A\U(X,Y)=0$ for all $X \in \A\U_t$. 
  \end{itemize}
\end{lemma}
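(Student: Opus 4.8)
The plan is to prove both statements by exhibiting the torsion–torsion-free pair as an instance of a torsion pair in the sense of category theory, so that (a) and (b) are mutually dual halves of the defining orthogonality property. Concretely, I would argue: one implication in each part is essentially formal, while the other implication requires producing nonzero morphisms, and that is where the earlier machinery does the work.

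For (b), the direction ``$Y\in\A\U_f\Rightarrow\A\U(X,Y)=0$ for all $X\in\A\U_t$'' is the most substantial and I would do it first. Take a torsion object $X$ and a torsion-free object $Y$, and let $f\colon X\to Y$ be any morphism. For each $G\in\U$ and each $x\in X(G)$, torsionness gives $\alpha\in\U(H,G)$ with $\alpha^*(x)=0$, whence $\alpha^*(f_G(x))=f_H(\alpha^*(x))=0$; since $Y$ is torsion-free, $\alpha^*$ is injective (the equivalent characterisation of torsion-free in Definition~\ref{def-torsion-and-torsion-free}), so $f_G(x)=0$. As $G$ and $x$ were arbitrary, $f=0$. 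Conversely, suppose $Y$ is such that $\A\U(X,Y)=0$ for every torsion object $X$. I would apply this to $X=\tors(Y)$, which is a genuine subobject of $Y$ by Lemma~\ref{lem-torsion-subobject} (this is exactly where Hypothesis~\ref{hyp-colimit-tower} is needed — without a colimit tower $\tors(Y)$ need not even be a subfunctor, cf. Example~\ref{ex-torsion-not-v-space}). The inclusion $\tors(Y)\hookrightarrow Y$ is then a morphism from a torsion object into $Y$, so by hypothesis it is zero, which forces $\tors(Y)=0$, i.e. $Y$ is torsion-free.

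For (a), the direction ``$X\in\A\U_t\Rightarrow\A\U(X,Y)=0$ for all $Y\in\A\U_f$'' is precisely the implication already proved in the first paragraph of the (b)-argument, so nothing new is needed. The remaining direction — if $\A\U(X,Y)=0$ for all torsion-free $Y$, then $X$ is torsion — is where I expect the only real obstacle, and here the key input is Corollary~\ref{cor-not-torsion-map}. Suppose $X$ is not torsion, so there is some $G\in\U$ and some $x\in X(G)$ which is not a torsion element. Corollary~\ref{cor-not-torsion-map} produces a morphism $u\colon e_G\otimes X\to\one$ with $u(1_G\otimes x)=1$; in particular $u\neq 0$. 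Now $\one$ is torsion-free: indeed $\one=e_1$ when $1\in\U$ and more generally $\one(\alpha)$ is the identity on $k$ for every morphism $\alpha$ in $\U$ between nonzero groups, so every structure map is injective. Thus $u$ is a nonzero morphism into a torsion-free object, but its source is $e_G\otimes X$ rather than $X$ itself, so I would compose with the unit map $X=\one\otimes X$… — more carefully, I would instead restrict $u$ along the canonical morphism $X\to e_G\otimes X$, $y\mapsto 1_G\otimes y$ (the map classifying the identity on $G$), obtaining a morphism $X\to\one$ which still sends $x$ to $1$ and hence is nonzero. This contradicts the assumption that $\A\U(X,\one)=0$, since $\one\in\A\U_f$. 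Therefore $X$ must be torsion, completing the proof.

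The one point that will need care is the very last step: checking that the composite $X\to e_G\otimes X\xrightarrow{u}\one$ really does send $x$ to $1$, which amounts to unwinding that the morphism $X\to e_G\otimes X$ evaluated at $G$ sends $x$ to $1_G\otimes x$. This is immediate from the construction in Corollary~\ref{cor-not-torsion-map} and the Yoneda description of $e_G$, but it is the hinge of the whole argument, so I would state it explicitly rather than leave it to the reader.
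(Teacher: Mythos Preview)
Your treatment of (b) and of the forward implication in (a) is correct and essentially identical to the paper's. The gap is in the converse direction of (a): the ``canonical morphism $X\to e_G\otimes X$, $y\mapsto 1_G\otimes y$'' does not exist as a morphism in $\A\U$. Such a map would be induced by a morphism $\one\to e_G$, but $\A\U(\one,e_G)=\A\U(e_1,e_G)=e_G(1)=k[\U(1,G)]=0$ whenever $G$ is nontrivial. The formula $y\mapsto 1_G\otimes y$ is only a linear map $X(G)\to(e_G\otimes X)(G)$ at the single object $G$; there is no way to extend it naturally to other $H\in\U$, because $e_G(H)$ has no distinguished element playing the role of $1_G$. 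So you have not actually produced a nonzero morphism from $X$ into a torsion-free object, and the point you flag as ``the hinge of the whole argument'' is in fact where it breaks.

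The paper's argument for this direction is much shorter and avoids Corollary~\ref{cor-not-torsion-map} altogether: by Lemma~\ref{lem-tor-quot-tor-free} the quotient $X/\tors(X)$ is torsion-free, so the projection $X\to X/\tors(X)$ is a morphism into an object of $\A\U_f$. If $\A\U(X,Y)=0$ for all torsion-free $Y$, this projection vanishes, hence $X=\tors(X)$. Your approach could be partially rescued by passing instead to the adjoint map $X\to De_G=\uHom(e_G,\one)$, which is genuinely a morphism in $\A\U$ and is nonzero; but then you would need $De_G$ to be torsion-free, and the paper only establishes that (via projectivity, Theorem~\ref{thm-hom-fg-projective}) under the stronger hypothesis that $\U$ is a multiplicative global family, not merely that it has a colimit tower.
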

\begin{proof}
 If $f \colon X \to Y$ then $f(\tors(X)) \leq \tors(Y)$. If
 $X \in \A\U_t$ and $Y \in\A\U_f$ then $\tors(X)=X$ and $\tors(Y)=0$ so
 this becomes $f(X)=0$ and $f=0$.  Thus, for $X \in \A\U_t$ and
 $Y \in \A\U_f$ we have $\A(X,Y)=0$.
 
 Now suppose that $X$ is such that $\A\U(X,Y)=0$ for all $Y \in \A\U_t$. In particular, 
 this means that the projection $X \to X/\tors(X) $ is zero, so $\tors(X)=X$ and 
 $X\in\A\U_t$.
 
 Suppose instead that $Y$ is such that $\A\U(X,Y)=0$ for all
 $X \in \A\U_t$. In particular, this means that the inclusion
 $\tors(Y) \to Y$ is zero, so $\tors(Y)=0$ and $Y \in \A\U_f$.
\end{proof}

\begin{lemma}\label{lem-tensor-and-homs-torsion}
 Consider objects $X \in \A\U_t$ and $Y \in \A\U_f$. Then for all
 $Z \in \A\U$, we have
 \begin{itemize}
  \item[(a)] $X \otimes Z$ is torsion;
  \item[(b)] $\uHom(X\otimes Z,Y)=0$.
 \end{itemize}
\end{lemma}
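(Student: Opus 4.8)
The plan is to prove (a) first and then deduce (b) almost immediately from (a) together with the orthogonality statement Lemma~\ref{lem-torsion-torsion-free}. For part (a), I want to show that every element of $(X\otimes Z)(G)$ is torsion. Since $(X\otimes Z)(G)=X(G)\otimes_k Z(G)$, a general element is a finite sum $\sum_i x_i\otimes z_i$ with $x_i\in X(G)$ and $z_i\in Z(G)$. Each $x_i$ is torsion because $X\in\A\U_t$, so by Lemma~\ref{lem-torsion-G_n}(c) there is, for each $i$, an integer $n_i$ such that $\alpha^*(x_i)=0$ for every $\alpha\in\U(G_n,G)$ with $n\geq n_i$; taking $n=\max_i n_i$ (which is possible since the sum is finite) we get a single morphism $\alpha\colon G_n\to G$ in $\U$ with $\alpha^*(x_i)=0$ for all $i$. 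Then $\alpha^*\bigl(\sum_i x_i\otimes z_i\bigr)=\sum_i\alpha^*(x_i)\otimes\alpha^*(z_i)=0$, so the element is torsion. Hence $X\otimes Z$ is torsion.

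For part (b), observe that $\uHom(X\otimes Z,Y)$ is the object of $\A\U$ characterised by $\A\U\bigl(W,\uHom(X\otimes Z,Y)\bigr)\simeq\A\U(W\otimes X\otimes Z,Y)$ for all $W$. By part (a), $X\otimes Z$ is torsion, and since the tensor product with any object sends torsion objects to torsion objects (apply part (a) again with $W\otimes Z$ in place of $Z$, or simply note that the argument above works verbatim for $W\otimes(X\otimes Z)$), the object $W\otimes X\otimes Z$ is torsion for every $W$. As $Y$ is torsion-free, Lemma~\ref{lem-torsion-torsion-free}(a) (or (b)) gives $\A\U(W\otimes X\otimes Z,Y)=0$ for all $W$. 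Therefore $\uHom(X\otimes Z,Y)$ represents the zero functor, and by the Yoneda lemma it is the zero object.

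Neither step presents a serious obstacle; the only point requiring a little care is the reduction to a \emph{single} morphism $\alpha$ killing all the finitely many torsion elements $x_i$ simultaneously, which is exactly what the uniform clause (c) of Lemma~\ref{lem-torsion-G_n} is designed to provide (and which in turn relies on Hypothesis~\ref{hyp-colimit-tower} that $\U$ has a colimit tower). One should also remember that the formula $(X\otimes Z)(G)=X(G)\otimes Z(G)$ and the compatibility $\alpha^*(x\otimes z)=\alpha^*(x)\otimes\alpha^*(z)$ are immediate from the pointwise definition of the tensor product in Definition~\ref{def-dual}. Everything else is formal manipulation with the closed monoidal structure and the orthogonality lemma.
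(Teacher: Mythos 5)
Your proof is correct and follows essentially the same route as the paper. The only cosmetic difference is that you invoke Lemma~\ref{lem-torsion-G_n}(c) directly to produce a single $\alpha$ killing all the $x_i$, whereas the paper first observes that each homogeneous term $x_i\otimes z_i$ is torsion and then cites Lemma~\ref{lem-torsion-subobject} (closure of torsion elements under finite sums) — but that lemma's proof is itself exactly the uniform-bound argument you inlined, so the content is the same; likewise in (b) the paper just evaluates at $G$ (i.e.\ takes $W=e_G$) rather than quantifying over all $W$ and appealing to Yoneda.
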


\begin{proof}
 Any element of $(X \otimes Z)(G)$ can be written as a finite linear combination of 
 homogeneous terms $x_i \otimes z_i$. For each of such term we can find 
 $\alpha_i \colon H_i \to G$ such that
 $\alpha_i^*(x_i)=0$.  Thus we have
 $\alpha^*(x \otimes z)= \alpha^*(x) \otimes \alpha^*(z)=0$. 
 As a finite linear combination of torsion elements is again torsion by 
 Lemma~\ref{lem-torsion-subobject}, we deduce that $X\otimes Z$ is torsion. 
 For all $G \in \U$, we have
 \[
  \uHom(X\otimes Z,Y)(G)=\A\U(e_G \otimes X\otimes Z, Y)=0
 \]
 by part (a) and Lemma~\ref{lem-torsion-torsion-free}.
\end{proof}
	
\begin{lemma}\label{lem-torsion-cat-localizing}
 The subcategory
 $\A\U_t$ is localizing that is, it is closed under arbitrary sums,
 subobjects, extensions and quotients.
\end{lemma}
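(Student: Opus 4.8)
The claim is that $\A\U_t$ is localizing: closed under arbitrary sums, subobjects, extensions, and quotients. My plan is to check each of the four closure properties in turn, in each case reducing to the defining property of torsion: an element $x\in X(G)$ is torsion iff there is $\alpha\in\U(H,G)$ with $\alpha^*(x)=0$, and $X$ is torsion iff every element is. A useful technical tool throughout will be Lemma~\ref{lem-torsion-subobject}, which (under Hypothesis~\ref{hyp-colimit-tower}) guarantees that $\tors(X)$ is already a \emph{subobject} of $X$ and is the largest torsion subobject, so ``$X$ is torsion'' is equivalent to ``$\tors(X)=X$''. Several of the steps then become formal manipulations with this largest-torsion-subobject functor.

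\textbf{Subobjects and quotients.} If $Y\leq X$ with $X$ torsion, then any $y\in Y(G)$ is in particular an element of $X(G)$, hence torsion, so $Y$ is torsion; this needs no hypothesis at all. For quotients, let $q\colon X\to X''$ be an epimorphism with $X$ torsion, and take $\bar x\in X''(G)$. Since evaluation at $G$ is exact (colimits are pointwise, Remark~\ref{rem-bicomplete}), $q(G)$ is surjective, so $\bar x=q(G)(x)$ for some $x\in X(G)$; choosing $\alpha$ with $\alpha^*(x)=0$ we get $\alpha^*(\bar x)=q(H)(\alpha^*(x))=0$ by naturality of $q$. So $X''$ is torsion.

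\textbf{Arbitrary sums.} Let $(X_i)_{i\in I}$ be a family of torsion objects and $X=\bigoplus_i X_i$. An element of $X(G)=\bigoplus_i X_i(G)$ is a \emph{finite} sum $x=\sum_{i\in F} x_i$ with $F\subseteq I$ finite and $x_i\in X_i(G)$. Each $x_i$, viewed inside $X_i\leq X$ (torsion, so $\tors(X_i)=X_i$), is torsion in $X$; since $\tors(X)$ is a subobject of $X$ by Lemma~\ref{lem-torsion-subobject}, it is in particular a $k$-subspace of $X(G)$, so the finite sum $x$ lies in $\tors(X)(G)$ as well. Hence $X$ is torsion. (This is exactly the place where the colimit-tower hypothesis is used, via Lemma~\ref{lem-torsion-subobject}; without it, as Example~\ref{ex-torsion-not-v-space} shows, a sum of torsion elements need not be torsion, and the argument breaks.)

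\textbf{Extensions.} Let $0\to X'\to X\xrightarrow{q} X''\to 0$ be short exact with $X'$ and $X''$ torsion; I want $X$ torsion. Take $x\in X(G)$. Its image $q(G)(x)\in X''(G)$ is torsion, so there is $\alpha\colon H\to G$ with $\alpha^*(q(G)(x))=0$, i.e.\ $q(H)(\alpha^*(x))=0$, so $\alpha^*(x)$ lies in $X'(H)$. Since $X'$ is torsion, $\alpha^*(x)$ is a torsion element of $X'$, hence of $X$; choosing $\beta\colon K\to H$ with $\beta^*(\alpha^*(x))=0$ gives $(\alpha\beta)^*(x)=0$, so $x$ is torsion. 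This composition-of-two-steps argument is the standard ``torsion is closed under extensions'' trick and is essentially identical to the argument already used in the proof of Lemma~\ref{lem-tor-quot-tor-free}.

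\textbf{Main obstacle.} The only nontrivial point is the arbitrary-sums case, and even there the real content has been offloaded into Lemma~\ref{lem-torsion-subobject}: once one knows $\tors(X)$ is a genuine subobject (so torsion elements of a fixed value $X(G)$ form a vector subspace), closure under finite sums is automatic and closure under arbitrary coproducts follows because elements of a coproduct are supported on finite subsets. So I expect no serious difficulty; the proof is a short diagram chase in each of the four cases, with Lemma~\ref{lem-torsion-subobject} invoked exactly once.
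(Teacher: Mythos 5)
Your proof is correct and takes essentially the same approach as the paper's: the extensions argument is identical, and your sums argument is the same idea merely routed through Lemma~\ref{lem-torsion-subobject} (which already packages the colimit-tower input) rather than invoking Lemma~\ref{lem-torsion-G_n}(c) directly as the paper does. The only cosmetic difference is that you write out the subobject and quotient cases, which the paper dismisses as clear.
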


\begin{proof}
 Consider an exact sequence $X \xrightarrow{i} Y \xrightarrow{p} Z$ in
 which $X$ and $Z$ are torsion objects. Consider an element
 $y \in Y(G)$. As $Z$ is a torsion object, we can choose
 $\alpha\colon H \to G$ with $\alpha^*(p(y))=0$. This means that
 $p(\alpha^*(y))=0$, so $\alpha^*(y)=i(x)$ for some $x \in X(H)$. As
 $X$ is a torsion object, we can choose $\beta \colon K \to H$ with
 $\beta^*(x)=0$, and it follows that
 \[
  (\alpha \beta)^*(y)=\beta^*i(x)=i(\beta^*(x))=i(0)=0.
 \]
 This shows that $Y$ is also a torsion object so $\A\U_t$ is closed
 under extensions.
  
 Now let $X$ be a sum of torsion objects $X_i$ and consider an element
 $x \in X(G)$.  By definition, we can write $x=x_{i_1}+\ldots+x_{i_n}$
 for torsion elements $x_{i_k} \in X_{i_k}(G)$. 
 By Lemma~\ref{lem-torsion-G_n}(c), we can find large $n$ such that 
 $\alpha^*(x_{i_k})=0$ for all $i_k$ and $\alpha \in \U(G_n, H_{i_k})$. 
 Thus, we have
 \[
  \alpha^*(x)=\alpha_{i_1}^*(x_{i_1})+ \ldots\alpha^*_{i_n}(x_{i_n})=0
 \]
 so $x$ is torsion. This shows that $\A\U_t$ is closed under arbitrary
 sums.  All the other claims are clear.
\end{proof}
  
\begin{lemma}\label{lem-subobject-torsion-free}
 The subcategory $\A\U_f$ is closed 
 under subobjects, extensions, arbitrary sums and arbitrary products.
\end{lemma}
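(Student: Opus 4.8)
The plan is to reduce every claim to elementary linear algebra, using the fact recorded in Remark~\ref{rem-bicomplete} that all (co)limits, kernels and cokernels in $\A\U$ are computed pointwise, so that in particular monomorphisms and short exact sequences in $\A\U$ are levelwise. Throughout I use the reformulation from Definition~\ref{def-torsion-and-torsion-free}: an object $X$ lies in $\A\U_f$ exactly when every structure map $\alpha^*\colon X(G)\to X(H)$, for $\alpha\in\U(H,G)$, is injective.

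For subobjects, products and sums the arguments are immediate. If $Y\in\A\U_f$ and $Y'\leq Y$, then for each $G$ the inclusion $Y'(G)\hookrightarrow Y(G)$ intertwines the structure maps, and the restriction of an injective linear map is injective, so $Y'\in\A\U_f$; equivalently, any torsion element of $Y'$ is a torsion element of $Y$, hence zero. If $(Y_i)_{i\in I}$ lies in $\A\U_f$ and $Y=\prod_iY_i$, then $Y(G)=\prod_iY_i(G)$ and the structure map $\alpha^*$ on $Y$ is the product of the maps $\alpha^*$ on the $Y_i$; a product of injective linear maps is injective, so $Y\in\A\U_f$. Finally the canonical map $\bigoplus_iY_i\to\prod_iY_i$ is a monomorphism, so closure under arbitrary sums follows from the previous two observations.

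The one case requiring a short diagram chase is extensions. Suppose $0\to X\xrightarrow{i}Y\xrightarrow{p}Z\to 0$ is exact with $X,Z\in\A\U_f$, let $\alpha\in\U(H,G)$, and suppose $y\in Y(G)$ satisfies $\alpha^*(y)=0$. Applying $p$ gives $\alpha^*(p_G(y))=p_H(\alpha^*(y))=0$, so $p_G(y)=0$ because $Z$ is torsion-free; by exactness at $Y(G)$ we may write $y=i_G(x)$ with $x\in X(G)$. Then $i_H(\alpha^*(x))=\alpha^*(i_G(x))=\alpha^*(y)=0$, and $i_H$ is injective, so $\alpha^*(x)=0$; since $X$ is torsion-free this forces $x=0$, hence $y=0$. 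Thus every structure map of $Y$ is injective, i.e.\ $Y\in\A\U_f$.

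There is no genuine obstacle here; the only thing to watch is bookkeeping of the levels at which maps live, which is handled by Remark~\ref{rem-bicomplete}. I note that, in contrast with the corresponding closure properties of $\A\U_t$ (Lemma~\ref{lem-torsion-cat-localizing}), none of the above requires Hypothesis~\ref{hyp-colimit-tower}.
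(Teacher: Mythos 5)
Your proof is correct, but it follows a genuinely different route from the paper's. The paper deduces closure under products and subobjects from the orthogonality characterization of Lemma~\ref{lem-torsion-torsion-free}(b) ($Y\in\A\U_f$ iff $\A\U(T,Y)=0$ for all torsion $T$), and handles extensions by applying the left exact sequence $0=\A\U(T,X)\to\A\U(T,Y)\to\A\U(T,Z)=0$ and invoking that characterization again; you instead work directly with the levelwise reformulation (all structure maps $\alpha^*$ injective) and do an explicit diagram chase for extensions. The two arguments buy slightly different things. The paper's is shorter on the page and makes the semi-orthogonality of $(\A\U_t,\A\U_f)$ do the work, but the nontrivial direction of Lemma~\ref{lem-torsion-torsion-free}(b) rests on $\tors(Y)$ being a subobject (Lemma~\ref{lem-torsion-subobject}), hence on Hypothesis~\ref{hyp-colimit-tower}. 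Your argument uses only that (co)limits and exactness are levelwise, so your closing remark is accurate: these closure properties of $\A\U_f$ hold for arbitrary $\U$, in contrast with the closure of $\A\U_t$ under sums, which genuinely needs the colimit tower (cf.\ Example~\ref{ex-torsion-not-v-space}). Your treatment of sums via the monomorphism $\bigoplus_iY_i\to\prod_iY_i$ coincides with the paper's.
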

\begin{proof}
 From Lemma~\ref{lem-torsion-torsion-free}(b) it is clear that
 $\A\U_f$ is closed under products and subobjects. As products and
 sums are computed objectwise, we see that every sum injects in the
 corresponding product, so $\A\U_f$ is also closed under coproducts.
 Now consider a short exact sequence as follows, in which $X$ and $Z$
 are torsion-free
 \begin{center}
  \begin{tikzcd}
   X \arrow[r,rightarrowtail,"f"] &
   Y \arrow[r,two heads,"g"] &
   Z.
  \end{tikzcd}
  \end{center}
  If $T$ is a torsion object, this gives a left exact sequence
  \begin{center}
  \begin{tikzcd}
   0=\A\U(T,X) \arrow[r,rightarrowtail,"f_*"] &
   \A\U(T,Y) \arrow[r,"g_*"] &
   \A\U(T,Z)=0,
  \end{tikzcd}
 \end{center}
 proving that $\A\U(T,Y)=0$. It follows that $\A\U_f$ is also closed
 under extensions. 
\end{proof}

We will now give another characterization of torsion-free objects
under some mild conditions on $\U$. But first we need a little bit of
preparation.

\begin{construction}\label{con-map-into-SX}
 Recall the inclusion functor $l \colon \U^\times \to \U$ and the
 functor $l_!l^*$ from Construction~\ref{con-proj-inj-resolutions}.
 For any object $X\in\A\U$, we set $SX=D(l_!l^*(DX))$ which is
 injective by Proposition~\ref{prop-one-injective}.  Adjoint to the
 counit map $l_!l^*(DX) \to DX$, we have a map
 $X \otimes l_!l^*(DX)\to \mathbbm{1}$ which is itself adjoint to a
 map $\xi \colon X \to SX$. If we fix a skeleton $\U'$ for $\U$, we
 have the explicit formula
 \[ SX=\prod_{G \in \U'} De_{G, DX(G)}, \]
 and the map $\xi$ has $G$-component which is adjoint to the evaluation map 
 $\mathrm{ev}\colon X \otimes e_{G, DX(G)} \to \mathbbm{1}$. 
 More explicitly, we have
 \[
  \mathrm{ev} \colon X(H) \otimes e_G(H) \otimes_{\Out(G)}
  \A\U(e_G\otimes X, \mathbbm{1}) 
  \to k, \quad x \otimes [\alpha] \otimes f\mapsto f([\alpha] \otimes x)
 \]
 for all $H\in\U$.
\end{construction}

\begin{proposition}\label{prop-torsion-free-embed-proj}
 Suppose that $\U$ is a multiplicative global family and consider an object 
 $X\in\A\U$.  Then $SX$ is
 projective and we have an exact sequence
 \[
 0\to \tors(X) \to X \xrightarrow{\xi} SX.
 \]
 In particular, $X$ is torsion-free if and only if it can 
 be embedded in a projective object. 
\end{proposition}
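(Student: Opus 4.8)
The plan is to prove the three claims --- projectivity of $SX$, exactness of the sequence, and the ``in particular'' --- in that order, assembling them from the explicit description of $\xi$ in Construction~\ref{con-map-into-SX} together with the structural results on projectives and on torsion objects established above.

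First I would show that $SX$ is projective. Construction~\ref{con-map-into-SX} records the explicit formula $SX=\prod_{G\in\U'}De_{G,DX(G)}$, where $De_{G,DX(G)}=\uHom(e_{G,DX(G)},\one)$. Each $e_{G,DX(G)}$ is projective by Lemma~\ref{lem-projective-and-injective}, and $\one=e_1$ is representable and hence projective; since $\U$ is a multiplicative global family, Proposition~\ref{prop-preserve-projectives} applies and shows that each internal hom $De_{G,DX(G)}$ is projective. Proposition~\ref{prop-product-projectives} then shows that their product $SX$ is projective.

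Next I would identify $\ker(\xi)$ with $\tors(X)$. Since $SX$ is projective it is torsion-free by Lemma~\ref{lem-gens-torsion-free}, so the composite $\tors(X)\hookrightarrow X\xrightarrow{\xi}SX$ is a morphism from a torsion object to a torsion-free object and therefore vanishes by Lemma~\ref{lem-torsion-torsion-free}; thus $\tors(X)\subseteq\ker(\xi)$. For the reverse inclusion, let $x\in X(H)$ satisfy $\xi(x)=0$ and suppose, toward a contradiction, that $x$ is not torsion. Projecting $SX$ onto its factor $De_{H,DX(H)}$ yields a map $X\to De_{H,DX(H)}$ which, by Construction~\ref{con-map-into-SX}, is adjoint to the evaluation $X\otimes e_{H,DX(H)}\to\one$; chasing this adjunction through the explicit formula recorded there, one finds that the $H$-component of this map sends $x$ to a natural transformation whose value at $1_H\otimes([1_H]\otimes f)$ equals $f(1_H\otimes x)$, for every $f\in DX(H)=\A\U(e_H\otimes X,\one)$. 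Since $\xi(x)=0$, this forces $f(1_H\otimes x)=0$ for all such $f$, contradicting Corollary~\ref{cor-not-torsion-map}, which supplies a $u\colon e_H\otimes X\to\one$ with $u(1_H\otimes x)=1$. Hence $\ker(\xi)=\tors(X)$ and $0\to\tors(X)\to X\xrightarrow{\xi}SX$ is exact.

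Finally, the ``in particular'' is immediate: if $X$ is torsion-free then $\ker(\xi)=\tors(X)=0$, so $\xi$ realises $X$ as a subobject of the projective object $SX$; conversely, a subobject of a projective object $P$ is torsion-free because $P$ is torsion-free by Lemma~\ref{lem-gens-torsion-free} and the torsion-free objects are closed under subobjects by Lemma~\ref{lem-subobject-torsion-free}. The step I expect to take the most care is the adjunction-chasing in the third paragraph --- making precise that $\xi(x)=0$ really does entail $f(1_H\otimes x)=0$ for every $f\in DX(H)$, so that Corollary~\ref{cor-not-torsion-map} can be invoked. Everything else is a fairly mechanical combination of the cited statements.
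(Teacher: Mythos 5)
Your proof is correct and follows essentially the same route as the paper's: projectivity of $SX$ via Proposition~\ref{prop-product-projectives} together with the projectivity of internal homs of generators (ultimately Theorem~\ref{thm-hom-fg-projective}), and the identification $\ker(\xi)=\tors(X)$ via the evaluation formula of Construction~\ref{con-map-into-SX} and Corollary~\ref{cor-not-torsion-map}. The only (harmless) variation is in the first step, where you invoke Proposition~\ref{prop-preserve-projectives} directly on $\uHom(e_{G,DX(G)},\one)$, whereas the paper first decomposes $e_{G,DX(G)}$ as a direct sum of copies of $e_G$ using the free $\Out(G)$-action on $DX(G)$ and then applies Theorem~\ref{thm-hom-fg-projective} to $De_G$.
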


\begin{proof}
 We first show that $SX$ is projective. By
 Proposition~\ref{prop-product-projectives}, it is enough to show that
 $De_{G, DX(G)}$ is projective.  Note that $\Out(G)$ acts freely on
 $DX(G)=\A\U(e_G \otimes X, \mathbbm{1})$.  Choose
 $u_1, \ldots, u_r \in DX(G)$ containing precisely one element from
 each $\Out(G)$-orbit so that $DX(G)=\bigoplus_{i=1}^r k[\Out(G)]$ and
 hence $e_{G, DX(G)}=\bigoplus_{i=1}^r e_G$. Therefore we have reduced
 the problem to showing that $De_G$ is projective. This now follows from 
 Theorem~\ref{thm-hom-fg-projective}.

 If $SX$ is projective, then it is also torsion-free by 
 Lemma~\ref{lem-gens-torsion-free}, so we get $\tors(X) \subseteq \ker(\xi)$. 
 If $x \in X(G)$ is not torsion, then by Corollary~\ref{cor-not-torsion-map} we 
 can find $u\in DX(G)$ such that $u(1_G\otimes x)\not =0$. In particular, 
 we have $\mathrm{ev}( 1_G\otimes x \otimes u)\not =0$ and hence 
 $\ker(\xi)\subseteq\tors(X)$. This shows that the sequence is exact. 
\end{proof}

Let $X$ be a finitely generated torsion object. It is tempting to
conclude that $X(G)$ should be zero when $G$ is sufficiently large, in
some sense. However, the following example shows that this is not
correct.
  
\begin{example}\label{ex-non-split}
 Let $\theta \colon P \to Q$ be a non-split epimorphism between groups
 in $\U$. This gives a map $\theta_* \colon e_P \to e_Q$, and we
 define $X$ to be the cokernel (so $X$ is finitely presented). The
 obvious generator $x \in X(Q)$ satisfies $\theta^*(x)=0$ by
 construction, so $x$ is torsion. As $x$ generates $X$, it follows
 that $X$ is a torsion object. Note that $X(G)$ is the quotient of
 $k[\U(G,Q)]$ in which we kill every basis element $[\alpha]$ for
 which the homomorphism $\alpha\colon G \to Q$ can be lifted to
 $P$. Note that no split epimorphism $\alpha \colon G \to Q$ can be
 lifted to $P$, because that would give rise to a splitting of
 $\theta$. In particular, if $H$ admits a split epimorphism to $Q$,
 then $X(H) \not = 0$. Thus, we have $X(H \times Q)\not = 0$ for all
 $H \in \U$.
\end{example}

It is true, however, that if $X$ is a finitely generated torsion
object, and $G$ is both sufficiently large and sufficiently free, then
$X(G)=0$. We now proceed to make a precise version of this statement.

\begin{definition}\label{def-G-star-null}
 We say that an object $X\in \A\U$ is $G_*$-\emph{null} if $X(G_n)=0$ for large $n$.
\end{definition}

\begin{lemma}\label{lem-G-star-torsion}
 If $X$ is $G_*$-null, then it is torsion. The converse holds if $X$
 is finitely generated.
\end{lemma}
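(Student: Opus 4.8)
The plan is to treat the two implications separately, with Lemma~\ref{lem-torsion-G_n} doing the work in each case.

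For the first implication, I would start from the assumption that $X(G_n)=0$ for all $n\geq N$, fix an arbitrary $H\in\U$ and $x\in X(H)$, and produce a morphism killing $x$. Condition~(a) of the colimit tower (Definition~\ref{def-colimit-tower}) guarantees that $\U(G_n,H)\neq\emptyset$ once $n$ is large, so I can pick $n\geq N$ together with some $\alpha\in\U(G_n,H)$; then $\alpha^*(x)$ lies in the zero group $X(G_n)$, so $x$ is torsion by definition (this is exactly the implication $(b)\Rightarrow(a)$ of Lemma~\ref{lem-torsion-G_n}). Hence $X$ is torsion.

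For the converse, I would assume $X$ is finitely generated and torsion, fix an epimorphism $\varphi\colon\bigoplus_{i=1}^r e_{H_i}\to X$, and let $x_i\in X(H_i)$ be the image of the canonical generator under Yoneda, so that $\varphi$ at level $G$ sends $[\alpha]\in k[\U(G,H_i)]$ to $\alpha^*(x_i)$. Since $X$ is torsion, every $x_i$ is torsion, so Lemma~\ref{lem-torsion-G_n}(c) yields an integer $n_i$ with $\alpha^*(x_i)=0$ for all $n\geq n_i$ and all $\alpha\in\U(G_n,H_i)$; one then sets $n_0=\max_i n_i$. For $n\geq n_0$ the map $\bigoplus_i k[\U(G_n,H_i)]\to X(G_n)$ is surjective (because $\varphi$ is an epimorphism and cokernels are computed pointwise, Remark~\ref{rem-bicomplete}) with image spanned by the elements $\alpha^*(x_i)$, all of which now vanish; therefore $X(G_n)=0$ and $X$ is $G_*$-null.

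I do not expect a genuine obstacle: both directions are short once Lemma~\ref{lem-torsion-G_n} is in hand, and the finiteness of the generating set is used only to reduce to finitely many applications of part~(c) of that lemma. The one place that calls for a moment's attention is the passage from ``finitely generated'' to ``$X(G_n)$ is spanned by the $\alpha^*(x_i)$'', but this is immediate from the Yoneda description of maps out of $e_{H_i}$ in Definition~\ref{def-main-objects} together with the pointwise computation of colimits in Remark~\ref{rem-bicomplete}.
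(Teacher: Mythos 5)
Your proposal is correct and follows essentially the same route as the paper: the forward direction uses condition~(a) of the colimit tower to find $\alpha\in\U(G_n,H)$ with $n$ large enough that $X(G_n)=0$, and the converse applies Lemma~\ref{lem-torsion-G_n}(c) to the finitely many torsion generators $x_i\in X(H_i)$ and takes the maximum of the resulting bounds. Your extra remark justifying that $X(G_n)$ is spanned by the elements $\alpha^*(x_i)$ is a detail the paper leaves implicit, but it is the same argument.
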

\begin{proof}
 First suppose that $X$ is $G_*$-null. Consider an element
 $x \in X(H)$. Choose $n$ large enough that $X(G_n)=0$ and
 $\U(G_n, H) \not =\emptyset$. Then for $\alpha\in \U(G_n,H)$ we have
 $\alpha^*(X)=0$, as required.
  
 Conversely, suppose that $X$ is finitely generated, with generators
 $x_i \in X(H_i)$ for $i=1,\ldots,d$ say. By
 Lemma~\ref{lem-torsion-G_n} we can choose $n_i$ such that
 $\alpha^*(x_i)=0$ in $X(G_m)$ for all $m \geq n_i$, and all
 $\alpha\in \U(G_m,H_i)$.  Put $n=\max(n_1, \ldots,n_d)$; then we find
 that $X(G_n)=0$ for all $n \geq m$.
\end{proof}
  
  We finish this section by giving some examples of torsion objects.

\begin{example}\label{ex-relations-no-torsion}
 Let $G$ be cyclic of order $p$, so $\Aut(G)$ is cyclic of order
 $p-1$, and let $\psi\in\Aut(G)$ be a generator.  Let $X$ be the
 cokernel of $\psi_*-1\colon e_G\to e_G$.  By definition $X(H)$ is the
 quotient of $k[\U(H,G)]$ by the subspace generated by the elements 
 $[\psi\alpha]-[\alpha]$ for all $\alpha$. As $\psi$ is a generator of $\Aut(G)$, 
 we can identify $X$ with $c_{G}$ from Definition~\ref{def-main-objects}.  
 In particular, $X$ is projective and torsion-free.  
 This illustrates the fact that we can introduce quite a lot of relations 
 without creating torsion.
\end{example} 

\begin{example}\label{ex-torsion-misc-a}
 Take $\U=\Z[p^\infty]$ and let 
 $C$ be cyclic of order $p$. 
 Let $\lambda,\rho\colon C^2\to C$ be the two projections, and let $X$ be
 the cokernel of $\lambda_*-\rho_*\colon e_{C^2}\to e_C$. 
 This means that $X(G)=k[T(G)]$, where $TG$ is the coequaliser of the maps
 $\lambda_*,\rho_*\colon k[\U(G,C^2)]\to k[\U(G,C)]$. Let $Q(G)$ be the Frattini
 quotient of $G$, so $Q(G)\simeq C^{d(G)}$ for some $d(G)\geq 0$.  If
 $d(G)=0$ then $G=1$ and $T(G)=\emptyset$ and $X(G)=0$.  If $d(G)=1$
 then $G$ is cyclic and $\U(G,C^2)=\emptyset$ so
 $T(G)=\U(G,C)=\U(Q(G),C)$ (which is a set of size $p-1$) so
 $X(G)\simeq k^{p-1}$.  Now suppose that $d(G)\geq 2$.  If $\alpha$ and
 $\beta$ are epimorphisms from $G$ to $C$ with different kernels then
 the combined map $\phi=(\alpha,\beta)\colon G\to C^2$ is again surjective with
 $\lambda\phi=\alpha$ and $\rho\phi=\beta$ so $[\alpha]=[\beta]$ in $T(G)$. 
 Even if $\alpha$ and $\beta$ have the same kernel, we can choose a third
 epimorphism $\gamma\colon G\to C$ with different kernel (because of the fact
 that $d(G)\geq 2$); we then have $[\alpha]=[\gamma]=[\beta]$.  From this we
 see that $T(G)$ is a singleton and so $X(G)=k$. To summarize
 \[
 X(G)=k[T(G)]=\begin{cases}
 0                     & \text{if}\;\; d(G)=0 \\
 \U(G,C)\simeq k^{p-1} & \text{if}\;\; d(G)=1 \\
 k                     & \text{if}\;\; d(G)\geq 2.
 \end{cases}
 \]
 From our discussion we also see that 
 \begin{align*}
  \tors(X)(G) &\simeq
   \begin{cases}
    k^{p-2} & \text{ if $G$ is nontrivial and cyclic } \\
    0 & \text{ otherwise }
   \end{cases} \\
  (X/\tors(X))(G) &\simeq
   \begin{cases}
    0 & \text{ if } G = 1 \\
    k & \text{ if } G \neq 1.
   \end{cases}
 \end{align*}
\end{example}

\begin{example}\label{ex-torsion-misc-b}
 Take $\U=\Z[2^\infty]$.  There are then three
 morphisms $\lambda,\rho,\sigma\in\U(C^2,C)$, and we define $X$ to be
 the cokernel of $\lambda_*+\rho_*+\sigma_*\colon e_{C^2}\to e_C$. 
 We claim that $X$ is a torsion object. To see this, we put 
 $u=\lambda+\rho+\sigma\in e_C(C^2)$ so that
 $X(G)$ is the quotient of $k[\U(G,C)]$ by all elements of the form
 $\phi^*(r)$ as $\phi$ runs over $\U(G,C^2)$. If $d(G)=1$ then
 $\U(G,C)$ is a singleton and $\U(G,C^2)=\emptyset$ and $X(G)=k$.  If
 $d(G)=2$ then $k[\U(G,C)]$ has three elements, say
 $\alpha,\beta,\gamma$, and
 \[ X(G) = k\{\alpha,\beta,\gamma\} /(\alpha+\beta+\gamma)\simeq k^2. \]
 Now consider $X(C^3)$. This is spanned by the seven nonzero homomorphisms 
 $C^3\to C$. 
 There are seven subgroups of order $4$ in $\Hom(C^3,C)\simeq C^3$: 
 \begin{align*}
 A_1&=\{0, e^*_1, e^*_2, (e_1+e_2)^*\}   &      
 A_2&=\{0, e^*_1, e^*_3, (e_1+e_3)^*\}   \\    
 A_3&=\{0, e^*_2, e^*_3, (e_2+e_3)^*\}    &     
 A_4&=\{0, e^*_3, (e_1+e_2)^*, (e_1+e_2+e_3)^*\} \\
 A_5&=\{0, e^*_2, (e_1+e_3)^*, (e_1+e_2+e_3)^*\} & 
 A_6&=\{0, e^*_1, (e_2+e_3)^*, (e_1+e_2+e_3)^*\} \\
 A_7&=\{0, (e_1+e_2)^*, (e_2+e_3)^*, (e_1+e_3)^* \} & &
\end{align*}  
 where $e_1^*, e_2^*$ and $e_3^*$ denote the canonical 
 generators. 
 For each of these $A_i$ we have a relation, saying that
 the sum of the three nonzero homomorphisms in that subgroup is zero. 
 For example, the relation attached to $A_1$ tells us that 
 $e_1^*+e_2^*+(e_1+e_2)^*=0$.
 Let $u$ be the sum of all these relations, and let $v_\alpha$ be the sum
 of the subset that involve a particular morphism $\alpha$. A calculation 
 shows that $(3v_\alpha-u)/6=\alpha$. 
 It follows that the resulting quotient
 $X(C^3)$ is zero.  If $d(G)\geq 3$ then any $\alpha\in\U(G,C)$
 can be factored through $C^3$, and it follows from this that
 $X(G)=0$.  Thus $X$ is a torsion object as claimed.
\end{example}

\section{Noetherian abelian categories}
\label{sec-noetherian}

The goal of this section is to study when the category $\A\U$ is
locally noetherian.

\begin{definition}\label{def-locally-noetherian}
 Let $\U$ be a subcategory of $\G$.
 \begin{itemize}
  \item An object $X\in\A\U$ is \emph{noetherian} if every subobject
   of $X$ is finitely generated. 
  \item The category $\A\U$ is \emph{locally noetherian} if $e_G$ is
   noetherian for all $G \in \U$.
 \end{itemize}
\end{definition}

\begin{remark}\label{rem-map2}
 Suppose that $\U$ is locally noetherian.  After adding the obvious
 consequences of the noetherian property to Remark~\ref{rem-map1} we
 get the following diagram of finiteness conditions for objects in
 $\A\U$: 
 \[
  \begin{tikzcd}[column sep=huge]
   \text{finitely resolved}  \arrow[r, Leftrightarrow] &
   \text{finitely presented} \arrow[r, Leftrightarrow] &
   \text{finitely generated} \arrow[r, Rightarrow] &
   \text{finite type} \\
   \text{perfect} 
    \arrow[u, Rightarrow] 
    \arrow[r, Leftrightarrow, "completeness"] &
   \text{finitely projective}
    \arrow[r, Rightarrow,"completeness"] & 
   \text{finite order.} &
  \end{tikzcd}
 \]
\end{remark}

It is not difficult to find subcategories of $\G$ for which
$\A\U$ is not locally noetherian.

\begin{proposition}\label{prop-not-noetherian}
 Let $\U$ be a full subcategory containing the trivial group and
 infinitely many cyclic groups of prime order.  Then $\A\U$ is not
 locally noetherian.
\end{proposition}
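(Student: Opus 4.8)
The plan is to refute local noetherianity directly, by producing inside the generator $e_1$ (which lies in $\A\U$ since $1\in\U$) a subobject that is not finitely generated, and detecting this via the support criterion of Lemma~\ref{lem-infinitess-criterion}. The point is that $e_1=\one$ is the constant functor with value $k$, all of whose structure maps are the identity of $k$; hence any subobject $X\leq e_1$ is completely determined by the set $\{T\in\U\mid X(T)\neq 0\}$, and the only constraint on that set is that it be closed under passing from a group to any group in $\U$ that surjects onto it.

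Concretely, I would take $X\leq e_1$ given by $X(T)=0$ when $T\simeq 1$ and $X(T)=k$ otherwise, with all restriction maps inherited from $e_1$. The routine verification is that this is a genuine subfunctor: if $\alpha\colon T\to S$ is a morphism of $\U$ with $X(S)\neq 0$, i.e.\ $S\not\simeq 1$, then surjectivity of $\alpha$ forces $T\not\simeq 1$, so $X(T)\neq 0$ and $\alpha^*(X(S))\subseteq X(T)$ as required. With this choice $\supp(X)=\{[T]\mid T\in\U,\ T\not\simeq 1\}$.

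The key observation is then that, for every prime $p$ with $C_p\in\U$, the class $[C_p]$ is a minimal element of the poset $\supp(X)$: if $[C_p]\gg[K]$ with $[K]\in\supp(X)$, then $K$ is a nontrivial quotient of $C_p$, hence $K\simeq C_p$. By hypothesis there are infinitely many such primes, so $\min(\supp(X))$ is infinite, and Lemma~\ref{lem-infinitess-criterion} forces $X$ not to be finitely generated; thus $e_1$ is not noetherian and $\A\U$ is not locally noetherian. I do not anticipate a real obstacle here — the only points demanding a little care are the functoriality of $X$ and the direction of the order $\gg$ on $\supp(X)$. If one prefers to avoid citing Lemma~\ref{lem-infinitess-criterion}, the same conclusion follows from a hands-on argument: a hypothetical epimorphism $\bigoplus_{i=1}^{r}e_{G_i}\to X$ would, evaluated at $C_p$, force each $G_i$ contributing a nonzero map to be a nontrivial quotient of $C_p$, hence $G_i\simeq C_p$; only finitely many primes can be covered this way, contradicting $X(C_q)=k\neq 0$ for the infinitely many remaining primes $q$ with $C_q\in\U$.
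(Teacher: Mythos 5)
Your construction of the subobject $X\leq e_1$ is exactly the paper's $\chi^+$, and you invoke the same support criterion (Lemma~\ref{lem-infinitess-criterion}) via the same observation that the classes $[C_p]$ are minimal in the support. This is essentially identical to the paper's proof, correct in all details; the hands-on alternative you sketch at the end is also fine and amounts to unwinding that lemma in this special case.
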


\begin{proof}
 Let $\chi^+ \in \A\U$ be the subobject of $\one$ given by
 \[
  \chi^+(T)=
   \begin{cases}
    0 \qquad \text{if} \; |T|=1 \\
    k \qquad \text{if} \; |T|>1.
   \end{cases}
 \]
 Note that $\min(\supp(\chi^+))$ contains the 
 isomorphism classes of all cyclic groups of prime orders.  Apply
 Lemma~\ref{lem-infinitess-criterion} to see that $\chi^+$ cannot be 
 finitely generated.
\end{proof}

The rest of this section will be devoted to prove the following theorem.

\begin{theorem}\label{thm-noetherian-subcategory-list}
 Fix a prime number $p$ and a positive integer $n$. The abelian category 
 $\A\U$ is locally noetherian for the following choices of subcategories 
 $\U$:
 \begin{itemize}
  \item[(a)] $\F[p^n]= \{ \mathrm{free}\; \ZZ/p^n\text{-}\mathrm{modules}\}$.
  \item[(b)] $\C[p^\infty]=\{\mathrm{cyclic}\;p\text{-}\mathrm{groups} \}$.
  \item[(c)] $\Z[p^n]=\{ \mathrm{fin. \;gen.\; }\ZZ/p^n
  \text{-}\mathrm{modules}\}$.
  \item[(d)] $\Z[p^\infty]=\{\mathrm{finite \;abelian}\;p
  \text{-}\mathrm{groups} \}$.
 \end{itemize}
\end{theorem}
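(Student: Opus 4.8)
The plan is to verify, for each of the four families, the hypotheses of one of the noetherianity criteria available in the literature, and then to deduce the remaining cases by the functoriality of Section~\ref{sec-subcat-functors}. The basic observation is that $\A\U=[\U^{\op},\Vect_k]$ is the representation category $\mathrm{Rep}_k(\U^{\op})$, and the principal projective of $\mathrm{Rep}_k(\U^{\op})$ at $G$ is $k[\U^{\op}(G,-)]=k[\U(-,G)]=e_G$; hence $\A\U$ is locally noetherian precisely when $\U^{\op}$ has noetherian representations. By the Gr\"obner-basis formalism of Sam--Snowden \cite{Sam} it suffices to exhibit $\U^{\op}$ as a \emph{quasi-Gr\"obner} category, i.e.\ to produce an essentially surjective functor, satisfying property~(F), from a Gr\"obner category to $\U^{\op}$. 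I would treat (a) and (d) this way, and then obtain (b) and (c) by restriction; (b) can alternatively be handled directly via the criterion of Gan--Li \cite{Gan2015} applied to the EI category $\C[p^\infty]^{\op}$, whose morphism sets are completely explicit (cf.\ Example~\ref{ex-cyclic-colimit-exact}).

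For (d), I would first use Pontryagin duality $A\mapsto A^{\vee}=\Hom(A,\mathbb{Q}/\mathbb{Z})$ to identify $\Z[p^\infty]^{\op}$ with the category $\I$ of finite abelian $p$-groups and injective homomorphisms (for abelian groups conjugation is trivial, so no conjugacy subtleties arise). Since $\I$ has ``enough objects'', the source Gr\"obner category must too: I would take it to be a category $\P$ of \emph{based presentations}, whose objects are surjections $F\twoheadrightarrow A$ with $F$ a free $\ZZ/{p^N}$-module equipped with an ordered basis (equivalently, ordered embeddings of $A^{\vee}$ into a based free cover), and whose morphisms are the evident structure-compatible maps. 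The ordered-basis data equips each $|\P_x|$ with a lexicographic well-order preserved by the $\P$-action, so the Gr\"obner property for $\P$ reduces to a Dickson-type noetherianity statement for the associated poset of subsets, which one proves using that the quotients of a free $\ZZ/{p^N}$-module are controlled by finitely much combinatorial data (Hall polynomials, cf.\ \cite{Butler}*{2.1.1}). The forgetful functor $\P\to\I$ is essentially surjective by construction, and property~(F) amounts to the assertion that every injection $A\hookrightarrow B$ is dominated, through finitely many choices, by maps into based presentations of $B$; this is checked by hand. Sam--Snowden's transfer theorem then gives that $\I\simeq\Z[p^\infty]^{\op}$ is quasi-Gr\"obner, so $\A\Z[p^\infty]$ is locally noetherian.

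Case (a) is a variant of the same argument: by Pontryagin duality $\F[p^n]^{\op}$ is the category of free $\ZZ/{p^n}$-modules and \emph{split} injective homomorphisms (a surjection of free $\ZZ/{p^n}$-modules has free kernel, since $\ZZ/{p^n}$ is local, so its dual is a split injection with free cokernel); this is the $\ZZ/{p^n}$-analogue of $\mathrm{VI}$, and for $n=1$ it recovers the noetherianity of $\mathrm{VI}$-modules, hence of $\A\E[p]$. The same ordered-basis / echelon-form combinatorics exhibits it as quasi-Gr\"obner. Finally, (b) and (c) follow from (d) by restriction: both $\C[p^\infty]$ and $\Z[p^n]$ are full replete subcategories of $\Z[p^\infty]$ which are closed downwards, so for the inclusion $i$ we have, by Lemma~\ref{lem-omnibus}(a),(g),(j), that $i_*$ is extension by zero (hence exact) with $i^*i_*\simeq\id$ and $i_*(e_G^{\U})=\chi_{\U}\otimes e_G$; given a subobject $M\leq e_G^{\U}$ in $\A\U$, the object $i_*M$ is a subobject of $\chi_\U\otimes e_G$, which embeds in the noetherian object $e_G\in\A\Z[p^\infty]$, so $i_*M$ is finitely generated, and hence so is $M\simeq i^*i_*M$ since $i^*$ preserves finite generation by Lemma~\ref{lem-preservation-finiteness}(a). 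The crux of the whole program is the Gr\"obner step for (d): building the auxiliary category of based presentations so that its subset posets are provably noetherian and property~(F) holds — this is the one place where the genuine combinatorics of quotients of finite abelian $p$-groups is needed, rather than the categorical formalism used for the reductions.
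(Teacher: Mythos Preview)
Your restriction argument for (b) and (c) from (d) has a genuine error: you claim that $\chi_\U\otimes e_G$ ``embeds in the noetherian object $e_G$''. This is false. For instance, take $\U=\C[p^\infty]\subset\V=\Z[p^\infty]$, $G=C_p$, $T=C_p$, $T'=C_p^2$. Then $(\chi_\U\otimes e_G)(T)=e_G(T)\neq 0$ but $(\chi_\U\otimes e_G)(T')=0$, while $e_G(T)\to e_G(T')$ is injective; no monomorphism $\chi_\U\otimes e_G\to e_G$ can exist. The fix is simple but the direction is reversed: when $\U$ is closed downwards in $\V$, the object $\chi_\U$ is a \emph{quotient} of $\one$ (the map $\one\to\chi_\U$ that is the identity on $k$ for $T\in\U$ and zero otherwise is natural precisely because $\U$ is downward-closed). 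Hence $\chi_\U\otimes e_G$ is a quotient of $e_G$, so finitely generated, hence noetherian in $\A\V$. The rest of your argument ($i_*$ is exact extension-by-zero, $i^*i_*\simeq\id$, $i^*$ preserves finite generation) then goes through. With this correction your deduction of (b) and (c) from (d) is valid, and is more economical than the paper's route: the paper proves (b) independently via the Gan--Li $A_\infty$ criterion (checking transitivity and bijectivity for $\C[p^\infty]$ by hand), and obtains (c) by rerunning the argument for (d) with the labeling set restricted.

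For (d) itself, the paper does \emph{not} use Pontryagin duality; it works directly on the surjection side. The Gr\"obner category is $\L_\dagger$, whose objects are finite totally ordered sets $X$ equipped with a map $e_X\colon X\to\NN$, and whose morphisms are ``$\dagger$-monotone'' surjections $\phi\colon X\to Y$ (meaning $\phi^\dagger(y)=\min\phi^{-1}\{y\}$ is monotone) with $e_Y\circ\phi\leq e_X$. The functor $M\colon\L_\dagger^{\op}\to\Z[p^\infty]^{\op}$ sends $X$ to $\prod_{x}C_{p^{e_X(x)}}$. The substantive combinatorics is proving $\L_\dagger^{\op}$ is slice-wqo, which the paper does by constructing a comonotone map into $\NN^2\times\S(\NN^2)$ and invoking Higman's lemma; property (F) is then checked via ``tautological framings''. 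Your sketch via duality and ``based presentations'' points in a reasonable direction, but as you acknowledge, you have not actually specified the Gr\"obner category (what is $N$? what are the morphisms?) or proved the wqo step; describing it as ``Dickson-type'' undersells the difficulty, as the argument genuinely needs Higman-level machinery rather than just $\NN^k$. The $\dagger$-monotone trick is the idea you are missing.
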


\begin{proof}
  Sam and Snowden proved part (a)~\cite{Sam2017}*{8.3.1}. 
  The proofs of part (b),(c),(d) will be given in the next 
  subsections.
\end{proof}

\subsection{Part (b)}
We start by introducing the criterion for noetherianity developed in
\cite{Gan2015} which applies to a special type of subcategories.

\begin{definition}[\cite{Gan2015}*{2.2}]\label{def-A-infty}
 Let $\U$ be a subcategory of $\G$ and fix a skeleton
 $\U'$ for $\U$. If $G,H\in\U$ we write $G\gg H$ to mean that
 $\U(G,H)\neq\emptyset$.  We say that $\U$ has \emph{type} $A_\infty$
 if there exists an isomorphism of posets
 $(\U',\gg)\simeq(\NN,\geq)$.
\end{definition}

\begin{example}\label{ex-A-infty}
 The subcategory $\C$ of all cyclic groups is not of type $A_\infty$
 as there are no epimorphisms $C_3\to C_2$ or $C_2\to C_3$.  However
 if we fix a prime number $p$, then the subcategory $\C[p^\infty]$ of
 cyclic $p$-groups has type $A_\infty$.  Recall that $\F[p^n]$ is
 (equivalent to) the category of finitely generated free modules over
 $\ZZ/p^n$; this also has type $A_\infty$.  The same is true of the
 category $\E[p]$ of elementary abelian $p$-groups (because it is the
 same as $\F[p]$).
\end{example}

For compatibility with our work, we reformulate~\cite{Gan2015}*{3.1}
for contravariant diagrams.

\begin{definition}\label{def-trans-prop}
 We say that the category $\U$ has the \emph{transitivity property} if
 the action of $\Out(G)$ on $\U(G,H)$ is transitive whenever
 $G \gg H$.
\end{definition}

\begin{definition}\label{def-U-two-action}
 Suppose that $\U$ has the transitivity property.  For any
 pair $(G,H)$ with $G\gg H$ we let $\Out(G)$ act diagonally on
 $\U(G,H)^2$ and put $\U_2(G,H)=\U(G,H)^2/\Out(G)$.  
\end{definition}

\begin{lemma}\label{lem-Phi}
 Suppose we fix $\alpha\in\U(G,H)$ and put
 $\Phi(\alpha)=\{\phi\in\Out(G)\mid \alpha\phi=\alpha\}$.  Then there
 is a natural bijection
 $\zeta\colon \U(G,H)/\Phi(\alpha)\to\U_2(G,H)$.
\end{lemma}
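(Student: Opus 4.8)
The statement is the familiar orbit–stabilizer bijection, adapted to the transitivity hypothesis. Fix $\alpha \in \U(G,H)$; I will define $\zeta$ on a representative $[\beta]$ of a class in $\U(G,H)/\Phi(\alpha)$ by $\zeta[\beta] = [(\alpha,\beta)] \in \U_2(G,H)$, and check that this is well-defined, a bijection, and natural.

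\textbf{Key steps.} First I would verify \emph{well-definedness}: if $\beta' = \beta\phi$ for some $\phi \in \Phi(\alpha)$, then $(\alpha,\beta') = (\alpha\phi,\beta\phi) = \phi^*(\alpha,\beta)$ (using $\alpha\phi = \alpha$), so $(\alpha,\beta')$ and $(\alpha,\beta)$ lie in the same $\Out(G)$-orbit, hence represent the same element of $\U_2(G,H)$. Thus $\zeta$ is a well-defined map on $\U(G,H)/\Phi(\alpha)$. Second, \emph{surjectivity}: given any class $[(\gamma,\delta)] \in \U_2(G,H)$, the transitivity property (Definition~\ref{def-trans-prop}) supplies $\psi \in \Out(G)$ with $\gamma\psi = \alpha$; then $(\gamma,\delta)$ lies in the same orbit as $(\gamma\psi,\delta\psi) = (\alpha,\delta\psi)$, so $[(\gamma,\delta)] = \zeta[\delta\psi]$. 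Third, \emph{injectivity}: suppose $\zeta[\beta] = \zeta[\beta']$, i.e. $(\alpha,\beta')$ and $(\alpha,\beta)$ lie in the same $\Out(G)$-orbit, so $(\alpha,\beta') = (\alpha\phi,\beta\phi)$ for some $\phi \in \Out(G)$. Comparing first coordinates gives $\alpha\phi = \alpha$, i.e. $\phi \in \Phi(\alpha)$; comparing second coordinates gives $\beta' = \beta\phi$, so $[\beta] = [\beta']$ in $\U(G,H)/\Phi(\alpha)$.

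\textbf{Naturality.} I would take "natural" to mean natural in the pair $(G,H)$ with respect to morphisms in $\U$ (with $\alpha$ and $\Phi(\alpha)$ transported along isomorphisms on the $G$-side, and $\U(G,H)$ covariantly functorial on the $H$-side), so this amounts to unwinding the definitions: postcomposition by $\U(H,H')$-morphisms and precomposition by isomorphisms $G' \to G$ commute with the formulas $[\beta] \mapsto [(\alpha,\beta)]$ on the nose, once one fixes how $\alpha$ and the chosen representatives are carried along. I do not expect any obstacle here beyond bookkeeping.

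\textbf{Main obstacle.} There is essentially no hard step; the only point requiring the hypothesis is surjectivity, where the transitivity property is exactly what lets us move an arbitrary pair's first coordinate onto $\alpha$. If the intended notion of naturality is subtler (e.g. asserting naturality as $\alpha$ itself varies within $\U(G,H)$, comparing the different bijections $\zeta_\alpha$), then the extra content is that changing $\alpha$ to $\alpha\psi$ conjugates $\Phi(\alpha)$ to $\Phi(\alpha\psi) = \psi^{-1}\Phi(\alpha)\psi$ and the induced bijection $\U(G,H)/\Phi(\alpha) \to \U(G,H)/\Phi(\alpha\psi)$ intertwines $\zeta_\alpha$ and $\zeta_{\alpha\psi}$; this is again a direct check and not a genuine difficulty.
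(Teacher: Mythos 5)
Your proof is correct and follows essentially the same route as the paper: define $\zeta[\beta]=[(\alpha,\beta)]$, use the transitivity property to move the first coordinate of an arbitrary pair onto $\alpha$ for surjectivity, and compare coordinates for injectivity. You additionally spell out well-definedness, which the paper leaves implicit; the naturality discussion is likewise consistent with what the paper intends.
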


\begin{proof}
 We have a map $\U(G,H)\to\U(G,H)^2$ given by
 $\gamma\mapsto(\alpha,\gamma)$, and this induces a map
 \[
 \zeta\colon \U(G,H)/\Phi(\alpha)\to\U_2(G,H).
 \]
 If $(\beta,\gamma)\in\U(G,H)^2$
 then the transitivity property gives $\theta\in\Out(G)$ with
 $\beta\theta=\alpha$ and it follows that
 $[\beta,\gamma]=[\beta\theta,\gamma\theta]=\zeta(\gamma\theta)$ in $\U_2(G,H)$.  
 This shows that $\zeta$ is surjective.  
 
 On the other hand, if $\zeta[\beta_0]=\zeta[\beta_1]$ then there
 exists $\phi\in\U(G)$ with
 $(\alpha\phi,\beta_0\phi)=(\alpha,\beta_1)$.  This means that
 $\alpha\phi=\alpha$ (so $\phi\in\Phi(\alpha)$) and
 $\beta_0\phi=\beta_1$ (so $[\beta_0]=[\beta_1]$ in
 $\U(G,H)/\Phi(\alpha)$).  This shows that $\zeta$ is also injective.
\end{proof}

\begin{lemma}
 Suppose that $G'\gg G$ and $u\in\U_2(G,H)$, so
 $u\subseteq\U(G,H)^2$.  Put
 \[ \lambda(u) = \lambda_{GG'}(u) = 
      \{(\alpha\phi,\beta\phi) \mid (\alpha,\beta)\in u,\;\phi\in\U(G',G)\} 
      \subseteq \U(G',H)^2.
 \]
 Then $\lambda(u)$ is a $\Out(G')$-orbit, or in other words an element
 of $\U_2(G',H)$.  The map $\lambda$ can also be characterised by
 $\lambda[\alpha,\beta]=[\alpha\phi,\beta\phi]$ for any
 $\phi\in\U(G',G)$.
\end{lemma}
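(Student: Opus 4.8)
The plan is to check directly that $\lambda(u)$ is a nonempty subset of $\U(G',H)^2$, that it is stable under the diagonal $\Out(G')$-action, and that it is in fact a single orbit; the identification with an element of $\U_2(G',H)$ and the stated characterisation then drop out immediately. Throughout I would use the transitivity property freely: since $G'\gg G$ the group $\Out(G')$ acts transitively on $\U(G',G)$; since $\U_2(G,H)$ is only defined when $G\gg H$, the group $\Out(G)$ acts transitively on $\U(G,H)$; and since $G'\gg G\gg H$ we have $G'\gg H$, so $\U_2(G',H)$ is meaningful and $\Out(G')$ acts transitively on $\U(G',H)$. I would also use repeatedly that a composite of surjective homomorphisms is surjective, so that $\alpha\phi\in\U(G',H)$ whenever $\alpha\in\U(G,H)$ and $\phi\in\U(G',G)$, and similarly for composites with elements of $\Out(G')$; this already shows $\lambda(u)\subseteq\U(G',H)^2$, and nonemptiness is clear since $u\neq\emptyset$ and $\U(G',G)\neq\emptyset$.

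Next I would verify that $\lambda(u)$ is a union of $\Out(G')$-orbits. Given $(\alpha\phi,\beta\phi)\in\lambda(u)$ with $(\alpha,\beta)\in u$ and $\phi\in\U(G',G)$, and given $\psi\in\Out(G')$, the composite $\phi\psi$ again lies in $\U(G',G)$, and $\psi\cdot(\alpha\phi,\beta\phi)=(\alpha(\phi\psi),\beta(\phi\psi))\in\lambda(u)$. So $\lambda(u)$ is stable under the action.

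The substance of the lemma is that $\lambda(u)$ is a \emph{single} orbit. I would take two elements $(\alpha_1\phi_1,\beta_1\phi_1)$ and $(\alpha_2\phi_2,\beta_2\phi_2)$ of $\lambda(u)$, with $(\alpha_i,\beta_i)\in u$ and $\phi_i\in\U(G',G)$. Since $u$ is one $\Out(G)$-orbit there is $\theta\in\Out(G)$ with $(\alpha_2,\beta_2)=(\alpha_1\theta,\beta_1\theta)$. Now $\theta\phi_2\in\U(G',G)$, so transitivity of the $\Out(G')$-action on $\U(G',G)$ yields $\psi\in\Out(G')$ with $\phi_1\psi=\theta\phi_2$. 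Composing on the left with $\alpha_1$ and with $\beta_1$ gives $\alpha_1\phi_1\psi=\alpha_1\theta\phi_2=\alpha_2\phi_2$ and $\beta_1\phi_1\psi=\beta_2\phi_2$, i.e. $\psi\cdot(\alpha_1\phi_1,\beta_1\phi_1)=(\alpha_2\phi_2,\beta_2\phi_2)$. Hence the two elements lie in the same $\Out(G')$-orbit, so $\lambda(u)$ is a single orbit and thus an element of $\U_2(G',H)$.

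Finally, for the characterisation: if $u=[\alpha,\beta]$ is the $\Out(G)$-orbit of a representative $(\alpha,\beta)$, then by the definition of $\lambda$ we have $(\alpha\phi,\beta\phi)\in\lambda(u)$ for every $\phi\in\U(G',G)$, and since $\lambda(u)$ is a single orbit it equals $[\alpha\phi,\beta\phi]$ for any such $\phi$. I do not expect a genuine obstacle; the only thing demanding care is bookkeeping with the (right) action conventions — keeping "$\alpha\phi$" as honest composition and making sure the element of $\Out(G')$ supplied by transitivity acts on the correct side — so that the one computation $\alpha_1\phi_1\psi=\alpha_1\theta\phi_2$ really goes through by associativity.
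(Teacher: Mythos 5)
Your argument is correct and follows essentially the same route as the paper's: both proofs use the transitivity of the $\Out(G')$-action on $\U(G',G)$ to absorb the $\Out(G)$-ambiguity in the choice of representative of $u$, the only cosmetic difference being that the paper fixes one reference element $x'=(\alpha\phi,\beta\phi)$ and shows every element of $\lambda(u)$ lies in its orbit, whereas you compare two arbitrary elements directly. Same ingredients, same computation, no gap.
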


\begin{proof}
 A typical element of $\lambda(u)$ has the form
 $x=(\alpha\phi,\beta\phi)$ with $(\alpha,\beta)\in u$ and
 $\phi\in\U(G,H)$.  If $\theta\in\Out(G)$ then the map
 $\phi'=\phi\theta$ also lies in $\U(G,H)$ and
 $\theta^*x=(\alpha\phi',\beta\phi')$; this shows that $\lambda(u)$ is
 preserved by $\Out(G)$.

 Now suppose we fix an element $x=(\alpha,\beta)\in u$ and a map
 $\phi\in\U(G,H)$ and put $x'=(\alpha\phi,\beta\phi)\in\lambda(u)$.
 Any element of $u$ has the form $(\alpha\zeta,\beta\zeta)$ for some
 $\zeta\in\Out(G)$.  Thus, any element $y\in\lambda(u)$ has the form
 $y=(\alpha\zeta\psi,\beta\zeta\psi)$ for some $\zeta\in\Out(G)$ and
 $\psi\in\U(G',G)$.  By the transitivity property we can find
 $\xi\in\Out(G')$ with $\zeta\psi=\phi\xi$, so
 $y=(\alpha\phi\xi,\beta\phi\xi)=\xi^*(x')$.  It follows that
 $\lambda[x]=[x']$, so in particular $\lambda[x]$ is an orbit as
 claimed.
\end{proof}

\begin{definition}\label{def-bij-prop}
 We say that $\U$ has the \emph{bijectivity property} if for all $H$
 there exists $G\gg H$ such that for all $G'\gg G$ the map
 \[ \lambda \colon \U_2(G,H) \to \U_2(G',H) \]
 is bijective.
\end{definition}

\begin{remark}\label{rem-bij-prop}
 Our bijectivity property is not visibly the same as that
 of~\cite{Gan2015}*{3.2}.  However, Lemma~\ref{lem-Phi} shows that
 they are equivalent (and we consider that our version is more
 transparent).
\end{remark}

We are finally ready to state the criterion.

\begin{theorem}[\cite{Gan2015}*{3.7}]
 Let $\U$ be a subcategory of $\G$ of type
 $A_\infty$. Suppose that $\U$ satisfies the transitivity and
 bijectivity properties. Then $\A\U$ is locally noetherian.
\end{theorem}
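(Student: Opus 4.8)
We adapt the argument of \cite{Gan2015}, transported to contravariant diagrams. Using the type $A_\infty$ hypothesis, fix a skeleton $\U'=\{G_0,G_1,G_2,\dots\}$ with $G_m\gg G_n$ if and only if $m\geq n$. Since $\G$ has finite hom-sets, every $e_{G_n}$ has finite type; it is supported on $\{G_m\mid m\geq n\}$, and all of its structure maps are injective by Lemma~\ref{lem-epimorphism}. Fix $n$ and a subobject $M\leq e_{G_n}$; the plan is to produce $N$ with $M=L_{\leq|G_N|}M$ for the filtration of Construction~\ref{con-filtration-projective}, which suffices since $L_{\leq|G_N|}M$ is automatically finitely generated once $\U_{\leq|G_N|}$ is finite and $M$ has finite type (Lemmas~\ref{lem-truncated-objects-are-perfect} and~\ref{lem-preservation-finiteness}). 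Write $\alpha_m\colon G_{m+1}\to G_m$ for a chosen morphism and $P_m\subseteq M(G_{m+1})$ for the $\Out(G_{m+1})$-submodule generated by $\alpha_m^*(M(G_m))$. An induction on $m$, using the transitivity property so that every morphism from $G_{m+1}$ to a lower $G_j$ is an $\Out(G_{m+1})$-translate of one factoring through $G_m$, shows that $M=L_{\leq|G_N|}M$ if and only if $M(G_{m+1})=P_m$ for all $m\geq N$.

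The crux is a stabilization statement. By the transitivity property each $e_{G_n}(G_m)=k[\U(G_m,G_n)]$ is a transitive permutation module for $k[\Out(G_m)]$, so, as we work in characteristic zero, its lattice of submodules is a finite-length lattice controlled by the Hecke-type endomorphism algebra $E_m:=\End_{k[\Out(G_m)]}(e_{G_n}(G_m))$, which has a $k$-basis indexed by $\U_2(G_m,G_n)$ (via the bijection $\zeta$ of Lemma~\ref{lem-Phi}). The bijectivity property provides an integer $m_0$, depending on $n$, such that $\lambda\colon\U_2(G_m,G_n)\to\U_2(G_{m+1},G_n)$ is a bijection for all $m\geq m_0$, hence also between any two levels $\geq m_0$. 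The key lemma I would prove is that for $m\geq m_0$ the operation
\[
\beta_m\colon\{\,\Out(G_m)\text{-submodules of }e_{G_n}(G_m)\,\}\longrightarrow\{\,\Out(G_{m+1})\text{-submodules of }e_{G_n}(G_{m+1})\,\},
\]
sending a submodule to the one it generates in $e_{G_n}(G_{m+1})$ along $\alpha_m^*$, is an isomorphism of lattices, with inverse given by taking the $\alpha_m^*$-preimage. This is exactly where bijectivity of $\lambda$ is used: it identifies $E_m$ with $E_{m+1}$ compatibly with the injection $\alpha_m^*$ and with a choice of $k$-linear splitting, which forces the submodule lattices of the two modules to correspond.

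Granting the key lemma, the theorem follows by a finiteness argument. Given $M\leq e_{G_n}$, transport the subspaces $M(G_m)$ for $m\geq m_0$ back to level $m_0$ by composing the inverse lattice isomorphisms $\beta_{m_0}^{-1},\dots,\beta_{m-1}^{-1}$; since $P_m\subseteq M(G_{m+1})$ holds for all $m$ and the $\beta_m$ are order isomorphisms, the resulting subspaces of the finite-dimensional space $e_{G_n}(G_{m_0})$ form an ascending chain, which must stabilize from some level $N\geq m_0$ on. Unwinding the definitions, stabilization at step $m$ says precisely that $M(G_{m+1})=\beta_m(M(G_m))=P_m$, so $M=L_{\leq|G_N|}M$ and $M$ is finitely generated; hence $e_{G_n}$ is noetherian for every $n$, i.e.\ $\A\U$ is locally noetherian. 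I expect the main obstacle to be the key lemma: one must check that the purely combinatorial bijection $\lambda$ on orbit sets genuinely upgrades to the module-theoretic assertion that the $\alpha_m^*$-preimage of a submodule of $e_{G_n}(G_{m+1})$ already generates it, which requires pinning down the relationship between $E_m$ and $E_{m+1}$ through $\alpha_m^*$ and invoking semisimplicity.
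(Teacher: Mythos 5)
The paper does not actually prove this statement --- it is imported verbatim from \cite{Gan2015}*{3.7} --- so the comparison is with Gan--Li's argument, whose overall architecture (reduce to showing that the chain of submodules $M(G_m)\leq e_{G_n}(G_m)$ is eventually generated from the previous level, and use bijectivity of $\lambda$ plus semisimplicity to bound how long this can fail) you have reproduced correctly. However, your key lemma is false, not merely difficult. Take $\U=\E[2]$ and $n=1$, so $e_{G_1}(G_m)=k[\Epi(\mathbb{F}_2^m,\mathbb{F}_2)]$ is the permutation module of $GL_m(\mathbb{F}_2)$ on the $2^m-1$ nonzero functionals; here $|\U_2(G_m,G_1)|=2$ for all $m\geq 2$, so $m_0=2$ and the lemma is asserted for $\beta_2$. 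Let $W=k\sigma$, where $\sigma$ is the sum of the three basis elements of $e_{G_1}(G_2)$; this is a proper (trivial, one-dimensional) $GL_2(\mathbb{F}_2)$-submodule. Then $\alpha_2^*(\sigma)$ is the sum of the three points of a line of the Fano plane, and the $GL_3(\mathbb{F}_2)$-submodule it generates is the span of all seven line-sums, which is \emph{all} of the $7$-dimensional module $e_{G_1}(G_3)$ because the point--line incidence matrix of the Fano plane is invertible over $\mathbb{Q}$. Hence $\beta_2(k\sigma)=\beta_2(e_{G_1}(G_2))=e_{G_1}(G_3)$: the map $\beta_m$ is not injective on submodule lattices, taking $\alpha_m^*$-preimages does not invert it, and the concluding step of transporting $M(G_m)$ back to level $m_0$ through the $\beta^{-1}$'s has no meaning. (Relatedly, a bijection of the index sets $\U_2$ does not identify $E_m$ with $E_{m+1}$ as algebras --- the intersection numbers of the two association schemes differ --- and $\alpha_m^*$ does not respect the isotypic decompositions, which is exactly what the example exploits.)

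The repair is to track a coarser invariant than the submodule itself. Fix compatible basepoints $y_0\in\U(G_m,G_n)$, $x_0=\alpha_m^*(y_0)$, with stabilizers $\Phi_m\leq\Out(G_m)$, $\Phi_{m+1}\leq\Out(G_{m+1})$, and set $c_m=\dim_k M(G_m)^{\Phi_m}=\dim_k\Hom_{k[\Out(G_m)]}(e_{G_n}(G_m),M(G_m))$. Composing $\alpha_m^*$ with averaging over $\Phi_{m+1}$ sends the orbit-sum basis of $e_{G_n}(G_m)^{\Phi_m}$ to positive multiples of the orbit-sum basis of $e_{G_n}(G_{m+1})^{\Phi_{m+1}}$ according to $\lambda$; bijectivity of $\lambda$ makes this composite an isomorphism, and it visibly restricts to an injection $W^{\Phi_m}\hookrightarrow(\beta_m W)^{\Phi_{m+1}}$ for every submodule $W$. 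Hence $c_m$ is non-decreasing and bounded by $|\U_2(G_m,G_n)|$, so eventually constant; and since in characteristic zero a submodule $U\leq e_{G_n}(G_{m+1})$ is recovered as $\sum_{f\in\Hom(e_{G_n}(G_{m+1}),U)}\img(f)$, the equality $c_m=c_{m+1}$ forces $\beta_m(M(G_m))=M(G_{m+1})$. Your reduction of local noetherianity to this stabilization, and the finiteness of $L_{\leq|G_N|}M$, then go through unchanged.
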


We now apply the criterion to our case of interest.

\begin{theorem}\label{thm-noetherian-cyclic-case}
 Fix a prime number $p$ and let $\C[p^\infty]$ be the family of cyclic
 $p$-groups.  Then the category $\A\C[p^\infty]$ is locally noetherian.
\end{theorem}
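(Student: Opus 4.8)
The plan is to apply the noetherianity criterion of Gan--Li (Theorem~\cite{Gan2015}*{3.7}): it suffices to check that $\U=\C[p^\infty]$ has type $A_\infty$, the transitivity property, and the bijectivity property, after which the conclusion follows formally.

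The first two conditions are close to things already recorded in the excerpt. A skeleton of $\C[p^\infty]$ is $\{C_{p^n}\mid n\geq 0\}$, there is a homomorphism $C_{p^m}\to C_{p^n}$ (automatically surjective) exactly when $n\leq m$, so $(\U',\gg)\simeq(\NN,\geq)$ and $\U$ has type $A_\infty$ (Example~\ref{ex-A-infty}). For transitivity I would identify $\Out(C_{p^m})=\Aut(C_{p^m})=(\ZZ/p^m)^\times$ and observe that $\U(C_{p^m},C_{p^n})=\Epi(C_{p^m},C_{p^n})$ is a torsor for $(\ZZ/p^n)^\times$ on which $(\ZZ/p^m)^\times$ acts through the reduction homomorphism $(\ZZ/p^m)^\times\twoheadrightarrow(\ZZ/p^n)^\times$; since this reduction is surjective, the action is transitive (compare Example~\ref{ex-cyclic-colimit-exact}).

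The one part requiring a small computation is the bijectivity property of Definition~\ref{def-bij-prop}. Fixing $\alpha\in\U(C_{p^m},C_{p^n})$, Lemma~\ref{lem-Phi} identifies $\U_2(C_{p^m},C_{p^n})$ with $\U(C_{p^m},C_{p^n})/\Phi(\alpha)$. In the coordinates above a homomorphism $\beta$ corresponds to a unit $b\in(\ZZ/p^n)^\times$, the stabiliser $\Phi(\alpha)=\ker((\ZZ/p^m)^\times\to(\ZZ/p^n)^\times)$ acts trivially on these coordinates, and the rule $[\alpha,\beta]\mapsto ab^{-1}$ defines a bijection $\U_2(C_{p^m},C_{p^n})\simeq(\ZZ/p^n)^\times$ that is independent of $m$. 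Precomposing $\alpha$ and $\beta$ with a common $\phi\in\U(C_{p^{m'}},C_{p^m})$ multiplies both $a$ and $b$ by the same unit, hence leaves $ab^{-1}$ fixed; so under these identifications the transition map $\lambda\colon\U_2(C_{p^m},C_{p^n})\to\U_2(C_{p^{m'}},C_{p^n})$ (for $C_{p^{m'}}\gg C_{p^m}$) becomes the identity of $(\ZZ/p^n)^\times$, in particular a bijection. The bijectivity property therefore holds with $G=H$, and Theorem~\cite{Gan2015}*{3.7} yields that $\A\C[p^\infty]$ is locally noetherian.

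I do not expect a genuine obstacle in this argument. The only care needed is to pin down the identifications $\U(C_{p^m},C_{p^n})\simeq(\ZZ/p^n)^\times$ compatibly as $m$ varies, so that the maps $\lambda$ are visibly identities, and to note that the degenerate case $H=1$ is trivial since every set in sight is then a singleton.
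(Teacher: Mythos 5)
Your argument is correct and follows essentially the same route as the paper's proof: verify type $A_\infty$, transitivity, and bijectivity for $\C[p^\infty]$, and invoke the Gan--Li criterion. Your bijectivity check via the invariant $ab^{-1}$ is the same as the paper's map $[\alpha,\beta]\mapsto\phi$ with $\beta=\phi\alpha$ (up to inverting in $(\ZZ/p^n)^\times$), and your use of Lemma~\ref{lem-Phi} is a harmless reformulation of the direct computation there.
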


\begin{proof}
 We have already seen that $\C[p^\infty]$ has type $A_\infty$ so it is 
 enough to check that it satisfies the transitivity and bijectivity property.
 Recall the discussion on the morphisms of $\C[p^\infty]$ from 
 Example~\ref{ex-cyclic-colimit-exact}.  
 
 Consider cyclic groups $G$ and $H$ and suppose that $|H|$ divides $|G|$ so 
 that $\U(G,H)\not=\emptyset$. We know that 
 for any $\alpha\in\U(G,H)$ and $\phi\in\Aut(H)$ there exists 
 $\psi \in \Aut(G)$ such that $\alpha\psi=\phi\alpha$. 
 Combining this with the fact that $\U(G,H)$ is a torsor for $\Aut(H)$,  
 we find that $\U(G,H)$ is a single orbit for $\Aut(G)$. 
 Thus $\C[p^\infty]$ satisfies the transitivity condition.
 
 If $(\alpha,\beta)\in\U(G,H)^2$ then there is a unique element
 $\phi \in \Aut(H)$ with $\beta= \phi\circ\alpha$.  This is
 unchanged if we compose $\alpha$ and $\beta$ with any surjective
 homomorphism $\epsilon\colon G'\to G$.  It follows that the rule
 $[\alpha,\beta]\mapsto \phi$ gives a well-defined bijections
 $\xi=\xi_{GH}\colon \U_2(G,H)\to \Aut(H)$.  This also
 satisfies $\xi_{G'H}\lambda=\xi_{GH}$, so all the maps $\lambda$ are
 bijective, and so $\C[p^\infty]$ satisfies the bijectivity condition.
\end{proof}

\subsection{Part (c) and (d)}
The rest of this section will be devoted to proving the following result. 

\begin{theorem}\label{thm-p-groups-is-locally-noetherian}
 Fix a prime number $p$.  Recall that $\Z[p^\infty]$ is the category
 of finite abelian $p$-groups, and that $\Z[p^n]$ is the subcategory
 where the exponent divides $p^n$.  Then the categories
 $\A\Z[p^\infty]$ and $\A\Z[p^n]$ are locally noetherian.
\end{theorem}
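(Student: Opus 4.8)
The strategy is to apply the Gr\"{o}bner-basis formalism of Sam--Snowden~\cite{Sam2017}, taking as input the already-known part~(a): that $[\F[p^n]^{\op},\Vect_k]$ is locally noetherian.

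The first step is to reduce both parts to the single family $\Z[p^\infty]$. I would first record the general principle that if $\V\leq\G$ is locally noetherian and $\U\subseteq\V$ is closed downwards, then $\U$ is locally noetherian; since $\Z[p^n]$ is closed downwards in $\Z[p^\infty]$ (as $\exp(G/H)\mid\exp(G)$), this leaves only $\Z[p^\infty]$ to treat. To prove the principle, let $i\colon\U\to\V$ be the inclusion and let $M\leq e^{\U}_G$ be a subobject with $G\in\U$. By Lemma~\ref{lem-omnibus}(g), $i_*$ is extension by zero and hence exact, and by Lemma~\ref{lem-omnibus}(j) we have $i_*e^{\U}_G=\chi_{\U}\otimes e^{\V}_G$, a subobject of $e^{\V}_G$; thus $i_*M\leq e^{\V}_G$, which is finitely generated because $\V$ is locally noetherian. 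Picking an epimorphism $\bigoplus_j e^{\V}_{H_j}\twoheadrightarrow i_*M$ and applying the exact functor $i^*$, while using $i^*i_*\simeq\id$ (Lemma~\ref{lem-omnibus}(a)) together with $i^*e^{\V}_H=0$ for $H\notin\U$ and $i^*e^{\V}_H=e^{\U}_H$ for $H\in\U$ (Lemma~\ref{lem-omnibus}(j)), we obtain an epimorphism from a finite direct sum of generators onto $M$, so $M$ is finitely generated.

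For $\U=\Z[p^\infty]$ I would introduce an auxiliary category of presentations. Let $\mathcal{P}$ be the category whose objects are pairs $(F,N)$, where $F$ is a homocyclic group $(\ZZ/p^m)^r$ and $N\leq F$, and whose morphisms $(F,N)\to(F',N')$ are the surjective homomorphisms $\sigma\colon F'\twoheadrightarrow F$ with $\sigma(N')\subseteq N$. There is a functor $\Phi\colon\mathcal{P}\to\Z[p^\infty]^{\op}$ sending $(F,N)$ to $F/N$ and $\sigma$ to the induced surjection $\overline{\sigma}\colon F'/N'\twoheadrightarrow F/N$; it is essentially surjective because every finite abelian $p$-group has this form. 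The key point is that $\Phi$ has Sam--Snowden's property~(F): for each $A$ a single presentation suffices, namely a minimal one $p_0\colon F_0=(\ZZ/p^{\exp A})^{\delta(A)}\twoheadrightarrow A$ with $N_0=\ker(p_0)$. Indeed, given any $(F,N)$ and any surjection $f\colon F/N\twoheadrightarrow A$, precompose with $F\twoheadrightarrow F/N$ to get $\phi\colon F\twoheadrightarrow A$; since $\exp(F)\geq\exp(A)$ and $F$ is free, hence projective, over $\ZZ/p^{\exp F}$, the map $\phi$ lifts through $p_0$ to some $\sigma\colon F\to F_0$; minimality of the presentation forces $N_0\subseteq pF_0$, so $\img(\sigma)+pF_0=F_0$ and $\sigma$ is surjective by Nakayama's lemma; and $N\subseteq\ker(\phi)=\sigma^{-1}(N_0)$, so $\sigma$ is a morphism $(F_0,N_0)\to(F,N)$ of $\mathcal{P}$ with $f=\overline{p_0}\circ\overline{\sigma}$, as property~(F) demands. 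Consequently, once $\mathcal{P}$ is known to be quasi-Gr\"{o}bner, the transfer theorem of Sam--Snowden (essential surjectivity together with property~(F)) shows that $\Z[p^\infty]^{\op}$ is quasi-Gr\"{o}bner, and therefore $\A\Z[p^\infty]$ is locally noetherian.

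The remaining step, which I expect to be the genuinely hard one, is to show that $\mathcal{P}$ is quasi-Gr\"{o}bner. One does this by exhibiting a Gr\"{o}bner category mapping onto $\mathcal{P}$ essentially surjectively and with property~(F); the natural candidate is the ``ordered'' version of $\mathcal{P}$ in which each homocyclic group carries a chosen ordered basis and morphisms respect it, so that the forgetful functor to $\mathcal{P}$ just forgets the ordering (finite data). To see that this ordered category is Gr\"{o}bner one must equip, for each object, the set of morphisms out of it --- which can be presented as matrices over $\ZZ/p^m$ in a suitable normal form together with the finite subgroup-label $N$, ranging over all target ranks and all exponents at least $m$ --- with an admissible well-order compatible with the category action. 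The fixed-exponent, label-free portion of this is precisely part~(a) (the Gr\"{o}bner property of an ordered version of $\F[p^n]^{\op}$), and the new ingredients are to interleave all exponents (for example, ordering by exponent first) and to adjoin the subgroup labels, in each case verifying that the resulting order remains a well-quasi-order and remains admissible. The main obstacle is thus purely combinatorial: constructing this admissible well-order and checking the well-quasi-order property, the interaction between the varying exponent and the subgroup labels being the delicate point. By contrast, the reduction in the second paragraph and the property~(F) computation above are essentially formal.
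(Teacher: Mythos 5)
Your reduction of $\Z[p^n]$ to $\Z[p^\infty]$ via $i_*$ is a legitimate alternative to the paper's treatment (the paper instead restricts its Gr\"obner category to objects whose exponent function takes values in a fixed finite set $\Omega$), but note one slip: since $\U$ is closed \emph{downwards}, $\chi_\U$ is a quotient of $\one$, not a subobject, so $i_*e^{\U}_G=\chi_\U\otimes e^{\V}_G$ is a quotient of $e^{\V}_G$ rather than a subobject. The conclusion that $i_*M$ is finitely generated still stands, because quotients of $e^{\V}_G$ are finitely generated and, in a locally noetherian category, subobjects of finitely generated objects are finitely generated. Your condition-(F) computation for the presentation category $\mathcal{P}$ (lifting through a minimal presentation and invoking Nakayama) is also correct as far as it goes.

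The genuine gap is the step you yourself flag as "the genuinely hard one": you never establish that $\mathcal{P}$, or its ordered version, is quasi-Gr\"obner, and that is the entire content of the theorem. Worse, your choice of auxiliary category makes this step strictly harder than necessary. Because your objects carry the kernel $N\leq F$, the slice-wqo condition requires, for any sequence $(\sigma_i\colon F_i\twoheadrightarrow F,\,N_i)$, some $i<j$ and a surjection $\tau\colon F_j\to F_i$ over $F$ satisfying the additional constraint $\tau(N_j)\subseteq N_i$; the labels $N_i$ range over an unbounded family of finite sets with no evident quasi-order, and this does not reduce to part (a) by "interleaving exponents and adjoining labels." The paper avoids kernels altogether: its Gr\"obner category $\L_\dagger$ consists of finite ordered sets equipped only with an exponent function $e_X\colon X\to\NN$, with $\dagger$-monotone surjections as morphisms; the functor $M$ sends $X$ to the \emph{product} $\prod_{x\in X}C[e_X(x)]$, so essential surjectivity is immediate, and the passage to an arbitrary group is absorbed into condition (F) through the "framing" argument rather than built into the objects. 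The wqo verification (via Higman's Lemma and a comonotone embedding of $\L_\dagger^{\op}$ into $\NN^2\times\S(\NN^2)$) and the lexicographic hom-ordering on $\dagger$-monotone surjections constitute most of the section and are precisely the content your proposal leaves unproven — and replaces with a harder unproven statement.
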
 
  
We will apply a different criterion due to Sam and Snowden that we
shall now recall~\cite{Sam2017}.  The basic outline is as follows.
One way to prove that polynomial rings are noetherian is to use the
technology of Gr\"obner bases.  If $\C$ is a category satisfying
appropriate combinatorial and order-theoretic conditions, we can use
similar techniques to prove that $[\C,\Vect_k]$ is locally noetherian.
If $\U\leq\G$ and we have a functor $\C\to\U^\op$ with appropriate
finiteness properties, we can then deduce that $\A\U$ is locally
noetherian.  In the case $\U=\Z[p^\infty]$ we will take $\C$ to be
something like the category of finite abelian $p$-groups with a
specified presentation, although the precise details are somewhat
complex.

\begin{remark}\label{rem-preordered-sets-and-categories}
 Some of the definitions and constructions below can be done for
 preordered sets or for small categories.  We regard a preordered set
 $P$ as a small category with one morphism $a\to b$ whenever
 $a\leq b$, and no morphisms $a\to b$ if $a\not\leq b$.  We regard a
 small category $\C$ as a preordered set by declaring that $a\leq b$
 if and only if $\C(a,b)\neq\emptyset$.
\end{remark}

The first combinatorial condition that we need to use is as follows:
\begin{definition}\label{def-wqo}
 Let $\C$ be a small category. 
 \begin{itemize} 
  \item A \emph{sequence} in $\C$ means a map
   $u \colon\NN\to\obj(\C)$
  \item A \emph{subsequence} of $u$ is a map of the form $u \circ f$,
   where $f \colon \NN \to \NN$ is strictly increasing.
  \item We say that $u$ is \emph{good} if there exists $i<j$ such
   that $u(i)\leq u(j)$ (meaning that $\C(u(i),u(j))\neq\emptyset$, as
   in Remark~\ref{rem-preordered-sets-and-categories}).
  \item We say that $u$ is \emph{very good} if $u(i)\leq u(j)$
   for all $i\leq j$. 
  \item We say that $\C$ is \emph{well-quasi-ordered} (or \emph{wqo})
   if every sequence in $\C$ is good.
  \item We say that $\C$ is \emph{cowqo} if $\C^{\op}$ is wqo.
  \item We say that $\C$ is \emph{slice-wqo} if the slice category
   $X \downarrow \C$ is wqo for all objects $X$.
 \end{itemize}
\end{definition}

\begin{remark}\label{rem-wqo-preorder}
 It is clear that the definition of wqo is compatible with the
 identifications in Remark~\ref{rem-preordered-sets-and-categories}.
\end{remark}

\begin{remark}\label{rem-finite-wqo}
 If $\C$ is finite then any sequence $u\colon\NN\to\obj(\C)$ is
 non-injective and therefore good.
\end{remark}

\begin{remark}\label{rem-wo-wqo}
 Now let $P$ be a well-ordered set.  
 For any sequence $u\colon\NN\to P$,
 the set $u(\NN)$ must have a smallest element, say $u(k)$, and then we
 have $u(k)\leq u(k+1)$, showing that $u$ is good.  It follows that
 $P$ is wqo.
\end{remark}

The following lemma is a basic ingredient.
\begin{lemma}\label{lem-wqo-equiv}
 Suppose that $\C$ is wqo.  Then any sequence in $\C$ has a very good
 subsequence. 
\end{lemma}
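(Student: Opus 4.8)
The plan is to extract the very good subsequence by an infinite Ramsey argument. Given a sequence $u\colon\NN\to\obj(\C)$, first I would $2$-colour the set of pairs $\{i,j\}$ with $i<j$: call such a pair \emph{ascending} if $u(i)\le u(j)$ (that is, $\C(u(i),u(j))\neq\emptyset$, in the sense of Remark~\ref{rem-preordered-sets-and-categories}), and \emph{non-ascending} otherwise. By the infinite Ramsey theorem for pairs, there is an infinite set $S=\{s_0<s_1<s_2<\dotsb\}\subseteq\NN$ on which this colouring is constant. Let $f\colon\NN\to\NN$ be the strictly increasing map $f(k)=s_k$, so that $u\circ f$ is a subsequence of $u$.

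It then remains to rule out the non-ascending colour. If $S$ were monochromatic of colour non-ascending, then $\C(u(s_i),u(s_j))=\emptyset$ for all $i<j$, so the sequence $u\circ f$ would contain no good pair, contradicting the hypothesis that $\C$ is wqo. Hence $S$ is monochromatic of colour ascending, which says exactly that $u(f(i))\le u(f(j))$ whenever $i\le j$; that is, $u\circ f$ is very good, as required.

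I expect no real obstacle here: the argument is a soft application of Ramsey's theorem. If one prefers to avoid Ramsey, a direct alternative runs as follows. Call $i\in\NN$ \emph{dead} if there is no $j>i$ with $u(i)\le u(j)$; the restriction of $u$ to the dead indices is a sequence with no good pair, so by wqo there are only finitely many dead indices, and beyond the last dead index one may greedily build an increasing chain $f(0)<f(1)<\dotsb$ with $u(f(k))\le u(f(k+1))$ for all $k$. Transitivity of the relation $\le$ on $\obj(\C)$—which holds because morphisms in a category compose—then upgrades these consecutive inequalities to $u(f(i))\le u(f(j))$ for all $i\le j$. The only point to keep in mind in this variant is that $\le$ need not be antisymmetric, but that is harmless for the argument.
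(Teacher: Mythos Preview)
Your proof is correct. Both routes you sketch are sound; in particular, the Ramsey argument is clean and the direct variant works because the non-dead indices are cofinite and the preorder is transitive.

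The paper's own proof follows a line closer to your second approach but with a stronger intermediate step: rather than working with ``dead'' indices (those with \emph{no} larger comparable index), it works with the set $J(u)$ of indices $i$ for which $\{j>i\mid u(i)\le u(j)\}$ is \emph{infinite}, shows $J(u)\neq\emptyset$ by constructing a bad subsequence if every such set were finite, and then recursively restricts to the infinitely many successors of $\min J(u)$. Your Ramsey argument is a genuinely different route --- it trades the explicit recursive extraction for an appeal to the infinite pigeonhole for pairs, which makes the proof shorter at the cost of invoking a stronger combinatorial principle. Your direct variant is actually simpler than the paper's: you only need ``at least one successor'' rather than ``infinitely many successors'', and the greedy construction then suffices because every index past the last dead one is again non-dead. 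Either way, all three arguments are standard for this fact about wqo's.
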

\begin{proof}
 Given any sequence $u\colon \NN\to\obj(\C)$ and $i\in\NN$, put
 \[ I(u,i) = \{j > i \mid u(i) \leq u(j) \}. \]
 Then put $J(u)=\{i\mid |I(u,i)|=\infty\}$.  Suppose that
 $J(u)$ is empty, so $I(u,i)$ is finite for all $i$.  Define
 $f\colon\NN\to\NN$ recursively by $f(0)=0$ and 
 \[ f(i+1) = \min\{j\mid j> f(i) \text{ and } j>k 
      \text{ for all } k \in I(u,f(i)) \}.
 \]
 It is then not hard to see that $u\circ f$ is bad, contradicting the
 assumption that $\C$ is wqo.  It follows that $J(u)$ must actually be
 nonempty.  Put $j(u)=\min(J(u))$, so $I(u,j(u))$ is infinite.  Put
 $T(u)=u\circ f$, where $f\colon \NN\to\NN$ is the unique strictly
 increasing map with image $I(u,j(u))$.  Now define
 $R(u)\colon \NN\to\obj(\C)$ recursively by $R(u)(0)=u(j(0))$ and
 $R(u)(i+1)=R(T(u))(i)$.  We find that $R(u)$ is a very good
 subsequence of $u$.
\end{proof}

\begin{definition}\label{def-hom-ordering}
 Let $\C$ be a small category.
 \begin{itemize}
  \item We say that $\C$ is \emph{rigid} if every endomorphism is an
   identity.
  \item A \emph{hom-ordering} on $\C$ consists of a system of
   well-orderings of the hom sets $\C(X,Y)$ such that for all
   $\alpha \colon Y \to Z$, the induced map
   $\alpha_* \colon \C(X,Y) \to \C(X,Z)$ is monotone.
 \end{itemize}
\end{definition}

\begin{definition}\label{def-Groebner}
 Let $\C$ be a small category and let $\D$ be essentially small.
 \begin{itemize}
  \item We say that $\C$ is \emph{Gr\"{o}bner} if it is rigid,
   slice-wqo and it admits a hom-ordering.
  \item We say that $\D$ is \emph{quasi-Gr\"{o}bner} if there is a
   Gr\"{o}bner category $\C$ and an essentially surjective functor
   $M\colon\C\to\D$ such that each comma category $(x\downarrow M)$
   has a finite weakly initial set.  In more detail, the condition is
   as follows: for each $x\in\D$ there must exist a finite list of
   objects $y_1,\dotsc,y_n\in\C$ and morphisms $f_i\colon x\to M(y_i)$,
   such that for any $y\in\C$ and any $f\colon x\to M(y)$ there exists $i$
   and $g\colon y_i\to y$ with $f=M(g)\circ f_i$.  This is known as
   \emph{Condition (F)}.
 \end{itemize}
\end{definition}

We are finally ready to state the criterion.
	
\begin{theorem}{\cite{Sam2017}*{4.3.2}}\label{thm-grobner-noetherian}
 Let $\D$ be a quasi-Gr\"{o}bner category. Then the category
 $[\D,\Vect_k]$ is locally noetherian.
\end{theorem}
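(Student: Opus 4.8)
Since this is a theorem of Sam and Snowden (\cite{Sam2017}*{4.3.2}), the plan is to recall the structure of their proof. By the definition of local noetherianity it suffices to show that for every $x\in\D$ the principal projective $P^{\D}_x=k[\D(x,-)]$ is a noetherian object of $[\D,\Vect_k]$; the argument splits into two layers — the case where $\D$ is itself \emph{Gr\"obner}, and a transfer along a functor $M\colon\C\to\D$ as in Definition~\ref{def-Groebner}.

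\textbf{The Gr\"obner case.} First I would prove that if $\C$ is Gr\"obner then $[\C,\Vect_k]$ is locally noetherian. Call a morphism $x\to y$ a \emph{monomial}, so $P^{\C}_x(y)=k[\C(x,y)]$ is free on the monomials out of $x$ at $y$; the hom-ordering (Definition~\ref{def-hom-ordering}) well-orders these, and each nonzero $h\in P^{\C}_x(y)$ has a leading monomial $\mathrm{lead}(h)$. For a subobject $N\le P^{\C}_x$ let $\mathrm{init}(N)$ be the set of leading monomials of nonzero elements of $N$, viewed inside $\obj(x\downarrow\C)$. Using rigidity of $\C$ together with the monotonicity of post-composition, one checks that the behaviour of $\mathrm{init}(N)$ under $x\downarrow\C$ is governed by $N$; since $\C$ is slice-wqo the poset $x\downarrow\C$ is well-quasi-ordered, and an up-set in a wqo has finitely many minimal elements with every element dominating one of them. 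Picking $g_1,\dots,g_r\in N$ whose leading monomials are those minimal elements, a Gr\"obner division argument writes an arbitrary $0\ne h\in N$ as a combination of post-composites of the $g_i$, cancelling leading terms and terminating because leading monomials strictly decrease in a well-order. Hence every subobject of $P^{\C}_x$ is finitely generated, so $P^{\C}_x$ is noetherian.

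\textbf{The quasi-Gr\"obner reduction.} Now suppose $\D$ is quasi-Gr\"obner, witnessed by a Gr\"obner category $\C$ and an essentially surjective $M\colon\C\to\D$ satisfying Condition~(F). The restriction functor $M^{*}\colon[\D,\Vect_k]\to[\C,\Vect_k]$ is exact, and because $M$ is essentially surjective it sends a strict inclusion of subobjects of $P^{\D}_x$ to a strict inclusion of subobjects of $M^{*}(P^{\D}_x)$. So it is enough to show that $M^{*}(P^{\D}_x)$ is noetherian over $\C$. This is precisely what Condition~(F) supplies: choosing $y_1,\dots,y_n\in\C$ and $f_i\colon x\to My_i$ as in Definition~\ref{def-Groebner}, the map $\bigoplus_{i=1}^{n}P^{\C}_{y_i}\to M^{*}(P^{\D}_x)$ carrying the $i$-th generator to $f_i$ is an epimorphism, so $M^{*}(P^{\D}_x)$ is finitely generated over $\C$ and hence noetherian (in a locally noetherian category finitely generated objects are noetherian). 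An infinite strictly ascending chain of subobjects of $P^{\D}_x$ would then produce one in $M^{*}(P^{\D}_x)$, which is impossible; thus $P^{\D}_x$ is noetherian and $[\D,\Vect_k]$ is locally noetherian.

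\textbf{Where the difficulty lies.} The transfer step is essentially formal once Condition~(F) is exploited correctly. The real content is the Gr\"obner case, and inside it the two supporting ingredients: the combinatorial statement that up-sets in a well-quasi-ordered set are finitely generated — the categorical substitute for Dickson's lemma, fed by the slice-wqo hypothesis — and the careful bookkeeping of leading terms and Gr\"obner division at the level of a general (rigid, hom-ordered) category, which is where the axioms of a hom-ordering and rigidity do the delicate work. Both are carried out in~\cite{Sam2017}.
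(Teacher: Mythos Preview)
The paper does not give its own proof of this theorem: it is stated with a citation to \cite{Sam2017}*{4.3.2} and immediately followed by a remark on terminology, with no proof environment. Your proposal correctly recognises this and provides a faithful sketch of Sam--Snowden's argument (the Gr\"obner-basis reduction via leading monomials and slice-wqo, followed by the transfer along $M^*$ using Condition~(F)); this matches the cited source, so there is nothing further to compare.
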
 

\begin{remark}\label{rem-terminology}
 Here and elsewhere we have used terminology and notation that seems
 clear to us and compatible with the rest of our work, but which
 differs from that in~\cite{Sam2017} and related references.  In
 particular, our ``rigid'' (as in Definition~\ref{def-hom-ordering})
 is their ``direct'', and our ``wqo'' is their ``noetherian''.  Our
 ``hom-ordering'' is their condition (G1), and our ``slice-wqo''
 condition is their (G2).
\end{remark}

Before proving Theorem~\ref{thm-p-groups-is-locally-noetherian} we
need to introduce more notation and prove some technical results.

\subsection*{Well-quasi orders}

\begin{remark}\label{rem-hereditarily-finite}
 To deal with some set-theoretic issues, we let $\X$ denote the set of
 hereditarily finite sets, so $\X$ is countable and closed under
 taking subsets, products and quotients, and contains sets of all
 finite orders. When we discuss categories of finite sets with extra
 structure, we will implicitly assume that the underlying sets are in
 $\X$, so that the category will be small.
\end{remark}

\begin{definition}\label{def-comonotone}
 Let $\C$ and $\D$ be preordered sets, and let $f \colon \C \to \D$ be
 a function.
 \begin{itemize}
  \item[(a)] We say that $f$ is \emph{monotone} if $p \leq p'$ implies
   $f(p)\leq f(p')$.
  \item[(b)] We say that $f$ is \emph{comonotone} if $f(p) \leq f(p')$
   implies $p \leq p'$.
 \end{itemize}
\end{definition}

\begin{remark}\label{rem-monotone-functor}
 Here $\C$ is and $\D$ might be small categories, regarded as
 preordered sets as in
 Remark~\ref{rem-preordered-sets-and-categories}. In that case, any
 functor $f \colon \C \to \D$ gives a monotone map.
\end{remark}	

\begin{proposition}\label{prop-comonotone-wqo}
 If $f\colon\C\to\D$ is comonotone and $\D$ is wqo then $\C$ is wqo.
\end{proposition}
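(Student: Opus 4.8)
The plan is to unwind the definition of \emph{wqo} and simply transport a would-be bad sequence forward along $f$. First I would fix an arbitrary sequence $u\colon\NN\to\obj(\C)$ and form the composite $f\circ u\colon\NN\to\obj(\D)$, which is a sequence in $\D$ (here I am using the convention of Remark~\ref{rem-preordered-sets-and-categories} to regard $\C$ and $\D$ as preordered sets, so $f\circ u$ genuinely makes sense as a sequence in the preorder $\D$). Since $\D$ is assumed wqo, the sequence $f\circ u$ is good, so there exist indices $i<j$ with $(f\circ u)(i)\leq(f\circ u)(j)$, i.e.\ $f(u(i))\leq f(u(j))$ in $\D$.

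Now I would invoke comonotonicity of $f$: from $f(u(i))\leq f(u(j))$ we get $u(i)\leq u(j)$ in $\C$, which exhibits $u$ as a good sequence. Since $u$ was arbitrary, every sequence in $\C$ is good, so $\C$ is wqo. There is essentially no obstacle to overcome here; the statement is a one-step consequence of the definitions, and the only point requiring a little care is to keep the two preorders straight (the inequality $u(i)\leq u(j)$ being interpreted via $\C(u(i),u(j))\neq\emptyset$ when $\C$ is a category) so that ``good'' is being applied and concluded in the correct category. I would therefore present the argument in the two lines above, perhaps with an explicit reminder that comonotone maps reflect $\leq$, and leave it at that.
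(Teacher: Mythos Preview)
Your proposal is correct and takes essentially the same approach as the paper: compose an arbitrary sequence with $f$, use that $\D$ is wqo to find $i<j$ with $f(u(i))\leq f(u(j))$, and reflect this back to $\C$ via comonotonicity. The paper's proof is the same two-line argument.
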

\begin{proof}
 If $u \colon \NN \to \C$ is a sequence, then $f \circ u$ must be
 good, so there exists $i \leq j$ with $fu(i) \leq fu(j)$, but that
 implies $u(i) \leq u(j)$ by the comonotone property.
\end{proof}

\begin{proposition}\label{prop-product-of-wqo-is-wqo}
 Any finite product of wqo preordered sets is again wqo.
\end{proposition}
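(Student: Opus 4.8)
The plan is to induct on the number of factors, reducing everything to the case of a product of two wqo preordered sets, and to handle that case by combining the two wqo hypotheses one coordinate at a time by means of Lemma~\ref{lem-wqo-equiv}.

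First I would dispose of the trivial cases. The empty product is the one-point preordered set, which is wqo by Remark~\ref{rem-finite-wqo} (any sequence in a finite preordered set is non-injective, hence good), and a single factor is wqo by hypothesis. It then suffices to prove that if $P$ and $Q$ are wqo, then so is the product $P\times Q$ equipped with the componentwise preorder, in which $(p,q)\leq(p',q')$ iff $p\leq p'$ and $q\leq q'$; the case of $n\geq 2$ factors follows by writing $P_1\times\dotsb\times P_n=(P_1\times\dotsb\times P_{n-1})\times P_n$ and applying the induction hypothesis.

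For the binary case, I would take an arbitrary sequence $w\colon\NN\to P\times Q$ and write $w(n)=(u(n),v(n))$ with $u\colon\NN\to P$ and $v\colon\NN\to Q$. Since $P$ is wqo, Lemma~\ref{lem-wqo-equiv} provides a strictly increasing map $f\colon\NN\to\NN$ such that $u\circ f$ is very good, so $u(f(i))\leq u(f(j))$ whenever $i\leq j$. Now $v\circ f$ is a sequence in the wqo preordered set $Q$, hence good: there exist $i<j$ with $v(f(i))\leq v(f(j))$. For this same pair we already know $u(f(i))\leq u(f(j))$, so $w(f(i))\leq w(f(j))$ in $P\times Q$ and $f(i)<f(j)$. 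Thus $w$ is good, and $P\times Q$ is wqo.

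There is essentially no serious obstacle here; the only point requiring care is the order of operations. One must first pass to a subsequence on which the $P$-coordinate is non-decreasing, using the ``very good subsequence'' strengthening of the wqo property from Lemma~\ref{lem-wqo-equiv}, and only then invoke the wqo property of $Q$ on the surviving $Q$-coordinates. Attempting to apply only the bare ``good sequence'' property to the two coordinates independently does not produce a single pair of indices that works simultaneously for both.
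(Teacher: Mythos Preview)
Your proof is correct and follows essentially the same approach as the paper: reduce to the binary case, pass to a subsequence on which the first coordinate is nondecreasing via Lemma~\ref{lem-wqo-equiv}, then use the wqo property of the second factor. The only cosmetic difference is that the paper invokes Lemma~\ref{lem-wqo-equiv} a second time to get a very good subsequence in both coordinates, whereas you (slightly more economically) use only the bare goodness of $Q$ in the second step.
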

\begin{proof}
 It suffices to show that if $P$ and $Q$ are wqo, then so is
 $P \times Q$.  Let $u \colon \NN \to P \times Q$ be a sequence. As
 $P$ is wqo, we can find a subsequence $v$ such that $\pi_P \circ v$
 is nondecreasing. As $Q$ is wqo, we can then find a subsequence $w$
 of $v$ such that $\pi_Q \circ w$ is nondecreasing. Now $w$ is
 nondecreasing subsequence of $u$.
\end{proof}

We now recall the Nash-Williams theory of minimal bad
sequences~\cite{Nash}. 
\begin{definition}\label{def-min-bad}
 Let $P$ be a preordered set. We say that a finite list $u \in P^n$ is
 \emph{bad} if there is no pair $(i,j)$ with $0 \leq i <j<n$ and
 $u(i) \leq u(j)$. We say that such a finite list $u$ is \emph{very
  bad} if there is an infinite bad sequence extending it. If so, the
 set
 \[
   E(u)= \left\lbrace u' \in P \mid (u(0), \ldots, u(n-1), u') \;
    \text{is \; very \; bad} \right\rbrace
 \]
 is nonempty. Now suppose we have a well-ordered set $W$ and a
 function $\lambda \colon P \to W$.  Put
 \[
  EM(u) = \{u' \in E(u) \mid \lambda(u')=
            \min(\lambda(E(u))) \} \neq \emptyset.
 \]
 We say that a very bad list $u \in P^n$ is $\lambda$-\emph{minimal}
 if for all $k < n$ we have $u(k) \in EM(u_{<k})$. We say that a bad
 sequence $u$ is $\lambda$-\emph{minimal} if every initial segment
 $u_{<k}$ is $\lambda$-minimal.
\end{definition}

\begin{lemma}\label{lem-min-bad}
 If $P$ is not wqo, then it has a $\lambda$-minimal bad sequence.
\end{lemma}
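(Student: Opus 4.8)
The plan is to carry out the classical Nash--Williams minimal bad sequence construction: we build $u$ by recursion on $\NN$ so that every initial segment lies in the appropriate minimising set $EM(\cdot)$, and then we show that the resulting infinite sequence is automatically bad and $\lambda$-minimal.

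First I would note that, since $P$ is not wqo, there exists an infinite bad sequence; hence the empty list is very bad and $E(\emptyset)\neq\emptyset$. The heart of the argument is the recursive step. Suppose $u_{<k}=(u(0),\dots,u(k-1))$ has been defined, is a very bad list, and satisfies $u(j)\in EM(u_{<j})$ for all $j<k$. Choosing any infinite bad sequence $w$ that extends $u_{<k}$, we see that $(u(0),\dots,u(k-1),w(k))$ is very bad (witnessed by $w$ itself), so $w(k)\in E(u_{<k})$ and in particular $E(u_{<k})\neq\emptyset$. Since $\lambda(E(u_{<k}))$ is a nonempty subset of the well-ordered set $W$, it has a least element, so $EM(u_{<k})\neq\emptyset$; we pick $u(k)\in EM(u_{<k})$. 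By the definition of $E$, the extended list $u_{<k+1}$ is again very bad, so the recursion proceeds. (It uses dependent choice, which is standard in this context.) This produces an infinite sequence $u\colon\NN\to P$ such that $u_{<k}$ is very bad and $u(k)\in EM(u_{<k})$ for every $k\in\NN$.

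It then remains to verify that $u$ is a bad sequence and that it is $\lambda$-minimal. Minimality is immediate: by construction every initial segment $u_{<k}$ is a very bad list with $u(j)\in EM(u_{<j})$ for all $j<k$. For badness I would argue by contradiction. If $u(i)\le u(j)$ for some $i<j$, then the finite list $u_{<j+1}$ violates the defining condition of a bad list; but $u_{<j+1}$ was shown above to be very bad, hence an initial segment of an infinite bad sequence, hence itself bad --- a contradiction. Therefore $u$ is a $\lambda$-minimal bad sequence.

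I do not anticipate any real obstacle. The only point that needs care is the bookkeeping which keeps the ``very bad'' property alive throughout the recursion, so that $E(u_{<k})$ never becomes empty, together with the elementary observation that a very bad list is in particular bad --- and it is precisely this observation that forces the limit sequence $u$ to be bad. The well-ordering of $W$ enters only to guarantee that the minimising choice $EM(u_{<k})$ is available at each stage.
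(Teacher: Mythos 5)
Your proof is correct and follows exactly the approach of the paper, which gives only a one-line version of the same recursion (start from the empty very bad list and repeatedly choose $u(k)\in EM(u_{<k})$). Your added verifications — that the very bad property propagates so $E(u_{<k})$ stays nonempty, and that badness of the limit sequence follows because every very bad initial segment is in particular a bad list — are precisely the bookkeeping the paper leaves implicit.
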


\begin{proof}
 Start with the empty sequence, which is very bad by the assumption
 that $P$ is not wqo. Then choose recursively $u(k) \in EM(u_{< k})$
 for all $k \geq 0$.
\end{proof}

The following result abstracts the logic used for various wqo proofs
in the literature.
\begin{proposition}\label{prop-chi-wqo}
 Let $P$ and $\lambda$ be as above. Let $P_0$ be a subset of $P$, and
 let $\chi\colon P_0\to P$ be a map such that
 \begin{itemize}
  \item[(a)] For all $x\in P_0$ we have $\chi(x)\leq x$ and
   $\lambda(\chi(x))<\lambda(x)$.
  \item[(b)] Every bad sequence $u\colon \NN\to P$ has a subsequence
   $v$ contained in $P_0$ with the following property: if $i<j$ with
   $\chi(v(i))\leq\chi(v(j))$, then $v(i)\leq v(j)$.
 \end{itemize}
 Then $P$ is wqo.
\end{proposition}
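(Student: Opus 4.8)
The plan is to run the Nash--Williams minimal bad sequence argument. Suppose, for a contradiction, that $P$ is not wqo. By Lemma~\ref{lem-min-bad} we may fix a $\lambda$-minimal bad sequence $u\colon\NN\to P$. Applying hypothesis~(b) to $u$, we obtain a strictly increasing function $f\colon\NN\to\NN$ such that the subsequence $v=u\circ f$ takes all its values in $P_0$ and satisfies: whenever $i<j$ with $\chi(v(i))\leq\chi(v(j))$, we also have $v(i)\leq v(j)$. Put $N=f(0)$, so that $v(0)=u(N)$, and define a new infinite sequence $w\colon\NN\to P$ by $w(k)=u(k)$ for $k<N$ and $w(N+k)=\chi(v(k))$ for $k\geq 0$.

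First I would check that $w$ is bad. Suppose instead that there are indices $i<j$ with $w(i)\leq w(j)$, and consider three cases. If $j<N$, then $w(i)=u(i)\leq u(j)=w(j)$, contradicting badness of $u$. If $i<N\leq j$, write $j=N+m$; then by hypothesis~(a) we get $u(i)=w(i)\leq w(j)=\chi(v(m))\leq v(m)=u(f(m))$, and since $f$ is strictly increasing we have $f(m)\geq f(0)=N>i$, so $u(i)\leq u(f(m))$ with $i<f(m)$, again contradicting badness of $u$. Finally, if $N\leq i<j$, write $i=N+l$ and $j=N+m$ with $l<m$; then $\chi(v(l))=w(i)\leq w(j)=\chi(v(m))$, so the splicing property of $v$ gives $v(l)\leq v(m)$, that is $u(f(l))\leq u(f(m))$ with $f(l)<f(m)$, contradicting badness of $u$ once more. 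Hence $w$ is bad.

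It remains to derive the contradiction from $\lambda$-minimality. Since $w$ is an infinite bad sequence, its initial segment $(u(0),\dotsc,u(N-1),\chi(v(0)))=(w(0),\dotsc,w(N))$ is very bad, so $\chi(v(0))\in E(u_{<N})$. Because $u$ is $\lambda$-minimal we have $u(N)\in EM(u_{<N})$, hence $\lambda(u(N))=\min(\lambda(E(u_{<N})))\leq\lambda(\chi(v(0)))$. On the other hand, hypothesis~(a) gives $\lambda(\chi(v(0)))<\lambda(v(0))=\lambda(u(N))$, a contradiction. Therefore $P$ must be wqo. I do not expect any serious obstacle here beyond bookkeeping: the only place that needs care is the three-case verification that $w$ is bad, where one must simultaneously use the strict monotonicity of $f$, the inequality $\chi(x)\leq x$ from~(a), and the splicing property of $v$ from~(b); everything else is a direct unwinding of Definition~\ref{def-min-bad}.
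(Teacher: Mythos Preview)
Your proof is correct and follows essentially the same approach as the paper: choose a $\lambda$-minimal bad sequence, splice its initial segment with $\chi$ applied to the subsequence from hypothesis~(b), verify the spliced sequence is bad via the same three cases, and obtain a contradiction with $\lambda$-minimality. If anything, your write-up of the final contradiction (unwinding the definitions of $E$ and $EM$) is slightly more explicit than the paper's.
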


\begin{proof}
 Suppose not, so there exists a minimal bad sequence $u$. Let $v$ be a
 subsequence as in~(b), so $v(n)=u(f(n))$ for some strictly increasing
 map $f\colon \NN\to\NN$. Define $w(n)=u(n)$ for $n<f(0)$ and
 $w(f(0)+k)=\chi(v(k))$. We claim that $w$ is bad. If not, we have
 $i<j$ with $w(i)\leq w(j)$. If $j<f(0)$ this gives $u(i)\leq u(j)$,
 contradicting the badness of $u$. Suppose instead that
 $i<f(0)\leq j$, so $w(i)=u(i)$ and $w(j)=\chi(v(j'))=\chi(u(j''))$
 for some $j'\geq 0$ and $j''\geq f(0)$. We now have
 $u(i)\leq\chi(u(j''))\leq u(j'')$, again contradicting the badness of
 $u$. This just leaves the possibility that $f(0)\leq i<j$, so
 $w(i)=\chi(v(i'))=\chi(u(i''))$ and $w(j)=\chi(v(j'))=\chi(u(j''))$
 for some $i',j',i'',j''$ with $i'<j'$ and $i''<j''$. We now have
 $\chi(v(i'))\leq\chi(v(j'))$ so $v(i')\leq v(j')$ b y condition~(b),
 so $u(i'')\leq u(j'')$, yet again contradicting the badness of
 $u$. It follows that $w$ must be bad after all. However, this
 contradicts the $\lambda$-minimality of $u(f(0))$ in $E(u_{<f(0)})$.
\end{proof}

\begin{definition}\label{def-words}
 Let $\C$ be a wqo category. We define $\S\C$ to be the category of
 pairs $(X,p)$, where $X$ is a finite, totally ordered set, and
 $p\colon X\to\C$. A morphism from $(X,p)$ to $(Y,q)$ consists of a
 strictly monotone map $\overline{\phi}\colon X\to Y$ together with a
 family of morphisms $\phi_x\colon p(x)\to q(\overline{\phi}(x))$ for
 each $x\in X$.  These are composed in the obvious way.  We put
 $\lambda(X,p)=|X|$.
\end{definition}

\begin{remark}\label{rem-words-preorder}
 If $\C$ is just a preordered set, then a morphism from $(X,p)$ to
 $(Y,q)$ is just a strictly monotone map
 $\overline{\phi}\colon X\to Y$ such that
 $p(x)\leq q(\overline{\phi}(x))$ for all $x$.
\end{remark}

The following result is standard (although typically formulated a
little differently).  We give the proof to illustrate the use of
Proposition~\ref{prop-chi-wqo}. 
\begin{proposition}[Higman's Lemma]\label{prop-words-wqo}
 $\S\C$ is wqo.
\end{proposition}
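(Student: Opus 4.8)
The plan is to deduce this from the Nash--Williams machinery packaged in Proposition~\ref{prop-chi-wqo}, applied with $P=\S\C$. By Remark~\ref{rem-wqo-preorder} it suffices to treat $\S\C$ as a preordered set, and by Remark~\ref{rem-words-preorder} we have $(X,p)\le(Y,q)$ precisely when there is a strictly monotone map $\bar\phi\colon X\to Y$ with $p(x)\le q(\bar\phi(x))$ for all $x\in X$ (the subword order with the $\C$-preorder on letters). We take $\lambda(X,p)=|X|\in\NN$ as in Definition~\ref{def-words}, with $\NN$ well-ordered in the usual way. For the subset $P_0\subseteq P$ we take the nonempty words, i.e.\ the pairs $(X,p)$ with $X\neq\emptyset$, and we define $\chi\colon P_0\to P$ by deleting the least letter: $\chi(X,p)=(X\setminus\{\min X\},\,p|_{X\setminus\{\min X\}})$.

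First I would check condition~(a). The inclusion $X\setminus\{\min X\}\hookrightarrow X$ is strictly monotone and compatible with $p$, so $\chi(X,p)\le(X,p)$, and $\lambda(\chi(X,p))=|X|-1<|X|=\lambda(X,p)$. For condition~(b), let $u\colon\NN\to\S\C$ be a bad sequence. No term $u(n)$ can be the empty word, since the empty word admits a (unique) morphism into every object of $\S\C$, so $u(n)\le u(n+1)$ would hold, contradicting badness; hence $u$ already takes values in $P_0$. Write $u(n)=(X_n,p_n)$, let $x_n=\min X_n$, and let $c_n=p_n(x_n)\in\C$ be the first letter. Since $\C$ is wqo, Lemma~\ref{lem-wqo-equiv} gives a strictly increasing $f\colon\NN\to\NN$ with $c_{f(i)}\le c_{f(j)}$ in $\C$ for all $i\le j$. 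Set $v(n)=u(f(n))$, a subsequence of $u$ contained in $P_0$.

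It remains to verify the implication in~(b) for this $v$. Suppose $i<j$ and $\chi(v(i))\le\chi(v(j))$, witnessed by a strictly monotone $\psi\colon X_{f(i)}\setminus\{x_{f(i)}\}\to X_{f(j)}\setminus\{x_{f(j)}\}$ with $p_{f(i)}(x)\le p_{f(j)}(\psi(x))$ for all $x$ in its domain. Extend $\psi$ to $\bar\phi\colon X_{f(i)}\to X_{f(j)}$ by sending $x_{f(i)}$ to $x_{f(j)}$. Since $\operatorname{im}(\psi)\subseteq X_{f(j)}\setminus\{x_{f(j)}\}$ and $x_{f(j)}=\min X_{f(j)}$, the map $\bar\phi$ is still strictly monotone, and it is compatible with the letter functions: on $x_{f(i)}$ we have $p_{f(i)}(x_{f(i)})=c_{f(i)}\le c_{f(j)}=p_{f(j)}(x_{f(j)})$ by the choice of $f$, and on the remaining letters this is exactly the inequality supplied by $\psi$. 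Hence $v(i)\le v(j)$ in $\S\C$. Thus~(b) holds, and Proposition~\ref{prop-chi-wqo} yields that $\S\C$ is wqo.

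The only genuinely delicate point is the gluing step in the last paragraph: the role of the wqo hypothesis on $\C$ is precisely to gain control of the deleted first letters along a subsequence, after which one must check that prepending the comparison $c_{f(i)}\le c_{f(j)}$ to the given embedding of tails still produces a strictly monotone, letter-compatible map — that is, a morphism in $\S\C$. Everything else is formal, since the minimal-bad-sequence bookkeeping has been abstracted into Proposition~\ref{prop-chi-wqo}.
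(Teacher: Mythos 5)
Your proof is correct and follows essentially the same route as the paper: the same choice of $\chi$ (delete the least letter), the same length function $\lambda$, the same appeal to Proposition~\ref{prop-chi-wqo}, and the same use of the wqo hypothesis on $\C$ to pass to a subsequence along which the deleted first letters are nondecreasing. The only difference is that you spell out the gluing of the first-letter comparison onto the tail embedding, which the paper dismisses with ``it follows easily''; your verification of that step is accurate.
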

\begin{proof}
 For $(X,p)$ with $X\neq\emptyset$ we define $x_0=\min(X)$ and
 $\epsilon(X,p)=p(x_0)\in\C$ and $\chi(X,p)=(X',p')$, where
 $X'=X \sm\{x_0\}$ and $p'=p|_{X'}$.  This clearly satisfies
 condition~(a) of Proposition~\ref{prop-chi-wqo}. If
 $u\colon \NN\to\S\C$ is bad then $u(n)$ can never be empty (otherwise
 we would have $u(n)\leq u(n+1)$), so we have a sequence
 $u_1=\epsilon \circ u\colon\NN\to\C$. As $\C$ is wqo, we can choose a
 strictly increasing map $f\colon\NN\to\NN$ such that
 $u_1\circ f\colon \NN\to P$ is very good. Now put $v=u\circ f$.  If
 $i<j$ and $\chi(v(i))\leq\chi(v(j))$ then we also have
 $\epsilon(v(i))\leq\epsilon(v(j))$ and it follows easily that
 $v(i)\leq v(j)$. Using Proposition~\ref{prop-chi-wqo} we can now see
 that $\S\C$ is wqo.
\end{proof}

\begin{definition}\label{def-dag-monotone}
 Let $X$ and $Y$ be nonempty finite totally ordered sets. Let
 $\phi\colon X\to Y$ be a surjective map, which need not preserve the
 order.  We define an $\phi^\dagger \colon Y\to X$ by
 $\phi^\dagger(y)=\min(\phi^{-1}\{y\})$.  We say that $\phi$ is
 \emph{$\dag$-monotone} if $\phi^\dagger$ is monotone.
\end{definition}

\begin{lemma}\label{lem-dag-monotone}
 For any $\phi$ we have $\phi\phi^\dagger(y)=y$ for all $y\in Y$, and
 $\phi^\dagger\phi(x)\leq x$ for all $x\in X$. If $\phi$ is
 $\dagger$-monotone then we have $\phi(x)<y$ whenever
 $x<\phi^\dagger(y)$. In particular, if $x_0$ and $y_0$ are the
 smallest elements of $X$ and $Y$, then $\phi(x_0)=y_0$ and
 $\phi^\dagger(y_0)=x_0$.
\end{lemma}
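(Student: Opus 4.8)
The plan is to check the four assertions in order, each of which follows directly by unwinding the definition $\phi^\dagger(y)=\min(\phi^{-1}\{y\})$.

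First I would record the two tautological facts, valid for any surjective $\phi$. Since $\phi^\dagger(y)$ lies in the fibre $\phi^{-1}\{y\}$, applying $\phi$ returns $y$; this gives $\phi\phi^\dagger(y)=y$ (and incidentally shows $\phi^\dagger$ is injective). For the second, set $y=\phi(x)$, so that $x\in\phi^{-1}\{y\}$; then $x$ is at least the minimum of that fibre, i.e.\ $\phi^\dagger\phi(x)=\phi^\dagger(y)\leq x$.

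Next, assuming $\phi$ is $\dagger$-monotone, I would prove that $x<\phi^\dagger(y)$ forces $\phi(x)<y$. Put $z=\phi(x)$. The second fact gives $\phi^\dagger(z)\leq x<\phi^\dagger(y)$, so $\phi^\dagger(z)\neq\phi^\dagger(y)$, and hence $z\neq y$ since $\phi^\dagger$ is a well-defined function. If we had $z>y$, then monotonicity of $\phi^\dagger$ would give $\phi^\dagger(z)\geq\phi^\dagger(y)$, a contradiction. As $Y$ is totally ordered, the only remaining possibility is $z<y$, as required. Then for the statement about smallest elements, write $x_0=\min X$ and $y_0=\min Y$: minimality of $x_0$ gives $\phi^\dagger(y_0)\geq x_0$, and if this were strict, the previous paragraph (applied with $x=x_0$, $y=y_0$) would yield $\phi(x_0)<y_0$, contradicting minimality of $y_0$. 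Hence $\phi^\dagger(y_0)=x_0$, and then $\phi(x_0)=\phi\phi^\dagger(y_0)=y_0$ by the first fact.

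There is no genuine obstacle here; the only point needing a moment's care is the trichotomy argument on $Y$ in the third step, where one must separately exclude $z=y$ (because $\phi^\dagger(z)<\phi^\dagger(y)$) and $z>y$ (by $\dagger$-monotonicity) in order to conclude $z<y$. Everything else is immediate from the definition of $\phi^\dagger$.
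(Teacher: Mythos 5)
Your proof is correct and follows essentially the same route as the paper's: the first two facts by unwinding the definition of $\phi^\dagger$, the third via monotonicity of $\phi^\dagger$, and the last by combining the third with minimality of $y_0$. The only cosmetic difference is that for the third claim the paper proves the contrapositive ``$y\leq\phi(x)\Rightarrow\phi^\dagger(y)\leq x$'' directly, whereas you run a trichotomy on $\phi(x)$ versus $y$; these are the same argument.
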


\begin{proof}
 It is clear by definition that $\phi\phi^\dagger(y)=y$. Next, if
 $x\in X$ then $x$ is a preimage of $\phi(x)$, whereas
 $\phi^\dagger\phi(x)$ is the smallest preimage, so
 $\phi^\dagger\phi(x)\leq x$. Now suppose that $\phi$ is
 $\dagger$-monotone.  If $y\leq\phi(x)$ then
 $\phi^\dagger(y)\leq\phi^\dagger\phi(x)\leq x$. By the
 contrapositive, if $x<\phi^\dagger(y)$ we must have $\phi(x)<y$, as
 claimed. We now claim that $x_0=\phi^\dagger(y_0)$. Indeed, if not
 then $x_0<\phi^\dagger(y_0)$ so $\phi(x_0)<y_0$, contradicting the
 definition of $y_0$. We must therefore have $x_0=\phi^\dagger(y_0)$
 after all, and it follows that $\phi(x_0)=\phi\phi^\dagger(y_0)=y_0$.
\end{proof}

\begin{corollary}\label{cor-dag-comp}
 Suppose we have $\dagger$-monotone maps
 \[ X \xrightarrow{\phi} Y \xrightarrow{\psi} Z. \]
 Then $(\psi\phi)^\dagger=\phi^\dagger\psi^\dagger$, and so $\psi\phi$
 is also $\dagger$-monotone.
\end{corollary}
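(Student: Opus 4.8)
The plan is to establish the identity $(\psi\phi)^\dagger=\phi^\dagger\psi^\dagger$ first, and then deduce $\dagger$-monotonicity of $\psi\phi$ for free, since $\phi^\dagger$ and $\psi^\dagger$ are monotone by hypothesis and a composite of monotone maps is monotone. Since $\phi$ and $\psi$ are surjective, so is $\psi\phi$, so $(\psi\phi)^\dagger$ makes sense and we must check that it agrees with $\phi^\dagger\psi^\dagger$ at every $z\in Z$.

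For one inequality, I would observe that $\phi^\dagger\psi^\dagger(z)$ is a preimage of $z$ under $\psi\phi$: by Lemma~\ref{lem-dag-monotone} we have $\phi\bigl(\phi^\dagger\psi^\dagger(z)\bigr)=\psi^\dagger(z)$ and $\psi\bigl(\psi^\dagger(z)\bigr)=z$, so $\phi^\dagger\psi^\dagger(z)\in(\psi\phi)^{-1}\{z\}$. As $(\psi\phi)^\dagger(z)$ is by definition the minimum of that preimage set, this gives $(\psi\phi)^\dagger(z)\le\phi^\dagger\psi^\dagger(z)$.

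For the reverse inequality, write $x_0=(\psi\phi)^\dagger(z)$ and $y_1=\psi^\dagger(z)$. Since $\psi(\phi(x_0))=z$, the element $\phi(x_0)$ lies in $\psi^{-1}\{z\}$, and $y_1$ is the minimum of this set, so $\phi(x_0)\ge y_1$. Now suppose for contradiction that $x_0<\phi^\dagger(y_1)=\phi^\dagger\psi^\dagger(z)$. Because $\phi$ is $\dagger$-monotone, Lemma~\ref{lem-dag-monotone} yields $\phi(x_0)<y_1$, contradicting $\phi(x_0)\ge y_1$. Hence $x_0\ge\phi^\dagger\psi^\dagger(z)$, and combining with the previous paragraph we get $(\psi\phi)^\dagger(z)=\phi^\dagger\psi^\dagger(z)$ for all $z$, i.e. $(\psi\phi)^\dagger=\phi^\dagger\psi^\dagger$. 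Finally, $\phi^\dagger\colon Y\to X$ and $\psi^\dagger\colon Z\to Y$ are monotone (this is exactly $\dagger$-monotonicity of $\phi$ and $\psi$), so their composite $(\psi\phi)^\dagger$ is monotone, which is the assertion that $\psi\phi$ is $\dagger$-monotone.

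There is no real obstacle here: everything is a short bookkeeping argument with the two basic facts from Lemma~\ref{lem-dag-monotone} (namely $\phi\phi^\dagger=\mathrm{id}$ and the contrapositive statement ``$x<\phi^\dagger(y)\Rightarrow\phi(x)<y$''), and the only point to be careful about is applying that contrapositive to the right pair of elements, as in the middle paragraph above.
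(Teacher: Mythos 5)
Your proof is correct and follows essentially the same route as the paper's: both arguments hinge on the two facts from Lemma~\ref{lem-dag-monotone} (that $\phi\phi^\dagger=\mathrm{id}$ and that $x<\phi^\dagger(y)$ forces $\phi(x)<y$), the only difference being that you split the identity $(\psi\phi)^\dagger(z)=\phi^\dagger\psi^\dagger(z)$ into two inequalities whereas the paper verifies the minimality property of $\phi^\dagger\psi^\dagger(z)$ directly. The final deduction of $\dagger$-monotonicity from the composite of monotone maps is also identical.
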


\begin{proof}
 Given $z\in Z$ put $y=\psi^\dagger(z)$ and
 $x=\phi^\dagger(y)=\phi^\dagger\psi^\dagger(z)$. Using the Lemma we
 get $\psi\phi(x)=z$. We also see that if $x'<x=\phi^\dagger(y)$ then
 $\phi(x')<y=\psi^\dagger(z)$ and thus $\psi(\phi(x'))<z$. This means
 that $x$ has the defining property of $(\psi\phi)^\dagger(z)$. We
 therefore have $(\psi\phi)^\dagger=\phi^\dagger\psi^\dagger$. This is
 the composite of two increasing maps, so it is again increasing, so
 $\psi\phi$ is $\dagger$-monotone.
\end{proof}

\begin{definition}\label{def-L-dag}
 We define a category $\L_\dagger$ as follows.  The objects are finite
 nonempty sets equipped with a map $e_X \colon X \to \NN$, together
 with a total order on $X$.  The morphisms from $X$ to $Y$ are
 $\dagger$-monotone surjective maps $\phi\colon X\to Y$ such that
 $e_Y(\phi(x))\leq e_X(x)$ for all $x\in X$.
\end{definition}

\begin{definition}\label{def-L-dag-map}
 We define $\alpha,\beta \colon \L_\dagger\to\NN$ by
 $\alpha(X)=e_X(\min(X))$ and $\beta(X)=\min(e_X(X))$. Next, for
 $x\in X\sm\{\min(X)\}$ we define
 \[ e'_X(x) = \min\{e_X(x')\mid x'<x\} \in\NN, \]
 and $e^*_X(x)=(e_X(x),e'_X(x))\in\NN^2$. The set $X\sm\{\min(X)\}$
 together with the map $e_X^*$ define an object
 $\gamma(X)\in\S(\NN^2)$.
\end{definition}

\begin{proposition}\label{prop-L-dag}
 The map
 $(\alpha,\beta,\gamma)\colon \L_\dagger^{\op} \to
 \NN^2\times\S(\NN^2)$ is comonotone, so $\L_\dagger$ is cowqo.
\end{proposition}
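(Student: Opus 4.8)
The plan is to verify that $(\alpha,\beta,\gamma)$ is comonotone and then invoke Proposition~\ref{prop-comonotone-wqo}, given that the target $\NN^2\times\S(\NN^2)$ is wqo. For the latter: $\NN$ is well-ordered, hence wqo by Remark~\ref{rem-wo-wqo}; so $\NN^2$ is wqo by Proposition~\ref{prop-product-of-wqo-is-wqo}; so $\S(\NN^2)$ is wqo by Higman's Lemma (Proposition~\ref{prop-words-wqo}); and finally $\NN^2\times\S(\NN^2)$ is wqo by Proposition~\ref{prop-product-of-wqo-is-wqo} again. It then remains to show that whenever $X,Y\in\L_\dagger$ satisfy $\alpha(X)\le\alpha(Y)$, $\beta(X)\le\beta(Y)$ and there is a morphism $\gamma(X)\to\gamma(Y)$ in $\S(\NN^2)$, there is a morphism $Y\to X$ in $\L_\dagger$ (equivalently $X\le Y$ in $\L_\dagger^\op$).

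To build such a morphism, write $x_0=\min X$ and list $X\sm\{x_0\}$ as $x_1<\dots<x_m$, and put $y_0=\min Y$. A morphism $\gamma(X)\to\gamma(Y)$ is by definition a strictly monotone injection $\psi_0\colon\{x_1,\dots,x_m\}\hookrightarrow Y\sm\{y_0\}$ with $e_X(x_i)\le e_Y(\psi_0(x_i))$ and $e'_X(x_i)\le e'_Y(\psi_0(x_i))$ for all $i$. Extend it to a strictly monotone injection $\psi\colon X\hookrightarrow Y$ by $\psi(x_0)=y_0$ (this matches the constraint from Lemma~\ref{lem-dag-monotone}, that any $\dagger$-monotone surjection $Y\to X$ must send $y_0$ to $x_0$). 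This partitions $Y$ into blocks $B_i=\{y\in Y\mid\psi(x_i)\le y<\psi(x_{i+1})\}$ for $0\le i<m$ and $B_m=\{y\in Y\mid y\ge\psi(x_m)\}$. I would then define $\phi\colon Y\to X$ by: $\phi(\psi(x_i))=x_i$ for each $i$; for an ``interior'' point $y\in B_i$ with $\psi(x_i)<y$ and $i<m$, set $\phi(y)=x_{(i)}$ where $x_{(i)}\in\{x_0,\dots,x_i\}$ achieves $e_X(x_{(i)})=\min(e_X(x_0),\dots,e_X(x_i))=e'_X(x_{i+1})$; and for an ``overhang'' point $y\in B_m$ with $y>\psi(x_m)$, set $\phi(y)=x_\beta$ where $x_\beta\in X$ achieves $e_X(x_\beta)=\beta(X)$. (When $X$ is a singleton, $\phi$ is just the constant map, and the checks below degenerate harmlessly.)

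The verification then has three parts. Surjectivity is immediate since $\phi(\psi(x_i))=x_i$. For $\dagger$-monotonicity one checks that $\phi^\dagger(x_i)=\psi(x_i)$ for every $i$: the only other preimages of $x_i$ are interior points of blocks $B_j$ with $j\ge i$ and overhang points, all of which exceed $\psi(x_i)$, while $\psi(x_i)$ itself lies in $\phi^{-1}\{x_i\}$; hence $\phi^\dagger=\psi$ is strictly monotone. For the inequality $e_X(\phi(y))\le e_Y(y)$ one splits along the three clauses of the definition of $\phi$: at $y=\psi(x_0)=y_0$ it is $\alpha(X)\le\alpha(Y)$; at $y=\psi(x_i)$ with $i\ge1$ it is the first-component condition of the $\gamma$-morphism; at an interior point $y\in B_i$ one uses $e_X(\phi(y))=e'_X(x_{i+1})\le e'_Y(\psi(x_{i+1}))\le e_Y(y)$, where the middle step is the second-component condition and the last holds because $y<\psi(x_{i+1})$; and at an overhang point $e_X(\phi(y))=\beta(X)\le\beta(Y)\le e_Y(y)$.

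I expect the main obstacle to be arranging the definition of $\phi$ so that surjectivity, $\dagger$-monotonicity and the $e$-decreasing condition hold simultaneously. The crucial observation is that an interior point of block $B_i$ must be routed to a running-minimum achiever among $x_0,\dots,x_i$ (so that the $e'$-component of the $\gamma$-morphism is what controls it), whereas an overhang point past $\psi(x_m)$ must be routed to a global minimiser of $e_X$ (so that $\beta$ controls it), and one must then confirm that neither re-routing perturbs $\phi^\dagger$. Once the definition is set up this way, everything else is routine bookkeeping.
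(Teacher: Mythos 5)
Your proof is correct and takes essentially the same route as the paper: build the strictly monotone injection $\psi$ from the hypothesis $\gamma(X)\le\gamma(Y)$, extend it by sending $\min X$ to $\min Y$, and define the surjection $\phi\colon Y\to X$ by inverting $\psi$ on its image, routing points strictly between $\psi(x_i)$ and $\psi(x_{i+1})$ to a minimiser of $e_X$ on $\{x'\mid x'<x_{i+1}\}$ (controlled by the $e'$-component of the $\gamma$-morphism) and points past $\max\psi(X)$ to a global minimiser of $e_X$ (controlled by $\beta$), then checking $\phi^\dagger=\psi$. The only addition is that you spell out why $\NN^2\times\S(\NN^2)$ is wqo, which the paper leaves implicit.
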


\begin{proof}
 Suppose that $\alpha(X)\leq\alpha(Y)$ and $\beta(X)\leq\beta(Y)$ and
 $\gamma(X)\leq\gamma(Y)$; we need to construct a morphism from $Y$ to
 $X$.  As $\beta(X)\leq\beta(Y)$, we can choose a strictly increasing
 map $\psi\colon X\sm\{\min(X)\}\to Y\sm\{\min(Y)\}$ with
 $e_X(x)\leq e_Y(\psi(x))$ and $e'_X(x)\leq e'_X(\psi(x))$ for all
 $x$. We extend $\psi$ over all of $X$ by putting
 $\psi(\min(X))=\min(Y)$, and note that the relation
 $e_X(x)\leq e_Y(\psi(x))$ remains true. We define
 $\phi\colon\psi(X)\to X$ by $\phi(\psi(x))=x$. Now consider an
 element $y\in Y\sm\psi(X)$, so $y\neq\min(Y)$. If $y>\max(\psi(X))$
 we choose $x$ with $e_X(x)=\beta(X)$ and define $\phi(y)=x$, noting
 that $e_Y(y)\geq\beta(Y)\geq\beta(X)=e_X(x)$. Otherwise, we let $x'$
 be least such that $\psi(x')>y$, then choose $x<x'$ with
 $e_X(x)=e'_X(x')$.  This gives
 \[ e_Y(y) \geq e'_Y(\psi(x')) \geq e'_X(x') = e_X(x), \] and we
 define $\phi(y)=x$. We now have a surjective map $\phi\colon Y\to X$
 with $e_Y(y)\geq e_X(\phi(y))$ for all $y$. We also have
 $\phi(\psi(x))=x$, and $\phi(y)<x$ whenever $y<\psi(x)$, so that
 $\psi=\phi^\dagger$. This means that $\phi$ is a morphism in
 $\L^\dagger$, as required.
\end{proof}

\begin{corollary}\label{cor-L-dag-cowqo}
 $\L_\dagger$ is slice-cowqo
\end{corollary}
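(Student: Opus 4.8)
The plan is to prove the equivalent assertion that $X\downarrow\L_\dagger^{\op}$ is wqo for every object $X$ of $\L_\dagger$. Following the pattern of Proposition~\ref{prop-L-dag}, I would construct a comonotone map from the underlying preorder of $X\downarrow\L_\dagger^{\op}$ into a finite product of copies of $\NN$ together with one $\S$-construction over a wqo alphabet, and then appeal to Propositions~\ref{prop-comonotone-wqo}, \ref{prop-product-of-wqo-is-wqo} and~\ref{prop-words-wqo}.

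First I would describe the slice concretely. Fix $X=\{x_1<\dots<x_k\}$ with $a_i=e_X(x_i)$. An object of $X\downarrow\L_\dagger^{\op}$ is a $\dagger$-monotone surjection $\psi\colon Y\to X$ in $\L_\dagger$; writing $F_i=\psi^{-1}\{x_i\}$ and $m_i=\psi^\dagger(x_i)=\min F_i$, Lemma~\ref{lem-dag-monotone} gives $m_1=\min Y$, $m_1<\dots<m_k$, $F_i\subseteq\{y:y\ge m_i\}$, and of course $e_Y\ge a_i$ on $F_i$. A morphism $(Y,\psi)\to(Y',\psi')$ in the slice is a $\dagger$-monotone surjection $g\colon Y'\to Y$ with $e_Y\circ g\le e_{Y'}$ and $\psi\circ g=\psi'$, so in particular $g$ restricts to a surjection $F_i'\to F_i$ for each $i$. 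For $y\in Y$ which is not one of the $m_i$, set $e'_Y(y)=\min\{e_Y(z):z<_Y y,\ \psi(z)=\psi(y)\}$, the least weight strictly below $y$ inside $y$'s own fibre. I would then let $\gamma(Y,\psi)$ be the word indexed by $Y\setminus\{m_1\}$, in its order, sending $y$ to $(\psi(y),\,[\,y\text{ is a fibre-minimum}\,],\,e_Y(y),\,e'_Y(y))\in X\times\{0,1\}\times\NN\times\NN$, where $X$ and $\{0,1\}$ are given the discrete order; this lands in $\S(X\times\{0,1\}\times\NN\times\NN)$, which is wqo by Higman's Lemma (Proposition~\ref{prop-words-wqo}) since the alphabet is a product of wqo's (Remarks~\ref{rem-finite-wqo} and~\ref{rem-wo-wqo}, Proposition~\ref{prop-product-of-wqo-is-wqo}). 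Finally I put $f(Y,\psi)=(e_Y(m_1),\,\min e_Y(F_1),\dots,\min e_Y(F_k),\,\gamma(Y,\psi))$, a point of $\NN^{1+k}\times\S(X\times\{0,1\}\times\NN\times\NN)$, whose target is wqo by Proposition~\ref{prop-product-of-wqo-is-wqo}.

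The key step is to show $f$ is comonotone, i.e.\ that $f(Y,\psi)\le f(Y',\psi')$ forces a slice morphism $(Y,\psi)\to(Y',\psi')$. The comparison of $\S$-components supplies a strictly monotone $\theta\colon Y\setminus\{m_1\}\to Y'\setminus\{m_1'\}$ with $\psi'\theta=\psi$, carrying fibre-minima to fibre-minima, and with $e_Y(y)\le e_{Y'}(\theta y)$ and $e'_Y(y)\le e'_{Y'}(\theta y)$; extending by $m_1\mapsto m_1'$ makes $\theta$ a strictly monotone embedding of $Y$ into $Y'$ over $X$ with $\theta(m_i)=m_i'$ for all $i$. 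Exactly as in the proof of Proposition~\ref{prop-L-dag}, I would now define $g\colon Y'\to Y$ by $g(\theta y)=y$ and, for $y'\notin\theta(Y)$ lying in $F_i'$: if $y'<_{Y'}\max\theta(F_i)$, choose $a\in F_i$ least with $\theta a>_{Y'}y'$ and send $y'$ to an element of $F_i$ lying below $a$ of weight $e'_Y(a)\,(\le e'_{Y'}(\theta a)\le e_{Y'}(y'))$; otherwise send $y'$ to an element of $F_i$ of weight $\min e_Y(F_i)\,(\le\min e_{Y'}(F_i')\le e_{Y'}(y'))$. A routine check shows $g$ is a fibre-respecting surjection with $e_Y\circ g\le e_{Y'}$ (at $m_1$ using $e_Y(m_1)\le e_{Y'}(m_1')$), that every $g$-preimage of $y$ is $\ge_{Y'}\theta(y)$, hence $g^\dagger=\theta$ and $g$ is $\dagger$-monotone. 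So $g$ is a morphism of the slice, $f$ is comonotone, $X\downarrow\L_\dagger^{\op}$ is wqo by Proposition~\ref{prop-comonotone-wqo}, and since $X$ is arbitrary $\L_\dagger$ is slice-cowqo.

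The point I expect to require the most care is this reconstruction of $g$: in contrast with Proposition~\ref{prop-L-dag}, the surjection must be $\dagger$-monotone \emph{and} compatible with the projection to $X$, and it is precisely to make both demands simultaneously satisfiable that the word alphabet has to record the fibre label $\psi(y)$ and the ``$y$ is a fibre-minimum'' flag (with the discrete order, so that a morphism of words respects the fibre partition exactly), and that $e'_Y$ has to be taken fibrewise rather than over all of $Y$. Once these decorations are in place, checking comonotonicity of $f$ is a direct transcription of the argument for Proposition~\ref{prop-L-dag}.
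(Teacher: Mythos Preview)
Your approach is correct, and it differs substantially from the paper's. The paper argues in two lines: it asserts that the fibre decomposition $(U\xrightarrow{p}X)\mapsto(p^{-1}\{x\})_{x\in X}$ gives a full and faithful embedding $\L_\dagger\downarrow X\to\prod_{x\in X}\L_\dagger$, and then invokes Proposition~\ref{prop-product-of-wqo-is-wqo}. That argument, however, has a gap: the embedding is faithful but not full, and the induced map of preorders is not even comonotone. For instance, take $X=\{x_1<x_2\}$ with $e_X\equiv 0$; take $U=\{u_1<u_2<u_3<u_4\}$ with $p^{-1}\{x_1\}=\{u_1,u_2,u_4\}$, $p^{-1}\{x_2\}=\{u_3\}$ and $e_U=(10,0,0,0)$; and take $U'=\{u'_1<u'_2<u'_3\}$ with $(p')^{-1}\{x_1\}=\{u'_1,u'_3\}$, $(p')^{-1}\{x_2\}=\{u'_2\}$ and $e_{U'}=(5,0,0)$. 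There are fibrewise $\L_\dagger$-morphisms $p^{-1}\{x_i\}\to(p')^{-1}\{x_i\}$, but every $\dagger$-monotone surjection $U\to U'$ over $X$ is forced to send $u_2\mapsto u'_1$, violating the weight condition since $e_{U'}(u'_1)=5>0=e_U(u_2)$. Your word encoding---recording the fibre label and the fibre-minimum flag with the discrete order, and using the \emph{fibrewise} running minimum $e'_Y$---is exactly what is needed to rule out such obstructions, and your reconstruction of $g$ (mirroring the proof of Proposition~\ref{prop-L-dag}, now done inside each fibre) confirms that nothing further is required. So what the paper's approach would buy is brevity; what yours buys is a complete argument.

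One small point to tidy: you define $e'_Y(y)$ only for $y$ not among the $m_i$, but then include $e'_Y(y)$ as a coordinate of the letter for every $y\in Y\setminus\{m_1\}$, including $m_2,\dots,m_k$. Either set $e'_Y(m_i)=0$ for $i\ge 2$ (harmless, since the discrete flag coordinate makes the $e'$-coordinate irrelevant when comparing two fibre-minima), or split the alphabet according to the flag.
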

\begin{proof}
 The construction $(X\xleftarrow{p}U)\mapsto (p^{-1}\{x\})_{x\in X}$
 gives a full and faithful embedding 
 $\L_\dagger \downarrow X\to\prod_{x\in X}\L_\dagger$. 
 Finally apply Proposition~\ref{prop-product-of-wqo-is-wqo}.
\end{proof}

\subsection*{Hom-orderings}
\label{subsec-hom-ordering}

\begin{remark}\label{rem-hom-ordering-op}
 In Definition~\ref{def-hom-ordering} we defined the notion of a
 hom-ordering on $\C$.  We can spell out the dual notion as follows:
 a hom-ordering of $\C^{op}$ consists of a system of well-orderings
 of the hom sets $\C(X,Y)$ such that for all $\beta \colon W \to X$,
 the induced map $\beta^* \colon \C(X,Y)\to \C(W,Y)$ is monotone.
\end{remark}

\begin{remark}\label{rem-hom-order-pullback}
 If $F \colon \C\to\D$ is a faithful functor and we have a
 hom-ordering on $\D$ then we can define a hom-ordering on $\C$ by
 declaring that $\phi\leq\psi$ if and only if $F\phi\leq F\psi$.
\end{remark}

\begin{definition}\label{def-hom-order-dag}
 Let $\F_\dagger$ be the category of finite totally ordered sets and
 $\dagger$-monotone surjections. We order $\F_\dagger(X,Y)$
 lexicographically, so $\phi<\psi$ if and only if there exists $x_0\in X$ with
 $\phi(x_0)<\psi(x_0)$ and $\phi(x)=\psi(x)$ for all $x<x_0$.
\end{definition}

\begin{proposition}\label{prop-hom-order-dag}
 This gives a hom-ordering on $\F^{\op}_\dagger$.
\end{proposition}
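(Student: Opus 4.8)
The plan is to verify the two defining properties of a hom-ordering on $\F^{\op}_\dagger$ as spelled out in Remark~\ref{rem-hom-ordering-op}: that the lexicographic order of Definition~\ref{def-hom-order-dag} is a well-ordering on each hom set $\F_\dagger(X,Y)$, and that for every morphism $\beta\colon W\to X$ in $\F_\dagger$ the precomposition map $\beta^*\colon\F_\dagger(X,Y)\to\F_\dagger(W,Y)$, $\phi\mapsto\phi\circ\beta$, is monotone. First note that a composite of $\dagger$-monotone surjections is again a $\dagger$-monotone surjection, by Corollary~\ref{cor-dag-comp} together with the fact that composites of surjections are surjective; hence $\F_\dagger$ genuinely is a category and $\beta^*$ is well-defined.

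The first point is immediate: since $X$ and $Y$ are finite, $\F_\dagger(X,Y)$ is a subset of the finite set $\Map(X,Y)$, and the lexicographic order is a total order on it (two distinct $\dagger$-monotone surjections differ somewhere, and at the least place of disagreement their values lie in the totally ordered set $Y$ and so are comparable). A total order on a finite set is a well-order.

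For the second point, fix $\beta\colon W\to X$ in $\F_\dagger$ and $\phi,\psi\in\F_\dagger(X,Y)$; we may assume $\phi<\psi$, and we show $\phi\beta<\psi\beta$ (which also covers the trivial case $\phi=\psi$). Let $x_0\in X$ be the witness, so $\phi(x)=\psi(x)$ for all $x<x_0$ and $\phi(x_0)<\psi(x_0)$, and set $w_0=\beta^\dagger(x_0)$. By Lemma~\ref{lem-dag-monotone} we have $\beta(w_0)=\beta\beta^\dagger(x_0)=x_0$, hence $\phi\beta(w_0)=\phi(x_0)<\psi(x_0)=\psi\beta(w_0)$. Again by Lemma~\ref{lem-dag-monotone}, the $\dagger$-monotonicity of $\beta$ forces $\beta(w)<x_0$ whenever $w<w_0=\beta^\dagger(x_0)$, and therefore $\phi\beta(w)=\phi(\beta(w))=\psi(\beta(w))=\psi\beta(w)$ for all such $w$. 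Thus $w_0$ is the least place at which $\phi\beta$ and $\psi\beta$ disagree, and there $\phi\beta$ takes the smaller value, so $\phi\beta<\psi\beta$ in the lexicographic order. Hence $\beta^*$ is strictly monotone, in particular monotone, and the two properties together say that the lexicographic order is a hom-ordering on $\F^{\op}_\dagger$.

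There is no serious obstacle here; the lemma is elementary. The one point needing care is the choice of the pivot $w_0=\beta^\dagger(x_0)$ and the invocation of Lemma~\ref{lem-dag-monotone} in exactly the form that $\beta$ sends everything strictly below $w_0$ strictly below $x_0$ — this is precisely what prevents an earlier disagreement from appearing after precomposition and so keeps the lexicographic comparison intact.
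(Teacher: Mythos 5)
Your proof is correct and follows essentially the same route as the paper's: both take the witness $x_0$ for $\phi<\psi$, set $w_0=\beta^\dagger(x_0)$, and apply Lemma~\ref{lem-dag-monotone} to see that $\phi\beta$ and $\psi\beta$ agree below $w_0$ and differ in the right direction at $w_0$. The extra observations you make (closure under composition via Corollary~\ref{cor-dag-comp}, finiteness of the hom sets giving a well-order) are correct and are treated as standard in the paper.
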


\begin{proof}
 It is standard and easy that the above rule gives a total order on
 the finite set of surjections from $X$ to $Y$.  Now suppose we have
 $\theta\colon W\to X$ and $\phi,\psi\colon X\to Y$ with
 $\phi\leq\psi$; we must show that $\phi\theta\leq\psi\theta$. By
 assumption there exists $x_0\in X$ with $\phi(x_0)<\psi(x_0)$ and
 $\phi(x)=\psi(x)$ for all $x<x_0$. Put
 $w_0=\theta^\dagger(x_0)=\min(\theta^{-1}\{x_0\})$. Then
 $(\phi\theta)(w_0)=\phi(x_0)<\psi(x_0)=(\psi\theta)(w_0)$. On the
 other hand, if $w<w_0$ then Lemma~\ref{lem-dag-monotone} tells us
 that $\theta(w)<x_0$ and so $(\phi\theta)(w)=(\psi\theta)(w)$.
\end{proof}

\begin{corollary}\label{cor-L-dag}
 The faithful forgetful functor
 $\L^{\op}_{\dagger}\to \F^{\op}_\dagger$ gives a hom-ordering to
 $\L^{\op}_\dagger$. \qed
\end{corollary}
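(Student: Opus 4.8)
The plan is simply to combine Remark~\ref{rem-hom-order-pullback} with Proposition~\ref{prop-hom-order-dag}. First I would record that there is a forgetful functor $U\colon\L_\dagger\to\F_\dagger$ sending an object $(X,e_X)$ to its underlying totally ordered set $X$ and a morphism to the same underlying $\dagger$-monotone surjection; this is well-defined because a morphism of $\L_\dagger$ is by definition a $\dagger$-monotone surjection $\phi$ subject to the extra inequality $e_Y(\phi(x))\le e_X(x)$, which one simply drops, and it is faithful because a morphism of $\L_\dagger$ carries no data beyond its underlying map. Dualising, the induced functor $U^{\op}\colon\L_\dagger^{\op}\to\F_\dagger^{\op}$ is again faithful.

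By Proposition~\ref{prop-hom-order-dag} the lexicographic orderings of the finite sets $\F_\dagger(X,Y)$ constitute a hom-ordering on $\F_\dagger^{\op}$. Applying Remark~\ref{rem-hom-order-pullback} to the faithful functor $U^{\op}$ then produces a hom-ordering on $\L_\dagger^{\op}$: concretely, for $\phi,\psi\in\L_\dagger(X,Y)$ one sets $\phi\le\psi$ iff $U\phi\le U\psi$ in $\F_\dagger(X,Y)$. Faithfulness realises $\L_\dagger(X,Y)$ as a subset of a well-ordered set, hence each pulled-back order is again a well-ordering, and the monotonicity of the precomposition maps $\beta^*$ is inherited from $\F_\dagger^{\op}$ since $U^{\op}$ commutes with precomposition. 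There is no genuine obstacle here: the entire content is the observation that forgetting the $e$-labelling is a faithful functor, so the corollary follows at once.
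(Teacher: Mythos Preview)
Your proof is correct and is exactly the argument the paper has in mind: the corollary is left with only a \qed because it follows immediately by pulling back the hom-ordering of Proposition~\ref{prop-hom-order-dag} along the faithful forgetful functor via Remark~\ref{rem-hom-order-pullback}, which is precisely what you have spelled out.
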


\subsection*{Proof of Theorem~\ref{thm-p-groups-is-locally-noetherian}}
\label{subsec-noetherian-proof}

For the duration of this proof we put 
\[ \P=\Z[p^\infty]=\{\text{ finite abelian $p$-groups} \} \] 
and $C[k]=\ZZ/{p^k}\in\P$.  If $k\geq m$, we write $\pi$ for the
standard surjective homomorphism $C[k]\to C[m]$.  For $A\in\P$ and
$a \in A$, we let $\eta_a$ be the natural number such that $a$ has
order $p^{\eta_a}$

By combining Corollaries~\ref{cor-L-dag-cowqo} and~\ref{cor-L-dag}, we
see that $\L^{\op}_{\dagger}$ is Gr\"{o}bner.
	
We define an essentially surjective functor
$M\colon \L^{\op}_{\dagger}\to\P^{\op}$ as follows.  For an object
$X\in\L_{\dagger}$, we set $MX=\prod_{x\in X}C[e_X(x)]$. Given a
morphism $\phi\colon X\to Y$ in $\L_{\dagger}$, we define
$\phi_*\colon MX\to MY$ by $$(\phi_*m)_y=\prod_{\phi(x)=y}\pi(m_x).$$
 	
Let us introduce some terminology before proceeding with the proof.  A
\emph{framing} of $A\in\P$ is a surjective homomorphism $MX \to A$ for
some $X\in \L_{\dagger}$.  This corresponds to a map
$\alpha_0 \colon X \to A$ such that $\eta(\alpha_{0}(x)) \leq e_X(x)$
for all $x$, and $\alpha_0(X)$ generates $A$. We say that the framing
is \emph{tautological} if $X$ is a subset of $A$ and $\alpha_0$ is
just the inclusion and
\[
 e_{X}(x)= \max \{ \eta(w) \mid w \in X, \; w \leq x \}.
\]
It is clear from the definition that there are only finitely many
tautological framings.  Unravelling the definitions, we see that $M$
satisfies condition (F) if any framing $\alpha_0 \colon X \to A$
factors as
\[ X \to \overline{X} \to A \]
where the first arrow is in $\L_{\dagger}$ and the second one is a
tautological framing. So if $\alpha \colon X \to A$ is an arbitrary
framing, we define $\overline{X}= \alpha_0(X) \subset A$ and
$e_{\overline{X}}= \eta |_{\overline{X}}$ and set
$\overline{\alpha}_0 \colon \overline{X} \to A$ to be the
inclusion. We also define $\alpha_0^{\dagger} \colon A \to X$ by
$\alpha_0^{\dagger}(a)= \min (\alpha_0^{-1}(a))$ and order
$\overline{X}$ by declaring that $a < b$ iff
$\alpha_0^{\dagger}(a) < \alpha_0^{\dagger}(b)$.  This makes
$\overline{\alpha}_{0}$ into a tautological framing and gives the
required factorization. Therefore $\P^{\op}$ is quasi-Gr\"{o}bner and
so part (a) holds.
 	
For part (b), we put
\[ \Omega = \{ \eta_a \mid \ A \in \U, \; a \in A \} \subset \NN. \]
Define $\L_{\dagger}^{\U}$ to be the full subcategory of
$\L_{\dagger}$ consisting of objects $X$ with
$\img(e_X) \subset \Omega$. This is still Gr\"{o}bner
by~\cite{Sam2017}*{4.4.2}. It is now easy to check that the functor
$M \colon (\L_{\dagger}^{\U})^{\op} \to \U^{\op}$ defined as above is
essentially surjective and satisfies property $(F)$.  Thus $\U^{\op}$
is quasi-Gr\"{o}bner and $\A\U$ is locally noetherian.
	
\section{Representation stability}
\label{sec-central-stability}

In this section we show that any finitely presented object can be
recovered by a finite amount of data via a stabilization recipe. This
phenomenon is called central stability and it was first introduced by
Putman~\cite{Putman15}.  We also show that under the noetherian
assumption, any finitely generated object satisfies the analogue of
the injectivity and surjectivity conditions in the definition of
representation stability due to Church--Farb~\cite{Farb}*{1.1}.

\begin{definition}\label{def-truncation-functor}
 Let $\U$ be a subcategory of $\G$. For $X \in \A\U$, we
 put
 \[ \tau_n(X)=i_!^{\leq n} i^*_{\leq n}(X) \in \A\U, \]
 and note that there is a counit map $\tau_n(X) \to X$. We also
 define natural maps $\tau_n(X) \to \tau_{n+1}(X)$ as follows. Let
 $j$ denote the inclusion $\U_{\leq n}\to \U_{\leq (n+1)}$, so we have
 a counit map $j_!j^*(Y) \to Y$ for all $Y \in \A\U_{\leq
  (n+1)}$. Taking $Y=i^*_{\leq (n+1)}(X)$ for some $X \in \A\U$, we
 get a map $j_!i^*_{\leq (n+1)} X \to i^*_{\leq (n+1)} X$. Applying
 the functor $i_!^{\leq (n+1)}$ to this gives the required map
 $\tau_n(X) \to \tau_{n+1}(X)$.
\end{definition}

We list a few important properties of the truncation functor.

\begin{proposition}\label{prop-tau_n}
 Consider an object $X \in \A\U$.
 \begin{itemize}
  \item[(a)] Then $X$ is the colimit of the objects $\tau_n(X)$.
  \item[(b)] We have $\tau_n(e_G)=e_G$ if $G \in \U_{\leq n}$ and $\tau_n(e_G)=0$ 
   otherwise.
  \item[(c)] For all $G \in \U$ and $n \geq 0$, we have
   \[ \tau_n(X)(G)= \colim_{H \in N(G,n)} X(G/H) \]
   where $N(G,n)= \{H \triangleleft G \mid |G/H| \leq n\}$.
 \end{itemize}
\end{proposition}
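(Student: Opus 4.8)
The plan is to prove the three parts of Proposition~\ref{prop-tau_n} in the order stated, building each on the previous one and on the basic facts about the Kan extension functors from Lemma~\ref{lem-omnibus}. The key observation throughout is that $\tau_n = i_!^{\leq n} i_{\leq n}^*$ is a composite of a restriction followed by a left Kan extension, and that $\U_{\leq n}$ is always a finite subcategory (and subgroup-closed, closed downwards, etc., inheriting these from $\U$), so all the limits appearing are over finite diagrams.

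For part (c), I would unwind the definitions directly. By the contravariant Kan formula (Definition~\ref{def-subcat-functors}(a)), $(\tau_n(X))(G) = (i_!^{\leq n}Y)(G)$ where $Y = i_{\leq n}^*(X)$, and this is the colimit over the comma category $(G\downarrow\U_{\leq n})$ of the functor sending $(G\xrightarrow{u} H)$ to $Y(H) = X(H)$. The point is then to identify this comma category with the poset $N(G,n) = \{H\triangleleft G \mid |G/H|\leq n\}$. An object of $(G\downarrow\U_{\leq n})$ is a morphism in $\G$, i.e.\ a conjugacy class $[u]$ of a surjection $u\colon G\to H$ with $|H|\leq n$; such a conjugacy class is determined by its kernel $\ker(u)\triangleleft G$, and conversely every normal subgroup $H$ with $|G/H|\leq n$ arises this way (and lies in $\U_{\leq n}$ since $\U$ is closed downwards). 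One checks that the morphisms in the comma category correspond exactly to the order relation $H\subseteq H'$ (i.e.\ further quotients), giving an isomorphism of categories $(G\downarrow\U_{\leq n})\simeq N(G,n)$ under which the diagram sending $(G\xrightarrow{u}H)\mapsto X(H)$ becomes $H\mapsto X(G/H)$. This yields the stated formula; I expect this identification of the comma category with $N(G,n)$ to be the one genuinely fiddly point, mainly in verifying that conjugacy classes of surjections are pinned down by their kernels (which is immediate) and that morphisms match up correctly.

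For part (b): if $G\in\U_{\leq n}$ then $(G\xrightarrow{\id}G)$ is terminal in $(G\downarrow\U_{\leq n})$, so the colimit collapses to $X(G)$; taking $X = e_G$ and using $e_G(G) = k[\Out(G)]$ with the evident functoriality gives $\tau_n(e_G) = e_G$. (Alternatively this follows from part (c) since $N(G,n)$ has initial object the trivial subgroup when $|G|\leq n$.) If $G\notin\U_{\leq n}$, i.e.\ $|G| > n$, then there is no surjection from $G$ onto a group of order $\leq n$ landing in $\U_{\leq n}$ that... — more precisely, for $e_G$ one has $(i_{\leq n}^* e_G)(H) = k[\U(H,G)]$, which requires a surjection $H\to G$; since $|H|\leq n < |G|$ this set is empty, so $i_{\leq n}^*(e_G) = 0$ and hence $\tau_n(e_G) = i_!^{\leq n}(0) = 0$.

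For part (a): the maps $\tau_n(X)\to\tau_{n+1}(X)$ were constructed in Definition~\ref{def-truncation-functor} and are compatible with the counit maps $\tau_n(X)\to X$, so one gets a map $\colim_n \tau_n(X)\to X$. Since colimits in $\A\U$ are computed pointwise (Remark~\ref{rem-bicomplete}), it suffices to evaluate at each $G\in\U$ and check the map $\colim_n (\tau_n(X))(G)\to X(G)$ is an isomorphism. But for $n\geq |G|$ we have $G\in\U_{\leq n}$, so part (b)-type reasoning (the terminal object $(G\xrightarrow{\id}G)$) shows the counit $(\tau_n(X))(G)\to X(G)$ is already an isomorphism for all such $n$, and the transition maps $(\tau_n(X))(G)\to(\tau_{n+1}(X))(G)$ are compatible isomorphisms in that range; hence the colimit is $X(G)$ as required. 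This part is essentially formal once (b) is in hand, so I would present (c) first (as the substantive computation), then deduce (b), then (a). The main obstacle, as noted, is the careful identification in (c) of the comma category $(G\downarrow\U_{\leq n})$ with the normal-subgroup poset $N(G,n)$, respecting the conjugacy-class structure of morphisms in $\G$.
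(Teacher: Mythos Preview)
Your approach is essentially the same as the paper's: both prove (c) by identifying the comma category $(G\downarrow\U_{\leq n})$ with the poset $N(G,n)$ via kernels, and both prove (a) by observing that the counit $\tau_n(X)(G)\to X(G)$ is already an isomorphism once $n\geq|G|$.

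There is one small slip in your argument for (b) in the case $G\in\U_{\leq n}$: the terminal-object observation shows only that $\tau_n(X)(G)=X(G)$ when \emph{evaluated at} $G$, not that $\tau_n(e_G)(T)=e_G(T)$ for all $T\in\U$, which is what the identity $\tau_n(e_G)=e_G$ of objects requires. Your parenthetical alternative via (c) has the same issue: it again only tells you about the value at the single point $G$. The paper fixes this by appealing to Lemma~\ref{lem-omnibus}: since $G\in\U_{\leq n}$ we have $i^*_{\leq n}(e_G^{\U})=e_G^{\U_{\leq n}}$ by inspection, and then $i_!^{\leq n}(e_G^{\U_{\leq n}})=e_G^{\U}$ by the Yoneda argument recorded there. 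Your treatment of the case $|G|>n$ is fine.

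Your aside that the identification in (c) uses $\U$ closed downwards is a fair observation; the paper's definition of $N(G,n)$ tacitly assumes this, or else one should read it as $\{H\triangleleft G\mid G/H\in\U_{\leq n}\}$.
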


\begin{proof}
 For part (a) it is enough to notice that $\tau_n(X)(G)= X(G)$ for
 $|G| \leq n$.  Part (b) follows from Lemma~\ref{lem-omnibus}(i).
 Using the formula for Kan extensions, we see that $\tau_n(X)(G)$ can
 be written as a colimit over the comma category
 $(G \downarrow \U_{\leq n})$.  Suppose we have objects
 $(G \xrightarrow{\alpha} A)$ and $(G \xrightarrow{\beta} B)$ in the
 comma category so $A,B \in \U_{\leq n}$.  As $\alpha$ and $\beta$ are
 surjective, we find that there is a unique morphism from $\alpha$ to
 $\beta$ if $\ker(\alpha) \leq \ker(\beta)$, and no morphisms
 otherwise. This shows that the comma category is equivalent to the
 poset $N(G,n)$ so part (c) follows.
\end{proof}

The following is a characterization of finitely generated and finite
presented objects.

\begin{proposition}\label{prop-condition-left-induced}
 Consider an object $X \in \A\U $.
 \begin{itemize}
  \item[(a)] $X$ is finitely generated if and only if $X$ has finite
   type and there exists $N \in \NN$ such that the canonical map
   $\tau_n(X) \to X$ is an epimorphism for all $n \geq N$.
  \item[(b)] $X$ is finitely presented if and only if $X$ has finite
   type and there exists $N \in \NN$ such that the canonical map
   $\tau_n(X) \to X$ is an isomorphism for all $n \geq N$.
 \end{itemize}
\end{proposition}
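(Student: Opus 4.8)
The plan is to prove the two equivalences by packaging the truncation functors $\tau_n$ with the elementary facts already established: that $\tau_n(X)(G)=X(G)$ for $|G|\le n$, that $X=\colim_n\tau_n(X)$, and that $\tau_n(e_G)=e_G$ when $G\in\U_{\le n}$ and vanishes otherwise (Proposition~\ref{prop-tau_n}). The key observation throughout is that a finite direct sum $P=\bigoplus_{i=1}^r e_{G_i}$ has $\tau_n(P)=\bigoplus_{|G_i|\le n}e_{G_i}$, so $\tau_n(P)\to P$ is an isomorphism as soon as $n\ge\max_i|G_i|$; and more generally $\tau_n$ preserves epimorphisms and cokernels since $i_!$ and $i^*$ are both exact (the latter because restriction is exact, the former because $i_!$ for the inclusion $\U_{\le n}\hookrightarrow\U$ is, by Lemma~\ref{lem-omnibus}, left adjoint and even extension by zero onto the complement — in any case exact here since $\U_{\le n}$ is a full subcategory on which $i^*i_!=\id$).

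For part (a), the forward direction: if $X$ is finitely generated, choose an epimorphism $\pi\colon P\to X$ with $P=\bigoplus_{i=1}^r e_{G_i}$, and set $N=\max_i|G_i|$. Then $X$ has finite type since each $e_{G_i}(G)=k[\U(G,G_i)]$ is finite-dimensional and quotients preserve this. For $n\ge N$ we have a commuting square with $\tau_n(P)\to P$ an isomorphism and $\tau_n(\pi)\colon\tau_n(P)\to\tau_n(X)$; since $\tau_n$ is right exact (it preserves colimits, being a composite of a left adjoint and an exact functor), $\tau_n(\pi)$ is an epimorphism, and composing with the isomorphism $\tau_n(P)\xrightarrow{\sim}P\xrightarrow{\pi}X$ which factors through $\tau_n(X)\to X$, we conclude $\tau_n(X)\to X$ is an epimorphism. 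Conversely, if $X$ has finite type and $\tau_N(X)\to X$ is an epimorphism for some $N$, then since $\tau_N(X)(G)=X(G)$ for $|G|\le N$ and $X$ has finite type, the object $i^*_{\le N}(X)\in\A\U_{\le N}$ has finite type; choosing for each isomorphism class of $G\in\U_{\le N}$ a surjection $\bigoplus e_{G}\twoheadrightarrow$ onto the appropriate free cover of $X(G)$ over $k[\Out(G)]$ — equivalently, using the explicit $P_0=l_!l^*(i^*_{\le N}X)=\bigoplus_{G\in\U'_{\le N}}e_{G,X(G)}$ from Construction~\ref{con-proj-inj-resolutions}, which is finitely projective because $\U_{\le N}$ has finitely many isomorphism classes and $X$ has finite type — we get a finitely generated projective $P_0$ with $P_0\twoheadrightarrow\tau_N(X)$, hence $P_0\twoheadrightarrow\tau_N(X)\twoheadrightarrow X$, so $X$ is finitely generated.

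For part (b), one direction: if $X$ is finitely presented, pick a presentation $P_1\xrightarrow{d}P_0\xrightarrow{\pi}X$ with $P_0,P_1$ finitely projective, and let $N=\max$ of the orders of the indecomposable summands occurring in $P_0$ and $P_1$. Applying the right-exact functor $\tau_n$ for $n\ge N$ gives a right-exact sequence $\tau_n(P_1)\to\tau_n(P_0)\to\tau_n(X)\to0$ in which $\tau_n(P_1)\to P_1$ and $\tau_n(P_0)\to P_0$ are isomorphisms; comparing with $P_1\to P_0\to X\to0$ via the natural transformation $\tau_n\Rightarrow\id$ shows $\tau_n(X)\to X$ is an isomorphism. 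For the converse, suppose $X$ has finite type and $\tau_N(X)\xrightarrow{\sim}X$. By part (a), $X$ is finitely generated, so choose a finitely projective $P_0$ and an epimorphism $P_0\twoheadrightarrow X$ with kernel $K$; then $K$ also has finite type (finite type is closed under subobjects since it is a pointwise condition), and $K$ fits in $0\to K\to P_0\to X\to0$. Applying $\tau_n$ for $n\gg0$ and using that $\tau_n$ is exact enough (left-exactness is the only subtle point — but here it suffices that $\tau_n(P_0)\to P_0$ is iso and $\tau_n(X)\to X$ is iso for $n\ge N'$, whence the snake lemma forces $\tau_n(K)\to K$ to be an epimorphism for such $n$), part (a) applied to $K$ shows $K$ is finitely generated, and a surjection $P_1\twoheadrightarrow K$ with $P_1$ finitely projective completes a finite presentation of $X$.

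The main obstacle is the converse direction in (b): one must produce a finitely generated kernel, and the cleanest route is to first invoke part (a) to learn $X$ is finitely generated, then run the exact sequence $0\to K\to P_0\to X\to 0$ through the functors $\tau_n$ and verify — using only the already-known facts that $\tau_n(P_0)\to P_0$ and $\tau_n(X)\to X$ are isomorphisms for large $n$, plus the four/snake lemma applied pointwise — that $\tau_n(K)\to K$ is surjective for large $n$, so that the finite-type object $K$ satisfies the criterion of part (a). Care is needed because $\tau_n$ is only known to be right exact in general, so the argument must be phrased in terms of the eventual isomorphisms rather than naive left-exactness of $\tau_n$.
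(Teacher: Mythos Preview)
Your proof is correct and follows essentially the same approach as the paper: part~(a) is identical, and for part~(b) both you and the paper reduce the converse to part~(a) via the snake lemma applied to $0\to K\to P_0\to X\to 0$. Your forward direction in~(b) is slightly slicker---you apply right-exactness of $\tau_n$ directly to a two-term presentation $P_1\to P_0\to X\to 0$, whereas the paper instead takes the short exact sequence with $K$ finitely generated and uses the snake lemma to kill $\ker(\tau_n(X)\to X)$---but this is a cosmetic difference, not a different idea.
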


\begin{proof}
 For part (a), assume that the map $\tau_n(X) \to X$ is an epimorphism
 for all $n \geq N$. Note that we can construct an epimorphism
 \[
  \bigoplus_{G \in \U_{\leq n}} \dim(X(G)) \, e_G \to i^*_{\leq n}(X)
 \]
 as $X$ has finite type. We apply $i_!^{\leq n}$ to get an epimorphism
 \[
  \bigoplus_{G \in \U_{\leq n}} \dim(X(G)) \, e_G \to \tau_n(X)
 \]
 since $i_!^{\leq n}$ preserves all colimits by Lemma
 \ref{lem-omnibus}(f).  Post-composition with $\tau_n(X) \to X$ gives
 the desired epimorphism.  Conversely, assume that $X$ is finitely
 generated so that we have a short exact sequence
 $0 \to K \to P \to X \to 0$ with $P$ finitely projective.  Note that
 by Proposition~\ref{prop-tau_n}(b), there must exist $N \in \NN$ such
 that $\tau_n(P) \simeq P$ for all $n \geq N$. The commutativity of
 the diagram
 \[
  \begin{tikzcd}
   P \arrow[r] & X \arrow[r] & 0 \\
   \tau_n(P) \arrow[u, "\simeq"] \arrow[r] & \tau_n(X) \arrow[u]
  \end{tikzcd}
 \]
 implies that the map $\tau_n(X) \to X$ is an epimorphism for all
 $n \geq N$.

 For part (b), assume that $X$ is finitely presented. Then there
 exists a short exact sequence $0 \to K \to P \to X \to 0$ with $P$
 finitely projective and $K$ finitely generated. By Part (a), it is
 enough to show that the canonical map $\tau_n(X) \to X$ is eventually
 monic. Note that for large $n$, we have a diagram
 \[
  \begin{tikzcd}
   & 
   \ker(i_K^n) \arrow[d] &
   0 \arrow[d] &
   \ker(i_X^n) \arrow[d] & \\
   & 
   \tau_n(K) \arrow[r] \arrow[d, "i_K^n"] &
   \tau_n(P) \arrow[d,"\simeq"] \arrow[r] &
   \tau_n(X) \arrow[r] \arrow[d, "i_X^n"] & 0 \\
   0 \arrow[r] &
   K \arrow[r] \arrow[d] &
   P \arrow[d] \arrow[r] &
   X \arrow[r] \arrow[d] &
   0 \\
   &
   \cok(i_K^n) &
   0 &
   \cok(i_X^n) &
  \end{tikzcd}
 \]
 where the bottom row is exact and the top is only right exact.  By
 assumption both $K$ and $X$ are finitely generated, so the maps
 $i_K^n$ and $i^n_X$ are epimorphisms by part (a). Thus, the Snake
 Lemma tell us that $\ker(i^n_X)=0$. Conversely, assume that the
 natural map is an isomorphism.  By part (a), $X$ is finitely
 generated so we have a short exact sequence
 $0 \to K \to P \to X \to 0$ with $P$ finitely projective.  By
 applying the Snake Lemma to the diagram above, we see that
 $\cok(i_K^n)=0$ for large $n$, so $K$ is finitely generated and $X$
 is finitely presented.
\end{proof}

We note that by combining Propositions~\ref{prop-tau_n}
and~\ref{prop-condition-left-induced} we obtain that any finitely 
presented object satisfies central stability as mentioned in the introduction.

\begin{remark}\label{rem-q-leq-n}
Recall the functor $q_{\leq n}$ from Example~\ref{ex-q-leq-n}. 
We have seen that $q_{\leq n}$ is left adjoint to the inclusion 
$\U_{\leq n}^{\star} \to \G$. If $\U$ is closed downwards, then $q_{\leq n}$ is 
also the left adjoint to the inclusion $\U_{\leq n} \to \U$.
\end{remark}

\begin{proposition}\label{prop-q-star}
 Let $\U$ be multiplicative and closed under passage to subgroups, and
 consider a finitely presented object $X \in \A\U$. Then there exists
 $n \in \NN$ such that $X(G)=X(q_{\leq n} G)$ for all $G\in \U$.
\end{proposition}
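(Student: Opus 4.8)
The plan is to recognise the submultiplicativity hypothesis as exactly what makes $q_{\leq n}$ a left adjoint to an inclusion \emph{into $\U$} (rather than only into $\G$), so that the left Kan extension formula collapses to a single value, and then to run a short right exact diagram chase fed by the finite presentation of $X$.

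First I would fix $n$ and let $j=j_n\colon\U_{\leq n}^{\star}\hookrightarrow\U$ be the inclusion, which makes sense because $\U$ is submultiplicative, so $\U_{\leq n}^{\star}\subseteq\U$ (Example~\ref{ex-q-leq-n}). Since $\U$ is a full subcategory of $\G$ containing $\U_{\leq n}^{\star}$, the left adjoint $q_{\leq n}$ of $\U_{\leq n}^{\star}\hookrightarrow\G$ produced in Construction~\ref{con-q-V} restricts to a left adjoint $q_{\leq n}\colon\U\to\U_{\leq n}^{\star}$ of $j$, with unit at $G$ the canonical surjection $\pi_G\colon G\to q_{\leq n}G$. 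By Lemma~\ref{lem-omnibus}(i) this gives $j_!=q_{\leq n}^{*}$; in particular $(j_!j^{*}X)(G)=X(q_{\leq n}G)$ for every $G\in\U$, and the counit $\epsilon_X\colon j_!j^{*}X\to X$ has $G$-component the structure map $X(q_{\leq n}G)\to X(G)$ induced by $\pi_G$. So it would suffice to find $n$ for which $\epsilon_X$ is an isomorphism; by Lemma~\ref{lem-omnibus}(b) this is the same as requiring that $X$ lie in the essential image of $j_!$.

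Next I would use that $X$ is finitely presented: choose a right exact sequence $P_1\to P_0\to X\to 0$ in which $P_0$ and $P_1$ are finite direct sums of objects $e_{G,V}$ indexed by finitely many groups $G\in\U$ (Corollary~\ref{cor-indecomposable-projectives}). Take $n$ to be the largest order of a group occurring in $P_0$ or $P_1$, so each such $G$ lies in $\U_{\leq n}\subseteq\U_{\leq n}^{\star}$. Then Lemma~\ref{lem-omnibus}(j) gives $j^{*}(e_{G,V})=e_{G,V}$ and $j_!(e_{G,V})=e_{G,V}$ for all these $G$, so $\epsilon_{P_0}$ and $\epsilon_{P_1}$ are isomorphisms; in other words $P_0$ and $P_1$ already lie in the essential image of $j_!$.

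Finally I would finish by a five lemma argument: $j^{*}$ is exact (Lemma~\ref{lem-omnibus}(e)) and $j_!$ is right exact, being a left adjoint, so applying $j_!j^{*}$ to $P_1\to P_0\to X\to 0$ and invoking naturality of $\epsilon$ yields a commuting ladder with right exact rows whose two left vertical maps $\epsilon_{P_1}$ and $\epsilon_{P_0}$ are isomorphisms; hence $\epsilon_X$ is an isomorphism as well. Evaluating at $G$ gives $X(q_{\leq n}G)\cong X(G)$ via the map induced by $\pi_G$, uniformly in $G$. The main obstacle is conceptual rather than computational and lies entirely in the first step: submultiplicativity is what keeps the submultiplicative closure $\U_{\leq n}^{\star}$ inside $\U$ and closed under products, so that $q_{\leq n}$ is a well-defined functor $\U\to\U$ and a left adjoint of a genuine inclusion --- without it one only obtains a colimit over a comma category (as in the central stability statement, Proposition~\ref{prop-tau_n}), not the single value $X(q_{\leq n}G)$ --- and it is the relations $P_1$, i.e.\ finite presentation rather than mere finite generation of $X$, that upgrade the epimorphism one would otherwise get to an isomorphism.
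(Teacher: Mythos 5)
Your proposal is correct and follows essentially the same route as the paper: both choose $n$ so that all generators in a finite presentation of $X$ live over groups in $\U_{\leq n}^{\star}$, identify $j_!=q_{\leq n}^{*}$ via the adjunction $q_{\leq n}\dashv j$ (which is where submultiplicativity enters), and conclude from right-exactness that $X\simeq q_{\leq n}^{*}(j^{*}X)$, i.e.\ that the counit is an isomorphism. The paper phrases this as ``$X=q_{\leq n}^{*}(Y)$ with $Y=\cok(f)$ computed in $\A\U_{\leq n}^{\star}$'' rather than via the counit ladder, but the content is identical.
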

\begin{proof}
 Choose a finite presentation
 \[
  \bigoplus_{i=1}^r e_{G_i} \xrightarrow{f}
  \bigoplus_{j=1}^s e_{H_j} \to X \to 0.
 \]
 Choose $n$ large enough so that $G_i,H_j \in \U_{\leq n}^*$ for all
 $i$ and $j$.  Let $Y$ be cokernel of $f$ in $\A\U_{\leq n}^*$. We
 claim that $X=q_{\leq n}^*(Y)$.  As the functor $q_n^*$ preserves all
 colimits it is enough to show that $q_{\leq n}^* e_G=e_{G}$ for all
 $G \in \U_{\leq n}^*$. Using that $q_{\leq n}$ is left adjoint to the
 inclusion $\U_{\leq n}^* \to \U$ we see that
 \[
  (q_{\leq n}^*e_G)(H)=k[\U(q_{\leq n} H,G)]=k[\U(H, G)]=e_G(H)
 \]
 which concludes the proof.  
\end{proof}

We now restrict to the locally noetherian case.  Recall the definition
of eventually torsion-free and generated in finite degree object from the
introduction, see Definition~\ref{def-intro-stability}.

\begin{theorem}\label{thm-stability}
 Let $X \in \A\Z[p^\infty]$ be a finitely generated object. 
 Then the restriction of $X$ to $\A\C[p^\infty]$ and 
 $\A\F[p^n]$, for all $n \geq 1$, is generated in finite degree and eventually 
 torsion-free.
\end{theorem}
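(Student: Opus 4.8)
The plan is to work throughout with $Y := i^*X$, where $i$ denotes the inclusion of $\C[p^\infty]$ (resp.\ $\F[p^n]$) into $\Z[p^\infty]$, and to first establish that $Y$ is finitely generated in $\A\C[p^\infty]$ (resp.\ $\A\F[p^n]$). For $\C[p^\infty]$ this is easy: it is a full, downward-closed subcategory, so $i^*(e_G)=e_G$ when $G$ is a cyclic $p$-group and $i^*(e_G)=0$ otherwise (a cyclic group has no surjection onto a noncyclic one); applying the exact functor $i^*$ to an epimorphism $\bigoplus_j e_{G_j}\to X$ then exhibits $Y$ as finitely generated. For $\F[p^n]$ the subcategory is neither upward- nor downward-closed, so I would instead prove a factorization lemma: for $H\in\Z[p^n]$ with minimal free cover $\rho\colon(\ZZ/p^n)^{\delta(H)}\to H$, every surjection $\phi\colon A\to H$ out of a free $\ZZ/p^n$-module $A$ factors as $\phi=\rho\circ\widetilde\psi$ with $\widetilde\psi$ surjective --- one lifts $\phi$ through $\rho$ using projectivity of $A$, and sees the lift is onto because $\ker\rho\subseteq p\cdot(\ZZ/p^n)^{\delta(H)}$ so Nakayama applies. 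Consequently $i^*(e_H)$ is generated in rank $\leq\delta(H)$, and hence $Y$ is finitely generated in $\A\F[p^n]$.

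For stable surjectivity, finite generation gives a finite generating set for $Y$ living at free (resp.\ cyclic) groups of rank (resp.\ level) at most some $N$; put $r_0=p^{nN}$ (resp.\ $p^N$). If $A,B$ lie in the relevant category with $|B|\geq|A|\geq r_0$, then any surjection $B\to(\ZZ/p^n)^N$ (resp.\ $\ZZ/p^N$) factors through $A$: in the free case because such a surjection splits, so its kernel is a free direct summand from which a rank-$(\operatorname{rk}B-\operatorname{rk}A)$ free summand can be split off, yielding a free rank-$\operatorname{rk}A$ quotient through which the map passes; in the cyclic case because the subgroup lattice of $B$ is a chain and $\operatorname{rk}A$ lies between $N$ and $\operatorname{rk}B$. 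Writing each generator-image in $Y(B)$ as $\gamma^*(\pi^*y)$ with $\gamma\in\U(B,A)$ and $\pi^*y\in Y(A)$ then shows that $Y(A)\otimes k[\U(B,A)]\to Y(B)$ is onto, which is the stably surjective condition of Definition~\ref{def-intro-stability}.

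For the eventually-torsion-free condition, I would invoke that $\A\C[p^\infty]$ and $\A\F[p^n]$ are locally noetherian (Theorem~\ref{thm-p-groups-is-locally-noetherian}, together with the Sam--Snowden result for $\F[p^n]$) and that both admit colimit towers, namely $\ZZ/p^r$ and $(\ZZ/p^n)^r$ respectively (Examples~\ref{ex-cyclic-colimit-exact} and~\ref{ex-abelian-groups-colimit-exact}), so that Hypothesis~\ref{hyp-colimit-tower} holds. Since $Y$ is finitely generated, its torsion subobject $\tors(Y)$ is again finitely generated; being finitely generated and torsion, it is $G_*$-null by Lemma~\ref{lem-G-star-torsion}, i.e.\ $\tors(Y)$ vanishes on the tower groups from some point on. As the tower groups exhaust the isomorphism types of all sufficiently large groups in these categories, $\tors(Y)(A)=0$ for $|A|\geq r_0$ with $r_0$ large. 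Because $\ker(\alpha^*)\subseteq\tors(Y)(A)$ for every morphism $\alpha\colon B\to A$, it follows that $\alpha^*\colon Y(A)\to Y(B)$ is injective whenever $|A|\geq r_0$, which is exactly eventual torsion-freeness.

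I expect the main obstacle to be the $\F[p^n]$ case of finite generation of $i^*X$ --- equivalently, the minimal-free-cover factorization lemma above --- precisely because $\F[p^n]$ fails all the usual closure conditions on subcategories, so the soft preservation statements of Lemma~\ref{lem-preservation-finiteness} do not apply and the argument has to be made by hand; everything downstream (stable surjectivity via routing morphisms through intermediate objects, and eventual torsion-freeness via local noetherianity plus $G_*$-nullity) is then fairly routine.
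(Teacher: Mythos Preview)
Your proposal is correct, and the core mechanism --- local noetherianity gives $\tors(Y)$ finitely generated, hence $G_*$-null by Lemma~\ref{lem-G-star-torsion}, hence eventually torsion-free; a lifting/factorisation statement gives stable surjectivity --- is exactly what the paper does. The organisational choice, however, differs in an instructive way.

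For $\F[p^n]$ you restrict directly and then work to establish finite generation there via your minimal-free-cover factorisation lemma. The paper instead inserts an intermediate step: it restricts first to $\Z[p^n]$, which is downward-closed in $\Z[p^\infty]$, so Lemma~\ref{lem-preservation-finiteness}(a) gives finite generation of $Y_n$ for free, and then it runs the torsion argument in $\A\Z[p^n]$ (whose colimit tower consists exactly of the free modules $(\ZZ/p^n)^r$). Your factorisation lemma is precisely the special case of the paper's Lemma~\ref{lem-dotted-arrow} with $B=(\ZZ/p^n)^{\delta(H)}$ and $C=H$, so you are essentially reproving a lemma the paper already has in hand; the paper's detour through $\Z[p^n]$ sidesteps the very obstacle you flagged as the hardest part. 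On the other hand, your route is more uniform: you treat $\C[p^\infty]$ and $\F[p^n]$ by the same direct argument, whereas the paper simply cites~\cite{Gan} for the cyclic case. Both approaches invoke Lemma~\ref{lem-dotted-arrow} (or its content) for the stable-surjectivity half, so the underlying combinatorics is shared.
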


\begin{proof}
 Firstly we note that the restriction of $X$ to $\A\C[p^\infty]$ and 
 $\A\F[p^n]$ is again finitely generated by Lemmas~\ref{lem-preservation-finiteness} 
 and~\ref{lem-restriction-preserve-fg}. Note also that $\C[p^\infty]$ and 
 $\F[p^n]$ satisfy the transitivity property, see Definition~\ref{def-trans-prop}.
 For the family of cyclic $p$-groups, we have proved this in the proof of 
 Theorem~\ref{thm-noetherian-cyclic-case}. For the families $\F[p^n]$ this is a special 
 case of Lemma~\ref{lem-dotted-arrow}. Since the abelian categories 
 $\A\C[p^\infty]$ and $\A\F[p^n]$ are locally noetherian by 
 Theorem~\ref{thm-noetherian-subcategory-list}, we can apply~\cite{Gan2015}*{5.1, 5.2} 
 and deduce that the restriction is generated in finite degree and eventually 
 torsion-free.
 \end{proof}

We conclude this section by proving Theorem~\ref{thm-homotopy-stability} from the introduction.

\begin{proof}[Proof of Theorem~\ref{thm-homotopy-stability}]
  First of all note that the equivalence~(\ref{equivalence global}) in the 
  introduction 
  descents to an equivalence between the full subcategories of compact 
  objects $(\Sp_\U^{\mathbb{Q}})^\omega \simeq \D(\A\U)^\omega$ for 
  any family $\U\leq \G$.  
  We can apply~\cite{HPS}*{2.3.12} to deduce that 
  \[
  \D(\A\Z[p^\infty])^\omega =
   \mathrm{thick}(e_G \mid G \in \Z[p^\infty])
  \]
  where the right hand side denotes the smallest thick 
  (=closed under retracts) triangulated subcategory containing 
  the generators $e_G$ for $G\in \Z[p^\infty]$. 
  
  Consider the full subcategory 
  \[
  \T=\{X \mid H_*(X) \;\; \text{is finitely generated} \} \subset 
  \D(\A\Z[p^\infty])^\omega.
  \]
  Since $\A\Z[p^\infty]$ is locally 
  noetherian one easily checks that $\T$ is a thick triangulated 
  subcategory. Clearly $e_G \in \T$ for all $G\in\Z[p^\infty]$ so by 
  the discussion in the previous paragraph we 
  see that any compact object lies in $\T$. Finally apply 
  Theorem~\ref{thm-homotopy-stability}.
\end{proof}

\section{Injectives}
\label{sec-injective}

We now turn to study the injective objects of $\A\U$.  Unlike in the
projective case, a complete classification of the indecomposable
injective objects seems at the moment far out of reach. The main
difficulty arises from the fact that any projective object is
necessarily torsion-free whereas an injective object can be torsion,
absolutely torsion or torsion-free.

Recall that if $\U$ has a colimit tower then the dual of any object is
injective by Proposition~\ref{prop-one-injective}. Let us produce more
examples of injective objects.

\begin{proposition}\label{prop-projective-is-injective}
 Let $\U$ be a multiplicative global family.  
 Then the torsion-free injective objects coincide with the projective objects.
\end{proposition}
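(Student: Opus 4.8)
The plan is to prove both inclusions. One direction is already available: Lemma~\ref{lem-gens-torsion-free} tells us that every projective object is torsion-free, and we must show that projectives are also injective. For the other direction, we take a torsion-free injective object $Y$ and show it is projective.

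First I would dispose of the easy inclusion. Suppose $P$ is projective. By Proposition~\ref{prop-torsion-free-embed-proj} (which applies since $\U$ is a multiplicative global family), $P$ embeds into a projective object, but more to the point we can build an injective resolution starting from the monomorphism $P\to SP$ of Construction~\ref{con-map-into-SX}: that construction shows $SP$ is injective, and since $P$ is torsion-free we have $\tors(P)=0$, so $P=\ker(\xi\colon P\to SP)$... which is not immediately what I want. Instead, the cleanest route: since $\U$ is a multiplicative global family it has a colimit tower (Theorem~\ref{thm-exact} via Hypothesis~\ref{hyp-colimit-tower}, or directly), so $\one$ is injective by Proposition~\ref{prop-one-injective}, and hence every object of the form $DX=\uHom(X,\one)$ is injective. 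Now I claim $De_G$ is injective and, by Theorem~\ref{thm-hom-fg-projective}, also projective (taking $H=1$: $\uHom(e_G,e_1)=\uHom(e_G,\one)=De_G=F(\uQ(G,1))$, which is finitely generated projective). More generally, Proposition~\ref{prop-preserve-projectives} shows that for a multiplicative global family the projectives are closed under internal hom, so $DP=\uHom(P,\one)$ is projective whenever $P$ is. Combining: $P$ projective $\Rightarrow$ $DP$ projective, and $D(DP)$ is injective; but one shows $P\to D(DP)$ is a split monomorphism (indeed Proposition~\ref{prop-torsion-free-embed-proj} or a direct check on generators, since $DD e_G\simeq e_G$ by Theorem~\ref{thm-hom-fg-projective} applied twice), so $P$ is a retract of an injective, hence injective. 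That settles projective $\Rightarrow$ injective.

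Next I would handle the hard inclusion: a torsion-free injective $Y$ is projective. By Proposition~\ref{prop-torsion-free-embed-proj}, since $Y$ is torsion-free the sequence $0\to Y\xrightarrow{\xi} SY$ is a monomorphism into the projective (hence, by the previous paragraph, injective) object $SY$. As $Y$ is injective, this monomorphism splits: there is $r\colon SY\to Y$ with $r\xi=\id_Y$. Thus $Y$ is a retract of the projective object $SY$, and retracts of projectives are projective by Lemma~\ref{lem-proj-as-summand}. This completes the proof.

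The main obstacle is bookkeeping around Construction~\ref{con-map-into-SX} and Proposition~\ref{prop-torsion-free-embed-proj}: one must be sure that $SY$ really is projective for a multiplicative global family, which is exactly what Proposition~\ref{prop-torsion-free-embed-proj} asserts (its proof reduces to $De_G$ projective, which is Theorem~\ref{thm-hom-fg-projective}), and that $\one$ is injective so that the ``$DX$ is injective'' half of Proposition~\ref{prop-one-injective} is available — this needs the colimit-tower hypothesis, valid here. Once those inputs are in place the argument is a short retract-chasing exercise. If, for instance, the preferred exposition avoids invoking Proposition~\ref{prop-torsion-free-embed-proj} directly, one can instead observe that a torsion-free injective $Y$ embeds into any injective resolution-free model: take the canonical $\xi\colon Y\to SY$, note $SY$ is projective by the cited results, and split; the role of torsion-freeness is precisely to guarantee $\xi$ is monic (its kernel is $\tors(Y)=0$).
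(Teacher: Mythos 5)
Your argument for ``torsion-free injective $\Rightarrow$ projective'' is correct and coincides with the paper's: by Proposition~\ref{prop-torsion-free-embed-proj} the map $\xi\colon Y\to SY$ is a monomorphism (its kernel is $\tors(Y)=0$) into a projective object, it splits because $Y$ is injective, and retracts of projectives are projective.

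Your argument for ``projective $\Rightarrow$ injective'' has a genuine gap at the claim that $P\to DDP$ is a \emph{split} monomorphism, and neither justification you offer works. Proposition~\ref{prop-torsion-free-embed-proj} concerns $SP=D(l_!l^*(DP))$, not $DDP$, and in any case it only shows the map in question is monic (with kernel $\tors(P)$), not that it splits. The subsidiary claim $DDe_G\simeq e_G$ is false: for $G=C_p$, Theorem~\ref{thm-hom-fg-projective} together with Example~\ref{ex-vhom-to-one} give $De_{C_p}\simeq e_{C_p}\oplus\one$, and since $D\one=\one$ this yields $DDe_{C_p}\simeq e_{C_p}\oplus\one\oplus\one\not\simeq e_{C_p}$. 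Your duality route can be repaired, but you need Proposition~\ref{prop-splitting}: work with $\xi\colon P\to SP$ rather than the double dual; note that $SP$ is projective (Proposition~\ref{prop-torsion-free-embed-proj}) and also injective, being of the form $D(-)$ with $\one$ injective (a multiplicative global family is submultiplicative, hence colimit-exact by Theorem~\ref{thm-exact} and Proposition~\ref{prop-one-injective}); and then split the monomorphism $\xi$ between two projective objects by Proposition~\ref{prop-splitting}, which applies since $\U$ is complete by Proposition~\ref{prop-complete}. The paper's own proof of this direction avoids both the double dual and the splitting proposition: it decomposes $P\simeq\prod_n P_n$ via Proposition~\ref{prop-product-projectives}, uses that $(i_n)_!$ preserves products (Proposition~\ref{prop-groupoid-reps-a}) to exhibit each $P_n$ as a retract of a \emph{product} of generators $e_{G_t}$, and then observes directly that $e_G$ is a summand of the injective object $De_G$.
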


\begin{proof}
 Suppose that $\U$ is a multiplicative global family and consider a projective 
 object $P$. We will show that $P$ is injective giving one of the 
 implications in the proposition.  
 We can write $P=\prod_n P_n$ by Proposition~\ref{prop-product-projectives}, so 
 it will suffice to show that $P_n$ is injective. We have
 $P_n=(i_n)_!(i^*_nP_n)$ and $i^*_n P_n$ is projective in 
 $\A\U_n$.  We
 can write $i^*_n P_n$ as a retract of an object 
 $Q=\bigoplus_t e_{G_t}$
 with $G_t\in\U_n$.  This embeds in the product $R=\prod_te_{G_t}$,
 and all monomorphisms in $\A\U_n$ are split, so $i^*_nP_n$ is a 
 retract of $R$.  We know that $(i_n)_!$ preserves products by
 Proposition~\ref{prop-groupoid-reps-a}, so $P_n=(i_n)_!(i^*_n P_n)$ 
 is a retract of $\prod_t(i_n)_!(e_{G_t})=\prod_te_{G_t}$.  
 Therefore, it
 is enough to show that $e_{G_t}$ is injective.  This now follows from
 the fact that $De_{G_t}$ is injective and that $e_{G_t}$ is a summand
 of $De_{G_t}$ by Theorem~\ref{thm-hom-fg-projective}. Therefore $P$ is 
 injective as claimed. 
 Conversely, let $I$ be a torsion-free injective. 
 By Proposition~\ref{prop-torsion-free-embed-proj}, we can embed $I$ into a
 projective object $SI$. Since $I$ is injective, the inclusion $I \to SI$ splits 
 showing that $I$ is projective as required. 
\end{proof}

\begin{remark}\label{rem-not-injective}
 Let $\C[2^\infty]$ be the family of cyclic $2$-groups. Then we have a short
 exact sequence 
 \[ 0 \to e_{C_2} \to \one \to t_{1, k} \to 0 \] that cannot
 split as $\one$ is torsion-free and $t_{1, k}$ is torsion.
 Hence $e_{C_2}$ is not injective in $\A\C[2^\infty]$.
\end{remark}

The following structural result, classically due to
Matlis~\cite{Matlis}, suggests that we can restrict our attention to
indecomposable injectives.
	
\begin{theorem}[\cite{Gabriel}*{Chaper IV}]\label{thm-inj-as-sum}
 Any injective object in a locally noetherian abelian category is a
 sum of indecomposable injectives.
\end{theorem}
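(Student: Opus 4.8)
The plan is to give the standard structure-theory argument for injectives in a locally noetherian Grothendieck category (which is the setting in which the statement is used here), invoking the cited classical sources for the two background facts. Throughout, let $\mathcal{E}$ denote a locally noetherian abelian category with the usual (co)completeness and exactness hypotheses, so that injective envelopes exist: every object $X$ admits an essential monomorphism $X\hookrightarrow E(X)$ with $E(X)$ injective, where \emph{essential} means that every nonzero subobject of $E(X)$ meets $X$ nontrivially. The first fact I would record is that, because $\mathcal{E}$ is locally noetherian, an arbitrary direct sum of injective objects is again injective: this is the Grothendieck-category form of the Bass--Papp theorem, proved by reducing the injectivity lifting criterion to maps out of the noetherian generators and observing that the image of a noetherian object in a direct sum lies in a finite partial sum.

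Next I would set up the local structure. \emph{Claim:} every nonzero object $X\in\mathcal{E}$ contains a \emph{uniform} subobject $U$, i.e. a nonzero subobject such that any two nonzero subobjects of $U$ have nonzero intersection. Indeed, pick a nonzero noetherian subobject $X_0\leq X$ (possible since $\mathcal{E}$ is locally noetherian); if $X_0$ is not uniform there are nonzero subobjects with zero intersection, and choosing one of them and repeating produces a strictly descending process which, by the noetherian (hence also the descending-chain behaviour on the noetherian object $X_0$) condition, must terminate at a uniform subobject. Then $E(U)$ is indecomposable: a decomposition $E(U)=A\oplus B$ with $A,B\neq 0$ would give nonzero subobjects of $E(U)$ meeting trivially, each of which meets $U$ by essentiality, contradicting uniformity of $U$.

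With these in hand, the theorem follows by a Zorn's-lemma exhaustion argument. Let $E$ be injective. Consider the poset, ordered by inclusion, of those subobjects of $E$ that can be written as a direct sum of indecomposable injective subobjects of $E$; the union of a chain of such subobjects is again a direct sum of the same indecomposable injectives, and by the first background fact it is injective, hence a direct summand of $E$. By Zorn's lemma choose a maximal element $E'$, and write $E=E'\oplus E''$ using injectivity of $E'$. If $E''\neq 0$, apply the Claim to get a uniform subobject $U\leq E''$; since $E''$ is injective, the inclusion $U\hookrightarrow E''$ extends to a monomorphism $E(U)\hookrightarrow E''$ whose image is a direct summand of $E''$ (an injective envelope is a direct summand of any injective object containing it). But then $E'\oplus E(U)$ is a strictly larger direct sum of indecomposable injective subobjects of $E$, contradicting maximality. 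Hence $E''=0$ and $E=E'$ is a direct sum of indecomposable injectives.

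The step I expect to be the genuine obstacle is the first background fact --- that in a locally noetherian category arbitrary direct sums of injectives are injective --- together with the existence of injective envelopes; everything else is elementary once these are available, and in the write-up I would simply cite \cite{Gabriel}*{Chapter~IV} (and Matlis in the module case) for them rather than reproving them.
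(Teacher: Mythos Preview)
The paper does not prove this theorem; it merely states it and cites Gabriel's thesis (Chapter~IV), noting the module-theoretic antecedent due to Matlis. So there is no ``paper's own proof'' to compare against. Your sketch is the standard argument and is essentially correct.

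One imprecision is worth flagging. In your Claim that every nonzero object contains a uniform subobject, you write that repeatedly splitting off nonzero subobjects with zero intersection ``produces a strictly descending process which, by the noetherian \dots\ condition, must terminate.'' Noetherian means the \emph{ascending} chain condition, and the correct argument runs the other way: if $X_0$ is not uniform, choose nonzero $A_1,B_1\leq X_0$ with $A_1\cap B_1=0$; if $A_1$ is not uniform, choose nonzero $A_2,B_2\leq A_1$ with $A_2\cap B_2=0$; and so on. The partial sums $B_1\subsetneq B_1\oplus B_2\subsetneq\cdots$ then form a strictly ascending chain inside the noetherian object $X_0$, a contradiction. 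Your Zorn step is also slightly loose as written (one should order \emph{families} of indecomposable injective subobjects whose sum is direct, so that unions of chains are unambiguous), but the intended argument is the standard one and goes through.
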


\begin{lemma}\label{lem-torsion-injective}
 Let $\U$ be multiplicative global family of $\V$.
 \begin{itemize}
  \item[(a)] For any $G \in \V$ and $V$ irreducible
   $\Out(G)$-representation, the object $t_{G,V}$ is indecomposable and
   injective in $\A\V$. Furthermore, $t_{G,V}$ is the
   injective envelope of $s_{G,V}$.
  \item[(b)] For any $G \in \U$ and $V$ irreducible
   $\Out(G)$-representation, the object $\chi_\U \otimes e_{G,V}$ is
   indecomposable and injective in $\A\V$.
 \end{itemize}
\end{lemma}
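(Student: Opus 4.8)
The plan is to derive both parts formally from three ingredients already in hand: the adjunctions of Lemma~\ref{lem-projective-and-injective}, Schur's lemma, and the change-of-subcategory formalism of Lemma~\ref{lem-omnibus}. (Here $\U$ is a multiplicative global family contained in $\V$.)

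For part~(a), injectivity of $t_{G,V}$ in $\A\V$ is immediate: applying Lemma~\ref{lem-projective-and-injective} to $\V$, the functor $t_{G,\bullet}$ is right adjoint to the exact evaluation functor $\ev_G\colon\A\V\to\mathcal{M}_G$, so it carries every module to an injective object. For indecomposability I would use the same adjunction, together with the identification $t_{G,V}(G)=V$, to compute
\[
 \End_{\A\V}(t_{G,V}) \;\cong\; \mathcal{M}_G\bigl(t_{G,V}(G),V\bigr) \;=\; \End_{k[\Out(G)]}(V),
\]
which by Schur's lemma is a division algebra and so has no idempotents besides $0$ and $1$. To identify $t_{G,V}$ as the injective envelope of $s_{G,V}$, recall from Definition~\ref{def-main-objects} that $s_{G,V}$ is the image of the canonical map $e_{G,V}\to t_{G,V}$, so $s_{G,V}\leq t_{G,V}$ with $s_{G,V}(G)=t_{G,V}(G)=V$, and that $s_{G,V}$ is simple and nonzero (Lemma~\ref{lem-simple-obj}). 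It then remains only to verify that this inclusion is essential: given a nonzero subobject $Y\leq t_{G,V}$, pick $K$ with $Y(K)\neq 0$ and a nonzero $y\in Y(K)\subseteq t_{G,V}(K)=\Hom_{k[\Out(G)]}(k[\V(G,K)],V)$, then choose $\alpha\in\V(G,K)$ with $y(\alpha)\neq 0$; the element $\alpha^*(y)$ lies in $Y(G)$ and corresponds under $t_{G,V}(G)\cong V$ to $y(\alpha)\neq 0$, so $0\neq\alpha^*(y)\in Y\cap s_{G,V}$ on the $G$-level.

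For part~(b), note first that a global family is closed downwards, hence convex in $\V$, so $\chi_\U$ makes sense, and Lemma~\ref{lem-omnibus}(j) identifies $\chi_\U\otimes e_{G,V}$ with $i_*\bigl(e^\U_{G,V}\bigr)$ for the inclusion $i\colon\U\to\V$. Now $e^\U_{G,V}$ is projective in $\A\U$ (Lemma~\ref{lem-projective-and-injective}), hence torsion-free (Lemma~\ref{lem-gens-torsion-free}), hence injective in $\A\U$ because $\U$ is a multiplicative global family (Proposition~\ref{prop-projective-is-injective}); it is moreover indecomposable, either by Corollary~\ref{cor-indecomposable-projectives} or by the Schur computation $\End_{\A\U}(e^\U_{G,V})=\End_{k[\Out(G)]}(V)$. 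Since $i_*$ preserves both injective objects and indecomposable objects by Lemma~\ref{lem-omnibus}(e), it follows that $\chi_\U\otimes e_{G,V}=i_*(e^\U_{G,V})$ is indecomposable and injective in $\A\V$, as required.

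The only step that is not routine is the essentiality claim in~(a), and even there the real content is getting the conventions straight --- how $\Out(G)$ acts on $\V(G,K)$ and how the structure map $\alpha^*$ is computed on $t_{G,V}$ --- so that $\alpha^*(y)$ is genuinely identified with $y(\alpha)$ and is therefore nonzero.
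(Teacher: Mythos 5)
Your proof is correct. Part~(b) follows the paper's own route essentially verbatim: identify $\chi_\U\otimes e_{G,V}$ with $i_*(e^\U_{G,V})$ via Lemma~\ref{lem-omnibus}(j), use Proposition~\ref{prop-projective-is-injective} to see $e^\U_{G,V}$ is injective in $\A\U$, and push forward with Lemma~\ref{lem-omnibus}(e). For part~(a) you diverge in two places, both legitimately. For indecomposability the paper appeals to $i_*$ preserving indecomposables (writing $t_{G,V}=i_*(t^{\U'}_{G,V})$ for the groupoid $\U'=\U_{\simeq G}$, where the semisimple theory applies), whereas you compute $\End_{\A\V}(t_{G,V})\cong\End_{k[\Out(G)]}(V)$ through the $(\ev_G,t_{G,\bullet})$-adjunction and invoke Schur; your version is more self-contained and makes the division-algebra structure explicit, at the cost of having to note that the adjunction bijection $f\mapsto f(G)$ is a ring map. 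For the envelope statement the paper argues top-down: the injective hull of $s_{G,V}$ is a summand of the injective $t_{G,V}$, hence all of it by indecomposability; you instead verify essentiality directly by evaluating a nonzero $y\in Y(K)\subseteq\Hom_{k[\Out(G)]}(k[\V(G,K)],V)$ on some class $[\alpha]$ with $y(\alpha)\neq 0$ and observing that $\alpha^*(y)$ is that same nonzero element of $t_{G,V}(G)=s_{G,V}(G)=V$. The direct check is slightly longer but buys you something: it reproves indecomposability for free (an injective envelope of a simple object is indecomposable), so your Schur computation could in principle be dropped.
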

\begin{proof}
 We have seen that $t_{G, V}$ is injective and it is indecomposable by
 Lemma~\ref{lem-omnibus}(e). If $\U$ is a multiplicative global
 family, then $e_{G,V}$ is injective and so combining part (e) and (i)
 of Lemma~\ref{lem-omnibus} we see that
 $i_*(e_{G,V})=\chi_\U \otimes e_{G,V}$ is an indecomposable
 injective.  Finally note that there is a canonical monomorphism
 $s_{G,V} \to t_{G,V}$, so the injective hull of $s_{G,V}$ is a direct
 summand of $t_{G,V}$ so the claim follows by indecomposability
\end{proof}

The next result classifies the indecomposable injective objects which are 
absolutely torsion. 

\begin{lemma}\label{lem-env}
 Let $\U$ be a subcategory of $\G$ and let $I \in \A\U$
 be injective. Then $I$ is a retract of a product of objects $t_{G,V}$
 with $G \in \U$.  If in addition $I$ is absolutely torsion, then it
 is a retract of a sum of objects $t_{G,V}$ with $G \in \U$.
\end{lemma}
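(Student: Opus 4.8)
The plan is to reduce the general injective case to the absolutely torsion case, and then to analyse absolutely torsion injectives by induction on the base. First I would recall from Construction~\ref{con-proj-inj-resolutions} that for any $I\in\A\U$ the unit map $I\to l_*l^*(I)$ is a monomorphism, and that $l_*l^*(I)=\prod_{G\in\U'}t_{G,I(G)}$ for a chosen skeleton $\U'$. If $I$ is injective, this monomorphism splits, so $I$ is a retract of $\prod_{G\in\U'}t_{G,I(G)}$. Decomposing each $\Out(G)$-representation $I(G)$ into irreducibles (using that $k$ has characteristic zero, so $k[\Out(G)]$ is semisimple) and using that $t_{G,-}$ is additive, we see that $\prod_{G\in\U'}t_{G,I(G)}$ is itself a product of objects of the form $t_{G,V}$ with $V$ irreducible; this gives the first assertion.

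For the second assertion, suppose in addition that $I$ is absolutely torsion. The idea is to build, by induction on $n=\base(I)$, a sum of objects $t_{G,V}$ together with a split monomorphism from $I$ into it. Set $n=\base(I)$, let $\U_{=n}$ be the (finite) groupoid of groups of order $n$ in $\U$, and let $j\colon\U_{=n}\to\U$ be the inclusion. The restriction $j^*I$ is a representation of the finite groupoid $\U_{=n}$, so by Proposition~\ref{prop-groupoid-reps-b} it decomposes as a finite sum of irreducibles; adjointly this gives a map $I\to\bigoplus t_{G_i,V_i}$ (a finite sum, with $|G_i|=n$) which is an isomorphism on all groups of order $n$. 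Since $I$ is injective, and since an object of base $\geq n+1$ has zero evaluation in degrees $\leq n$, one checks the map $I\to\bigoplus t_{G_i,V_i}$ is a split monomorphism whose complementary summand $I'$ has $\base(I')\geq n+1$; moreover $I'$ is again absolutely torsion (it is a retract of $I$, or of the absolutely torsion object $\cok$). By Lemma~\ref{lem-torsion-cat-localizing} and the fact that retracts of absolutely torsion objects are absolutely torsion, the inductive hypothesis applies to $I'$.

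The induction terminates because $I$ is absolutely torsion: there is a uniform bound $m$ such that $I(H)=0$ for $|H|\geq m$ fails only in finitely many isomorphism classes — more precisely, the condition ``absolutely torsion'' forces every element to die under all maps from large enough groups, and combined with injectivity (so that $I$ is determined by a product of $t_{G,V}$'s with $|G|$ bounded) this means only finitely many base values $n$ occur. So after finitely many steps the residual summand is zero, and $I$ is isomorphic to a finite direct sum of objects $t_{G,V}$, which is in particular a retract of such a sum. The main obstacle I anticipate is the verification that the complementary summand $I'$ really does have strictly larger base and inherits ``absolutely torsion''; this requires using both injectivity of $I$ (to get the splitting) and a careful bound on $\base$ built from the structure of the $t_{G,V}$'s, together with the fact that a product of $t_{G,V}$'s with $|G|$ in a bounded range cannot contain a nonzero subobject supported only in arbitrarily high degrees — which is exactly where ``absolutely torsion'' versus merely ``torsion'' matters.
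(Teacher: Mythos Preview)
Your argument for the first assertion is correct and is exactly the paper's proof: embed $I$ into $l_*l^*(I)=\prod_{G\in\U'}t_{G,I(G)}$ via the unit, then split using injectivity.

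For the second assertion, however, your inductive scheme has two genuine problems, and the paper's route is both simpler and avoids them.

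First, the map $I\to\bigoplus_i t_{G_i,V_i}$ with $|G_i|=n$ is \emph{not} a monomorphism: for any $H$ with $|H|>n$ we have $t_{G_i,V_i}(H)=0$, while $I(H)$ may well be nonzero. What is true is that the kernel $K$ of this map has $\base(K)\ge n+1$, and the inclusion $K\hookrightarrow I$ splits because $I$ is injective; the complementary summand is then a retract of $\bigoplus_i t_{G_i,V_i}$. So the inductive step needs to be rephrased.

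Second, and more seriously, your termination argument is circular. You assert that absolutely torsion plus injective forces $I$ to be ``determined by a product of $t_{G,V}$'s with $|G|$ bounded'', but a bound on the indices $G$ appearing is essentially what you are trying to prove. Absolute torsion gives, for each individual element, a bound beyond which it dies; it does not by itself give a uniform bound on $\supp(I)$.

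The paper sidesteps all of this with a one-line observation: the \emph{same} monomorphism $\env\colon I\to\prod_{G\in\U'}t_{G,I(G)}$ already factors through the sub-sum $\bigoplus_{G\in\U'}t_{G,I(G)}$. Indeed, for $x\in I(H)$ the $G$-component of $\env(x)$ is the map $[\beta]\mapsto\beta^*(x)$ for $\beta\in\G(G,H)$; this vanishes unless $|G|\ge|H|$, and absolute torsion of $x$ gives an $m$ with $\beta^*(x)=0$ whenever $|G|\ge m$. Hence only finitely many components are nonzero, $\env$ lands in the direct sum, and splitting by injectivity finishes the proof. No induction, no termination issue, and no need to control $\supp(I)$.
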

\begin{proof}
 By Construction~\ref{con-proj-inj-resolutions}, we have a monomorphism
 \[
  \env \colon I \to \prod_{G \in \U'} t_{G, I(G)}=l_*l^*(I)
 \]
 By injectivity of $I$, the map $\env$ splits and so $I$ is a retract of
 $l_*l^*(I)$. If in addition $I$ is absolutely torsion, then the image of
 any element of $I$ under $\env$ is nonzero only for finitely many
 $G \in \U'$, so the morphism $\env$ factors through the direct sum.
\end{proof}

\bibliography{rationalglobalspectra}

\end{document}